\definecolor{couleur_cite}{rgb}{0.05,.4,0.05}
\definecolor{couleur_link}{rgb}{0.05,0.05,0.4}
\definecolor{couleur_url}{rgb}{0.5,0,0}
\numberwithin{equation}{section}
\theoremstyle{definition}
\newtheorem{remark}{Remark}
\theoremstyle{plain}
\newtheorem{theorem}{Theorem}
\newtheorem{lemma}{Lemma}
\newcommand\diag{\mathop{\mathrm{diag}}}
\newcommand\dist{\mathop{\mathrm{dist}}}
\newcommand\rk{\mathop{\mathrm{rk}}}
\newcommand\sgn{\mathop{\mathrm{sgn}}}
\newcommand\tr{\mathop{\mathrm{tr}}}
\newcommand\vol{\mathop{\mathrm{vol}}}
\newcommand\End{\mathop{\mathrm{End}}}
\newcommand\Hom{\mathop{\mathrm{Hom}}}
\newcommand\meas{\mathop{\mathrm{meas}}}
\newcommand\cusp{\mathrm{cusp}}
\newcommand\Eis{\mathrm{Eis}}
\newcommand\fin{\mathrm{fin}}
\newcommand\dd{\mathrm{d}}
\newcommand\id{\mathrm{id}}
\newcommand\np{\mathrm{np}}
\newcommand\pp{\mathrm{p}}
\newcommand\GL{\mathrm{GL}}
\newcommand\PGL{\mathrm{PGL}}
\newcommand\SL{\mathrm{SL}}
\newcommand\OO{\mathrm{O}}
\newcommand\SO{\mathrm{SO}}
\newcommand\U{\mathrm{U}}
\newcommand\SU{\mathrm{SU}}
\newcommand\MM{\mathrm{M}}
\newcommand\NN{\mathbb{N}}
\newcommand\ZZ{\mathbb{Z}}
\newcommand\QQ{\mathbb{Q}}
\newcommand\RR{\mathbb{R}}
\newcommand\CC{\mathbb{C}}
\newcommand\mcD{\mathcal{D}}
\newcommand\mcI{\mathcal{I}}
\newcommand\mcH{\mathcal{H}}
\newcommand\mcL{\mathcal{L}}
\newcommand\mcM{\mathcal{M}}
\newcommand\mcN{\mathcal{N}}
\newcommand\mcS{\mathcal{S}}
\newcommand\mfa{\mathfrak{a}}
\newcommand\mfB{\mathfrak{B}}
\newcommand\mfg{\mathfrak{g}}
\newcommand\mtD{\mathtt{D}}
\newcommand\mtP{\mathtt{P}}
\newcommand\mbv{\mathbf{v}}
\newcommand\mbt{\mathbf{t}}
\newcommand\ov{\overline}
\newcommand\eps{\varepsilon}
\newcommand\one{\!\!\!\!\pmod{1}}
\newcommand\mupl{\mu_{\mathrm{Pl}}}
\newcommand\Gtemp{\widehat{G}_\mathrm{temp}}
\newcommand\Vcan{\mathcal{V}}
\renewcommand\leq{\leqslant}
\renewcommand\geq{\geqslant}
\DeclareFontFamily{U}{mathx}{\hyphenchar\font45}
\DeclareFontShape{U}{mathx}{m}{n}{
      <5> <6> <7> <8> <9> <10>
      <10.95> <12> <14.4> <17.28> <20.74> <24.88>
      mathx10
      }{}
\DeclareSymbolFont{mathx}{U}{mathx}{m}{n}
\DeclareMathAccent{\widecheck}{0}{mathx}{"71}
\DeclareMathAccent{\wideparen}{0}{mathx}{"75}
\title{Beyond the spherical sup-norm problem}
\author{Valentin Blomer}
\address{Mathematisches Institut, Endenicher Allee 60, D-53115 Bonn, Germany}
\email{blomer@math.uni-bonn.de}
\author{Gergely Harcos, P\'eter Maga}
\address{Alfr\'ed R\'enyi Institute of Mathematics, POB 127, Budapest H-1364, Hungary}\email{gharcos@renyi.hu, magapeter@gmail.com}
\address{MTA R\'enyi Int\'ezet Lend\"ulet Automorphic Research Group}\email{gharcos@renyi.hu, magapeter@gmail.com}
\author{Djordje Mili\'cevi\'c}
\address{Bryn Mawr College, Department of Mathematics, 101 North Merion Avenue, Bryn Mawr, PA 19010, USA}
\email{dmilicevic@brynmawr.edu}
\thanks{This research was supported in part by the DFG-SNF Lead Agency Program grant BL 915/2-2 (V.B.), Germany's Excellence Strategy grant EXC-2047/1 - 390685813 (V.B.), National Science Foundation Grant DMS-1903301 (D.M.), European Research Council grant CoG-648017 (G.H.), the MTA R\'enyi Int\'ezet Lend\"ulet Automorphic Research Group (G.H. \& P.M.), NKFIH (National Research, Development and Innovation Office) grants K~119528 (G.H. \& P.M.), KKP~133819 (P.M.), FK~135218 (P.M.), and the Premium Postdoctoral Fellowship of the Hungarian Academy of Sciences (P.M.).}
\keywords{sup-norm problem, automorphic form, amplification, pre-trace formula, arithmetic hyperbolic 3-manifold, spherical transform, Paley--Wiener theorem, spherical function}
\subjclass[2020]{Primary 11F72; Secondary 11F41, 11F70, 11J25, 22E30, 43A90.}
   \def\MR#1{}
\begin{document}

\begin{abstract}
We open a new perspective on the sup-norm problem and propose a version for non-spherical Maa{\ss} forms when the maximal compact $K$ is non-abelian and the dimension of the $K$-type gets large.  We solve this problem for an arithmetic quotient of $G=\SL_2(\CC)$ with $K=\SU_2(\CC)$. Our results cover the case of vector-valued Maa{\ss} forms as well as all the individual scalar-valued Maa{\ss} forms of the Wigner basis, reaching  sub-Weyl exponents in some cases. On the way, we develop analytic theory of independent interest, including uniform strong localization estimates for generalized spherical functions of high $K$-type and a Paley--Wiener theorem for the corresponding spherical transform acting on the space of rapidly decreasing functions. The new analytic properties of the generalized spherical functions lead to novel counting problems of matrices close to various manifolds that we solve optimally.
\end{abstract}

\maketitle

\section{Introduction}

\subsection{The spherical sup-norm problem}
The sup-norm problem on arithmetic Riemannian manifolds is a question at the interface of harmonic analysis and  number theory that intrinsically combines techniques from both areas. Let $X = \Gamma \backslash G/K$ be a locally symmetric space of finite volume, where $\Gamma$ is an arithmetic subgroup. Arithmetically and analytically, the most interesting functions in $L^2(X)$ are joint eigenfunctions $\phi$ of all invariant differential operators and the Hecke operators: these are precisely the functions that arise from (spherical) automorphic forms.   The sup-norm problem asks for a quantitative comparison of the $L^2$-norm ${\|\phi\|}_2$ and the sup-norm ${\|\phi\|}_{\infty}$, most classically in terms of the Laplace eigenvalue $\lambda_{\phi}$, but depending on the application  also in terms of the volume of $X$ or other relevant quantities. Upper bounds for the sup-norm in terms of the Laplace eigenvalue are a measure for the equidistribution of the mass of  high energy eigenfunctions which  sheds light on the question to what extent these eigenstates can localize (``scarring''). Besides the quantum mechanical interpretation, the sup-norm problem in its various incarnations has connections to the multiplicity problem, zero sets and nodal lines of automorphic functions, and bounds for Faltings' delta function, to name just a few. See \cite{Sarnak2004Morawetz, Rudnick2005, GhoshReznikovSarnak2013, JorgensonKramer2004}.

If $X$ is  compact, the most general upper bound is due to Sarnak \cite{Sarnak2004Morawetz}:
\begin{equation}\label{sa}
{\|\phi\|}_{\infty} \ll_X  \lambda_{\phi}^{(\dim X - \rk X)/4}{\|\phi\|}_2,
\end{equation}
a bound which does not use the Hecke property and is in fact sharp (for general $X$) under these weaker assumptions. Sarnak derives this bound from asymptotics of spherical functions. A slightly different but ultimately related argument proceeds via a pre-trace inequality that bounds ${\|\phi\|}_{\infty}^2$ by a sum of an automorphic kernel over $\gamma \in \Gamma$. If the test function is an appropriate Paley--Wiener function, only the identity contributes to this sum, and one obtains as a (``trivial'') upper bound for ${\|\phi\|}_{\infty}$ the square-root of the spectral density as given in terms of the Harish-Chandra $\textbf{c}$-function. If the Langlands parameters of $\phi$ are in generic position, this coincides with \eqref{sa}.

To go beyond \eqref{sa}, one uses a test function that localizes   not only the archimedean Langlands parameters, but in addition the parameters at a large number of finite places (where ``large'' means a function tending to infinity as a small and carefully chosen power of  $\lambda_{\phi}$). This is called the amplification technique and leads, after estimating the automorphic kernel, to a problem in the geometry of numbers: count the elements of $G$ which appear in Hecke correspondences and lie in regions of $G$ according to the size of the kernel (such as counting rescaled integer matrices lying close to $K$). It has been implemented successfully in a variety of cases, see e.g.\ \cite{IwaniecSarnak1995, HarcosTemplier2013, BlomerPohl2016, BlomerMaga2016, Marshall2014a, Templier2015, Saha2017a, BlomerHarcosMagaMilicevic2020} and the references therein.

\subsection{Automorphic forms with \texorpdfstring{$K$}{K}-types}
In this paper we open a new perspective on the sup-norm problem and propose a version of higher complexity. The sup-norm problem makes perfect sense not only on the level of symmetric spaces, but also on the level of groups, and a priori there is no reason why one should restrict to spherical, i.e.\ right $K$-invariant automorphic forms. Let $\tau$ be an irreducible unitary representation of $K$ on some finite-dimensional complex vector space $V^{\tau}$, and consider the homogeneous vector bundle over $G/K$ defined by $\tau$. A cross-section may then be identified with a vector-valued function $f:G\to V^{\tau}$ which transforms on the right by $K$ with respect to $\tau$:
\[ f(gk) = \tau(k^{-1})f(g),\qquad g\in G,\quad k \in K. \]
It is now an interesting question to bound the sup-norm of $f$ or, more delicately, its components as the \emph{dimension}   of $V^{\tau}$ gets large. Such a situation cannot be realized in the classical case $G = \SL_2(\RR)$, since $K = \SO_2(\RR)$ is abelian, hence each $V^{\tau}$ is one-dimensional. In this paper, we offer a detailed investigation of the first nontrivial case $G = \SL_2(\CC)$. For concreteness, we choose the congruence lattice $\Gamma = \SL_2(\ZZ[i])$, although our results extend to more general arithmetic quotients of $G$ using the techniques in \cite{BlomerHarcosMagaMilicevic2020}.

Nontrivial irreducible unitary representations of $G$ are principal series representations parametrized by certain pairs $(\nu, p) \in \mfa^{\ast}_{\CC} \times \frac{1}{2}\ZZ$, where as usual $\mfa$ is the Lie algebra of the subgroup of positive diagonal matrices; see \S\ref{SL2C-subsec}.
(By a small abuse of notation we will later interpret $\nu$ simply as a complex number.) Each representation space $V$ of $G$ decomposes as a Hilbert space direct sum
\begin{equation}\label{decomp}
V = \bigoplus_{\substack{\ell\geq|p|\\\ell\equiv p\one}}  V^{\ell}
= \bigoplus_{\substack{\ell\geq|p|\\\ell\equiv p\one}} \ \bigoplus_{\substack{|q| \leq \ell\\q\equiv\ell\one}}V^{\ell,q},
\end{equation}
where $V^{\ell,q}$ is one-dimensional. Here and later, $\ell\in\frac{1}{2}\ZZ_{\geq 0}$ parametrizes the $K$-type, i.e.\ the $(2\ell+1)$-dimensional representation $\tau_{\ell}$ of $K$, and the diagonal matrix $\diag(e^{i\varrho}, e^{-i\varrho})\in K$ acts on $V^{\ell,q}$ by $e^{2qi\varrho}$. (The upper index $\ell$ in $V^\ell$ should not be mistaken for an $\ell$-th power.) 

Representations occurring in $L^2(\Gamma\backslash G)$ consist of even functions on $G$ and have $p\in\ZZ$. A representation  contains a spherical vector if and only if $p = 0$. In particular, the forms with $p\neq 0$ are untouched by any of the spherical sup-norm literature. For $p \neq 0$, no complementary series exists, so $\nu \in i\mfa^{\ast}$.

\subsection{Main results I: vector-valued forms}\label{main-results-intro-sec}
As explained above, we are interested in ``big'' $K$-types which occur for all representation parameters $|p| \leq \ell$, but arguably the most interesting case is when the $K$-type is ``new'' and no lower $K$-types appear in the same automorphic representation space. Hence from now on we restrict to $p = \ell$. The sup-norm problem for large $\nu$ was studied in detail in \cite{BlomerHarcosMagaMilicevic2020}, so here we keep $\nu$ in a fixed compact subset $I\subset i\RR$ and let $\ell$ vary.  The spectral density is a constant multiple of $p^2 - \nu^2$. In particular, for a  given $K$-type $\tau_{\ell}$, there are $\OO_I(\ell^2)$ cuspidal automorphic representations $V\subset L^2(\Gamma\backslash G)$ with spectral parameter $\nu\in I$ and $p = \ell$  (see \cite{dever2020ambient}), and in the light of the trace formula this bound is expected to be sharp. In each of these we consider the $(2\ell+1)$-dimensional subspace $V^{\ell}$. Let us choose an orthonormal basis $\{ \phi_q : |q| \leq \ell\}$ of $V^{\ell}$, with $\phi_q\in V^{\ell,q}$ as in \eqref{decomp}. The function $G\to\CC^{2\ell+1}$ given by
\begin{equation}\label{eq:vectorvalued}
g\mapsto\left(\phi_{-\ell}(g), \dotsc, \phi_\ell(g)\right)^{\top}
\end{equation}
is a vector-valued automorphic form for the group $\Gamma$ with spectral parameter $\nu$ and $K$-type $\tau_{\ell}$.
The Hermitian norm of this function,
\[\Phi(g):=\Bigl(\sum_{|q|\leq \ell} |\phi_q(g)|^2\Bigr)^{1/2},\qquad g\in G,\]
is independent of the choice of the orthonormal basis, and it satisfies ${\|\Phi\|}_2=(2\ell+1)^{1/2}$. Let us fix a compact subset $\Omega\subset G$. Our remarks on spectral density and dimension suggest that
\begin{equation}\label{3/2-exp}
{\| \Phi|_{\Omega} \|}_{\infty} :=\Bigl\| \sum_{|q| \leq \ell} |\phi_q|_{\Omega}|^2\Bigr\|^{1/2}_{\infty}\ll_{I,\Omega} \ell^{3/2}
\end{equation}
should be regarded as the ``trivial'' bound; this is made precise in Remark~\ref{non-arith} below. Our first result is a power-saving improvement.

\begin{theorem}\label{thm1} Let $\ell\geq 1$ be an integer, $I\subset i\RR$ and $\Omega\subset G$ be compact sets.
Let $V\subset L^2(\Gamma\backslash G)$ be a cuspidal automorphic representation with minimal $K$-type $\tau_{\ell}$ and spectral parameter $\nu_V\in I$. Then for any $\eps>0$ we have
\[ {\| \Phi|_{\Omega} \|}_{\infty} \ll_{\eps,I,\Omega} \ell^{4/3+\eps}. \]
\end{theorem}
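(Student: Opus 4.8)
The plan is to run the amplified pre-trace formula adapted to the $K$-type $\tau_\ell$, tracking the new feature that the relevant spherical function is now the \emph{generalized} spherical function $\varphi_{\nu,\ell}$ attached to the big $K$-type rather than the classical (scalar) one. First I would fix a point $g_0\in\Omega$ and expand $|\Phi(g_0)|^2=\sum_{|q|\le\ell}|\phi_q(g_0)|^2$ via a pre-trace inequality: choosing a test function $f$ on $G$ that is bi-$\tau_\ell$-equivariant and whose spherical transform is nonnegative and concentrated near the spectral parameter $\nu_V$, the automorphic kernel $\sum_{\gamma\in\Gamma}f(g_0^{-1}\gamma g_0)$ dominates (a positive multiple of) $|\Phi(g_0)|^2$ times the value of the spherical transform at $\nu_V$. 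The ``trivial'' bound $\ell^{3/2}$ comes from the identity term alone: by the Paley--Wiener theorem for the spherical transform in this setting (asserted in the abstract, so I may invoke it), one can take $f$ compactly supported with spherical transform supported in a fixed neighborhood of $I$, and then $f(\id)$ is of size the spectral density $\asymp p^2-\nu^2\asymp\ell^2$, giving $|\Phi(g_0)|^2\ll\ell^2/($mass of the transform near $\nu_V)\ll\ell^3$ after normalizing.

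To go beyond this I would amplify: insert Hecke operators $T_n$ for $n$ running over primes $p\le L$ (with $L=\ell^\delta$ for a small $\delta$ to be optimized), forming an amplifier $\big|\sum_{p\le L}x_p\lambda_\phi(p)\big|^2\gg L$ on the spectral side by the standard Iwaniec--Sarnak choice of coefficients $x_p$. On the geometric side this produces $\sum_{n\le L^2}\sum_{\gamma\in\Gamma_n}|f(g_0^{-1}\gamma g_0)|$ where $\Gamma_n$ is the set of matrices in $M_2(\ZZ[i])$ of determinant $n$ up to units. The size of $f(g_0^{-1}\gamma g_0)$ is now governed by the decay of $\varphi_{\nu,\ell}$, and here is where the paper's new input enters: the uniform strong localization estimates for generalized spherical functions of high $K$-type tell us precisely in which region of $G$ the kernel is large. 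Rather than just being concentrated near $K$ (as in the spherical case), the high-$\ell$ spherical function is concentrated near a lower-dimensional \emph{submanifold} cut out by further equations (reflecting that $\diag(e^{i\varrho},e^{-i\varrho})$ acts by $e^{2\ell i\varrho}$, so the phase oscillates rapidly off this locus). So the geometric count becomes: count $\gamma\in\Gamma_n$, rescaled, lying within a shrinking tube of radius $\sim\ell^{-1/2}$ (or the appropriate power dictated by the localization estimate) around this manifold.

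I would then solve this lattice-point-near-a-manifold counting problem optimally, splitting into the contribution of $\gamma$ that are ``parabolic/central-like'' (lying exactly on the singular locus — these are few and controlled by divisor-type bounds) versus generic $\gamma$, for which the tube has small volume and one gets a genuine saving over the trivial count of $\sim n^{2}$ matrices. Balancing the amplifier length $L$ against the counting saving yields the exponent: the trivial $3/2$ improves to $4/3$, consistent with the shape $\tfrac32-\tfrac{1}{6}$ that a one-parameter amplification in $\mathrm{SL}_2$ typically produces when the diagonal/geometric saving is of square-root strength.

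The main obstacle is the geometric counting step: one must show that, after rescaling, the number of $\gamma\in M_2(\ZZ[i])$ of norm $\asymp n$ lying within distance $\ell^{-1/2}$ of the relevant submanifold is bounded by what the localization estimate can absorb, uniformly in $n\le L^2$ and in $g_0\in\Omega$, and separating out the degenerate configurations (scalar matrices, matrices stabilizing the distinguished directions) that otherwise would ruin the bound. A secondary technical point is ensuring the test function $f$ simultaneously (i) has nonnegative spherical transform, (ii) is supported in a fixed-size neighborhood of the identity so that only finitely many $\gamma$ contribute from each $\Gamma_n$, and (iii) matches the true localization profile of $\varphi_{\nu,\ell}$ closely enough that the identity contribution is not larger than $\ell^2$ — this is exactly what the Paley--Wiener theorem of the paper is designed to provide.
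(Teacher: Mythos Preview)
Your outline matches the paper's proof: amplified pre-trace inequality with a $\tau_\ell$-type test function, decay estimates for the spherical trace function, a matrix count split into parabolic and non-parabolic pieces, and optimization at $L\asymp\ell^{1/3}$.

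One clarification is needed, though. For the vector-valued $\Phi$ (as opposed to the individual $\phi_q$) the relevant kernel is the conjugation-invariant spherical trace function $\varphi_{\nu,\ell}^{\ell}$, and it localizes near $\pm\id$, not near a more exotic submanifold. Concretely, after conjugating $g_0^{-1}\tilde\gamma g_0$ into upper-triangular form $\left(\begin{smallmatrix}z&u\\&z^{-1}\end{smallmatrix}\right)$, the paper's Theorem~4 gives $\varphi_{\nu,\ell}^{\ell}\ll\min\bigl(\ell,\ell^\eps\delta_1^{-2},\ell^{1/2+\eps}\delta_2^{-1}\bigr)$ where $\delta_1=\min|z\pm1|$ and $\delta_2=|u|$; there are thus \emph{two} independent scales, not a single tube radius. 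The counting then exploits the trace condition $|a+d-2\sqrt{n}|\ll\delta_1^2\sqrt{|n|}$ and the discriminant condition $(a-d)^2+4bc\ll\delta_1^2|n|$, with the parabolic/non-parabolic split you anticipate. Your remark about the $\diag(e^{i\varrho},e^{-i\varrho})$-phase and concentration near a lower-dimensional submanifold is the mechanism behind the individual-form estimates (the paper's Theorems~2 and~3 and the averaged functions $\varphi_{\nu,\ell}^{\ell,q}$), but it plays no role in Theorem~1.
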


We will explain some ideas of the proof in a moment, but we remark already at this point that the exponent is  the best possible, given that we sacrifice cancellation of the terms on the geometric side of the pre-trace formula and given our
current knowledge on the construction of the most efficient amplifier. In other words, under these conditions we solve the arising matrix counting problem optimally.
Since we trivially have ${\|\Phi\|}_{\infty}\gg\ell^{1/2}$, the above bound is one-sixth of the way   from the trivial down to the best possible exponent (absent the possibility of some escape of mass into a cusp). This matches (after a renormalization) the original and still the best available subconvexity exponent $5/24$ of Iwaniec--Sarnak~\cite{IwaniecSarnak1995} for the sup-norms of spherical Maa{\ss} forms of large Laplace eigenvalue on arithmetic hyperbolic surfaces.

\subsection{Main results II: individual vectors}
It is a much more subtle endeavor to investigate the sup-norm of the individual basis elements $\phi_q$. Here one must contend with the inherent high multiplicity, a known serious barrier in the sup-norm problem. Indeed, a straightforward construction \cite{Sarnak2004Morawetz} shows that some scalar-valued $L^2$-normalized form $\phi\in V^{\ell}$ (essentially the projection of the vector-valued form \eqref{eq:vectorvalued} in the modulus-maximizing direction) has sup-norm on $\Omega$ as large as ${\|\Phi|_{\Omega}\|}_{\infty}$ in Theorem~\ref{thm1}. However, our natural basis $\{\phi_q : |q| \leq \ell\}$ of $V^{\ell}$ is distinguished by consisting of eigenfunctions under the action of the group $\{\diag(e^{i\theta}, e^{-i\theta}) : \theta \in \RR\}$ of diagonal matrices in $K$. This is the classical basis with respect to which the representation $\tau_\ell$ is given by the Wigner $D$-matrix. By a similar heuristic reasoning as for \eqref{3/2-exp}, one might expect that the baseline bound should be ${\| \phi_q |_{\Omega} \|}_{\infty} \ll_{I,\Omega}\ell$. Indeed, we prove this bound in considerable generality up to a factor of $\ell^{\eps}$ (cf.\ Remark~\ref{non-arith} below), noting that it is not ``trivial'' in any sense other than that it does not require arithmeticity. Moreover, in the situation of Theorem~\ref{thm1}, we are in fact able to break this barrier uniformly for all $q$, as shown by the next theorem.
 
\begin{theorem}\label{thm2} Under the assumptions of Theorem~\ref{thm1}, we have
\[\max_{|q|\leq\ell}\, {\| \phi_q |_{\Omega} \|}_{\infty}  \ll_{\eps,I,\Omega} \ell^{26/27+\eps}.\]
\end{theorem}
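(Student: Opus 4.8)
The plan is to deploy the amplified pre-trace formula as for Theorem \ref{thm1}, but to exploit more carefully the shape of the individual Wigner vector $\phi_q$ rather than the full Hermitian norm $\Phi$. First I would set up a pre-trace inequality adapted to the $K$-type $\tau_\ell$: choose an archimedean test function whose spherical transform localizes the spectral parameter near $\nu_V$, together with an amplifier $\sum_n x_n \lambda_n$ built from Hecke eigenvalues at a set of primes $p \asymp L$ for an optimization parameter $L$ to be chosen as a small power of $\ell$. Spectral expansion and positivity give a bound of the shape
\[
L^2\,|\phi_q(g)|^2 \ll_\eps \ell^\eps \sum_{n,m} x_n \overline{x_m} \sum_{\gamma} \bigl\langle \tau_\ell(\cdot)\, k_\ell(g^{-1}\gamma g), \dots \bigr\rangle,
\]
where the $\gamma$ run over the relevant Hecke correspondence and $k_\ell$ is the generalized spherical kernel of $K$-type $\tau_\ell$. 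The key analytic input here is the uniform strong localization estimate for the generalized spherical functions of high $K$-type developed earlier in the paper: it tells us that the $(q,q)$-matrix coefficient of $\tau_\ell(u)k_\ell(g^{-1}\gamma g)$ is concentrated, in terms of $\ell$, on $\gamma$ for which $g^{-1}\gamma g$ lies close to a specific submanifold of $G$ determined by the compact torus direction singled out by $q$ (rather than close to all of $K$, as in the $\Phi$ problem). The extra constraint coming from fixing $q$ is what should beat the exponent $4/3$.

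The second step is the resulting counting problem. After splitting into the diagonal contribution $\gamma \in Z(G)\cap\Gamma$ and the non-central part, the diagonal contributes $\asymp L^2 \cdot \ell$ (giving the baseline $\|\phi_q\|_\infty \ll \ell^{1/2+\eps}\cdot(L^2\cdot\ell / L^4)^{1/2} = \ell^{1/2+\eps}$ once $L$ absorbs the amplifier normalization — actually the baseline before amplification is $\ell$), so everything comes down to bounding, for each $q$ and each relevant scale, the number of $\gamma$ in the Hecke sphere of radius $\asymp p^{k}$ with $g^{-1}\gamma g$ within a prescribed distance of the $q$-submanifold, weighted by the localization profile. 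I would stratify this submanifold according to how degenerate the torus direction is (i.e.\ the size of $|q|/\ell$), since the counting is genuinely different when $q$ is close to $\pm\ell$ versus when $q$ is small: in the extreme regimes the generalized spherical function behaves almost like a highest-weight vector and the localization is sharpest. For each stratum one translates the ``close to the manifold'' condition into congruence and size conditions on the entries of the underlying integral (or $\ZZ[i]$-) matrices, and counts lattice points in the resulting skewed boxes, exactly as in the spherical case but with the refined geometry. Optimizing $L$ against the worst stratum is what produces the exponent $26/27$.

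The main obstacle I anticipate is precisely this counting uniformly in $q$: unlike in Theorem \ref{thm1}, where the relevant manifold is the fixed compact group $K$ and the count is essentially a single clean problem, here the target manifold moves with $q$ and can degenerate, so one must show that the worst case over all $|q|\le\ell$ is still controlled — and control it optimally, matching the localization estimate so that no power of $\ell$ is lost. A secondary technical point is handling the off-diagonal terms $n\neq m$ of the amplifier together with the $q$-dependence; one needs the localization estimate to be strong enough (with power-saving decay off the manifold, not just boundedness) to sum the tails. Finally, as in Theorem \ref{thm1}, the statement is over $\Omega$ compact, so one must keep all implied constants uniform in $g\in\Omega$, which is automatic once the localization estimates are stated uniformly, but requires care in the degenerate strata where the conjugation by $g$ can rotate the singled-out torus direction.
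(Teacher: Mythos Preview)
Your skeleton—amplified pre-trace inequality, localization of the averaged spherical function $\varphi_{\nu,\ell}^{\ell,q}$, then a Diophantine count optimized over $L$—matches the paper. But you have misidentified where the $q$-dependence sits, and the ``main obstacle'' you anticipate is in fact sidestepped entirely.

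The localization input the paper uses for Theorem~\ref{thm2} is Theorem~\ref{thm6}, which bounds $\varphi_{\nu,\ell}^{\ell,q}(g)$ \emph{uniformly over all} $|q|\leq\ell$ by
\[
\ell^{\eps}\min\Bigl(1,\frac{\|g\|}{\sqrt{\ell}\,\dist(g,K)^2\,\dist(g,\mcD)}\Bigr)+\ell^{-\Lambda},
\]
where $\mcD\subset G$ is the set of diagonal matrices. The two manifolds $K$ and $\mcD$ are fixed; nothing here moves with $q$, and there is no stratification by $|q|/\ell$. The $q$-dependent picture you describe—a submanifold that degenerates as $q$ varies—is what happens in Theorem~\ref{thm5}, which is only available for $q\in\{0,\pm\ell\}$ and feeds Theorem~\ref{thm3}, not Theorem~\ref{thm2}. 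For general $q$ the paper explicitly aims for ``the softest possible proof'' and pays for this uniformity with the weaker exponent $26/27$.

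Consequently the counting is a single problem, independent of $q$: bound the number of $\gamma\in\Gamma_n$ with $g^{-1}\tilde\gamma g$ simultaneously $\delta_1$-close to $K$ and $\delta_2$-close to $\mcD$ (Lemma~\ref{APTI-done-lemma-single-form}). The paper estimates this crudely by the minimum of two one-parameter counts: $M_K(g,L,\mcL,\delta_1)$, which is the classical spherical count imported wholesale from \cite{BlomerHarcosMilicevic2016}, and $M_{\mcD}(g,L,\mcL,\eps,\delta_2)$, a new count near diagonal matrices (Lemma~\ref{first-moment-count-lemma}). Substituting into the pre-trace inequality and using $\min(A,B,C)\leq A^{4/9}B^{4/9}C^{1/9}$ gives $|\phi_q(g)|^2\preccurlyeq\ell^2(L^{-1}+L^2\ell^{-2/9})$, balanced at $L\asymp\ell^{2/27}$.

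So your plan would work in outline, but the hard step you budgeted for—controlling a $q$-dependent family of counting problems uniformly—is not needed, and attempting it would likely cost you the clean exponent.
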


For special values of $q$ we can improve on the exponent considerably. The central vector $\phi_{0}$ is distinguished as the ``archimedean newvector'' \cite{Popa2008} in the sense that its Whittaker function determines the archimedean $L$-factor of the underlying representation. Another interesting situation is the  extreme case of the vector $\phi_{\pm\ell}$.

\begin{theorem}\label{thm3} Keep the assumptions of Theorem~\ref{thm1}. 
\begin{enumerate}[(a)]
\item\label{thm3-a}
For $q = 0$ we have
\[ {\| \phi_{0} |_{\Omega} \|}_{\infty} \ll_{\eps,I,\Omega} \ell^{7/8+\eps}.\]
\item\label{thm3-b}
Suppose that $V$ lifts to an automorphic representation for $\PGL_2(\ZZ[i])\backslash\PGL_2(\CC)$. For $q = \pm \ell$ we have
\[{\| \phi_{\pm \ell} |_{\Omega} \|}_{\infty} \ll_{\eps,I,\Omega} \ell^{1/2+\eps}. \]
\end{enumerate}
\end{theorem}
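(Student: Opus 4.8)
The plan is to run amplification over the pre-trace formula, just as in the proofs of Theorems~\ref{thm1} and~\ref{thm2}, but now tracking one \emph{individual} diagonal entry of the high-$K$-type generalized spherical function rather than its trace. Fix $g\in\Omega$, and let $(x_n)$ be an admissible amplifier of length $L$ built from the Hecke eigenvalues of $V$; for part~(a) one uses that $\phi_0$ is the archimedean newvector, whose eigenvalues satisfy the usual Rankin--Selberg bounds, and for part~(b) the lift puts $V$ on $\PGL_2(\ZZ[i])\backslash\PGL_2(\CC)$, which provides a clean spherical Hecke algebra with the same eigenvalue control. Feeding into the pre-trace formula a Paley--Wiener test function that localises $\nu$ near $I$ and, in addition, projects onto the single $T_K$-weight $q$ inside $\tau_\ell$, positivity yields for $g\in\Omega$ an inequality of the shape
\[
|\phi_q(g)|^2 \;\ll_{\eps}\; \frac{\ell^{2}}{L^{1-\eps}} \;+\; \frac{1}{L^{2-\eps}}\sum_{\substack{\gamma\in\MM_2(\ZZ[i])\\ 1\le|\det\gamma|\le L^{2}}} \bigl|w_q\bigl(g^{-1}\gamma g\bigr)\bigr|,
\]
where $\ell^{2}$ is the identity contribution forced by the spectral density $p^{2}-\nu^{2}\asymp\ell^{2}$ (so that $\|\phi_q\|_{\infty}\ll\ell$ is the trivial bound, with the scalar $\gamma$ absorbed here), and $w_q$ is — up to rescaling by $|\det\gamma|^{1/2}$ and the spectral weight — the $(q,q)$-entry of the matrix kernel, whose pointwise size is controlled by the strong localization estimates for generalized spherical functions of high $K$-type proved earlier in the paper. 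The whole task is then to bound the geometric sum, uniformly in $g\in\Omega$ and $\nu\in I$, by an optimal count of the Hecke matrices lying in the (very thin, strongly $q$-dependent) support of $w_q$, and to balance $L$ against $\ell$.

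For part~(a) the decisive structural fact is that the $(0,0)$ matrix coefficient of $\tau_\ell$ on $K=\SU_2(\CC)$ is the Legendre polynomial $P_\ell(\cos\beta)$ in the polar angle $\beta$ of the $K$-component: it has height $\asymp 1$ only in two caps of angular radius $\asymp\ell^{-1}$ about $\pm\id$ and Bessel-type height $\ll(\ell\sin\beta)^{-1/2}$ away from them. Combined with the decay of the generalized spherical function along the split Cartan direction, $w_0(g^{-1}\gamma g)$ is appreciable only in two regimes: either $g^{-1}\gamma g$ lies within $\asymp\ell^{-1}$ of a scalar matrix — which, for $g\in\Omega$, pins $\gamma$ (up to scalars) close to the parabolic subgroup fixing the point $g$, the delicate ``matrices near a point-stabiliser'' count already present in the spherical problem — or $g^{-1}\gamma g$ lies within the natural transverse width of the submanifold $\SU_2(\CC)$, where $w_0$ has only the reduced height $\ll\ell$ coming from the squared Bessel decay, weighing against the count of rescaled Gaussian-integer matrices near $\SU_2(\CC)$ obtained in \cite{BlomerHarcosMagaMilicevic2020}. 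Solving both counts optimally and balancing $L$ against $\ell$ gives the exponent $7/8+\eps$. The main work — and the main obstacle — is the uniform optimal count near $\SU_2(\CC)$ carrying the correct dependence on $\ell$, together with the separate near-scalar regime.

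For part~(b) the extreme matrix coefficient is an exact power: in the $\SU_2(\CC)$-coordinates $k=\left(\begin{smallmatrix}\alpha&\beta\\-\bar\beta&\bar\alpha\end{smallmatrix}\right)$ one has $|D^\ell_{\pm\ell,\pm\ell}(k)|^{2}=|\alpha|^{4\ell}$, equal to $1$ exactly on the diagonal torus $T_K=\{\diag(e^{i\theta},e^{-i\theta})\}$ of $K$ and concentrated within distance $\asymp\ell^{-1/2}$ of it. Hence the support of $w_{\pm\ell}$ on $G$ is an $\asymp\ell^{-1/2}$-tube around the $2$-real-dimensional diagonal torus $\{\diag(z,z^{-1}):z\in\CC^{\times}\}$ — an extremely thin set. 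Working in $\PGL_2$ via the assumed lift makes the resulting count transparent: the Hecke matrices of determinant-norm $\ll L^{2}$ are listed explicitly, and an elementary geometry-of-numbers estimate together with a Diophantine (archimedean separation) argument — exploiting that for generic $g\in\Omega$ the lines $ge_1,ge_2$ are poorly approximable by ratios of small Gaussian integers — shows that none of these matrices, other than scalars, can have $g^{-1}\gamma g$ within $\asymp\ell^{-1/2}$ of the diagonal torus as long as $L\ll\ell$. Consequently the geometric sum above is $O_{\Omega}(\ell^{\eps})$ for every amplifier of length $L\ll\ell$; taking $L\asymp\ell$ collapses the inequality to $|\phi_{\pm\ell}(g)|^{2}\ll_{\eps}\ell^{1+\eps}$, which is the optimal bound (no amplifier longer than this is needed, nor would one help). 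I expect this near-torus count — made uniform in $g$ and $\nu$, and resting on the sharp width-$\asymp\ell^{-1/2}$ (rather than merely $\ell^{-\delta}$) localization of the generalized spherical function — to be the crux of the whole argument.
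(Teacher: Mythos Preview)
Your proposal for part~\ref{thm3-b} misses the paper's central innovation and would not reach the exponent $1/2$. The paper does \emph{not} use a standard amplified pre-trace inequality for $q=\pm\ell$; instead it runs a \emph{double} pre-trace formula in two variables $(g_1,g_2)$, couples it with a Voronoi-type diagonal detector (Lemma~\ref{lemma-vor}) to produce the Rankin--Selberg residue, and thereby bounds a \emph{fourth} moment of $|\phi_{\pm\ell}(g)|$. The resulting geometric side is a sum over \emph{pairs} $(\gamma_1,\gamma_2)\in\Gamma_n^2$ with the same determinant, and the crux is the second-moment count $Q(g,L,H_1,H_2)\preccurlyeq H_1H_2$ of Lemma~\ref{lemma-ell-count}. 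Your counting claim---that for $L\ll\ell$ no non-scalar $\gamma$ has $g^{-1}\tilde\gamma g$ within $\ell^{-1/2}$ of the diagonal torus---is false uniformly in $g\in\Omega$: already at $g=\id$, every diagonal $\gamma=\diag(a,d)\in\Gamma_n$ lies exactly on $\mcD$, and there are $\asymp d(n)$ of these per $n$. Feeding the correct first-moment count (Lemma~\ref{first-moment-count-lemma} with $\delta\asymp\ell^{-1/2}$) into standard amplification gives $|\phi_{\pm\ell}(g)|^2\preccurlyeq\ell^2/L+L^2$, hence only $\ell^{2/3}$ after optimizing. Moreover, you cannot simply take $L\asymp\ell$: the usual amplifier owes its lower bound $\widehat{x}(V)\gg L^2$ to the Hecke relation $|\lambda_l|+|\lambda_{l^2}|\gg 1$, a device the paper explicitly circumvents precisely because it caps the effective amplifier length far below $\ell$.

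For part~\ref{thm3-a}, the overall amplification scheme is right, but your description of the localization of $\varphi_{\nu,\ell}^{\ell,0}$ is too coarse to yield $7/8$. The function does \emph{not} simply concentrate near scalars or near all of $K$. Theorem~\ref{thm5}\ref{thm5-a} gives a two-layer structure: a \emph{hard} localization to the $4$-dimensional manifold $\mcN=\{|a|=|d|,\ |b|=|c|\}$ (rapid decay unless $D(g)\ll\|g\|^2\ell^{-1/2+\eps}$), and within that a \emph{soft} $1/(\sqrt{\ell}\,\dist(g,\mcS))$ decay away from the $1$-dimensional set $\mcS$ of diagonal and skew-diagonal matrices in $K$. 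Both constraints feed into the count $M_0^\ast(g,L,\mcL,\delta)$, and the bounds of Lemma~\ref{counting-for-thm2} use them jointly (see \eqref{cond1}--\eqref{cond0} and the volume arguments of \S\ref{subsec:volume-argument}). Counting only near $K$, as in the spherical problem, would miss the extra savings from the $\mcN$-condition \eqref{cond0} and not produce the exponent $7/8$.
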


The strong numerical saving in the case $q = \pm \ell$, going far beyond the Weyl exponent, is quite remarkable, in particular in view of the seemingly weaker saving in Theorem~\ref{thm1} which might be regarded as an easier case. We will discuss this in \S \ref{KS-intro}. The assumption that $V$ is associated to a representation of $\PGL_2$ rather than $\SL_2$ is only for technical simplicity and not essential to the  method, cf.\ \S \ref{RSSection}. This assumption holds if and only if the elements of $V$ are fixed by the Hecke operator $T_i$ (which is an involution on $L^2(\Gamma\backslash G)$).

\begin{remark} In the case of the spherical sup-norm problem, Sarnak~\cite{Sarnak2004Morawetz} put forward the purity conjecture that the accumulation points of the set
\[ \left\{\frac{\log {\|\psi\|}_{\infty}}{\log \lambda_{\psi}}:\text{$\psi$ is a joint eigenfunction}\right\} \]
lie in $\frac{1}{4}\ZZ$. It would be very interesting to see if an analogous conjecture may be expected in the $K$-aspect, and even if there may be examples exhibiting different layers of power growth as in \cite{Milicevic2011,Blomer2020,BrumleyMarshall2020}. In particular, the savings in Theorem~\ref{thm3} produce already a considerable ``exponent gap''.
\end{remark}

\begin{remark}\label{non-arith}
We record that our essentially best possible estimates on the spherical trace function in \S\ref{gen-sph-fun-intro-sec}, which are of purely analytic nature, coupled with the formalism of the pre-trace inequality, yield what might be considered ``trivial'' geometric estimates: for any co-finite Kleinian subgroup $\Gamma\leq G$, without any arithmeticity assumption, we have
\[ {\|\Phi|_{\Omega}\|}_{\infty}\ll_{I,\Omega,\Gamma}\ell^{3/2}\qquad\text{and}\qquad
\max_{|q|\leq\ell}\,{\|\phi_q|_{\Omega}\|}_{\infty}\ll_{\eps,I,\Omega,\Gamma}\ell^{1+\eps} \]
for any $L^2$-normalized vector-valued Maa{\ss} eigenform $(\phi_{-\ell},\dots,\phi_{\ell})^{\top}$ with spectral parameter $\nu\in I$ and $K$-type $\tau_{\ell}$ (with $\phi_q\in V^{\ell,q}$ as before).
\end{remark}

Our Theorems~\ref{thm1}--\ref{thm3} above, and the non-spherical sup-norm problem in general, come with several novelties of representation theoretic, analytic and arithmetic nature that we discuss briefly in the following subsections.

\subsection{Generalized spherical functions}\label{gen-sph-fun-intro-sec}
The classical pre-trace formula features on the geometric side the Harish-Chandra transform $\widecheck{h}$ of the test function $h$ on the spectral side. This transform is a bi-$K$-invariant function obtained by integrating $h$ against the elementary spherical functions (which themselves are bi-$K$-invariant, and hence in the case of $G = \SL_2(\CC)$ simply a function of one real variable). In typical applications there is no cancellation in this integral, so an asymptotic analysis of spherical functions is the first key step (see \cite{BlomerPohl2016} for a general result in this direction). Our set-up requires a generalized version for homogeneous vector bundles over $G/K$. For $G = \SL_2(\CC)$, the corresponding \emph{spherical trace function} equals (see \S \ref{section:sphericaltransform} for details)
\begin{equation}\label{spherical-def}
\varphi_{\nu,\ell}^{\ell}(g) =
(2\ell+1)\int_K \psi_{\ell}(\kappa(k^{-1} g k))\,e^{(\nu-1)\rho(H(gk))}\,\dd k,
\end{equation}
where $\dd k$ is the probability Haar measure on $K$, $\rho$ is the unique positive root, $\kappa$ (resp.\ $H$) is the $KAN$ Iwasawa projection onto $K$ (resp.\ $\mfa$), and
\begin{equation}\label{chi-ell}
\psi_{\ell}\left( \begin{pmatrix} \alpha  &\beta \\ - \bar{\beta} & \bar{\alpha} \end{pmatrix} \right) := \bar{\alpha}^{2\ell}, \qquad \left( \begin{matrix} \alpha  &\beta \\ - \bar{\beta} & \bar{\alpha} \end{matrix} \right) \in K.
\end{equation}
The trivial bound is $|\varphi_{\nu,\ell}^{\ell}(g)|\leq 2\ell+1$, which is sharp for $g = \pm\id$, and the key question is how quickly $\varphi_{\nu,\ell}^{\ell}(g)$ decays, uniformly in $\ell$, as $g\in G$ moves away from $\pm\id$. We observe that $\varphi_{\nu,\ell}^{\ell}(g)$ is  invariant under conjugation by $K$, hence it suffices to investigate it for upper triangular matrices $g\in G$. We shall use the Frobenius norm
$\|g\|:= \sqrt{\tr(gg^*)}$, and we note that for $g\in G$ this is always at least $\sqrt{2}$. The following bound is new and most likely sharp for fixed $\nu\in i\RR$ (up to factors $\ell^{\eps}$ and powers of $\| g \|$, which we did not try to optimize).

\begin{theorem}\label{thm4} Let $\ell\geq 1$ be an integer, and let $g = \left(\begin{smallmatrix} z & u\\ & z^{-1}\end{smallmatrix}\right) \in G$ be upper triangular. Then for any $\nu\in i\RR$, $k \in K$, $\eps>0$, we have
\[ \varphi_{\nu,\ell}^{\ell}(k^{-1}gk) \ll_{\eps} 
\min\bigg(\ell, \frac{\ell^\eps\|g\|^6}{|z^2 - 1|^2}, \frac{\ell^{1/2+\eps}\|g\|^3}{|u|}\bigg). \]
\end{theorem}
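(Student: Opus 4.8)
The plan is to analyze the oscillatory integral \eqref{spherical-def} for the specific function $\psi_\ell$ in \eqref{chi-ell}, exploiting both the rapid oscillation of $\psi_\ell(\kappa(k^{-1}gk))$ (of "angular momentum" size $\ell$) and the decay of the Iwasawa factor $e^{(\nu-1)\rho(H(gk))}$ as $gk$ moves away from $K$. First I would reduce to the case $g$ upper triangular (done in the statement's hypothesis, using $K$-conjugation invariance), and parametrize $k\in K$ by, say, Euler-type angles or by the coset $K/M$ where $M$ is the diagonal torus in $K$; since $\psi_\ell$ is an $M$-character this cuts the effective integral down to a two-dimensional one over $S^2\cong K/M$. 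Writing $k^{-1}gk=\left(\begin{smallmatrix} z & u\\ & z^{-1}\end{smallmatrix}\right)$ conjugated, one computes explicitly the $(2,2)$-entry $\bar\alpha$ of its $K$-part $\kappa(\cdot)$ and the $\mathfrak a$-part $H(\cdot)$ as rational/algebraic functions of the $S^2$-coordinates, $z$, and $u$. The trivial bound $\ell$ is immediate from $|\psi_\ell|\le 1$ and $\mathrm{vol}(K)=1$ together with the factor $2\ell+1$.

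For the second bound $\ell^\eps\|g\|^6/|z^2-1|^2$: here I would use stationary phase / repeated integration by parts in the $K$-variable. The phase is essentially $2\ell\arg(\bar\alpha(k,z,u))$ plus the lower-order contribution of $\nu\rho(H)$ (which for $\nu\in i\RR$ is purely imaginary, hence also oscillatory but of bounded "frequency"). The key computation is to show that the gradient of $\arg\bar\alpha$ over $K/M$ vanishes only at a controlled set of points, and that near those points the Hessian is nondegenerate with size governed by $|z^2-1|$; away from them one integrates by parts twice, each step gaining a factor $1/(\ell\cdot|\nabla\text{phase}|)$ and losing a bounded power of $\|g\|$ from differentiating the amplitude $e^{-\rho(H(gk))}$ and the Jacobian. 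Summing the stationary-point contribution $\ell\cdot \ell^{-1}|z^2-1|^{-1}$ (one power of $\ell^{-1/2}$ per dimension from the $2$-dim stationary phase, against $2\ell+1$) and a rapidly decaying tail yields $\|g\|^6/|z^2-1|^2$ up to $\ell^\eps$; the precise power of $\|g\|$ is not optimized, so I would simply track an upper bound through the differentiations rather than compute the sharp exponent.

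For the third bound $\ell^{1/2+\eps}\|g\|^3/|u|$: this should come from a different, one-dimensional van der Corput / second-derivative estimate. When $|u|$ is the relevant large parameter, the entry $\bar\alpha$ depends on $u$ through a term that, along a suitable one-parameter subfamily of $k$'s (the circle where the $z$-dependence is stationary), has phase derivative of size $\sim\ell|u|/\|g\|^{O(1)}$ but with a genuine second derivative, so the one-dimensional $\int e^{i\Psi}\,dt\ll |\Psi''|^{-1/2}$ bound gives $(\ell|u|)^{-1/2}$, while the complementary variable contributes at most trivially; multiplied by $2\ell+1$ this is $\ell^{1/2}/|u|^{1/2}$—but to match the stated $\ell^{1/2}/|u|$ one needs an extra full $|u|^{-1/2}$, which I expect to extract either from a second application of integration by parts in the transverse direction or from the decay of $e^{-\rho(H)}$ (whose argument grows like $\log|u|$... more precisely like $|u|$ in the relevant region), so that the amplitude itself is $\ll \|g\|^{O(1)}/|u|$ on the bulk of the domain and the oscillatory part only needs to salvage $\ell^{-1/2}$ from a one-dimensional stationary phase.

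\textbf{Main obstacle.} The hard part will be making the stationary-phase analysis genuinely uniform in $\ell$ \emph{and} in the geometry of $g$ simultaneously, i.e.\ controlling how the location, number, and degeneracy type of the critical points of $\arg\bar\alpha$ on $K/M$ vary as $z\to\pm1$ or $u\to0$ or $\|g\|\to\infty$, and patching the three regimes (the $|z^2-1|$-regime, the $|u|$-regime, and the trivial regime) with a partition of unity without losing the sharp power of $\ell$ at the seams. In particular one must verify that the sub-leading phase $\nu\rho(H(gk))$ never conspires to cancel the leading $2\ell\arg\bar\alpha$ term (which is why $\nu$ is kept in the bounded set $i\RR$, so $|\nu|=O(1)\ll\ell$), and that the amplitude $e^{-\rho(H(gk))}$, which can be as small as $\|g\|^{-O(1)}$, is handled by crude bounds rather than needing its own stationary-phase treatment. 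I would structure the proof so that all three bounds flow from a single normal-form lemma for the phase function near $K/M$, proved once and for all by an explicit coordinate computation.
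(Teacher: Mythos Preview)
Your proposal frames the integral \eqref{spherical-def} as a real-phase oscillatory integral with phase $2\ell\arg\bar\alpha$ and amplitude $|\bar\alpha|^{2\ell}e^{-\rho(H(gk))}$, to be attacked by stationary phase or van der Corput. The gap is that this amplitude is \emph{not} slowly varying: since $|\bar\alpha|\le 1$ with equality only on a thin set, the factor $|\bar\alpha|^{2\ell}$ is itself decaying on the scale $\ell^{-1/2}$ away from $\{|\bar\alpha|=1\}$, i.e.\ at the same rate as the putative phase oscillation. Standard stationary-phase and integration-by-parts lemmas require the amplitude derivatives to be bounded independently of the large parameter; here each derivative of $|\bar\alpha|^{2\ell}$ introduces a factor of $\ell$, so the usual gain of $\ell^{-1}$ per integration by parts is exactly canceled. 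One could in principle recast $\bar\alpha^{2\ell}=\exp(2\ell\log\bar\alpha)$ with a \emph{complex} phase and run a steepest-descent analysis, but that is considerably more delicate than what you sketch and is not what your argument does.

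The paper's proof avoids all of this by observing that no cancellation is needed: one takes absolute values inside the $K$-integral from the outset and uses $|\psi_\ell(\kappa(k^{-1}gk))|=|\bar\alpha|^{2\ell}$ as a soft cutoff. Writing $k$ in Euler angles, one computes the upper-left and lower-left entries of $k^{-1}gk$ explicitly and sets $x=(z^2-1)\cos\theta+ie^{-2i\phi}uz\sin\theta$; a short calculation shows that $|\bar\alpha|^2<1-(\log\ell)/\ell$ (hence $|\psi_\ell|<1/\ell$) unless both $\tan\theta$ and $|x|\sin\theta$ are $\ll\sqrt{(\log\ell)/\ell}$. The two non-trivial bounds then follow from a direct measure estimate of this near-diagonal region in the $(\theta,\phi)$ variables: the constraint on $|x|\sin\theta$ forces either $\theta$ or $\phi$ (or both) into a small set whose size is controlled by $|z^2-1|^{-1}$ or $|uz|^{-1}$, with a little case analysis depending on which term in $x$ dominates. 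The exponential Iwasawa factor is handled crudely by $e^{-\rho(H(gk))}\le\|g\|^2$. Your anticipated ``main obstacle'' of patching regimes and tracking degenerating critical points simply does not arise in this approach; the argument is elementary and entirely non-oscillatory.
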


The proof shows that the factors $\ell^{\eps}$ can be replaced with a suitable power of $\log 2\ell$. The same remark applies to Theorems~\ref{thm6} and \ref{thm5} below.
 
The spherical trace function $\varphi_{\nu,\ell}^{\ell}$ can be used to analyze the vector-valued function
\eqref{eq:vectorvalued}. It is, unfortunately, unable to identify the individual components $\phi_q$, and there does not seem to exist a general theory of spherical functions covering such cases. As the components are eigenfunctions of the action of the diagonal elements, we can single out $\phi_q$ by considering
\begin{equation}\label{spherical-averaged}
\varphi_{\nu,\ell}^{\ell,q}(g):=\frac1{2\pi}\int_0^{2\pi}\varphi_{\nu,\ell}^{\ell}\left(g\diag(e^{i\varrho},e^{-i\varrho})\right)\,e^{-2qi\varrho}\,\dd \varrho.
\end{equation}
The function $\varphi_{\nu,\ell}^{\ell,q}$ is an interesting object that does not seem to have been considered before. It is not conjugation invariant anymore, so it needs to be analyzed on the entire $6$-dimensional group $G = \SL_2(\CC)$, and little preliminary reduction is possible. When restricted to $K$, it is not hard to see that $\varphi_{\nu,\ell}^{\ell,q}(k)$, for $k = k[u,v,w]\in K$ written in terms of Euler angles (cf.\ \eqref{decomp-K}), is essentially a Jacobi polynomial in $\cos 2v$. We refer to \S \ref{thm5a-proof-sec} for a more detailed discussion. In particular,
$\varphi_{\nu,\ell}^{\ell,q}(\pm\id)=1$. Therefore, at least heuristically, a safe baseline bound should be
\begin{equation}\label{trivial-q}
\varphi_{\nu,\ell}^{\ell,q}(g) \ll_\eps \ell^\eps.
\end{equation}
Unlike in the bi-$K$-invariant case, where the trivial bound is just an application of the triangle inequality and hence is indeed trivial, the expected baseline bound \eqref{trivial-q} turns out to be hard to prove. It requires very strong cancellation in the $\varrho$-integral, along with the decay properties of $\varphi_{\nu,\ell}^{\ell}$.  Taking \eqref{trivial-q} for granted, we wish to investigate in what directions and with what speed we can identify decay as we move away from $\pm\id\in G$. Interestingly, this is extremely sensitive to the value of $q$.

Let $\mcD\subset G$ be the set of diagonal matrices, $\mcS$ the normalizer of $A$ in $K$ (which consists of the diagonal and the skew-diagonal matrices lying in $K$), and
\begin{equation}\label{adbc}
\mcN:=\left\{\begin{pmatrix}a & b\\ c & d \end{pmatrix}\in G: |a| = |d|, \ |b| = |c|\right\}.
\end{equation}
It is clear that  $\mcS\subset K\subset\mcN\subset G$. For $g \in G$ and non-empty $\mcH \subset G$, we shall write $\dist(g,\mcH)$ for their distance $\inf_{h\in\mcH}\|g-h\|$. For later reference, we note that $\|g-h\|=\|g^{-1}-h^{-1}\|$, hence also
\begin{equation}\label{distinvariance}
\dist(g,\mcH)=\dist(g^{-1},\mcH^{-1}).
\end{equation}
As an alternative to $\dist(g,\mcN)$, we shall also use
\begin{equation}\label{Dgdef}
D(g):=\left||a|^2-|d|^2\right|+\left||b|^2-|c|^2\right|.
\end{equation}
For orientation, we remark the elementary inequality
\[ \dist(g,\mcN)^2\leq D(g)\leq 2\|g\|\dist(g,\mcN). \]

In the following theorem, we show that  $\varphi_{\nu,\ell}^{\ell,q}(g)$ decays away from $K$ and $\mcD$ in generic ranges, for all $|q|\leq\ell$, and with considerable uniformity.

\begin{theorem}\label{thm6} Let $\ell,q\in\ZZ$ be such that $\ell\geq\max(1,|q|)$. Let $\nu\in i\RR$ and $g \in G$.
Then for any $\eps>0$ and $\Lambda>0$, we have
\begin{equation}\label{thm6bound}
\varphi_{\nu,\ell}^{\ell,q}(g) \ll_{\eps,\Lambda}\ell^{\eps}\min\left(1,\frac{\| g \|}{\sqrt{\ell}\dist(g,K)^2\dist(g,\mcD)}\right) + \ell^{-\Lambda}.
\end{equation}
\end{theorem}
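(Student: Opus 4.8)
The plan is to collapse the $\varrho$-average in \eqref{spherical-averaged} against \eqref{spherical-def} into an explicit, essentially two-dimensional integral over $K$, and then to extract the decay by a Laplace/stationary-phase analysis in the large parameter $\ell$. For the reduction: for $k\in K$ write $w:=ke_1$ for its first column and $v:=gw$. Inserting \eqref{spherical-def} into \eqref{spherical-averaged} and substituting $k\mapsto\diag(e^{-i\varrho},e^{i\varrho})k$ (to move the $\varrho$-torus next to $g$), the Iwasawa factor becomes $e^{(\nu-1)\rho(H(gk))}=\|v\|^{2\nu-2}$, independent of $\varrho$, while
\[
\psi_\ell\bigl(\kappa\bigl(k^{-1}\diag(e^{i\varrho},e^{-i\varrho})\,g\,k\bigr)\bigr)=\|v\|^{-2\ell}\bigl(w_1e^{-i\varrho}\ov{v_1}+w_2e^{i\varrho}\ov{v_2}\bigr)^{2\ell}.
\]
Expanding the $2\ell$-th power and performing $\tfrac1{2\pi}\int_0^{2\pi}\!\cdots e^{-2qi\varrho}\,\dd\varrho$ retains only the binomial term carrying $e^{2qi\varrho}$. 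Since $w_1\ov{v_1}$, $w_2\ov{v_2}$ and $\|v\|$ depend on $w$ only through $x:=|w_1|^2$ and $\psi:=\arg(w_1\ov{w_2})$, the remaining angular variable integrates out, leaving
\[
\varphi_{\nu,\ell}^{\ell,q}(g)=\frac{2\ell+1}{2\pi}\binom{2\ell}{\ell+q}\int_0^1\!\!\int_0^{2\pi}\frac{\bigl(\ov a\,x+\ov b\sqrt{x(1-x)}\,e^{i\psi}\bigr)^{\ell-q}\bigl(\ov d\,(1-x)+\ov c\sqrt{x(1-x)}\,e^{-i\psi}\bigr)^{\ell+q}}{\|v\|^{2\ell+2-2\nu}}\,\dd\psi\,\dd x,
\]
where $g=\left(\begin{smallmatrix}a&b\\c&d\end{smallmatrix}\right)$ and $\|v\|^2=(|a|^2+|c|^2)x+(|b|^2+|d|^2)(1-x)+2\sqrt{x(1-x)}\,\mathrm{Re}\bigl((a\ov b+c\ov d)e^{i\psi}\bigr)$. (At $g=\pm\id$ the $\psi$-integral is trivial and the $x$-integral is a Beta integral equal to $1$; restricting to $K$, i.e.\ $\|v\|\equiv1$, recovers the Jacobi-polynomial description of $\varphi_{\nu,\ell}^{\ell,q}|_K$.)

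The first term in the minimum in \eqref{thm6bound} is the baseline bound \eqref{trivial-q}, which I treat as given; so I may assume $\sqrt\ell\,\dist(g,K)^2\dist(g,\mcD)\ge\|g\|$ and must bound the integral above by $\ell^\eps\|g\|(\sqrt\ell\,\dist(g,K)^2\dist(g,\mcD))^{-1}+\ell^{-\Lambda}$. I will use that $\dist(g,\mcD)$ is comparable to $(|b|^2+|c|^2)^{1/2}$ — the quantity governing the $\psi$-oscillation of the numerator — and that $|a\ov b+c\ov d|\ll\|g\|\dist(g,K)$ (the inner product of the two columns of $g$), which governs the $\psi$-oscillation of $\|v\|^2$; also $\|g\|^2=e^{2r}+e^{-2r}$ with $\dist(g,K)\asymp r$ for the Cartan radius $r$.

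The core is the asymptotic analysis. Write the integrand as $e^{\ell(m+i\theta)}\|v\|^{-2}$ with $m,\theta$ real (the $\mathrm{Im}\,\nu$-part of $\ell\theta$ is $O(1)$). By AM--GM on the two numerator factors and the denominator, $m\le m^\ast$ with $e^{\ell m^\ast}=\bigl(\tfrac{\ell-q}{2\ell}\bigr)^{\ell-q}\bigl(\tfrac{\ell+q}{2\ell}\bigr)^{\ell+q}$, equality forcing simultaneously $x=\tfrac{\ell-q}{2\ell}$ and $|v_1|^2=\tfrac{\ell-q}{2\ell}\|v\|^2$; moreover $(2\ell+1)\binom{2\ell}{\ell+q}e^{\ell m^\ast}\asymp\sqrt{\ell/(1-(q/\ell)^2)}$. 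The $x$-integral is always concentrated, by the Laplace method, near its $x$-maximum, and that single Gaussian direction contributes $(\ell/(1-(q/\ell)^2))^{-1/2}$, which against the $\sqrt\ell$ above gives exactly the baseline size $1$ — this is the skeleton of \eqref{trivial-q}, and any further saving must come from the $\psi$-direction, i.e.\ from the second AM--GM condition. For $g\in\mcD$ the integrand is $\psi$-independent (there $w_j\ov{v_j}=\ov{g_{jj}}|w_j|^2$), the two conditions are incompatible unless $r=0$, and $\max m=m^\ast-c\,\dist(g,K)^2$: one gets exponential decay in $\ell$ but no help in $\psi$, consistent with \eqref{thm6bound} returning $1$ when $\dist(g,\mcD)=0$. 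Off $\mcD$, $\theta$ and $m$ acquire genuine $\psi$-dependence, of respective sizes $\asymp\dist(g,\mcD)$ and $\asymp\dist(g,K)$, so the critical set of $m$ in the $(x,\psi)$-square drops from a segment to isolated points (the value $m^\ast$ itself being unattained, with deficit of order $\dist(g,K)^2$, when $g$ is also far from $K$). Carrying out the two-dimensional Laplace/complex-stationary-phase estimate near those points, and disposing of the complement by non-stationary-phase integration by parts against the strict deficit $m^\ast-\max m$, yields the second term of \eqref{thm6bound} together with the tail $\ell^{-\Lambda}$.

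The hard part is to make this last step uniform and to land exactly on the product $\dist(g,K)^2\dist(g,\mcD)$. Uniformity in $q$ is the crux: as $|q|\to\ell$ the weights $\tfrac{\ell\mp q}{2\ell}$ degenerate, the $x$-integral concentrates at an endpoint $x\in\{0,1\}$ rather than at an interior saddle, and the interior Laplace heuristics must be replaced by estimates valid all the way down to $\min(\ell\pm q)=O(1)$ — precisely the regime of the extreme vectors $\phi_{\pm\ell}$. Compounding this, $\varphi_{\nu,\ell}^{\ell,q}$ has no conjugation symmetry, so there is no reduction to fewer variables and the degeneration of the Hessian of $m$ as $g\to\mcD$ and the collapse of its critical value as $g\to K$ have to be tracked jointly and scale-invariantly across all of $G$; distilling the clean product $\dist(g,K)^2\dist(g,\mcD)$ out of this, along with checking the elementary comparisons recorded above, is where the bulk of the technical work lies.
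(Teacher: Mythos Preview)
Your integral representation is correct and is essentially a coordinate-free version of the paper's formula \eqref{post-expansion}; the paper parametrizes $g$ in Cartan coordinates and integrates over Euler angles, while you project to $(x,\psi)$, but the two are equivalent after change of variables. However, the proposal has two genuine gaps.

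First, you write that the baseline bound $\varphi_{\nu,\ell}^{\ell,q}(g)\ll_\eps\ell^\eps$ is ``treated as given''. It is not given: the paper explicitly flags in the introduction that, unlike the bi-$K$-invariant case, this bound ``turns out to be hard to prove'', and indeed devotes the bulk of the proof of Theorem~\ref{thm6} to establishing it. The difficulty is precisely that the Laplace concentration in $x$ only cancels the binomial prefactor, as you observe, so one must extract genuine cancellation in the $\psi$-direction uniformly in $q$; your AM--GM heuristic explains why one \emph{expects} $\OO(1)$ but does not prove it. The paper's route is to combine a sharp Young-type inequality (Lemma~\ref{young}) with the integral to reduce everything to bounding the measure of a localization set $\mcM\subset[0,2\pi]\times[0,\infty)$ cut out by the near-equality conditions \eqref{eq}, and then to estimate $\meas(\mcM)$ by a careful case analysis in the discriminant \eqref{disc-q}, splitting according to whether $|q|/\ell\le 5/6$ or not. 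This analysis is elementary but not soft, and is the heart of the theorem.

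Second, for the decay bound you outline a two-dimensional complex stationary-phase argument but concede yourself that ``the hard part is to make this last step uniform and to land exactly on the product $\dist(g,K)^2\dist(g,\mcD)$''---and then stop. This is not a proof. The paper bypasses stationary phase entirely: once the baseline bound and Lemma~\ref{lem1} are in hand, one simply reads off from the conditions \eqref{r1} and \eqref{eq} defining $\mcM$ that any point $(\phi,t)\in\mcM$ forces $\sin 2v_j\ll_\Lambda t+(\lambda+\sqrt{\ell-|q|})/\sqrt{\ell}$ and $t\ll_\Lambda\lambda/((r-1)\sqrt\ell)$, which in Cartan coordinates translates directly into the bound $\dist(g,\mcD)\ll_\Lambda\|g\|(\lambda/(\alpha\sqrt\ell)+(\lambda+\sqrt{\ell-|q|})/\sqrt\ell)$ with $\alpha=\dist(g,K)$. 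Combining this with the trivial estimate $\int_\mcM t\,\dd\phi\,\dd t\ll_\Lambda\lambda^2/\alpha^2\ell$ coming from \eqref{r1} alone yields \eqref{thm6bound} in a few lines. The key point you are missing is that the localization conditions themselves already encode the geometric constraints on $g$; no oscillatory-integral machinery is needed once the set $\mcM$ has been isolated.
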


The proof of Theorem~\ref{thm6} uses a soft argument that provides some decay for all $|q|\leqslant\ell$, despite the substantial dependence of $\varphi_{\nu,\ell}^{\ell,q}$ on this parameter. In the special case $q\in\{-\ell,0,\ell\}$, we use more elaborate arguments for stronger bounds.

\begin{theorem}\label{thm5} Let $\ell\geq 1$ be an integer, $\nu\in i\RR$ and $g\in G$. Let $\eps>0$ and $\Lambda>0$ be two parameters.
\begin{enumerate}[(a)]
\item\label{thm5-a}
We have
\begin{equation}\label{thm5boundq=0}
\varphi_{\nu,\ell}^{\ell,0}(g) \ll_{\eps,\Lambda}
\ell^{\eps}\min\left(1, \frac{1}{\sqrt{\ell} \dist(g, \mcS)}\right)+\ell^{-\Lambda}.
\end{equation}
Moreover, $\varphi_{\nu,\ell}^{\ell,0}(g) \ll_{\Lambda} \ell^{-\Lambda}$ holds unless $D(g)\ll_\Lambda\|g\|^2(\log\ell)/\sqrt{\ell}$.
\item\label{thm5-b}
We have
\begin{equation}\label{thm5boundq=ell}
\varphi_{\nu,\ell}^{\ell,\pm \ell}(g) \ll_{\eps} \| g \|^{-2+\eps} \ell^{\eps}.
\end{equation}
Moreover, $\varphi_{\nu,\ell}^{\ell,\pm \ell}(g) \ll_{\Lambda} \ell^{-\Lambda}$ holds unless
$\dist(g, \mcD) \ll_\Lambda\| g \|\sqrt{\log\ell}/\sqrt{\ell}$.
\end{enumerate}
\end{theorem}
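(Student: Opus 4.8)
The plan is to start from the defining integral \eqref{spherical-averaged} and the known integral representation \eqref{spherical-def} for $\varphi_{\nu,\ell}^{\ell}$, substitute, and exploit the explicit form of $\psi_\ell$. Writing $g$ in a convenient normal form (using the conjugation-invariance of $\varphi_{\nu,\ell}^{\ell}$, though \emph{not} of $\varphi_{\nu,\ell}^{\ell,q}$, so care is needed — one may still reduce to $g$ upper triangular after absorbing a conjugation into the $k$-integral), one obtains a double integral over $K\times(\RR/2\pi\ZZ)$ whose integrand, thanks to \eqref{chi-ell}, contains a factor $\bar\alpha(k,\varrho)^{2\ell}e^{-2qi\varrho}$ times a slowly varying archimedean factor $e^{(\nu-1)\rho(H(\cdot))}$. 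For $q=0$ and $q=\pm\ell$ this phase simplifies dramatically: the exponent $2\ell$ combines with $e^{-2qi\varrho}$ into a single power of an explicit unimodular-or-contractive quantity, so the whole integrand is (up to the harmless archimedean factor) a pure $2\ell$-th power. This is exactly the structure that makes a clean stationary-phase / Laplace-type analysis feasible, whereas for generic $q$ the two powers fight each other and one is forced back onto the softer Theorem~\ref{thm6}.

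First I would set up the $q=0$ case, part \eqref{thm5-a}. Here the $\varrho$-average of $\varphi_{\nu,\ell}^{\ell}(g\diag(e^{i\varrho},e^{-i\varrho}))$ with trivial twist turns the inner $K$-integral plus the $\varrho$-integral into an integral over a slightly larger homogeneous space, and the phase $|\bar\alpha|^{2\ell}$ is maximized exactly on the locus where $k^{-1}gk\diag(e^{i\varrho},e^{-i\varrho})$ lands in $K$ with $\alpha$ unimodular — which, chasing through the Iwasawa decomposition, pins $g$ to a neighborhood of $\mcS$ (diagonal or skew-diagonal elements of $K$): off that locus $|\bar\alpha|<1$ strictly and the $2\ell$-th power forces exponential-in-$\ell$ decay, giving the "$\ll_\Lambda \ell^{-\Lambda}$ unless $D(g)\ll \|g\|^2(\log\ell)/\sqrt\ell$" statement once one relates the size of $1-|\bar\alpha|$ to $D(g)$ via \eqref{Dgdef} and the elementary inequality preceding the theorem. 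On the critical locus itself, the Gaussian decay of $|\bar\alpha|^{2\ell}$ transverse to it has width $\asymp\ell^{-1/2}$, so the stationary-phase evaluation produces the main term $\asymp 1$ with the $\ell^{-1/2}\dist(g,\mcS)^{-1}$ gain as $g$ moves off $\mcS$ but stays within the $\ell^{-1/2}$-window; the $\ell^\eps$ is the usual loss from logarithmic factors in the number of relevant scales, and the $\ell^{-\Lambda}$ term absorbs the tail.

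For part \eqref{thm5-b}, $q=\pm\ell$, the twist $e^{\mp 2\ell i\varrho}$ exactly cancels (half of) the angular dependence coming from $\bar\alpha^{2\ell}$ when the diagonal torus element is folded in, so the $\varrho$-integral becomes essentially a Fourier coefficient that is nonzero only when a phase alignment forces $g$ near the diagonal torus $\mcD$; off a $\|g\|\sqrt{\log\ell}/\sqrt\ell$-neighborhood of $\mcD$ one again gets $\ell^{-\Lambda}$ decay. On that neighborhood the remaining integrand is a genuine $2\ell$-th power of a quantity of modulus $\le 1$ depending only on the "radial" coordinates of $g$, and its integral is controlled not by $\ell^{-1/2}$-Gaussian concentration but by the decay of $\|g\|^{-2}$ coming directly from the $e^{(\nu-1)\rho(H)}$ factor together with the normalization — this is why the bound is the clean $\|g\|^{-2+\eps}\ell^{\eps}$ with no $\sqrt\ell$ in the denominator, and it is the reason the extreme vector behaves so much better than the generic one. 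I would handle the sign ambiguity $\pm\ell$ by complex conjugation symmetry $\varphi_{\nu,\ell}^{\ell,-q}=\overline{\varphi_{\nu,\ell}^{\ell,q}}$ (or $\varphi^{\ell,-q}(g)=\varphi^{\ell,q}(\bar g)$-type identity), reducing to one sign.

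The main obstacle I anticipate is \emph{not} the stationary-phase heuristics but making the passage from "$|\bar\alpha|$ close to $1$" to an honest geometric statement about $\dist(g,\mcS)$ or $\dist(g,\mcD)$ uniformly in $\|g\|$ and in the spectral parameter $\nu\in i\RR$ — i.e.\ tracking the Iwasawa projections $\kappa$ and $H$ of $k^{-1}gk\diag(e^{i\varrho},e^{-i\varrho})$ explicitly enough to see exactly which manifold the concentration set is, and with what transverse Hessian, while keeping every implied constant independent of $\nu$ (which is legitimate since $\nu$ is purely imaginary, so $e^{(\nu-1)\rho(H)}$ has modulus $e^{-\rho(H)}\le 1$ and contributes no growth). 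A secondary technical point is that, unlike Theorem~\ref{thm6}, here one genuinely needs the oscillation in $\varrho$ rather than just absolute values, so the $\ell^{-\Lambda}$-tail bounds require repeated integration by parts (or a non-stationary-phase lemma) in $\varrho$ with uniform control of all derivatives of the phase — routine in principle, but the bookkeeping across the regions $\|g\|\asymp 1$ versus $\|g\|$ large is where the powers of $\|g\|$ in the statement get pinned down.
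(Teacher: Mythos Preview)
Your high-level picture --- concentrate the integrand, then Laplace/stationary phase --- matches the paper's spirit, but two of your structural claims are wrong and would derail the argument.

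First, for $q=0$ the integrand is \emph{not} a pure $2\ell$-th power. After evaluating the $\varrho$-integral explicitly (which the paper does, obtaining \eqref{post-expansion}), one gets $\binom{2\ell}{\ell}\bar I^{\ell}\bar J^{\ell}$ with $I\neq J$ two distinct bilinear expressions in the Euler-angle coordinates. Only for $q=\pm\ell$ does one factor disappear, leaving $\bar I^{2\ell}$ or $\bar J^{2\ell}$. This matters: the $q=0$ analysis needs a genuine two-factor AM--GM step (the paper's Lemma~\ref{young}, a sharpened Young inequality with near-equality analysis) to localize the integration domain to a set $\mcM$ described by \emph{two} approximate equations \eqref{eq0}, one coming from each factor. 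Your ``single power, single critical manifold'' picture would miss this.

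Second, and relatedly, you conflate the two localizations in part~\ref{thm5-a}. The \emph{hard} localization (the ``moreover'' clause about $D(g)$) is to the $4$-dimensional set $\mcN=\{|a|=|d|,\,|b|=|c|\}$, not to $\mcS$. It falls out almost immediately from the two constraint equations plus \eqref{r1}. The \emph{soft} localization to $\mcS\subset K$, giving the $1/(\sqrt\ell\,\dist(g,\mcS))$ saving, is a separate and considerably more delicate step: one eliminates between the two equations in \eqref{eq0} to produce the auxiliary function $F(\phi)=\cos\phi\cos\Delta\sin(2v_1+2v_2)-\sin\phi\sin\Delta\sin(2v_1-2v_2)$, shows $|F(\phi)|$ is small on $\mcM$, and deduces that either $\Delta$ is small (whence $g$ near diagonal $\subset\mcS$) or $v_1+v_2\in\frac\pi2\ZZ$ approximately (whence, after further case analysis ruling out the skew case, $g$ near $\mcS$). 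Your sketch ``$|\bar\alpha|<1$ off $\mcS$, done'' does not see this two-layer structure and would at best recover proximity to $\mcN$.

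For part~\ref{thm5-b}, your intuition is closer, but the bound \eqref{thm5boundq=ell} for large $\|g\|$ is not obtained from the general localization lemma: the paper treats $r\geq 2$ by a direct and rather explicit computation from \eqref{post-expansion} (expanding $\bar I^{2\ell}$ by the binomial theorem, integrating in $u$, and bounding a resulting one-dimensional integral via a pointwise inequality for $|U\pm V|^4/(1-x+r^{-4}x)$). The $r^{-2}$ does ultimately trace back to the height factor, but showing the remaining integral is $\OO(\log\ell)$ is the content; the general $\mcM$-machinery alone does not give the clean $\|g\|^{-2+\eps}$.
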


We expect that the bounds in Theorem~\ref{thm5} are essentially best possible, possibly up to powers of $\ell^{\eps}$ and $\| g \|$. The proof requires detailed analysis that could in principle be applied to all values of $q$ and would detect, for instance, further Airy-type bumps in certain regions and for certain choices of parameters.

\begin{remark}\label{remark3}
Less precise results but in a more general setting were obtained by Ramacher~\cite{Ramacher2018} using operator theoretical methods. Combined with an argument of Marshall~\cite{Marshall2014a}, these were applied by Ramacher--Wakatsuki~\cite{RamacherWakatsuki2017a} to the sup-norm problem with $K$-types. For compact arithmetic quotients of $\SL_2(\CC)$, and for $\phi\in V^{\ell}$ as before,
\cite[Th.~7.12]{RamacherWakatsuki2017a} yields ${\| \phi \|}_{\infty} \ll \ell^{5/2 - \delta}$ with an unspecified
constant $\delta>0$; this does not even recover the baseline bound.
\end{remark}

\subsection{Paley--Wiener theory}
For a reductive Lie group $G$, Paley--Wiener theory characterizes the image of $C_c^{\infty}(G)$ under the Harish-Chandra transform. For bi-$K$-invariant functions, this is a famous result of Gangolli~\cite{Gangolli1971}: the image consists of entire, Weyl group invariant functions satisfying certain growth conditions. For general $K$-finite functions, the picture is much more complicated: any linear relation that holds for the matrix coefficients of generalized principal series also needs to hold for the matrix coefficients of the operator-valued Fourier transform (and hence for the $\tau$-spherical transforms for $\tau\in\widehat{K}$). A complete list of these ``Arthur--Campoli relations'' requires a full knowledge of all the irreducible subquotients of the non-unitary principal series, which in general is not available. Arthur~\cite{Arthur1983} describes them as a sequence of successive residues of certain meromorphic functions; see also \cite{Campoli1979}. Needless to say, a good knowledge of available functions on the spectral side is crucial for the quantitative analysis of the pre-trace formula in the sup-norm problem.

For the case of $G = \SL_2(\CC)$, in a somewhat neglected paper, Wang~\cite{Wang1974} devised an elegant argument to establish a completely explicit Paley--Wiener theorem for the $\tau_{\ell}$-spherical transform acting on $C_c^{\infty}(G)$: in addition to the Weyl group symmetry, we have the additional symmetry $(\nu, p) \leftrightarrow (p, \nu)$ whenever $\nu \equiv p\pmod{1}$ and $|\nu|, |p| \leq \ell$; see Theorem~\ref{thm10} in \S \ref{section:sphericaltransform}. The additional symmetry is counter-intuitive at first (the pairs $(\nu,p)\neq(0,0)$ satisfying $\nu\equiv p\pmod{1}$ correspond to a discrete set of non-unitary representations), but it enters the picture as it fixes the eigenvalues $\nu^2+p^2$ and $\nu p$ of two generators of $Z(\mathcal{U}(\mfg))$, and hence the infinitesimal character. See \cite[Cor.~2]{Wang1974} and its proof. A more conceptual explanation, along the lines of irreducible subquotients, can be found after \eqref{eq:tau-ell-isotypical-decomposition-algebraic}. Wang's remarkable result is that these are \emph{all} relations.

The extra symmetry makes the application of the pre-trace formula more delicate. For instance, it appears impossible to single out an individual value of $p$ by a manageable test function on the spectral side. We circumvent this problem by employing a carefully chosen Gaussian \eqref{eq:def-gaussian-spectral-weight} that at least asymptotically singles out our preferred value $p=\ell$. The price to pay for this maneuver is that we lose compact support.  As a result of independent interest, we prove a new Paley--Wiener theorem for $K$-finite Schwartz class functions on $G = \SL_2(\CC)$. For the notation, see  \S \ref{section:sphericaltransform}.

\begin{theorem}\label{thm:pws} For $f\in\mcH(\tau_{\ell})$, the following two conditions are equivalent (with implied constants depending on $f$).
\begin{enumerate}[(a)]
\item\label{pws-a} The function $f(g)$ is smooth, and for any $m\in\ZZ_{\geq 0}$ and $A>0$ we have
\begin{equation}\label{eq:mAbound}
\frac{\partial^m}{\partial h^m}f(k_1 a_h k_2)\ll_{m,A} e^{-A|h|},\qquad h\in\RR,\quad k_1,k_2\in K.
\end{equation}
\item\label{pws-b} The function $\widehat{f}(\nu,p)$ extends holomorphically to $\CC\times\tfrac12\ZZ$ such that
\begin{equation}\label{eq:symmetry}\widehat{f}(\nu,p)=\widehat{f}(p,\nu),\qquad\nu\equiv p\one,\quad|\nu|,|p|\leq\ell,
\end{equation}
and for any $B,C>0$ we have
\begin{equation}\label{eq:BCbound}
\widehat{f}(\nu,p)\ll_{B,C} (1+|\nu|)^{-C},\qquad|\Re\nu|\leq B,\quad p\in\tfrac12\ZZ.
\end{equation}
\end{enumerate}
\end{theorem}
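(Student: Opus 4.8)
The plan is to follow the classical strategy for Paley--Wiener theorems: the implication \ref{pws-b}$\Rightarrow$\ref{pws-a} is obtained by Fourier inversion and contour shifting, while the harder direction \ref{pws-a}$\Rightarrow$\ref{pws-b} rests on the already-established compactly supported version (Theorem~\ref{thm10}) together with an approximation argument that passes to the Schwartz closure. Throughout, the $\tau_\ell$-spherical transform $\widehat f(\nu,p)$ is defined only for $|p|\leq\ell$ (this is the range of $K$-types appearing in $\mcH(\tau_\ell)$), so the symmetry \eqref{eq:symmetry} is a genuine constraint coming from the coincidence of infinitesimal characters, exactly as in Wang's analysis.

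For \ref{pws-b}$\Rightarrow$\ref{pws-a}: given $\widehat f(\nu,p)$ holomorphic in $\nu$, rapidly decaying in vertical strips, and satisfying \eqref{eq:symmetry}, I would write $f$ via the inversion formula for the spherical transform on $\SL_2(\CC)$, i.e.\ integrate $\widehat f(\nu,p)$ against the generalized spherical functions $\varphi^{\ell}_{\nu,p}$ weighted by the Plancherel density $(p^2-\nu^2)$ (a polynomial), summed over the finitely many admissible $p$ with $|p|\leq\ell$ and $p\equiv\ell\pmod 1$, and integrated over $\nu\in i\RR$. Smoothness of $f$ is immediate from the rapid decay \eqref{eq:BCbound} with $C$ large, differentiating under the integral sign. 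For the exponential decay \eqref{eq:mAbound}, I would insert the known growth/decay of $\varphi^{\ell}_{\nu,p}(k_1 a_h k_2)$ in the parameter $h$ — the spherical function behaves like $e^{(\nu-1)\rho(\cdot)}$ type factors, so on $a_h$ it carries a factor decaying like $e^{-|h|}$ up to polynomial corrections — and then shift the $\nu$-contour from $i\RR$ to $\Re\nu = \pm B$ for arbitrarily large $B$; the holomorphy and strip-decay hypotheses on $\widehat f$ make this legitimate, and each unit of contour shift buys a factor $e^{-|h|}$, yielding \eqref{eq:mAbound} for every $A$. Here one has to check that the generalized spherical function continues holomorphically in $\nu$ and that its growth in $\Re\nu$ is at most exponential in a controlled way; this is standard from its integral representation \eqref{spherical-def}, but it is the one place requiring care.

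For the converse \ref{pws-a}$\Rightarrow$\ref{pws-b}, the symmetry \eqref{eq:symmetry} is the crux. First, holomorphy of $\widehat f(\nu,p)$ in $\nu$ and the bound \eqref{eq:BCbound} follow directly from \eqref{eq:mAbound}: $\widehat f(\nu,p)$ is an integral of $f(k_1 a_h k_2)$ against $\varphi^{\ell}_{\nu,p}(a_h)^{-}$-type kernels whose growth in $h$ for $\Re\nu$ in a fixed strip is at most $e^{B|h|}$, which is beaten by \eqref{eq:mAbound}, and one integrates by parts in $h$ using the $m$-th derivative bounds to gain the polynomial decay in $|\Im\nu|$. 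The symmetry \eqref{eq:symmetry} is then inherited from the compactly supported case: approximate $f$ by $f_n\in C_c^\infty(G)\cap\mcH(\tau_\ell)$ (e.g.\ multiply by smooth cutoffs in the $a_h$-variable of the Cartan decomposition, which preserves the $K\times K$-type), apply Theorem~\ref{thm10} to each $f_n$ to get $\widehat{f_n}(\nu,p)=\widehat{f_n}(p,\nu)$ on the relevant discrete set, and pass to the limit; the point is that \eqref{eq:mAbound} guarantees $\widehat{f_n}\to\widehat f$ locally uniformly in $\nu$ for each fixed $p$, and the identity \eqref{eq:symmetry} survives a locally uniform limit. I would take care to arrange the cutoff so that the $f_n$ converge to $f$ in the relevant seminorms, so that the transforms converge uniformly on the compact set $\{(\nu,p): |\nu|,|p|\leq\ell,\ \nu\equiv p\!\!\pmod 1\}$, which is finite.

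The main obstacle I anticipate is not the symmetry itself — that is essentially inherited from Wang's result — but verifying the precise analytic input about the generalized spherical functions $\varphi^{\ell}_{\nu,p}$: their holomorphic continuation in $\nu$ to all of $\CC$, the exact exponential rate in the $a_h$-variable uniformly for $\Re\nu$ in strips, and enough regularity to justify both the contour shifts in \ref{pws-b}$\Rightarrow$\ref{pws-a} and the integration-by-parts estimates in the converse. This is where one must be slightly careful, since $\varphi^{\ell}_{\nu,p}$ is not bi-$K$-invariant and the clean Harish-Chandra asymptotics are replaced by the more hands-on integral representation \eqref{spherical-def}; but everything needed is elementary once that representation is differentiated and estimated, and the paper has presumably developed exactly these estimates in \S\ref{section:sphericaltransform}.
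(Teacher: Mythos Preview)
Your approach to \ref{pws-a}$\Rightarrow$\ref{pws-b} is sound, though more roundabout than necessary on one point: the symmetry \eqref{eq:symmetry} follows immediately from the corresponding identity \eqref{eq:spherical-function-symmetry-2} for the spherical trace functions themselves, which holds pointwise in $g$ and hence passes to any absolutely convergent integral against $f$. There is no need to approximate by $C_c^\infty$ and invoke Theorem~\ref{thm10}. For the decay \eqref{eq:BCbound}, both you and the paper reduce to a one-dimensional Fourier-type integral (the paper via the Abel-type transform $\breve f$ of \eqref{eq:sphericalviaAbel}--\eqref{eq:Abelnew}) and integrate by parts; your sketch is vague about how this reduction goes, but it is the right idea.

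The genuine gap is in \ref{pws-b}$\Rightarrow$\ref{pws-a}. Your contour-shifting argument does not work as stated, and in fact cannot work without invoking the symmetry \eqref{eq:symmetry}, which your sketch for this direction never uses. The bound \eqref{eq:spherical-function-bound} gives
\[
\bigl|\varphi^{\ell}_{\sigma+it,p}(k_1 a_h k_2)\bigr|\leq(2\ell+1)\frac{\sinh(\sigma h)}{\sigma\sinh h},
\]
which for large $|h|$ is of size $e^{(|\sigma|-1)|h|}$. Thus shifting the $\nu$-contour to $\Re\nu=\pm B$ produces \emph{growth} $e^{(B-1)|h|}$ in the integrand, not decay; each unit of shift costs rather than buys a factor $e^{|h|}$, and the hypothesis \eqref{eq:BCbound} offers nothing to compensate. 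The naive bound on $\varphi^{\ell}_{\nu,p}$ is sharp, so there is no free gain from the holomorphy of $\widehat f$ alone.

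The paper's mechanism is entirely different and uses \eqref{eq:symmetry} in an essential way. Via Wang's explicit formula for $\varphi^{\ell}_{it,p}$ one writes $\sinh(h)\cdot f(k_1 a_h k_2)$ as a finite sum over $p,j$ of integrals $\int_{-h}^{h}\widetilde f(s,p)\,D^{\ell}_{-p,j}(v_\theta^{-1})D^{\ell}_{j,j}(k_2k_1)D^{\ell}_{j,-p}(v_{\theta'})\,\dd s$, where $\widetilde f(s,p)$ is the Euclidean Fourier transform \eqref{eq:def-tilde-f-s-p}. An integral over the bounded interval $[-h,h]$ has no reason to decay as $h\to\infty$. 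The crucial step is that the symmetry \eqref{eq:symmetry}, through the calculation around \cite[(38)--(41)]{Wang1974}, forces the corresponding integral over all of $\RR$ to vanish; one may then replace $\int_{-h}^{h}$ by $-\bigl(\int_{-\infty}^{-h}+\int_{h}^{\infty}\bigr)$, and now the rapid decay \eqref{eq:nDbound} of $\widetilde f(s,p)$ (obtained from \eqref{eq:BCbound} by ordinary Euclidean contour shifting) yields \eqref{eq:mAbound}. So the Arthur--Campoli relation \eqref{eq:symmetry} is not merely a necessary bookkeeping constraint but the analytic engine driving the decay in this direction; your proposal misses this.
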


The Schwartz space offers a lot more flexibility in applications. A less precise result for more general groups is given in \cite[Th.~3]{DelormeFlensted-Jensen1991}, and we refer the reader to the introduction of that paper for additional discussion and motivation of Paley--Wiener type theorems for rapidly decaying functions.

\subsection{Beyond the pre-trace formula: a fourth moment}\label{KS-intro}
We still owe an explanation for the sub-Weyl exponent in Theorem~\ref{thm3}\ref{thm3-b}, where $q = \pm \ell$. The proof of this bound is different from the other results: it is inspired by a brilliant recent idea of Steiner and Khayutin--Steiner~\cite{St, KhayutinSteiner2020} in the \emph{weight} aspect for the groups $\SO_3(\RR)$ and $\SL_2(\RR)$. The starting point is the desire to choose the amplifier so long that it works as self-amplification. In this way, the amplifier can be made independent of the well-known but inefficient trick of using the Hecke relation $\lambda_p^2 - \lambda_{p^2} = 1$. A self-amplified second moment is in effect a fourth moment, and the key observation is that it can be realized as the diagonal term in a \emph{double} pre-trace formula. This only has a chance to work if the corresponding geometric side can be analyzed sufficiently accurately, and to this end, two extra features are necessary: a special behavior of spherical functions with rapid decay conditions (such as, for instance, the Bergman kernel for $\SL_2(\RR)$) and the possibility for a \emph{second moment} count on the geometric side, i.e.\ pairs of matrices, in a best possible way.

For the proof of Theorem~\ref{thm3}\ref{thm3-b}, we implement this idea for the first time in the context of \emph{principal series representations}. Our proof proceeds differently than both of \cite{St} and  \cite{KhayutinSteiner2020}. We avoid the theta correspondence and instead detect the diagonal term in the double pre-trace formula by an argument that is reminiscent of the Voronoi formula for  Rankin--Selberg $L$-functions over $\QQ[i]$, cf.\ \S \ref{sec28}. As we lose positivity, we have to use the full power of the pre-trace formula, unlike our other results where the softer pre-trace inequality suffices. The argument is analytically subtle, since we also lose the possibility to choose the test function in the pre-trace formula freely:  part of it is now  given to us by the gamma kernel in the Voronoi summation formula (one of several new features compared to \cite{St} and \cite{KhayutinSteiner2020}). At this point we need a very precise understanding of the Harish-Chandra transform in Theorem~\ref{thm:pws} with complete uniformity in the auxiliary complex parameters, and the reader may observe that in the end only the strong $g$-dependence in \eqref{thm5boundq=ell} saves the final bound.

\subsection{Matrix counting}
Having discussed some of the analytic and representation theoretic novelties, we finally comment briefly on the arithmetic part. In all previous instances of the sup-norm problem, the analysis of the geometric side of the pre-trace formula  amounts to counting matrices close to $K$, because the elementary spherical function is bi-$K$-invariant and decays away from $K$. Given the results on spherical trace functions in \S \ref{gen-sph-fun-intro-sec}, it is clear that from an arithmetic point of view the sup-norm problem with big $K$-types is conceptually very different from the spherical sup-norm problem.

The localization behavior of generalized spherical functions has distinct features as reflected by Theorems~\ref{thm4} and \ref{thm5}. The spherical trace function $\varphi_{\nu,\ell}^{\ell}$ concentrates close to the identity. The functions $\varphi_{\nu,\ell}^{\ell,\pm\ell}$ localize sharply around diagonal matrices (but not necessarily within $K$). For $\varphi_{\nu,\ell}^{\ell,0}$, there is localization on diagonal and skew-diagonal matrices within $K$, then there is a gradual transition to a second layer in a neighborhood of the 4-dimensional manifold $\mcN$ defined by \eqref{adbc}, and outside this neighborhood we see sharp decay. Theorem~\ref{thm6} is in some sense a combination of these two extreme cases. Correspondingly, the counting techniques in \S\S \ref{thm1-proof-sec}--\ref{sec-proof2} are still based on the geometry of numbers, but they differ conceptually and technically from the earlier treatment of the spherical sup-norm problem. In particular, as mentioned in \S \ref{KS-intro}, for the proof of Theorem~\ref{thm3}\ref{thm3-b}  we have to achieve a best possible double matrix count, cf.\ Lemma~\ref{lemma-ell-count}.

\subsection{Notation}
The group $G=\SL_2(\CC)$ and its arithmetic subgroup $\Gamma=\SL_2(\ZZ[i])$ are fixed throughout the paper. We use the $\eps$-convention in that $\eps>0$ denotes a number that may be different from line to line but may in each instance be taken to be as small as desired. As usual, we write $f\ll g$ or $f=\OO(g)$ to denote that $|f|\leqslant Cg$, where the implied constant $C>0$ may be different from line to line; it is absolute unless otherwise indicated by a subscript, except that we occasionally allow it to depend on the (fixed) quantities $I$ and $\Omega$ as well as on $\eps$. We also write $f\asymp g$ for $f\ll g\ll f$, and, when used as an asymptotic notation, $f\sim g$ for $\lim f/g=1$, where the direction of the limit is clear from the context.

\subsection{Acknowledgements}
This work began during D.M.'s term as Director's Mathematician in Residence at the Budapest Semesters of Mathematics program in the summer of 2018; D.M. would like to thank BSM, the Alfr\'ed R\'enyi Institute of Mathematics, as well as the Max Planck Institute for Mathematics for their hospitality and excellent working conditions.

\section{Preliminaries}

\subsection{Representations of \texorpdfstring{$\SU_2(\CC)$}{SU(2,C)}}\label{SU2-subsec}
In this subsection, we review the representation theory of the maximal compact subgroup
\[ K=\SU_2(\CC)=\left\{ k[\alpha,\beta]:=\begin{pmatrix}\alpha&\beta\\-\bar{\beta}&\bar{\alpha}\end{pmatrix}:|\alpha|^2+|\beta|^2=1\right\} \]
of $G=\SL_2(\CC)$.  We use \cite[\S2.1.1,2.2]{Lokvenec-Guleska2004} as a convenient reference.

For $u,v,w\in\RR$, we parametrize $K$ using essentially Euler angles $(2u,2v,2w)$ as follows:
\begin{equation}\label{decomp-K}
k[u,v,w]:=\begin{pmatrix}e^{iu}&\\&e^{-iu}\end{pmatrix}\begin{pmatrix}\cos v&i\sin v\\i\sin v&\cos v\end{pmatrix}\begin{pmatrix}e^{iw}&\\&e^{-iw}\end{pmatrix}.
\end{equation}
Generating an equivalence relation $\sim$ on $\RR^3$ by
\begin{equation}\label{angleequiv}
(u,v,w)\ \sim\ (u+2\pi,v,w),\ (u,v,w+2\pi),\ (u+\pi,v+\pi,w),\ (u+\pi/2,-v,w-\pi/2)
\end{equation}
we may parametrize $\SU_2(\CC)$ by $\RR^3/\!\sim$, or by a specific fundamental domain such as $[0,\pi)\times[0,\pi/2]\times[-\pi,\pi)$, in which each point in $\SU_2(\CC)$ has exactly one pre-image other than those with $v\in\frac{\pi}2\ZZ$. The probability Haar measure on $\SU_2(\CC)$ is given by
\begin{equation}\label{dk}
\dd k=(2\pi^2)^{-1}\sin 2v\,\dd u\,\dd v\,\dd w.
\end{equation}

The irreducible representations of $K=\SU_2(\CC)$ are classified as $(2\ell+1)$-dimensional representations $\tau_{\ell}$, for $\ell\in\frac12\ZZ_{\geq 0}$, described explicitly as the space $V_{2\ell}$ of polynomials of degree at most $2\ell$, with a basis given by $\{z^{\ell-q}:|q|\leq\ell,\,q\equiv\ell\pmod{1}\}$ and $\SU_2(\CC)$ action given by
\begin{equation}\label{matrix-coeff}
\tau_{\ell}(k[\alpha,\beta])z^{\ell-q}=(\alpha z-\bar{\beta})^{\ell-q}(\beta z+\bar{\alpha})^{\ell+q}=\sum_{\substack{|p|\leq\ell\\p\equiv\ell\one}}\Phi_{p,q}^{\ell}(k[\alpha,\beta])z^{\ell-p}.\end{equation}
A $K$-invariant scalar product on $V_{2\ell}$ is given by $(z^{\ell-q},z^{\ell-p})=(\ell-q)!(\ell+q)!\delta_{q=p}$, so that $\Phi_{p,q}^{\ell}$ are (unnormalized) matrix coefficients of $\tau_{\ell}$. Moreover,
\[\left\{\Phi_{p,q}^{\ell}\,:\,\text{$p,q,\ell\in\tfrac12\ZZ$ and $|p|,|q|\leq\ell$ and $p,q\equiv\ell\one$}\right\}\]
is an orthogonal basis of $L^2(K)$. In harmony with \cite[\S4.4.2]{Warner1972a}, we denote by $\xi_{\ell}$ the character of $\tau_{\ell}$, by $d_{\ell}=2\ell+1$ the dimension of $\tau_\ell$, and by $\chi_{\ell}=d_{\ell}\xi_{\ell}$ the normalized character of $\tau_\ell$. Finally, we denote by $\widehat{K}=\{\tau_{\ell}:\ell\in\frac12\ZZ_{\geq 0}\}$ the unitary dual of $K$.

\subsection{Representations of \texorpdfstring{$\SL_2(\CC)$}{SL(2,C)}}\label{SL2C-subsec}
For compatibility with the existing literature, we shall use the Iwasawa decomposition of $G=\SL_2(\CC)$ in two forms, $G=NAK$ and $G=KAN$, where $N$ (resp.\ $A$) is the subgroup of unipotent upper-triangular (resp.\ positive diagonal) matrices, and $K=\SU_2(\CC)$ is the standard maximal compact subgroup.

We fix a Haar measure on $G$ by setting
\[
\dd g=|\dd z|\frac{\dd r}{r^5}\dd k\quad\text{for } g=\begin{pmatrix}1&z\\&1\end{pmatrix}\begin{pmatrix}r&\\&r^{-1}\end{pmatrix}k, \quad z\in\CC,\,\,r>0,\,\,k\in K,
\]
where $|\dd z|=\dd x\,\dd y$ for $z=x+iy$, $x,y\in\RR$, and $\dd k$ is as in \eqref{dk}.

We write $\mfa\simeq\RR$ for the Lie algebra of $A$,  $\rho$ for the root on $\mfa$ mapping
$\big(\begin{smallmatrix}x&\\&-x\end{smallmatrix}\big)$ to $2x$,
$\exp:\mfa\to A$ for the exponential map, and $\kappa:G\to K$ and $H:G\to\mfa$ for
the projection and height maps defined by $g\in\kappa(g)\exp(H(g)) N$ for every $g\in G$.
Thus explicitly, for $g=\left(\begin{smallmatrix}a&b\\c&d\end{smallmatrix}\right)\in G$ we have
\begin{equation}\label{eq:kappaH}
\kappa(g)=\begin{pmatrix}a/\sqrt{|a|^2+|c|^2}&\ast\\c/\sqrt{|a|^2+|c|^2}&\ast\end{pmatrix},\quad \exp(H(g))=\begin{pmatrix}\sqrt{|a|^2+|c|^2}&\\&1/\sqrt{|a|^2+|c|^2}\end{pmatrix}.
\end{equation}
Finally, let $M\simeq S^1$ be the centralizer of $A$ in $K$, which consists of diagonal matrices in $K$.

Following \cite[Ch.~III]{GGV}, we introduce for every pair $(\nu,p)\in\CC\times\frac12\ZZ$
the (generalized) principal series representation $\pi_{\nu,p}$. Let us denote by $C^\infty(\CC)$ the set of functions $\CC\to\CC$ that are smooth when regarded as functions $\RR^2\to\CC$. The representation space $V_{\nu,p}$ consists of those functions $v\in C^\infty(\CC)$ for which the transformed functions
\begin{equation}\label{transformedfunctions}
\pi_{\nu,p}\left(\begin{pmatrix}a&b\\c&d\end{pmatrix}\right)v(z)
=|bz+d|^{2p+2\nu-2}(bz+d)^{-2p}v\left(\frac{az+c}{bz+d}\right),\qquad
\begin{pmatrix}a&b\\c&d\end{pmatrix}\in G,
\end{equation}
extend to elements of $C^\infty(\CC)$. The above display then actually defines the representation $\pi_{\nu,p}:G\to\GL(V_{\nu,p})$. The space $V_{\nu,p}$ is complete with respect to the countable family of seminorms
\[\sup\bigl\{\bigl|v^{(a,b)}(x+yi)\bigr|+\bigl|\widehat{v}^{(a,b)}(x+yi)\bigr|:x^2+y^2\leq c\bigr\},\qquad (a,b,c)\in\NN^3,\]
where we abbreviate $\widehat{v}:=\pi_{\nu,p}\left(\big(\begin{smallmatrix}&-1\\1&\end{smallmatrix}\big)\right)v$ for $v\in V_{\nu,p}$. The action of $G$ is continuous in the topology induced by these seminorms; thus, $\pi_{\nu,p}$ is a Fr\'echet space representation.

Using the action of $K=\SU_2(\CC)$ and its diagonal subgroup $\left\{\diag(e^{i\varrho},e^{-i\varrho}):\varrho\in\RR\right\}$, we can decompose the $K$-finite part of $V_{\nu,p}$ into an \emph{algebraic direct sum} of finite-dimensional subspaces and further into one-dimensional subspaces:
\begin{equation}\label{eq:tau-ell-isotypical-decomposition-algebraic}
V_{\nu,p}^{\text{$K$-finite}} = \bigoplus_{\substack{\ell\geq|p|\\\ell\equiv p\one}}V_{\nu,p}^{\ell}
= \bigoplus_{\substack{\ell\geq|p|\\\ell\equiv p\one}}
\ \bigoplus_{\substack{|q|\leq\ell\\q\equiv\ell\one}}V_{\nu,p}^{\ell,q}.
\end{equation}
Precisely, $V_{\nu,p}^\ell$ is a $(2\ell+1)$-dimensional subspace on which $\pi_{\nu,p}|_K$ acts by $\tau_\ell\in\widehat{K}$.

If $\nu\not\equiv p\pmod{1}$ or $|\nu|\leq|p|$, then $\pi_{\nu,p}\simeq\pi_{-\nu,-p}$ is irreducible, and these are all the equivalences among the representations $\pi_{\nu,p}$. If $\nu\equiv p\pmod{1}$ and $|\nu|>|p|$, then $\pi_{\nu,p}$ and $\pi_{-\nu,-p}$ are reducible. Assume $\nu>0$, say. Then the sum of $V_{\nu,p}^{\ell}$ with $|p|\leq\ell<\nu$ is a closed invariant subspace of $V_{\nu,p}$, and the representation induced on the quotient is irreducible. The closure of the sum of $V_{-\nu,-p}^{\ell}$ with $\ell\geq\nu$ is an invariant subspace of $V_{-\nu,-p}$, and the representation induced on it is irreducible. Both of these representations of $G$ are isomorphic to $\pi_{p,\nu}\simeq\pi_{-p,-\nu}$. This observation will become relevant in \eqref{eq:spherical-function-symmetry-2} below.

The space $V_{\nu,p}$ has a $G$-invariant Hermitian inner product if and only if $\nu\in i\RR$, or $p=0$ and $\nu\in(-1,0)\cup(0,1)$. In the first case, we say that $\pi_{\nu,p}$ belongs to the (tempered) unitary principal series. In the second case, we say that $\pi_{\nu,p}$ belongs to the (non-tempered) complementary series. In either case, the Fr\'echet space representation $\pi_{\nu,p}$ induces an irreducible unitary representation on the Hilbert space completion
$\widehat{V_{\nu,p}}$ that we shall still denote by $\pi_{\nu,p}$. The only equivalences among these unitary representations are $\pi_{\nu,p}\simeq\pi_{-\nu,-p}$. The equivalence classes, along with the trivial representation, form the unitary dual $\widehat{G}$ of $G$.

For $\pi\simeq\pi_{\nu,p}\in\widehat{G}$ we write
\[V_\pi:=\widehat{V_{\nu,p}},\qquad V_\pi^\ell:=V_{\nu,p}^\ell,\qquad V_\pi^{\ell,q}:=V_{\nu,p}^{\ell,q},\]
and then \eqref{eq:tau-ell-isotypical-decomposition-algebraic} is equivalent to the orthogonal Hilbert space decomposition
(cf.~\eqref{decomp}):
\[V_\pi = \bigoplus_{\substack{\ell\geq|p|\\\ell\equiv p\one}}V_\pi^{\ell}
= \bigoplus_{\substack{\ell\geq|p|\\\ell\equiv p\one}}
\ \bigoplus_{\substack{|q|\leq\ell\\q\equiv\ell\one}}V_\pi^{\ell,q}.\]
The projection $V_\pi\to V_{\pi}^{\ell}$ is realized by the operator
\begin{equation}\label{eq:projectionbychi}
\pi(\ov{\chi_\ell}):=\int_K \ov{\chi_\ell}(k)\pi(k)\,\dd k\in \End(V_\pi),
\end{equation}
where $\End(V_\pi)$ denotes the Hilbert space of Hilbert--Schmidt operators on $V_\pi$ endowed with the Hilbert--Schmidt norm. This leads to the ``block matrix decomposition''
\begin{equation}\label{eq:block-decomposition}
\End(V_\pi)=\bigoplus_{\substack{m,n\geq|p|\\ m,n\equiv p\one}}\Hom(V_{\pi}^{m},V_{\pi}^{n}),
\end{equation}
where the direct sum is meant in the Hilbert space sense. Hence, for $f\in C_c(G)$, the $(m,n)$-component of the Hilbert--Schmidt operator (cf.\ \cite[Th.~2]{GelfandNaimark})
\begin{equation}\label{eq:pif}
\pi(f):=\int_G f(g)\pi(g)\,\dd g\in\End(V_\pi)
\end{equation}
equals
\begin{equation}\label{eq:tau-projection}
\pi(\ov{\chi_n})\pi(f)\pi(\ov{\chi_m})=\pi(\ov{\chi_n}\star f\star\ov{\chi_m})\in\Hom(V_{\pi}^{m},V_{\pi}^{n}),
\end{equation}
where the convolutions are meant over $K$.

\subsection{Plancherel theorem}\label{subsec:plancherel}
In this subsection, we review the Plancherel theorem for $G=\SL_2(\CC)$ pioneered by Gelfand and Naimark, following the original sources \cite{GelfandNaimark,GelfandNaimark2} and their translations \cite{GelfandNaimarkTranslated,GelfandNaimark2Translated}. We note that the list of unitary representations given in \cite{GelfandNaimark2} is incomplete for higher rank groups (cf.\ \cite{Stein,Vogan,Tadic}), but this does not affect the results we are quoting. In addition, we warn the reader that the translations contain some misprints not present in the originals, e.g.\ in the crucial formulae \cite[(137)--(138)]{GelfandNaimarkTranslated}.

We identify once and for all (non-canonically) the \emph{tempered unitary dual} $\Gtemp$ with the set
\[\left\{\pi_{it,p}:(t,p)\in\left(\RR_{>0}\times\tfrac12\ZZ\right)\cup\left(\{0\}\times\tfrac12\ZZ_{\geq 0}\right)\right\},\]
with topology inherited from the standard topology on $\RR^2$. The \emph{Plancherel measure} on $\widehat{G}$ is supported on $\Gtemp$, and it is given explicitly as
\begin{equation}\label{eq:concrete-plancherel-measure}
\dd\mupl(\pi_{it,p}):=\frac{1}{\pi^2}(t^2+p^2)\,\dd t\,\dd p,
\end{equation}
with $\dd t$ the Lebesgue measure on $\RR_{\geq 0}$ and $\dd p$ the counting measure on $\tfrac12\ZZ$. For $\pi_{it,p}\in\Gtemp$, the underlying Hilbert space $\widehat{V_{it,p}}$ is independent of the parameters: it equals $\Vcan:=L^2(\CC)$. On this common representation space, \eqref{transformedfunctions} defines the unitary action $\pi_{it,p}:G\to\U(\Vcan)$ that agrees with \cite[(65)]{GelfandNaimark} for $(n,\rho)=(2p,2t)$. The \emph{operator-valued spherical transform} of $f\in C_c(G)$ is the map $\Gtemp\to\End(\Vcan)$ given by $\pi\mapsto\pi(f)$ as in \eqref{eq:pif}. The Plancherel theorem for $G$ concerns the extension of this transform to $L^2(G)$, and characterizes its image.

\begin{theorem}[Gelfand--Naimark]\label{thm:abstract-isomorphism}
The map given by \eqref{eq:pif} extends (uniquely) to an $L^2$-isometry
\[L^2(G)\longrightarrow L^2(\Gtemp\to\End(\Vcan)),\]
where the operator-valued $L^2$-space on the right-hand side is meant with respect to the Hilbert--Schmidt norm ${\|\cdot\|}_{\mathrm{HS}}$ on
$\End(\Vcan)$ and the Plancherel measure $\mupl$ on $\Gtemp$. In particular, for every $f\in L^2(G)$, the following Plancherel formula holds:
\begin{equation}\label{eq:abstract-plancherel}
\int_G |f(g)|^2\,\dd g = \int_{\Gtemp} {\|\pi(f)\|}_{\mathrm{HS}}^2\,\dd \mupl(\pi).
\end{equation}
\end{theorem}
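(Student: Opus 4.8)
The plan is to follow the classical computation of Gelfand and Naimark, for which $G=\SL_2(\CC)$ is especially favorable: it has no discrete series (its Cartan subgroups are all non-compact), every tempered irreducible unitary representation is a principal series $\pi_{it,p}$, and its Harish-Chandra $\mathbf c$-function is rational rather than a ratio of $\Gamma$-factors, which is ultimately why the Plancherel density in \eqref{eq:concrete-plancherel-measure} is the polynomial $t^2+p^2$. Since $C_c^\infty(G)$ is dense in $L^2(G)$, it suffices to prove the Plancherel identity \eqref{eq:abstract-plancherel} for $f\in C_c^\infty(G)$; that the resulting isometry is onto $L^2(\Gtemp\to\End(\Vcan))$ then follows either from the inversion formula below or from the general Plancherel theorem for type~I groups, so the substantive content is the explicit evaluation of both sides on $C_c^\infty(G)$, which also pins down the measure $\mupl$.

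First I would realize each operator $\pi_{it,p}(f)$ as an integral operator on $\Vcan=L^2(\CC)$. Using the flat model \eqref{transformedfunctions} of the principal series together with the Bruhat ``big cell'' decomposition $g=\bar n_{z}\,\diag(\delta,\delta^{-1})\,n_{w}$ (valid for almost every $g$, with $z,w\in\CC$ and $\delta\in\CC^\times$), for which Haar measure takes the shape $\dd g=c\,J(\delta)\,|\dd z|\,|\dd w|\,|\dd\delta|$ with $J$ an explicit power of $|\delta|$ coming from the modular character of the Borel, the operator $\pi_{it,p}(f)=\int_G f(g)\pi_{it,p}(g)\,\dd g$ unfolds, after the change of variables $g\leftrightarrow(z,w,\delta)$, into
\[
\bigl(\pi_{it,p}(f)v\bigr)(z)=\int_\CC\mathcal{K}_f(z,w;t,p)\,v(w)\,|\dd w|,
\]
where $\mathcal{K}_f(z,w;t,p)$ is obtained from $f$ by integrating the $\delta$-variable against the quasicharacter $\delta\mapsto|\delta|^{-2it}(\delta/|\delta|)^{2p}$ carried by the diagonal (times the cocycle of $\pi_{it,p}$). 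Writing $\delta=\varrho e^{i\theta}$ with $\varrho>0$ and $\theta\in\RR/2\pi\ZZ$, this exhibits $\mathcal{K}_f(z,w;t,p)$ as the Mellin transform in $\varrho$ (at $2it$) of the $p$-th Fourier coefficient in $\theta$ of an auxiliary kernel $F_f(z,w;\varrho,\theta)$ assembled from $f$ and $J$. The crucial bookkeeping is to verify that $J$, combined with the Plancherel density of the Mellin--Fourier transform on $\RR_{>0}\times(\RR/2\pi\ZZ)$, reproduces precisely the weight $\tfrac1{\pi^2}(t^2+p^2)$; equivalently, that $\tfrac1{\pi^2}(t^2+p^2)\,\dd t\,\dd p$ is the pushforward of a fixed multiple of Haar measure under the operator-valued spherical transform.

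With the kernel in hand I would compute $\|\pi_{it,p}(f)\|_{\mathrm{HS}}^2=\iint_{\CC^2}|\mathcal{K}_f(z,w;t,p)|^2\,|\dd z|\,|\dd w|$, integrate over $\Gtemp$ against $\mupl$, and interchange the $(t,p)$-integration with the $(z,w)$-integration by Fubini (licit since $f\in C_c^\infty(G)$ forces rapid decay in $(t,p)$ while $\mupl$ grows only polynomially). Parseval for the Fourier series in $\theta$ and for the Mellin transform in $\varrho$ then collapses the $(t,p)$-integral and yields
\[
\int_{\Gtemp}\|\pi(f)\|_{\mathrm{HS}}^2\,\dd\mupl(\pi)=\iint_{\CC^2}\int_{\CC^\times}|F_f(z,w;\delta)|^2\,|\dd z|\,|\dd w|\,|\dd\delta|,
\]
and the right-hand side equals $\int_G|f(g)|^2\,\dd g$ upon reversing the Bruhat change of variables. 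This proves \eqref{eq:abstract-plancherel} on $C_c^\infty(G)$, hence the unique extension to an isometry $L^2(G)\to L^2(\Gtemp\to\End(\Vcan))$; surjectivity is then obtained by running the same computation backwards to get the Fourier inversion formula and checking that the kernels $\mathcal{K}_f$, for $f\in C_c^\infty(G)$, are dense among Hilbert--Schmidt operator fields (or, alternatively, by invoking the abstract type~I framework).

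The step I expect to be the main obstacle is exactly this explicit kernel computation and the verification that the Jacobian from the Bruhat coordinates yields the precise normalization $\tfrac1{\pi^2}(t^2+p^2)$: this is where the (rational) $\mathbf c$-function enters quantitatively, and where — as the paper itself flags regarding \cite[(137)--(138)]{GelfandNaimarkTranslated} — one must be scrupulous about conventions, namely the two Iwasawa orderings $NAK$ versus $KAN$, the normalization of $\dd k$ in \eqref{dk}, and the multiplier in \eqref{transformedfunctions}. The remaining analytic points — justifying the Fubini interchanges and the passage from $C_c^\infty(G)$ to all of $L^2(G)$ — are routine given the compact support and smoothness of $f$ and the polynomial growth of $\mupl$.
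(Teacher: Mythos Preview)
The paper does not reprove this result; it invokes \cite[Th.~5]{GelfandNaimark} for the qualitative statement and devotes its four steps entirely to reconciling normalizations---translating $(n,\rho)\leftrightarrow(2p,2t)$, comparing Haar measures ($\dd\mu(g)=2\pi^2\,\dd g$), and, notably, correcting a factor-of-$2$ slip in Gelfand--Naimark's constant, traced to a $2$-to-$1$ change of variables that had been treated as $1$-to-$1$. Your proposal, by contrast, is to carry out the Gelfand--Naimark computation itself. So the two are compatible rather than competing: you are expanding what the paper cites, and you correctly identify the normalization as the crux, which is precisely where the paper's effort goes.

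One caution on your outline: the Plancherel density $(t^2+p^2)$ does not arise as ``Bruhat Jacobian plus Mellin--Fourier Parseval'' in the way you suggest. Parseval on $\RR_{>0}\times(\RR/2\pi\ZZ)$ yields a flat $\dd t\,\dd p$, and the Jacobian $J(\delta)$ of the Bruhat coordinates is a power of $|\delta|$, which on the Mellin side merely shifts $t$ rather than producing a polynomial weight. The factor $(t^2+p^2)$ is genuinely $|\mathbf c(it,p)|^{-2}$; in the direct Gelfand--Naimark calculation it emerges only after a further nonlinear change of variables relating the kernel integral $\iint_{\CC^2}\int_{\CC^\times}|F_f|^2$ back to $\int_G|f|^2\,\dd g$ (this is the step around \cite[(123)--(130)]{GelfandNaimark} that the paper scrutinizes), and in the Harish-Chandra picture it reflects that Abel inversion for $\SL_2(\CC)$ is a second-order differential operator on the Cartan. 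Since you already flag this step as the main obstacle, this is a refinement of your diagnosis rather than a gap---but be aware that the mechanism sits one level deeper than a single Jacobian, and the paper's own correction to \cite{GelfandNaimark} underscores how treacherous the constants are.
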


\begin{proof}
The theorem follows from \cite[Th.~5]{GelfandNaimark}; we only need to check that our Plancherel measure corresponds to the one in \cite[(137)]{GelfandNaimark}. We do this in four steps.
\newline\emph{Step 1.}
We observe that the constant $(8\pi^4)^{-1}$ in \cite[(137)]{GelfandNaimark} should be $(16\pi^4)^{-1}$ due to a small oversight in the derivation of \cite[(130)]{GelfandNaimark} from \cite[(129)]{GelfandNaimark}. The oversight is that the change of variables
\[(w_1,w_2,\lambda)\mapsto(\zeta_1,\zeta_2,\zeta_3):=(w_2,w_1\bar\lambda+w_2/\bar\lambda,w_1)\]
coming from \cite[(123)]{GelfandNaimark} is not 1-to-1 but 2-to-1.
\newline\emph{Step 2.}
We rewrite the corrected right-hand side of \cite[(137)]{GelfandNaimark} as a sum over $p\in\frac{1}{2}\ZZ$ and an integral over $t>0$, keeping in mind that $(n,\rho)$ in \cite{GelfandNaimark} is $(2p,2t)$ in our notation.
\newline\noindent\emph{Step 3.}
We observe that the Haar measure $\dd\mu(g)$ used by Gelfand--Naimark is $2\pi^2\dd g$. Indeed, applying
\cite[(40)]{GelfandNaimark} to a right $K$-invariant test function $f\in C_c(G)$, we obtain by several changes of variables that
\begin{align*}\int_G f(g)\,\dd\mu(g)
&=\int_{\CC\times\CC^\times\times\CC}f\left(\begin{pmatrix}w^{-1}&z\\&w\end{pmatrix}
\begin{pmatrix}1&\\v&1\end{pmatrix}\right)|\dd v|\,|\dd w|\,|\dd z|\\[4pt]
&=\int_{\CC\times\CC^\times\times\CC}f\left(\begin{pmatrix}w^{-1}&z\\&w\end{pmatrix}
\begin{pmatrix}1/\sqrt{1+|v|^2}&\bar v/\sqrt{1+|v|^2}\\&\sqrt{1+|v|^2}\end{pmatrix}\right)|\dd v|\,|\dd w|\,|\dd z|\\[4pt]
&=\int_{\CC\times\CC^\times\times\CC}f\left(\begin{pmatrix}w^{-1}&z\\&w\end{pmatrix}\right)
\frac{|\dd v|\,|\dd w|\,|\dd z|}{(1+|v|^2)^2}\\[4pt]
&= \pi \int_{\CC^\times\times\CC}f\left(\begin{pmatrix}1&z\\&1\end{pmatrix}
\begin{pmatrix}w&\\&w^{-1}\end{pmatrix}\right)\frac{|\dd w|\,|\dd z|}{|w|^6} = 2\pi^2\int_G f(g)\,\dd g.
\end{align*}
\emph{Step 4.}
Putting everything together, the corrected version of \cite[(137)]{GelfandNaimark} yields
\[\int_G |f(g)|^2\,2\pi^2\dd g = \frac{1}{16\pi^4}\sum_p\int_0^\infty{\|2\pi^2\pi_{it,p}(f)\|}_{\mathrm{HS}}^2\,(4t^2+4p^2)\,2\dd t.\]
This formula is equivalent to \eqref{eq:abstract-plancherel}, hence we are done.
\end{proof}

\begin{remark}\label{remark:Plancherel} In the proof above, we claimed that the Plancherel measure in \cite[Th.~5]{GelfandNaimark} is off by a factor of $2$. For double checking this claim, we looked at \cite[Th.~11.2]{Knapp}, and we found (to our dismay) that the Plancherel measure there is off by a factor of $\pi$. For example, for the test function $f(g):=1/\tr(gg^*)^2$, the Fourier transform given by \cite[(11.14)]{Knapp} equals $F_f^T(t)=\pi/\tr(tt^*)$, hence in \cite[(11.17)]{Knapp} the left-hand side is $\pi^2$, while the right-hand side is $\pi$. For triple checking our claim, we verified that our Plancherel measure yields the correct inversion formula for the classical spherical transform (for bi-$K$-invariant functions), as in \cite[\S 3.3]{FHMM}.
\end{remark}

\begin{theorem}[Gelfand--Naimark]\label{thm:general-inversion-formula}
Let $f\in C_c^\infty(G)$. For every $\pi\in\Gtemp$, the operator $\pi(f)\in\End(\Vcan)$ is of trace class, and the following inversion formula holds:
\begin{equation}\label{eq:general-inversion-formula}
f(g)=\int_{\Gtemp} \tr(\pi(f)\pi(g^{-1}))\,\dd\mupl(\pi).
\end{equation}
\end{theorem}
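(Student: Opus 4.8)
The plan is to deduce the pointwise inversion formula \eqref{eq:general-inversion-formula} from the $L^2$-Plancherel isometry of Theorem~\ref{thm:abstract-isomorphism} by polarization together with an approximate-identity argument; the one substantial analytic ingredient is a quantitative bound showing that $\pi(f)$ is not merely trace class but has trace norm decaying rapidly in the spectral parameter, which is what makes the right-hand side of \eqref{eq:general-inversion-formula} absolutely convergent. The reduction to $g=\id$ is immediate: with $h_g(x):=f(gx)\in C_c^\infty(G)$ one has $h_g(\id)=f(g)$ and $\pi(h_g)=\pi(g^{-1})\pi(f)$, so by cyclicity of the trace $\tr(\pi(f)\pi(g^{-1}))=\tr(\pi(h_g))$; it therefore suffices to prove
\[ h(\id)=\int_{\Gtemp}\tr(\pi(h))\,\dd\mupl(\pi)\qquad\text{for all }h\in C_c^\infty(G). \]

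\emph{The decay bound.} The heart of the argument is the estimate ${\|\pi(f)\|}_{\mathrm{tr}}\ll_{f,N}(1+t^2+p^2)^{-N}$ for $\pi=\pi_{it,p}\in\Gtemp$ and every $N\geq 0$, proved by combining two mechanisms. First, by the block decomposition \eqref{eq:block-decomposition} and \eqref{eq:tau-projection}, the $(n,m)$-block of $\pi(f)$ is $\pi(\ov{\chi_n}\star f\star\ov{\chi_m})\in\Hom(V_\pi^m,V_\pi^n)$, which vanishes unless $m,n\geq|p|$ and whose trace norm is $\ll(2m+1)(2n+1){\|\ov{\chi_n}\star f\star\ov{\chi_m}\|}_{L^1(G)}$ (from ${\|\pi(F)\|}_{\mathrm{op}}\leq{\|F\|}_{L^1(G)}$ and a dimension count); since $\ov{\chi_n}$ is an eigenfunction of the Casimir operator of $K$ with eigenvalue $\asymp n^2$, repeated integration by parts in the two $K$-variables transfers arbitrarily high powers of these eigenvalues onto $f$, giving ${\|\ov{\chi_n}\star f\star\ov{\chi_m}\|}_{L^1(G)}\ll_{f,A,B}(1+n^2)^{-A}(1+m^2)^{-B}$ with no hidden $\pi$- or $(m,n)$-dependence, as the relevant supports all lie in the fixed compact set $K\cdot\mathrm{supp}(f)\cdot K$. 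Summing over $m,n\geq|p|$ shows $\pi(f)$ is trace class with ${\|\pi(f)\|}_{\mathrm{tr}}$ decaying faster than any power of $p$, uniformly in $t$. Secondly, to get decay in $t$ one picks $D\in Z(\mathcal{U}(\mfg))$ built from the two central generators with eigenvalues $p^2-t^2$ and $itp$ on $\pi_{it,p}$ (cf.\ the discussion after \eqref{eq:tau-ell-isotypical-decomposition-algebraic}) so that $d\pi_{it,p}(D)\asymp(1+t^2+p^2)^N$; then $\pi(\lambda(D)f)=d\pi_{it,p}(D)\pi(f)$ with $\lambda$ the left regular action, and applying the first mechanism to $\lambda(D)f\in C_c^\infty(G)$ yields the claimed bound. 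In particular $\int_{\Gtemp}{\|\pi(f)\|}_{\mathrm{tr}}\,\dd\mupl(\pi)<\infty$.

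\emph{Inversion via an approximate identity.} Fix non-negative, inversion-invariant $\phi_n\in C_c^\infty(G)$ with $\int_G\phi_n=1$ and supports shrinking to $\{\id\}$; then $\phi_n^*=\phi_n$, so $\pi(\phi_n)=\pi(\phi_n)^*$, ${\|\pi(\phi_n)\|}_{\mathrm{op}}\leq{\|\phi_n\|}_{L^1(G)}=1$, and $\pi(\phi_n)\to\id$ in the strong operator topology. Polarizing \eqref{eq:abstract-plancherel} and using $\langle A,B\rangle_{\mathrm{HS}}=\tr(AB^*)$ gives, for $h\in C_c^\infty(G)$,
\[ \int_G h(x)\phi_n(x)\,\dd x=\int_{\Gtemp}\tr\bigl(\pi(h)\pi(\phi_n)\bigr)\,\dd\mupl(\pi). \]
The left-hand side tends to $h(\id)$ by the approximate-identity property. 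On the right-hand side the integrand is bounded by ${\|\pi(h)\|}_{\mathrm{tr}}{\|\pi(\phi_n)\|}_{\mathrm{op}}\leq{\|\pi(h)\|}_{\mathrm{tr}}$, which is $\mupl$-integrable by the decay bound, and, expanding the trace-class operator $\pi(h)$ in its singular value decomposition, it converges pointwise to $\tr(\pi(h))$ because $\pi(\phi_n)\to\id$ strongly with uniformly bounded norm. Dominated convergence gives $h(\id)=\int_{\Gtemp}\tr(\pi(h))\,\dd\mupl(\pi)$, and specializing $h=h_g$ finishes the proof.

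\emph{Main obstacle.} The only genuinely technical step is the decay bound, and specifically its uniformity in $\pi$: one must make the Casimir computation precise --- identifying $d\pi_{it,p}(D)$ as a polynomial in $t^2+p^2$ through the two central generators, and checking that $\lambda(D)$ interacts harmlessly with the $K$-convolutions --- rather than merely knowing that $\pi(f)$ is trace class for each individual $\pi$. Everything else (polarization of Plancherel, properties of approximate identities, and the interchange of limit and integral) is soft.
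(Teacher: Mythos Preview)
Your proof is correct. The paper itself does not really prove this theorem; it cites Gelfand--Naimark and Knapp, and then in a subsequent remark sketches a short deduction from the Plancherel isometry via the Dixmier--Malliavin theorem: every $f\in C_c^\infty(G)$ is a finite sum of convolutions $w\star w^*$ with $w\in C_c^\infty(G)$ (indeed $w\in C_c(G)$ suffices), and for such a building block one reads off $f(\id)=\|w\|_{L^2}^2=\int_{\Gtemp}\|\pi(w)\|_{\mathrm{HS}}^2\,\dd\mupl=\int_{\Gtemp}\tr(\pi(f))\,\dd\mupl$ directly from \eqref{eq:abstract-plancherel}.

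Your route is genuinely different: you replace the structural input of Dixmier--Malliavin by an analytic one, namely the quantitative trace-norm decay ${\|\pi_{it,p}(f)\|}_{\mathrm{tr}}\ll_{f,N}(1+t^2+p^2)^{-N}$ obtained from the $K$- and $G$-Casimirs, and then pass to the limit along an approximate identity using dominated convergence. This buys you a fully self-contained argument that avoids Dixmier--Malliavin; the price is the decay estimate, which is standard but not free. The paper's approach is slicker once Dixmier--Malliavin is granted and sidesteps any spectral estimate on $\pi(f)$. Both reductions rest on the same Plancherel theorem, and your reduction to $g=\id$ via $h_g(x)=f(gx)$ is the same translation trick implicit in the paper's citation of ``$x=R(g)f$''. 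One small clean-up: for the $D\in Z(\mathcal{U}(\mfg))$ step it is enough to take $D=(1+\Omega_1^2-4\Omega_2^2)^N$, where $\Omega_1,\Omega_2$ are the two central generators with eigenvalues $p^2-t^2$ and $itp$, so that $d\pi_{it,p}(D)=(1+(t^2+p^2)^2)^N$; then the $p$-decay from the $K$-Casimir argument is not even needed, only the uniform boundedness of $\|\pi(\lambda(D)f)\|_{\mathrm{tr}}$.
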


\begin{proof}
The theorem follows from \cite[Th.~19]{GelfandNaimark2} applied to $n=2$ and $x=R(g)f$, or from \cite[Th.~11.2]{Knapp}, with appropriate correction of the Plancherel measure (cf.\ Remark~\ref{remark:Plancherel}).
\end{proof}

\begin{remark} By a celebrated result of Dixmier--Malliavin~\cite{DixmierMalliavin1978}, every $f\in C_c^\infty(G)$ can be written as a linear combination of convolutions $w\star w^\ast$, where $w\in C_c^\infty(G)$ and $w^\ast(g):=\ov{w(g^{-1})}$. 
Hence Theorem~\ref{thm:general-inversion-formula} also follows from Theorem~\ref{thm:abstract-isomorphism} and \cite[Th.~2]{GelfandNaimark}. In fact for this implication we only need that $w\in C_c(G)$, which is easier to achieve.
\end{remark}

\subsection{The \texorpdfstring{$\tau_{\ell}$}{tau-ell}-spherical transform}\label{section:sphericaltransform}
For a given $\ell\in\frac{1}{2}\ZZ_{\geq 0}$, it is interesting to see what Theorems~\ref{thm:abstract-isomorphism} and \ref{thm:general-inversion-formula} yield for test functions $f\in L^2(G)$ with the following property: for almost every $\pi\in\Gtemp$, the operator $\pi(f)$ acts by a scalar on $V_{\pi}^{\ell}$ and by zero on its orthocomplement $V_{\pi}^{\ell,\perp}$. In the light of \eqref{eq:block-decomposition}, \eqref{eq:tau-projection}, \eqref{eq:abstract-plancherel}, and Schur's lemma, these test functions form the Hilbert subspace $\mcH(\tau_{\ell})\subset L^2(G)$ defined by the conditions
\begin{itemize}
\item $f(g)=f(kgk^{-1})$ for almost every $g\in G$ and $k\in K$;
\item $f=\ov{\chi_\ell} \star f \star\ov{\chi_\ell}$.
\end{itemize}

Let $\Gtemp(\tau_{\ell})$ be the set of $\pi\in\Gtemp$ whose restriction to $K$ contains $\tau_{\ell}$. For $f\in\mcH(\tau_{\ell})$, the operator-valued function $\pi\mapsto\pi(f)$ is supported on $\Gtemp(\tau_{\ell})$, and there it is simply determined by the scalar-valued function $\pi\mapsto\tr(\pi(f))$ via
\begin{equation}\label{eq:opfromtr}
\pi(f)|_{V_{\pi}^{\ell}}= \frac{\tr(\pi(f))}{2\ell+1}\cdot \id_{V_{\pi}^{\ell}}
\qquad\text{and}\qquad\pi(f)|_{V_{\pi}^{\ell,\perp}}=0.
\end{equation}
In particular, for $\pi\in\Gtemp(\tau_{\ell})$ and $f\in\mcH(\tau_{\ell})$,
\begin{equation}\label{eq:HSfromtr}
{\|\pi(f)\|}_{\mathrm{HS}}^2 = \tr(\pi(f)\pi(f)^*)= \frac{|\tr(\pi(f))|^2}{2\ell+1}.
\end{equation}
For $(\nu,p)\in i\RR\times\frac12\ZZ$, the condition $\pi_{\nu,p}\in\Gtemp(\tau_{\ell})$ is equivalent to $|p|\leq\ell$ and $p\equiv\ell\pmod{1}$. Moreover, for $f\in L^1(G)\cap\mcH(\tau_{\ell})$, the trace of $\pi_{\nu,p}(f)$ can be expressed
in terms of the \emph{$\tau_\ell$-spherical trace function}
\begin{equation}\label{eq:def-spherical-function}
\begin{aligned}
\varphi_{\nu,p}^{\ell}(g):=&\tr(\pi_{\nu,p}(\ov{\chi_\ell})\pi_{\nu,p}(g)\pi_{\nu,p}(\ov{\chi_\ell}))\\
=&\tr(\pi_{\nu,p}(\ov{\chi_{\ell}})\pi_{\nu,p}(g))=\tr(\pi_{\nu,p}(g)\pi_{\nu,p}(\ov{\chi_{\ell}}))
\end{aligned}
\end{equation}
as (cf.\ \eqref{eq:pif} and \eqref{eq:tau-projection})
\begin{equation}\label{eq:trpif}
\widehat{f}(\nu,p):=\tr(\pi_{\nu,p}(f)) = \int_G f(g)\,\varphi_{\nu,p}^{\ell}(g)\,\dd g.
\end{equation}

The function $\varphi_{\nu,p}^{\ell}:G\to\CC$ vanishes unless $|p|\leq\ell$ and $p\equiv\ell\pmod{1}$, for else $\tau_{\ell}$ does not appear in $\pi_{\nu,p}$, and $\varphi_{\nu,p}^{\ell}(\id)=2\ell+1$ in this latter case. Moreover, we have the integral representation of Harish-Chandra \cite[Cor.~6.2.2.3]{Warner1972}:
\[\varphi_{\nu,p}^{\ell}(g)=\int_K\left(\chi_\ell\star\eta_p\right)(\kappa(k^{-1}gk))\,
e^{(\nu-1)\rho(H(gk))}\,\dd k.\]
Here, $\eta_p:M\simeq S^1\to\CC^{\times}$ is the unitary character $\eta_p(z)=z^{-2p}$, the convolution is over $M$, and $\kappa$, $\rho$, and $H$ are as in \S\ref{SL2C-subsec}. For computational purposes, we spell out the $\chi_\ell\star\eta_p$ term explicitly, cf.~\eqref{matrix-coeff}, \cite[(10) \& Lemma~3.2]{Wang1974}, \cite[Th.~29.18]{HewittRoss}:
\[\begin{aligned}
\left(\chi_\ell\star\eta_p\right)(k[\alpha,\beta])
&=(2\ell+1)\Phi_{p,p}^{\ell}(k[\alpha,\beta])\\
&=(2\ell+1)\sum_{r=0}^{\ell-|p|}(-1)^r\binom{\ell+p}{r}\binom{\ell-p}{r}
\alpha^{\ell-p-r}{\bar\alpha}^{\ell+p-r}|\beta|^{2r}.
\end{aligned}\]
We collect further useful properties of $\varphi_{\nu,p}^{\ell}:G\to\CC$ in the next lemma, where we write
\[a_h:= \diag(e^{h/2}, e^{-h/2}),\qquad h\in\RR.\]

\begin{lemma} The $\tau_\ell$-spherical trace function $\varphi_{\nu,p}^{\ell}(g)$ extends holomorphically to $\nu\in\CC$, and it satisfies the bound
\begin{equation}\label{eq:spherical-function-bound}
\bigl|\varphi_{\sigma+it,p}^{\ell}(k_1a_hk_2)\bigr|\leq(2\ell+1)\frac{\sinh(\sigma h)}{\sigma\sinh(h)},
\qquad \sigma,t,h\in\RR,\quad k_1,k_2\in K.
\end{equation}
(For $\sigma=0$ or $h=0$, the fraction on the right-hand side is understood as $1$.)
The extended function has the symmetries
\begin{equation}\label{eq:spherical-function-symmetry}
\varphi_{\nu,p}^{\ell}(g)=\ov{\varphi_{-\ov{\nu},p}^{\ell}(g)}=\varphi_{\nu,p}^{\ell}(g^{-1}),
\end{equation}
\begin{equation}\label{eq:spherical-function-symmetry-2}
\varphi_{\nu,p}^{\ell}(g)=\varphi_{p,\nu}^{\ell}(g),\qquad\nu\equiv p\one,\quad|\nu|,|p|\leq\ell.
\end{equation}
\end{lemma}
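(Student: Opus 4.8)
The plan is to derive all four assertions from the Harish--Chandra integral representation
\[
\varphi_{\nu,p}^{\ell}(g)=\int_K\left(\chi_\ell\star\eta_p\right)(\kappa(k^{-1}gk))\,e^{(\nu-1)\rho(H(gk))}\,\dd k
\]
recalled above, from the explicit shape of $\chi_\ell\star\eta_p$, and from the abstract description of the principal series $\pi_{\nu,p}$ in \S\ref{SL2C-subsec}. The holomorphic continuation in $\nu$ is immediate: for fixed $g$ and $k$ the integrand has the form $C_{g,k}\,e^{\nu\rho(H(gk))}$ with $C_{g,k}$ independent of $\nu$, hence entire in $\nu$; since $k\mapsto\rho(H(gk))$ is bounded on the compact group $K$ and $|\chi_\ell\star\eta_p|\leq 2\ell+1$ on $K$ (see the next paragraph), the integrand is locally uniformly bounded in $\nu$, and differentiation under the integral sign (or Morera's theorem) produces an entire function of $\nu$.

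For the bound \eqref{eq:spherical-function-bound}, the only input concerning the $K$-type is the pointwise estimate $|(\chi_\ell\star\eta_p)(k)|=(2\ell+1)\,|\Phi_{p,p}^{\ell}(k)|\leq 2\ell+1$, which holds because, by \eqref{matrix-coeff} and the $K$-invariant inner product on $V_{2\ell}$, $\Phi_{p,p}^{\ell}(k)=\langle\tau_\ell(k)v,v\rangle$ is a diagonal matrix coefficient of the unitary representation $\tau_\ell$ with respect to the unit vector $v=z^{\ell-p}/\sqrt{(\ell-p)!(\ell+p)!}$, so $|\Phi_{p,p}^{\ell}(k)|\leq 1$ by Cauchy--Schwarz. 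Writing $\nu=\sigma+it$ and using $|e^{(\nu-1)\rho(H)}|=e^{(\sigma-1)\rho(H)}$ together with the left $K$-invariance of $H$ and the right invariance of Haar measure on $K$, we obtain for $g=k_1a_hk_2$
\[
\bigl|\varphi_{\sigma+it,p}^{\ell}(k_1a_hk_2)\bigr|\leq(2\ell+1)\int_K e^{(\sigma-1)\rho(H(a_hk))}\,\dd k .
\]
Now $\rho(H(a_hk[\alpha,\beta]))=\log(e^{h}|\alpha|^2+e^{-h}|\beta|^2)$ by \eqref{eq:kappaH}, and a short computation with \eqref{decomp-K} and \eqref{dk} (which give $|\alpha|^2=\cos^2 v$) shows that $|\alpha|^2$ is uniformly distributed on $[0,1]$ under $\dd k$; hence the last integral equals $\int_0^1\bigl(e^{h}t+e^{-h}(1-t)\bigr)^{\sigma-1}\,\dd t=\frac{\sinh(\sigma h)}{\sigma\sinh h}$ for $\sigma h\neq 0$ (via the substitution $u=e^{-h}+2t\sinh h$) and is $\leq 1$ when $\sigma h=0$, which in all cases is the right-hand side of \eqref{eq:spherical-function-bound} under the stated convention.

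For the symmetries \eqref{eq:spherical-function-symmetry} I would first establish $\varphi_{\nu,p}^{\ell}(g)=\varphi_{\nu,p}^{\ell}(g^{-1})$. Whenever $\pi_{\nu,p}$ is irreducible it is isomorphic to its contragredient $\pi_{-\nu,-p}$ (by \S\ref{SL2C-subsec}), hence carries a nondegenerate $G$-invariant bilinear form $B$; with respect to $B$ the transpose of $\pi_{\nu,p}(g)$ is $\pi_{\nu,p}(g^{-1})$, and the finite-rank projector $\pi_{\nu,p}(\overline{\chi_\ell})$ onto $V_{\nu,p}^{\ell}$ is $B$-symmetric, because $\chi_\ell$ is real-valued and invariant under $k\mapsto k^{-1}$ (the character of $\tau_\ell$ being self-dual). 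Since the trace of a finite-rank operator is unchanged under transposition, \eqref{eq:def-spherical-function} gives $\varphi_{\nu,p}^{\ell}(g)=\varphi_{\nu,p}^{\ell}(g^{-1})$ for all irreducible $\pi_{\nu,p}$, hence for all $\nu\in\CC$ by holomorphic continuation in $\nu$. For $\nu\in i\RR$ the representation $\pi_{\nu,p}$ is unitary and $\pi_{\nu,p}(\overline{\chi_\ell})$ is the orthogonal projection onto $V_{\nu,p}^{\ell}$; taking Hilbert-space adjoints in place of transposes yields $\overline{\varphi_{\nu,p}^{\ell}(g)}=\varphi_{\nu,p}^{\ell}(g^{-1})$, and together with the previous identity this forces $\varphi_{\nu,p}^{\ell}(g)\in\RR$ for $\nu\in i\RR$; since $-\overline{\nu}=\nu$ there, $\varphi_{\nu,p}^{\ell}(g)=\overline{\varphi_{-\overline{\nu},p}^{\ell}(g)}$ holds on $i\RR$. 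Both identities in \eqref{eq:spherical-function-symmetry} compare functions holomorphic in $\nu$ (note $\nu\mapsto\overline{\varphi_{-\overline{\nu},p}^{\ell}(g)}$ is holomorphic), so, holding on $i\RR$, they hold on all of $\CC$.

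The remaining identity \eqref{eq:spherical-function-symmetry-2} is the delicate point and the step I expect to be the main obstacle: the constraint $\nu\equiv p\pmod{1}$ pins $\nu$ to a discrete set, so it cannot be reached by continuation in $\nu$ and must instead be extracted from the reducibility structure of $\pi_{\nu,p}$ recorded in \S\ref{SL2C-subsec}. The case $\nu=\pm p$ is trivial or follows from $\pi_{\nu,p}\simeq\pi_{-\nu,-p}$; so assume $|\nu|>|p|$ and, say, $\nu>0$. Then the span of the $V_{\nu,p}^{m}$ with $|p|\leq m<\nu$ is a closed invariant subspace $U\subset V_{\nu,p}$, the quotient $V_{\nu,p}/U$ realizes the irreducible representation $\pi_{p,\nu}$, and for every $\ell\geq|\nu|$ the quotient map $q$ restricts to a $K$-isomorphism of $V_{\nu,p}^{\ell}$ (which meets $U$ trivially) onto the $\tau_\ell$-isotypic subspace of $\pi_{p,\nu}$. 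As $q$ intertwines both the $G$-actions and the operators $\pi(\overline{\chi_\ell})$, it conjugates $\pi_{\nu,p}(\overline{\chi_\ell})\pi_{\nu,p}(g)\pi_{\nu,p}(\overline{\chi_\ell})$ to $\pi_{p,\nu}(\overline{\chi_\ell})\pi_{p,\nu}(g)\pi_{p,\nu}(\overline{\chi_\ell})$, which have the same (finite-rank) trace; by \eqref{eq:def-spherical-function} this is exactly $\varphi_{\nu,p}^{\ell}(g)=\varphi_{p,\nu}^{\ell}(g)$. For $\nu<0$ one argues identically using the invariant \emph{sub}space of $V_{\nu,p}$ on which the representation is equivalent to $\pi_{p,\nu}$, again as recorded in \S\ref{SL2C-subsec}. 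Alternatively, and at the cost of a longer computation, this extra symmetry can be checked directly from Wang's explicit matrix-coefficient formulas \cite{Wang1974}.
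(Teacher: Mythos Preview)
Your proof is correct and follows essentially the same strategy as the paper's. The paper largely defers to Wang~\cite{Wang1974} for the holomorphic extension, the bound, and the inversion symmetry $g\mapsto g^{-1}$, while you supply the explicit computations (the uniform distribution of $|\alpha|^2$ and the resulting evaluation $\int_0^1(e^ht+e^{-h}(1-t))^{\sigma-1}\,\dd t=\sinh(\sigma h)/(\sigma\sinh h)$ is precisely the content of \cite[Prop.~3.4]{Wang1974}); for the discrete symmetry \eqref{eq:spherical-function-symmetry-2} the paper cites \cite[Cor.~2]{Wang1974} but explicitly mentions the subquotient argument you give as the ``more conceptual'' alternative, and your execution of it (the projector $\pi_{\nu,p}(\overline{\chi_\ell})$ annihilates the finite-dimensional invariant subspace $U$ since $\ell\geq\nu$, so the trace descends unchanged to the quotient $\simeq\pi_{p,\nu}$) is clean and correct.
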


\begin{proof}
The holomorphic extension of $\varphi_{\nu,p}^{\ell}(g)$ and the bound \eqref{eq:spherical-function-bound} are
a straightforward generalization of \cite[Prop.~3.4]{Wang1974} and its proof. The identity $\ov{\varphi_{-\ov{\nu},p}^{\ell}(g)}=\varphi_{\nu,p}^{\ell}(g^{-1})$ follows from \eqref{eq:def-spherical-function} and $\pi(g)^*=\pi(g^{-1})$ for $\nu\in i\RR$, and then also for $\nu\in\CC$ by the uniqueness of analytic continuation. The identity
$\varphi_{\nu,p}^{\ell}(g)=\varphi_{\nu,p}^{\ell}(g^{-1})$ is \cite[Lemma~3.2]{Wang1974}, keeping in mind that $\pi_{\nu,p}\simeq\pi_{-\nu,-p}$ for $\nu\in i\RR$ and again invoking analytic continuation. Finally, the remarkable symmetry \eqref{eq:spherical-function-symmetry-2} follows from \cite[Cor.~2]{Wang1974}, or more conceptually from the discussion below \eqref{eq:tau-ell-isotypical-decomposition-algebraic}.
\end{proof}

As we shall see in Theorem~\ref{thm:tr-plancherel} below, the \emph{$\tau_\ell$-spherical transform} defined by \eqref{eq:trpif} is inverted by the following \emph{inverse $\tau_\ell$-spherical transform}. For $h\in L^1(\Gtemp(\tau_{\ell}))\cap L^2(\Gtemp(\tau_{\ell}))$ and $g\in G$, we define
\begin{equation}\label{eq:inverse-tauell-transform}
\widecheck{h}(g):=\frac{1}{(2\ell+1)\pi^2}\sum_{\substack{|p|\leq\ell\\p\equiv\ell\one}}
\int_{0}^{\infty} h(it,p)\,\varphi_{it,p}^{\ell}(g^{-1})\,(t^2+p^2)\,\dd t.
\end{equation}

\begin{theorem}\label{thm:tr-plancherel} The transforms defined by \eqref{eq:trpif} and \eqref{eq:inverse-tauell-transform} extend (uniquely) to a pair of Hilbert space isometries inverse to each other:
\[\mcH(\tau_{\ell})\longleftrightarrow L^2(\Gtemp(\tau_{\ell})).\]
In particular, for $f\in \mcH(\tau_{\ell})$, the following Plancherel formula holds:
\begin{equation}\label{eq:tr-plancherel}
\int_G |f(g)|^2\,\dd g = \frac{1}{(2\ell+1)\pi^2}
\sum_{\substack{|p|\leq\ell\\p\equiv\ell\one}}\int_{0}^{\infty}
|\widehat{f}(it,p)|^2\,(t^2+p^2)\,\dd t.
\end{equation}
\end{theorem}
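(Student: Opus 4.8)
The plan is to deduce the statement from the abstract Plancherel isomorphism of Theorem~\ref{thm:abstract-isomorphism} and the inversion formula of Theorem~\ref{thm:general-inversion-formula} by restricting attention to the closed subspace $\mcH(\tau_{\ell})\subset L^2(G)$. The key preliminary step is to identify the image of $\mcH(\tau_{\ell})$ under the isometry $f\mapsto\bigl(\pi\mapsto\pi(f)\bigr)$: I claim it is exactly the set of operator fields that vanish outside $\Gtemp(\tau_{\ell})$ and that, on $\Gtemp(\tau_{\ell})$, act by a scalar on $V_{\pi}^{\ell}$ and by zero on $V_{\pi}^{\ell,\perp}$. Indeed, replacing $g$ by $k^{-1}gk$ in \eqref{eq:pif} shows that the condition $f(g)=f(kgk^{-1})$ (a.e.) is equivalent to $\pi(k)^{-1}\pi(f)\pi(k)=\pi(f)$ for all $k\in K$ and a.e.\ $\pi$; since each $\tau_{m}$ occurs in $\pi_{\nu,p}|_{K}$ with multiplicity at most one by \eqref{eq:tau-ell-isotypical-decomposition-algebraic}, the commutant of $\pi|_{K}$ in $\End(V_{\pi})$ consists precisely of the operators that are scalar on each $V_{\pi}^{m}$ and zero between distinct $V_{\pi}^{m}$, in the sense of \eqref{eq:block-decomposition}. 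Likewise, using \eqref{eq:tau-projection} and the fact that $\pi(\ov{\chi_{\ell}})$ is the orthogonal projection onto $V_{\pi}^{\ell}$ (see \eqref{eq:projectionbychi}), the condition $f=\ov{\chi_{\ell}}\star f\star\ov{\chi_{\ell}}$ is equivalent to $\pi(f)=\pi(\ov{\chi_{\ell}})\pi(f)\pi(\ov{\chi_{\ell}})$ for a.e.\ $\pi$. Intersecting the two conditions, and recalling that $V_{\pi}^{\ell}\neq 0$ exactly when $\pi\in\Gtemp(\tau_{\ell})$, yields the claim; the passage between a.e.\ identities of functions and a.e.\ identities of operator fields is legitimate because the map in Theorem~\ref{thm:abstract-isomorphism} is an isometry, hence injective, and the quantifier ``for all $k$'' versus ``for a.e.\ $k$'' is reconciled by Fubini.

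With this description in hand, fix $f\in C_{c}^{\infty}(G)\cap\mcH(\tau_{\ell})$; this space is dense in $\mcH(\tau_{\ell})$ (project $C_{c}^{\infty}(G)$ by conjugation-averaging over $K$ followed by convolution with $\ov{\chi_{\ell}}$ on both sides, a bounded idempotent on $L^2(G)$ with range $\mcH(\tau_{\ell})$). On $\Gtemp(\tau_{\ell})$ we have $\|\pi(f)\|_{\mathrm{HS}}^{2}=|\widehat{f}(\pi)|^{2}/(2\ell+1)$ by \eqref{eq:HSfromtr}, while $\pi(f)=0$ off $\Gtemp(\tau_{\ell})$; substituting this together with the explicit Plancherel measure \eqref{eq:concrete-plancherel-measure} into \eqref{eq:abstract-plancherel} (the locus $t=0$ being $\mupl$-null) yields \eqref{eq:tr-plancherel}. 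Hence $f\mapsto\widehat{f}$ is isometric on a dense subspace and extends uniquely to an isometry $\mcH(\tau_{\ell})\to L^{2}(\Gtemp(\tau_{\ell}))$, the target carrying the measure $(2\ell+1)^{-1}\pi^{-2}(t^{2}+p^{2})\,\dd t\,\dd p$ implicit in \eqref{eq:tr-plancherel}. For surjectivity, given $h\in L^{2}(\Gtemp(\tau_{\ell}))$ the operator field $T$ with $T(\pi):=(2\ell+1)^{-1}h(\pi)\,\pi(\ov{\chi_{\ell}})$ on $\Gtemp(\tau_{\ell})$ and $T(\pi):=0$ elsewhere satisfies $\int_{\Gtemp}\|T(\pi)\|_{\mathrm{HS}}^{2}\,\dd\mupl(\pi)=\int_{\Gtemp(\tau_{\ell})}|h(\pi)|^{2}/(2\ell+1)\,\dd\mupl(\pi)<\infty$, so by Theorem~\ref{thm:abstract-isomorphism} it equals $\pi(f)$ for some $f\in L^{2}(G)$; by the description above this $f$ lies in $\mcH(\tau_{\ell})$, and $\widehat{f}=h$.

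It remains to identify the inverse of this isometry with the map $h\mapsto\widecheck{h}$ of \eqref{eq:inverse-tauell-transform}. Since $f\mapsto\widehat{f}$ is a surjective isometry it is unitary, so its inverse equals its Hilbert-space adjoint, and it suffices to verify $\langle\widehat{f},h\rangle=\langle f,\widecheck{h}\rangle$ for $f\in C_{c}^{\infty}(G)\cap\mcH(\tau_{\ell})$ and $h\in L^{1}(\Gtemp(\tau_{\ell}))\cap L^{2}(\Gtemp(\tau_{\ell}))$. Expanding $\widehat{f}$ via \eqref{eq:trpif}, interchanging the (absolutely convergent, since $|\varphi_{it,p}^{\ell}|\leq 2\ell+1$ by \eqref{eq:spherical-function-bound}) sum-integral over $\Gtemp(\tau_{\ell})$ with the integral over $G$, and using the symmetries $\varphi_{it,p}^{\ell}(g)=\ov{\varphi_{it,p}^{\ell}(g)}=\varphi_{it,p}^{\ell}(g^{-1})$ from \eqref{eq:spherical-function-symmetry} to identify the complex conjugate of \eqref{eq:inverse-tauell-transform}, gives the identity. (Equivalently and more explicitly: $\pi(f)=(2\ell+1)^{-1}\widehat{f}(\pi)\,\pi(\ov{\chi_{\ell}})$ on $\Gtemp(\tau_{\ell})$ and $0$ elsewhere, so $\tr(\pi(f)\pi(g^{-1}))=(2\ell+1)^{-1}\widehat{f}(\pi)\,\varphi_{\nu,p}^{\ell}(g^{-1})$ by \eqref{eq:def-spherical-function}, and inserting this with \eqref{eq:concrete-plancherel-measure} into the inversion formula \eqref{eq:general-inversion-formula} gives $f=\widecheck{\widehat{f}}$ directly.) Either way, $h\mapsto\widecheck{h}$ agrees on a dense subspace with the inverse of $f\mapsto\widehat{f}$, hence extends uniquely to it and is itself a surjective isometry with $\widehat{\widecheck{h}}=h$. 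The substance of the argument is the operator-field description of $\mcH(\tau_{\ell})$ in the first paragraph --- the multiplicity-one property of $K$-types in the principal series, and the care with the ``almost everywhere'' quantifiers --- together with the routine but essential checks that $C_{c}^{\infty}(G)\cap\mcH(\tau_{\ell})$ is dense and that the normalizations in \eqref{eq:concrete-plancherel-measure}, \eqref{eq:HSfromtr}, and \eqref{eq:inverse-tauell-transform} combine exactly into \eqref{eq:tr-plancherel}; the rest is formal.
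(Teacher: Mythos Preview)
Your proof is correct and follows essentially the same approach as the paper: deduce the isometry from Theorem~\ref{thm:abstract-isomorphism} together with \eqref{eq:HSfromtr} and \eqref{eq:concrete-plancherel-measure}, then identify $\widecheck{\ }$ as the inverse via the general inversion formula \eqref{eq:general-inversion-formula} applied to $f\in C_c^{\infty}(G)\cap\mcH(\tau_{\ell})$. You spell out more explicitly the operator-field characterization of $\mcH(\tau_{\ell})$, the density of $C_c^{\infty}(G)\cap\mcH(\tau_{\ell})$, and surjectivity (all of which the paper absorbs into the discussion preceding the theorem), and you additionally offer an adjoint argument for the inverse, but the substance is the same.
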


\begin{proof} The fact that $\ \widehat{}\ $ extends to a Hilbert space isomorphism $\mcH(\tau_{\ell})\to L^2(\Gtemp(\tau_\ell))$ follows from Theorem~\ref{thm:abstract-isomorphism} and our discussion above. In particular, \eqref{eq:tr-plancherel} is a special case of \eqref{eq:abstract-plancherel} in the light of \eqref{eq:concrete-plancherel-measure}, \eqref{eq:HSfromtr}, \eqref{eq:trpif}. We are left with proving that $\ \widecheck{}\ $ is the inverse of $\ \widehat{}\ $, and for this it suffices to verify that $\ \widecheck{}\ $ applied after $\ \widehat{}\ $ is the identity on the dense subset $C_c^{\infty}(G)\cap\mcH(\tau_{\ell})$ of the Hilbert space $\mcH(\tau_{\ell})$. For $f\in C_c^{\infty}(G)\cap \mcH(\tau_{\ell})$, \eqref{eq:projectionbychi}, \eqref{eq:pif}, \eqref{eq:concrete-plancherel-measure}, \eqref{eq:general-inversion-formula}, \eqref{eq:opfromtr}, \eqref{eq:def-spherical-function}, \eqref{eq:trpif} yield
\begin{align*}f(g)
&=\int_{\Gtemp}\tr(\pi(f)\pi(g^{-1}))\,\dd\mupl=\frac{1}{2\ell+1}\int_{\Gtemp}\tr(\pi(f))\tr(\pi(\ov{\chi_\ell})\pi(g^{-1}))\,\dd\mupl\\
&=\frac{1}{(2\ell+1)\pi^2} \sum_{\substack{|p|\leq \ell \\ p\equiv \ell\one}} \int_{0}^{\infty} \widehat{f}(it,p)\, \varphi_{it,p}^{\ell}(g^{-1})\,(t^2+p^2)\,\dd t.
\end{align*} 
The proof is complete.
\end{proof}

Wang~\cite{Wang1974} proved an analogue of the Paley--Wiener theorem for the $\tau_\ell$-spherical transform, and in particular characterized the image of $\mcH(\tau_{\ell})\cap C_c^\infty(G)$ under the transform. The following is \cite[Prop.~4.5]{Wang1974} and should be compared to Theorem~\ref{thm:pws} in the introduction.

\begin{theorem}[Wang]\label{thm10}
Let $f\in\mcH(\tau_{\ell})$ be a test function, and let $R>0$. Then the following two conditions are equivalent.
\begin{enumerate}[(a)]
\item The function $f(g)$ is smooth, and
\[f(k_1 a_h k_2)=0,\qquad |h|>R,\quad k_1,k_2\in K.\]
\item The function $\widehat{f}(\nu,p)$ has a holomorphic extension to $\CC\times\tfrac12\ZZ$ such that
\[\widehat{f}(\nu,p)=\widehat{f}(p,\nu),\qquad\nu\equiv p\one,\quad|\nu|,|p|\leq\ell,\]
and for any $C>0$ we have
\[\widehat{f}(\nu,p)\ll_C (1+|\nu|)^{-C}e^{R|\Re\nu|},\qquad \nu\in\CC,\quad p\in\tfrac12\ZZ.\]
\end{enumerate}
\end{theorem}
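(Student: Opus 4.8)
This is \cite[Prop.~4.5]{Wang1974}; here is how one reconstructs Wang's argument. The guiding principle is that $\tau_\ell$ occurs in each principal series $\pi_{\nu,p}$ with multiplicity at most one (see \eqref{decomp}), so $\mcH(\tau_\ell)$ is a commutative convolution algebra and the $\tau_\ell$-spherical transform \eqref{eq:trpif} is its Gelfand transform; by Theorem~\ref{thm:tr-plancherel} it is a bijection of $\mcH(\tau_\ell)$ onto its image, and the whole question reduces to the behaviour, in both the spectral variable $\nu$ and the group variable $h$, of the scalar functions $\varphi_{\nu,p}^{\ell}(k_1 a_h k_2)$. For $G=\SL_2(\CC)$ these are completely explicit (the Jacobi-type hypergeometric series degenerate to elementary functions), and I will freely use their holomorphic dependence on $\nu$ and the bound \eqref{eq:spherical-function-bound} from the Lemma, the symmetries \eqref{eq:spherical-function-symmetry} and \eqref{eq:spherical-function-symmetry-2}, the equivalence $\pi_{\nu,p}\simeq\pi_{-\nu,-p}$ from \S\ref{SL2C-subsec} (which gives $\widehat f(\nu,p)=\widehat f(-\nu,-p)$ and $\varphi^\ell_{\nu,p}=\varphi^\ell_{-\nu,-p}$), and the Harish--Chandra asymptotics $\varphi^\ell_{\nu,p}(k_1 a_h k_2)=\mathbf{c}_\ell(\nu,p)\,\Psi_{\nu,p}(k_1,h,k_2)+\mathbf{c}_\ell(-\nu,-p)\,\Psi_{-\nu,-p}(k_1,h,k_2)$ as $h\to+\infty$, where $\Psi_{\nu,p}(k_1,h,k_2)\sim A(k_1,k_2)e^{(\nu-1)h}$ and the $\mathbf{c}$-function is rational in $\nu$ with poles confined to $|\nu|\leq\ell$.

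For the implication (a)$\Rightarrow$(b): if $f$ is smooth and supported where $|h|\leq R$, then \eqref{eq:trpif} together with the holomorphy of $\varphi^\ell_{\nu,p}$ in $\nu$ shows that $\widehat f(\nu,p)$ extends holomorphically to $\CC\times\tfrac12\ZZ$, and the symmetry $\widehat f(\nu,p)=\widehat f(p,\nu)$ for $\nu\equiv p\one$, $|\nu|,|p|\leq\ell$, is inherited verbatim from \eqref{eq:spherical-function-symmetry-2}. The exponential type is obtained by restricting \eqref{eq:spherical-function-bound} to $|h|\leq R$, which gives $|\varphi^\ell_{\sigma+it,p}(k_1a_hk_2)|\leq(2\ell+1)e^{|\sigma|R}/|\sinh h|$, the singular factor being absorbed by the $\sinh^2 h$ present in the Cartan measure on $G$; hence $\widehat f(\nu,p)\ll_f e^{R|\Re\nu|}$. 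The rapid decay $(1+|\nu|)^{-C}$ on every vertical strip follows by integrating \eqref{eq:trpif} by parts against the radial part of the Casimir operator, which multiplies $\varphi^\ell_{\nu,p}$ by a scalar of size $\asymp|\nu|^2$ and, transferred onto the smooth compactly supported $f$, produces no boundary terms; iterating gains arbitrary powers of $\nu^{-2}$.

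For the converse (b)$\Rightarrow$(a): the decay hypothesis puts $\widehat f|_{\Gtemp(\tau_\ell)}$ in $L^1\cap L^2$, so $f:=\widecheck{\widehat f}$ is, by Theorem~\ref{thm:tr-plancherel}, the unique element of $\mcH(\tau_\ell)$ with that transform, and one must show $f(k_1a_hk_2)=0$ for $h>R$ (the range $h<-R$ then follows from $\varphi^\ell_{\nu,p}(g)=\varphi^\ell_{\nu,p}(g^{-1})$). Starting from \eqref{eq:inverse-tauell-transform}, using $\varphi^\ell_{it,p}(g^{-1})=\varphi^\ell_{it,p}(g)$, folding the range $(t,p)\mapsto(-t,-p)$ (a symmetry of the integrand), and inserting the Harish--Chandra expansion so that only one of its two terms survives the folding, one arrives at
\[
f(k_1a_hk_2)=\frac{1}{i(2\ell+1)\pi^2}\sum_{\substack{|p|\leq\ell\\p\equiv\ell\one}}\ \int_{\Re\nu=0}\widehat f(\nu,p)\,\mathbf{c}_\ell(\nu,p)\,\Psi_{\nu,p}(k_1,h,k_2)\,(p^2-\nu^2)\,\dd\nu.
\]
Shifting each contour to $\Re\nu=-X$ and letting $X\to\infty$, the bound $\widehat f(\nu,p)\ll_C(1+|\nu|)^{-C}e^{R|\Re\nu|}$ against $\mathbf{c}_\ell(\nu,p)(p^2-\nu^2)\Psi_{\nu,p}(k_1,h,k_2)\ll_{h,\ell}(1+|\nu|)^{C'}e^{-(X+1)h}$ (valid for $h\geq 1$ on $\Re\nu=-X$) makes the shifted integral $O(e^{(R-h)X})\to 0$ whenever $h>R$. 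What remains is the sum of the residues crossed during the shift, which occur only at the finitely many reducibility points $\nu_0$ with $|\nu_0|\leq\ell$ and $\nu_0\equiv p\one$ — ``fake'' poles of $\mathbf{c}_\ell(\nu,p)\Psi_{\nu,p}$ that cancel in the entire function $\varphi^\ell_{\nu,p}$. The crux is that
\[
\sum_{\substack{|p|\leq\ell\\p\equiv\ell\one}}\mathop{\mathrm{Res}}_{\nu=\nu_0}\Bigl[\widehat f(\nu,p)\,\mathbf{c}_\ell(\nu,p)\,\Psi_{\nu,p}(k_1,h,k_2)\,(p^2-\nu^2)\Bigr]=0
\]
for every such $\nu_0$, and this is precisely where the extra symmetry enters: the $p$-term residue is a universal multiple of $\widehat f(\nu_0,p)$ attached to the finite-dimensional subquotient of $\pi_{\nu_0,p}$, and the relations $\widehat f(\nu_0,p)=\widehat f(p,\nu_0)$ make these contributions cancel (equivalently, the residue sum is the $\tau_\ell$-spherical transform of $f$ paired against a genuine finite linear combination of characters that vanishes identically). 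Granting this, $f(k_1a_hk_2)=0$ for all $h>R$.

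The main obstacle is exactly this last residue identity: it is the $\SL_2(\CC)$ incarnation of the Arthur--Campoli relations, and proving it requires knowing the reducibility structure of the non-unitary principal series precisely enough both to evaluate the residues of the split Harish--Chandra expansion and to verify that the single symmetry $\widehat f(\nu,p)=\widehat f(p,\nu)$ (for $\nu\equiv p\one$, $|\nu|,|p|\leq\ell$) is not merely necessary but already sufficient to annihilate them — this is Wang's assertion that ``these are all the relations''. Everything else is the classical one-variable Paley--Wiener theorem applied contour by contour; for $\ell=0$ the symmetry is vacuous, no residues survive, and one recovers Gangolli's theorem for bi-$K$-invariant functions on $\SL_2(\CC)$.
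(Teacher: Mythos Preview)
The paper gives no proof of this theorem; it simply records it as \cite[Prop.~4.5]{Wang1974}. So your proposal is to be weighed against Wang's original argument, traces of which are visible in the paper's proof of the closely related Schwartz-class Theorem~\ref{thm:pws}.

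Your direction (a)$\Rightarrow$(b) is essentially correct. For (b)$\Rightarrow$(a) your architecture is the standard one (split the spherical function via the Harish--Chandra expansion, shift the $\nu$-contour, and argue that the residues at the reducibility points cancel), but you explicitly \emph{grant} yourself the decisive residue-cancellation identity rather than prove it. That identity is the content of the theorem beyond the spherical case; everything else is routine Paley--Wiener. You also do not verify smoothness of $f$ in this direction, only the vanishing for $|h|>R$.

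Wang's actual route, as one can infer from the paper's proof of Theorem~\ref{thm:pws}, is more concrete than your outline. He first passes to the Abel-type transform $\breve f(h,p)$ of \eqref{eq:Abelnew}, so that $\widehat f(\nu,p)$ becomes the ordinary one-variable Fourier--Laplace transform \eqref{eq:sphericalviaAbel}; the Euclidean Paley--Wiener theorem then handles exponential type and decay directly. In the converse direction, instead of an abstract $\mathbf{c}$-function expansion and residue calculus, Wang uses his explicit formula for $\varphi^{\ell}_{\nu,p}(k_1a_hk_2)$ in terms of Wigner matrix coefficients (the display below \cite[(29)]{Wang1974}, quoted in the proof of Theorem~\ref{thm:pws}) to write $f(k_1a_hk_2)$ as a finite sum of truncated Euclidean inverse Fourier integrals $\int_{-h}^{h}\widetilde f(s,p)\,(\cdots)\,\dd s$. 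The symmetry $\widehat f(\nu,p)=\widehat f(p,\nu)$ is then shown, by an explicit calculation with the $D^{\ell}_{p,q}$ (\cite[(38)--(41)]{Wang1974}), to force the corresponding \emph{untruncated} integrals to vanish identically, which yields $f(k_1a_hk_2)=0$ for $|h|>R$. Your residue identity is the abstract shadow of this explicit cancellation, but you have not carried it out.
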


We now prove a Schwartz class version of this result as stated in Theorem~\ref{thm:pws}.

\begin{proof}[Proof of Theorem~\ref{thm:pws}]
For harmony of notation with \cite{Wang1974}, in this proof we use $D_{p,q}^{\ell}(k)$ to denote the matrix coefficients of $\tau_{\ell}$ relative to the basis obtained by normalizing the orthogonal basis $\{z^{\ell-q}:|q|\leq\ell,q\equiv\ell\pmod 1\}$ in the space $V_{2\ell}$ of \S\ref{SU2-subsec}. Thus we explicitly have the renormalization
\[ D_{p,q}^{\ell}(k)=\left(\frac{(\ell-p)!(\ell+p)!}{(\ell-q)!(\ell+q)!}\right)^{1/2}\Phi_{p,q}^{\ell}(k). \]

Assume condition \ref{pws-a}. The holomorphic extension of $\widehat{f}(\nu,p)$ follows from \eqref{eq:spherical-function-bound} coupled with \eqref{eq:mAbound} for $m=0$, and then \eqref{eq:symmetry} is immediate from \eqref{eq:spherical-function-symmetry-2}. In order to derive \eqref{eq:BCbound}, we use an alternate representation of $\widehat{f}(\nu,p)$. We shall assume that $|p|\leq\ell$ and $p\equiv\ell\pmod{1}$, for else $\widehat{f}(\nu,p)=0$.
By the third line of the second display on \cite[p.~621]{Wang1974} and \cite[Lemma~3.2]{Wang1974}, we see that the (unique) holomorphic extension is also provided by
\begin{equation}\label{eq:sphericalviaAbel}
\widehat{f}(\nu,p)=\frac{2\ell+1}{2}\int_{-\infty}^\infty\breve{f}(h,p)\,e^{\nu h}\,\dd h,
\end{equation}
where
\begin{equation}\label{eq:Abelnew}
\breve{f}(h,p):=e^h\int_K\int_N f(ka_hn)\,D_{p,p}^{\ell}(k)\,\dd k\,\dd n.
\end{equation}
We claim that, for any $m\in\ZZ_{\geq 0}$ and $A>0$, we have
\begin{equation}\label{eq:Abelbound}
\frac{\partial^m}{\partial h^m}\breve{f}(h,p)\ll_{m,A} e^{-A|h|},\qquad h\in\RR,\quad |p|\leq\ell,\quad p\equiv\ell\one.
\end{equation}
For $|h|>1$ this follows by writing $a_hn=k_1a_{h'}k_2$ in \eqref{eq:Abelnew}, and then combining \eqref{eq:mAbound} with some calculus to keep track of the dependence of $h'\in\RR$ and $k_1,k_2\in K$ on $h\in\RR$. For $|h|\leq 1$ we proceed similarly for the part of the integral in \eqref{eq:Abelnew} that corresponds to $n=\big(\begin{smallmatrix}1&z\\& 1\end{smallmatrix}\big)$ with $|z|>1$, while we estimate the ($h$-derivatives of the) remaining integral directly by the smoothness of $f(g)$. With \eqref{eq:Abelbound} at hand, \eqref{eq:BCbound} follows from \eqref{eq:sphericalviaAbel} via integration by parts. We proved that \ref{pws-a} implies \ref{pws-b}.

Assume condition \ref{pws-b}. By Theorem~\ref{thm:tr-plancherel},
\[f(g)=\frac{1}{(2\ell+1)\pi^2} \sum_{\substack{|p|\leq \ell \\ p\equiv \ell\one}} \int_{0}^{\infty} \widehat{f}(it,p)\, \varphi_{it,p}^{\ell}(g^{-1})\,(t^2+p^2)\,\dd t.\]
Let us restrict, without loss of generality, to $g=k_1a_hk_2$ with $h>0$. Using the display below \cite[(29)]{Wang1974}\footnote{We note that in \cite[(29)]{Wang1974} the product $k_2k_1$ should be conjugated as $u_{\varphi_1}^{-1}k_2k_1u_{\varphi_1}$, and the integral over $0\leq\varphi_1\leq 2\pi$ with normalization factor $1/(2\pi)$ is missing. After this correction, the crucial next display follows as stated, by expanding the matrix coefficient $D_{-p,-p}^{\ell}$ (in our notation) via the entry-by-entry product of three matrices and executing the $\varphi_1$-integral.}, we infer
\[f(g)=\frac{1}{4\pi^2\sinh(h)}\sum_{\substack{|p|,|j|\leq \ell \\ p,j\equiv\ell\one}}
\int_{-h}^h \widetilde{f}(s,p) \,
D_{-p,j}^{\ell}(v_{\theta}^{-1})D_{j,j}^{\ell}(k_2k_1) D_{j,-p}^{\ell}(v_{\theta'})\,\dd s,\]
where $D_{-p,j}^{\ell}(v_{\theta}^{-1})$ and $D_{j,-p}^{\ell}(v_{\theta'})$ can be explicated using \cite[(5) \& (28)]{Wang1974}, and
\begin{equation}\label{eq:def-tilde-f-s-p}
\widetilde{f}(s,p):=\int_{-\infty}^\infty\widehat{f}(it,p)\,e^{-its}\,(t^2+p^2)\,\dd t,\qquad s\in\RR.
\end{equation}
By \eqref{eq:BCbound} and Cauchy's theorem, it follows for any $n\in\ZZ_{\geq 0}$ and $D>0$ that
\begin{equation}\label{eq:nDbound}
\frac{\partial^n}{\partial s^n}\widetilde{f}(s,p)\ll_{n,D} e^{-D|s|},\qquad s\in\RR.
\end{equation}
The smoothness of $f(g)$ is now straightforward, and this automatically verifies \eqref{eq:mAbound} for $|h|\leq 1$. From now on we can assume, without loss of generality, that $h>1$. From \eqref{eq:symmetry}, \eqref{eq:nDbound}, and the calculation around \cite[(38)--(41)]{Wang1974}, we see that
\[\sum_{\substack{|p|,|j|\leq \ell \\ p,j\equiv\ell\one}}
\int_{-\infty}^\infty \widetilde{f}(s,p) \,
D_{-p,j}^{\ell}(v_{\theta}^{-1})D_{j,j}^{\ell}(k_2k_1)D_{j,-p}^{\ell}(v_{\theta'})\,\dd s=0,\]
hence in fact
\[f(g)=\frac{-1}{4\pi^2\sinh(h)}\sum_{\substack{|p|,|j|\leq \ell \\ p,j\equiv\ell\one}}
\left(\int_{-\infty}^{-h}+\int_h^\infty\right)\widetilde{f}(s,p) \,
D_{-p,j}^{\ell}(v_{\theta}^{-1})D_{j,j}^{\ell}(k_2k_1)D_{j,-p}^{\ell}(v_{\theta'})\,\dd s.\]
From here it is straightforward to deduce \eqref{eq:mAbound} for $h>1$, using \eqref{eq:nDbound} and the remarks above it. We proved that \ref{pws-b} implies \ref{pws-a}.
\end{proof}

We shall denote by $\mcH(\tau_{\ell})_{\infty}$ the set of functions satisfying the equivalent conditions \ref{pws-a} and \ref{pws-b} of Theorem~\ref{thm:pws}. It is clear that $\mcH(\tau_{\ell})_{\infty}$ is a convolution subalgebra of $L^1(G)\cap L^2(G)$.

\begin{remark} In the previous display, we may estimate the product of the three matrix coefficients (recalling that each matrix $(D_{p,q}^{\ell}(k))_{p,q}$ is orthogonal) using the trivial bound $|D_{j,j}^{\ell}|\leqslant 1$ and the Cauchy--Schwarz inequality for the remaining two factors. Combining this with the observation $\widetilde{f}(s,p)=\widetilde{f}(-s,-p)$ yields the following refinement of \eqref{eq:mAbound} when $m=0$:
\begin{equation}\label{eq:inverse-spherical-transform-estimate}
\bigl|f(k_1a_hk_2)\bigr|\leq\sum_{\substack{|p|\leq \ell \\ p\equiv\ell\one}}
\int_h^\infty \bigl|\widetilde{f}(s,p)\bigr|\,\dd s,\qquad h>1,\quad k_1,k_2\in K. 
\end{equation}
\end{remark}

We end this subsection by stating a two-variable version of some of the previous definitions and results. Taking (topological) tensor products of Hilbert spaces, we can identify $\mcH(\tau_{\ell}) \hat{\otimes} \mcH(\tau_{\ell})$ with the space of functions $f \in L^2(G \times G)$ satisfying
\begin{itemize}
\item $f(g_1, g_2) = f(k_1g_1k_1^{-1}, k_2g_2k_2^{-1})$ for almost every
$g_1, g_2 \in G$ and $k_1, k_2 \in K$;
\item $f=(\ov{\chi_\ell},\ov{\chi_\ell})  \star f \star (\ov{\chi_\ell},\ov{\chi_\ell})$ almost everywhere.
\end{itemize}
This can be seen by projecting the isomorphism between $L^2(G)\hat{\otimes} L^2(G)$ and $L^2(G\times G)$ (see e.g.\ \cite[Cor.~4.11.9]{Simon2015}) to $\mcH(\tau_{\ell}) \hat{\otimes} \mcH(\tau_{\ell})$ and the (closed) subspace of functions in question. By Theorem~\ref{thm:tr-plancherel}, this space is isometrically isomorphic to $L^2(\Gtemp(\tau_{\ell})^2)$ via the obvious extension of the map \eqref{eq:trpif}:
\begin{equation}\label{doubletransform}
\widehat{f}(\nu_1, p_1, \nu_2, p_2) := \int_{G_1\times G_2} f(g_1, g_2) \,
\varphi_{\nu_1, p_1}^{\ell}(g_1)\varphi_{\nu_2, p_2}^{\ell}(g_2) \, \dd g_1\,\dd g_2.
\end{equation}
For $h\in L^1(\Gtemp(\tau_{\ell})^2)\cap L^2(\Gtemp(\tau_{\ell})^2)$, the 
inverse transform is given as in \eqref{eq:inverse-tauell-transform}:
\begin{equation}\label{inversedoubletransform}
\begin{split}
\widecheck{h}(g_1, g_2) := \frac{1}{(2\ell+ 1)^2\pi^4}&
\sum_{\substack{|p_1|, |p_2|\leq\ell\\p_1 \equiv p_2\equiv\ell\one}}
\int_{0}^{\infty}\int_{0}^{\infty}  h(it_1,p_1, it_2, p_2)\\
&\qquad\varphi_{it_1,p_1}^{\ell}(g_1^{-1})\varphi_{it_2,p_2}^{\ell}(g_2^{-1})\,(t_1^2+p_1^2)(t_2^2+p_2^2)\,\dd t_1\,\dd t_2.
\end{split}
\end{equation}
It is straightforward to adapt the above presented proof of Theorem~\ref{thm:pws} to obtain the following variant for $\mcH(\tau_{\ell}) \hat{\otimes} \mcH(\tau_{\ell})$:

\begin{theorem}\label{thm:pws2} For $f\in\mcH(\tau_{\ell}) \hat{\otimes} \mcH(\tau_{\ell})$, the following two conditions are equivalent (with implied constants depending on $f$).
\begin{enumerate}[(a)]
\item\label{pws2-a} The function $f(g_1,g_2)$ is smooth, and for any $m_1,m_2\in\ZZ_{\geq 0}$ and $A>0$ we have
\[\frac{\partial^{m_1+m_2}}{\partial h_1^{m_1}\partial h_2^{m_2}}f(k_1 a_{h_1} k_2,k_3 a_{h_2} k_4)\ll_{m_1,m_2,A} e^{-A(|h_1|+|h_2|)},
\quad h_1,h_2\in\RR,\,\, k_1,k_2,k_3,k_4\in K.\]
\item\label{pws2-b} The function $\widehat{f}(\nu_1,p_1,\nu_2,p_2)$ has a holomorphic extension to $\CC\times\tfrac12\ZZ\times\CC\times\tfrac12\ZZ$ such that
\begin{align*}
\widehat{f}(\nu_1,p_1,\nu_2,p_2) &= \widehat{f}(p_1,\nu_1,\nu_2,p_2),
\qquad \nu_1 \equiv p_1\pmod{1},\quad |\nu_1|, |p_1| \leq \ell,\\
\widehat{f}(\nu_1,p_1,\nu_2,p_2) &= \widehat{f}(\nu_1,p_1,p_2,\nu_2),
\qquad \nu_2 \equiv p_2\pmod{1},\quad |\nu_2|, |p_2| \leq \ell,
\end{align*}
and for any $B,C>0$ we have
\[\widehat{f}(\nu_1,p_1,\nu_2,p_2) \ll_{B, C} (1+|\nu_1|+|\nu_2|)^{-C},
\qquad |\Re\nu_1|,|\Re\nu_2|\leq B,\quad p_1,p_2\in\tfrac{1}{2}\ZZ.\]
\end{enumerate}
\end{theorem}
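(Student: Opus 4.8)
The plan is to mimic the single-variable proof of Theorem~\ref{thm:pws} essentially verbatim, working one variable at a time and invoking Fubini to reduce statements about the two-variable transform to the one-variable results already established. The key structural observation is that everything in sight factorizes: the representation space $\mcH(\tau_{\ell})\hat{\otimes}\mcH(\tau_{\ell})$, the transform \eqref{doubletransform}, the inverse transform \eqref{inversedoubletransform}, and the Paley--Wiener conditions \ref{pws2-a}, \ref{pws2-b} are all ``tensor squares'' of their one-variable counterparts, so the heavy analytic lifting has already been done.

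First, for the implication \ref{pws2-a}$\Rightarrow$\ref{pws2-b}: the holomorphic extension of $\widehat{f}(\nu_1,p_1,\nu_2,p_2)$ in each variable follows from \eqref{eq:spherical-function-bound} applied in each of the two integrals of \eqref{doubletransform}, combined with the $m_1=m_2=0$ case of the bound in \ref{pws2-a}; joint holomorphy in $(\nu_1,\nu_2)$ then follows from Hartogs' theorem (or a direct two-variable Morera argument). The two symmetry relations are immediate from \eqref{eq:spherical-function-symmetry-2} applied to $\varphi_{\nu_1,p_1}^{\ell}(g_1)$ and to $\varphi_{\nu_2,p_2}^{\ell}(g_2)$ separately. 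For the decay bound \eqref{eq:BCbound} in the two-variable setting, I would introduce the two-variable Abel-type transform
\[
\breve{f}(h_1,p_1,h_2,p_2):=e^{h_1+h_2}\int_{K^2}\int_{N^2} f(k_1a_{h_1}n_1,k_2a_{h_2}n_2)\,D_{p_1,p_1}^{\ell}(k_1)D_{p_2,p_2}^{\ell}(k_2)\,\dd k_1\,\dd k_2\,\dd n_1\,\dd n_2,
\]
so that, as in \eqref{eq:sphericalviaAbel}, $\widehat{f}(\nu_1,p_1,\nu_2,p_2)$ equals a constant times the double Laplace-type integral $\int_{\RR^2}\breve{f}(h_1,p_1,h_2,p_2)\,e^{\nu_1h_1+\nu_2h_2}\,\dd h_1\,\dd h_2$. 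The analogue of \eqref{eq:Abelbound}, namely rapid decay of all mixed $h$-derivatives of $\breve{f}$ uniformly in $p_1,p_2$, follows by exactly the same argument as in the one-variable proof (writing $a_{h_i}n_i=k_ia_{h_i'}k_i''$ for the ranges where this is advantageous and differentiating directly otherwise), carried out in each variable; then repeated integration by parts in $h_1$ and $h_2$ separately yields \eqref{eq:BCbound}.

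Second, for \ref{pws2-b}$\Rightarrow$\ref{pws2-a}: I would start from the two-variable inversion formula \eqref{inversedoubletransform}, which factorizes as a product of two single-variable inverse transforms, and restrict to $g_i=k_ia_{h_i}k_i''$ with $h_1,h_2>0$. Expanding each $\varphi_{it_i,p_i}^{\ell}(g_i^{-1})$ via the display below \cite[(29)]{Wang1974} exactly as in the one-variable proof, one obtains $f(g_1,g_2)$ as a product over the two variables of finite sums of integrals against the kernels $\widetilde{f}$ from \eqref{eq:def-tilde-f-s-p} --- here the relevant object is the two-variable analogue $\widetilde{f}(s_1,p_1,s_2,p_2)=\int_{\RR^2}\widehat{f}(it_1,p_1,it_2,p_2)e^{-it_1s_1-it_2s_2}(t_1^2+p_1^2)(t_2^2+p_2^2)\,\dd t_1\,\dd t_2$, whose mixed $s$-derivatives decay rapidly by \eqref{eq:BCbound} and Cauchy's theorem, giving the two-variable analogue of \eqref{eq:nDbound}. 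Smoothness of $f$ and the bound \ref{pws2-a} for $h_1,h_2\le 1$ are then immediate; for the ranges $h_i>1$ one uses the symmetry relations to conclude, as in the one-variable case, that the integral of $\widetilde{f}(s_1,p_1,s_2,p_2)$ against the product of matrix-coefficient kernels over the \emph{full} range $s_i\in\RR$ vanishes (this vanishing can be applied in each variable independently), so that $f(g_1,g_2)$ is given by the tail integrals over $|s_i|>h_i$, from which \ref{pws2-a} follows by the decay of $\widetilde{f}$.

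The main obstacle --- which is bookkeeping rather than a genuine difficulty --- is organizing the two-variable argument so that the one-variable lemmas can be quoted cleanly rather than redone. Concretely, one must be careful that the vanishing-of-the-full-integral step works variable by variable (it does, since the matrix-coefficient identities and the symmetry \eqref{eq:symmetry} are invoked separately in $(\nu_1,p_1)$ and $(\nu_2,p_2)$), and that the uniformity in $p_1,p_2$ of all the decay estimates is genuinely uniform in \emph{both} parameters simultaneously, which is automatic because the one-variable estimates were already uniform in the single parameter $p$ and the two integrals decouple. No new analytic input beyond Theorem~\ref{thm:pws} and Fubini/Hartogs is needed, which is why the paper states the result as ``straightforward to adapt.''
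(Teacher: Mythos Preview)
Your proposal is correct and is precisely the adaptation the paper has in mind: the authors do not give a separate proof but simply state that ``it is straightforward to adapt the above presented proof of Theorem~\ref{thm:pws},'' and your outline fills in exactly those details, correctly handling the variable-by-variable vanishing step (which works because each symmetry in \ref{pws2-b} fixes the other pair $(\nu_j,p_j)$, so Wang's calculation applies one coordinate at a time) and the uniformity in $(p_1,p_2)$. The only point worth recording explicitly is the mixed case $h_1>1$, $|h_2|\leq 1$ (and its symmetric counterpart), where one applies the tail reduction in the first variable and the direct smoothness/compactness argument in the second; this is implicit in your sketch but should be stated for completeness.
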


We shall denote by $\mcH(\tau_{\ell},\tau_{\ell})_{\infty}$ the set of functions satisfying the equivalent conditions \ref{pws2-a} and \ref{pws2-b} of Theorem~\ref{thm:pws2}; this is clearly a convolution subalgebra of $L^1(G\times G)\cap L^2(G\times G)$.

\subsection{Hecke operators}
The arithmetic quotient $\Gamma\backslash G$ comes equipped with a rich family of Hecke correspondences, which we now describe, referring to \cite{BlomerHarcosMilicevic2016} for further details and references. For every $n\in\ZZ[i]\setminus\{0\}$, consider the set
\[ \Gamma_n:=\left\{\begin{pmatrix} a&b\\c&d\end{pmatrix}\in\MM_2(\ZZ[i]):ad-bc=n\right\}. \]
In particular, $\Gamma_1=\Gamma$. Then we may define the Hecke operator $T_n$ acting on functions $\phi:\Gamma\backslash G\to\CC$ by
\begin{equation}\label{Hecke-def}
(T_n\phi)(g):=\frac1{|n|}\sum_{\gamma\in\Gamma\backslash\Gamma_n}\phi\left(\frac1{\sqrt{n}}\gamma g\right)=\frac1{4|n|}\sum_{ad=n}\sum_{b\bmod d}\phi\left(\frac1{\sqrt{n}}\begin{pmatrix}a&b\\0&d\end{pmatrix}g\right),
\end{equation}
where the result is independent of the choice of the square-root since $\pm\id\in\Gamma$. In particular, since $\Gamma_{-1}=\Gamma\cdot\big(\begin{smallmatrix}-1&\\&1\end{smallmatrix}\big)$ and $\frac{1}{i}\big(\begin{smallmatrix}-1&\\&1\end{smallmatrix}\big)=\big(\begin{smallmatrix}i&\\&-i\end{smallmatrix}\big)\in\Gamma$, we have $T_{-1}=T_1=\id$. We also observe that, as $\gamma$ ranges through a set of representatives of $\Gamma\backslash\Gamma_n$, $n\gamma^{-1}$ ranges through a set of representatives of $\Gamma_n/\Gamma$.

These Hecke operators are self-adjoint on $L^2(\Gamma\backslash G)$, commute with each other and the Laplace operator; thus they act by constants $\lambda_n(V)$ on each irreducible component $V\subset L^2(\Gamma\backslash G)$, with non-zero vectors in each $V$ being joint Hecke--Maa{\ss} eigenfunctions. They also satisfy the multiplicativity relation
\begin{equation}\label{hecke-mult}
T_mT_n=\sum_{(d)\mid(m,n)}T_{mn/d^2},\qquad m,n\in\ZZ[i]\setminus\{0\},
\end{equation}
where it is clear that the right-hand side does not depend on the choice of the generator $d$. Finally we have the Rankin--Selberg bound
\begin{equation}\label{RS-bound}
\sum_{|n|^2\leq x}|\lambda_n(V)|^2\ll_Vx.
\end{equation}

\subsection{Eisenstein series and spectral decomposition}\label{Eisenstein-subsec}
In this subsection, we review the construction and properties of the (not necessarily spherical) Eisenstein series on $\Gamma\backslash G$. The quotient $\Gamma\backslash G$ has a unique cusp at $\infty$. For $\ell\in\ZZ_{\geq 0}$, $p,q\in\ZZ$ with $2\mid p$ and $|p|,|q|\leq\ell$, and $\nu\in\CC$ with $\Re\nu>1$, we define the Eisenstein series of type $(\ell,q)$ at $\infty$ as in \cite[Def.~3.3.1]{Lokvenec-Guleska2004} by the absolutely and locally uniformly convergent series
\begin{equation}\label{Eisdef}
E_{\ell,q}(\nu,p)(g):=\sum_{\gamma\in\Gamma_{\infty}\backslash\Gamma}\phi_{\ell,q}(\nu,p)(\gamma g),
\end{equation}
where $\Gamma_{\infty}$ is the subgroup of upper-triangular matrices in $\Gamma$ (the stabilizer of $\infty$ in $\Gamma$), and
\begin{equation}\label{eq:phi-ell-q-nu-p}
\phi_{\ell,q}(\nu,p)\left(\begin{pmatrix}r&\ast\\&r^{-1}\end{pmatrix}k\right):=r^{2(1+\nu)}\Phi_{p,q}^{\ell}(k),\qquad r>0,\quad k\in K.
\end{equation}
These Eisenstein series possess a meromorphic continuation to $\nu\in\CC$, which is holomorphic along $i\RR$ \cite[\S5.1]{Lokvenec-Guleska2004}. An easy calculation with \eqref{Hecke-def} and \eqref{matrix-coeff} shows that they are also eigenfunctions of the Hecke operators $T_n$ with
\begin{equation}
\label{Eisenstein-Hecke-eigen}
T_nE_{\ell,q}(\nu,p)=\lambda_n(E(\nu,p))E_{\ell,q}(\nu,p),\quad \lambda_n(E(\nu,p)):=\frac{1}{4}\sum_{n=ad}\chi_{\nu,p}(a)\chi_{-\nu,-p}(d),
\end{equation}
where $\chi_{\nu,p}(z):=|z|^{\nu}(z/|z|)^{-p}$. In particular,
\begin{equation}\label{eq:hecke-i-eigenvalue-eisenstein}
\lambda_{in}(E(\nu,p))=(-1)^{p/2}\lambda_n(E(\nu,p)).
\end{equation}
While $E_{\ell,q}(\nu,p)$ for individual $\nu\in i\RR$ (barely) fail to lie in $L^2(\Gamma\backslash G)$, their averages against $C_c(i\RR)$ weights $f(\nu)$ comfortably do, and upon taking the Hilbert space closure of their span and orthocomplements one obtains the familiar orthogonal decomposition
\begin{equation}\label{eq:spectral-decomposition}
L^2(\Gamma\backslash G)=\CC\cdot 1\oplus L^2(\Gamma\backslash G)_{\cusp}\oplus L^2(\Gamma\backslash G)_{\Eis}.
\end{equation}

Let $H(\nu,p)$ be the linear span of all $\phi_{\ell,q}(\nu,p)$ with $|p|,|q|\leq\ell$. By \eqref{eq:phi-ell-q-nu-p}, the functions $f\in H(\nu,p)$ satisfy
\[f\left(\begin{pmatrix}z&\ast\\&z^{-1}\end{pmatrix}g\right)=|z|^2\chi_{\nu,p}(z^2)f(g),\qquad z\in\CC^\times,\quad g\in G,\]
and they are determined by their restriction to $K$. In fact $H(\nu,p)$ as a $(\mfg,K)$-module is isomorphic to the $K$-finite part of $V_{\nu,p}$ featured in \eqref{eq:tau-ell-isotypical-decomposition-algebraic}. That is, the appropriate completion of $H(\nu,p)$ serves as a model of the Fréchet/Hilbert space representation $\pi_{\nu,p}$, and we shall denote by 
$H^\infty(\nu,p)$ the dense subspace of smooth vectors in this completion.

Denoting by $C^K(\Gamma\backslash G)$ the space of $K$-finite smooth functions on $\Gamma\backslash G$, an automorphic representation of type $(\nu,p)$ for $\Gamma\backslash G$ may be realized as a unitary $(\mfg,K)$-module homomorphism $T:H(\nu,p)\to C^K(\Gamma\backslash G)$, with the corresponding $\pi_{\nu,p}$ an irreducible unitary representation on the Hilbert space $V_{\nu,p}$, cf.\ \cite[\S3.4 \& \S8]{Lokvenec-Guleska2004}. Such a $T$ may arise as $T_V$ for a cuspidal consituent $V\simeq V_{\nu,p}$ occurring discretely in $L^2(\Gamma\backslash G)_{\cusp}$, or from the Eisenstein series via
\[T_{E(\nu,p)}\phi_{\ell,q}(\nu,p):=E_{\ell,q}(\nu,p),\qquad |p|,|q|\leq\ell.\]
Indeed, by \eqref{Eisdef}, the last display defines a $(\mfg,K)$-module homomorphism for $\Re\nu>1$, hence by analytic continuation for all $\nu\in\CC$ where the relevant Eisenstein series have no pole. Following custom, we lighten the notation by denoting a generic automorphic representation of type $(\nu,p)$, whether of type $T_V$ or $T_{E(\nu,p)}$, as $V$, and its associated Hecke eigenvalues as $\lambda_n(V)$. Finally, we shall use that the above $(\mfg,K)$-module homomorphism extends uniquely to a $G$-module homomorphism $H^\infty(\nu,p)\to C^\infty(\Gamma\backslash G)$, and its image consists of functions of moderate growth.

Now \eqref{eq:spectral-decomposition} is explicated by the following two spectral identities. For $f$ in the space $C^{\infty}_0(\Gamma\backslash G)$ of smooth complex-valued functions on $\Gamma\backslash G$ with all rapidly decaying derivatives, we have
\begin{equation}\label{eq:spectral-decomposition-EGM}
\begin{split}
f= \frac{\langle f, 1\rangle }{\vol(\Gamma \backslash G)}  & + \sum_{\text{$V$ cuspidal}} \sum_{\substack{q,\ell\in \ZZ \\ |p_V|,|q|\leq \ell}} \frac{\langle f,T_V \phi_{\ell,q}(\nu_V,p_V) \rangle}{{\|\Phi_{p_V,q}^{\ell}\|}_K^2}  T_V \phi_{\ell,q}(\nu_V,p_V)\\ & + \frac{1}{\pi i} \int_{(0)} \sum_{p\in 2\ZZ} \sum_{\substack{q,\ell\in \ZZ \\ |p|,|q|\leq \ell}} \frac{\langle f,E_{\ell,q}(\nu,p) \rangle}{{\|\Phi_{p,q}
^{\ell}\|}_K^2} E_{\ell,q}(\nu,p) \,\dd\nu,
\end{split}
\end{equation}
with the obvious interpretation of $\langle f,E_{\ell,q}(\nu,p) \rangle$. For
$f_1,f_2\in C^{\infty}_0(\Gamma \backslash G)$, we have with the same interpretation
\begin{equation}\label{eq:spectral-decomposition-plancherel}
\begin{split}
\langle f_1,f_2 \rangle = \frac{ \langle f_1, 1\rangle \langle 1, f_2\rangle}{\vol(\Gamma \backslash G)} & + \sum_{\text{$V$ cuspidal}} \sum_{\substack{q,\ell\in \ZZ \\ |p_V|,|q|\leq \ell}} \frac{\langle f_1,T_V \phi_{\ell,q}(\nu_V,p_V) \rangle \langle T_V \phi_{\ell,q}(\nu_V,p_V), f_2 \rangle}{{\|\Phi_{p_V,q}^{\ell}\|}_K^2}  \\ & + \frac{1}{\pi i} \int_{(0)} \sum_{p\in 2\ZZ} \sum_{\substack{q,\ell\in \ZZ \\ |p|,|q|\leq \ell}} \frac{\langle f_1,E_{\ell,q}(\nu,p) \rangle \langle  E_{\ell,q}(\nu,p),f_2 \rangle}{{\|\Phi_{p,q}
^{\ell}\|}_K^2} \,\dd\nu.
\end{split}
\end{equation}
Compare with \cite[Ch.~6, Th.~3.4]{ElstrodtGrunewaldMennicke1998} and \cite[Th.~8.1]{Lokvenec-Guleska2004}.

We shorten the notation in two ways. First, for an automorphic representation $V$ (cuspidal or Eisenstein) of type $(\nu,p)$ occurring in $L^2(\Gamma\backslash G)$, we write
\[\phi_{\ell,q}^{V}:=\frac{T_V\phi_{\ell,q}(\nu,p)}{{\|\Phi_{p,q}^{\ell}\|}_K},\qquad |p|,|q|\leq\ell.\] 
In particular, when at least one of two such $V$ and $V'$ is cuspidal, $\langle\phi_{\ell,q}^V,\phi_{\ell',q'}^{V'}\rangle$ equals $\delta_{(\ell,q,V)=(\ell',q',V')}$. Second, while the decompositions in \eqref{eq:spectral-decomposition-EGM} and \eqref{eq:spectral-decomposition-plancherel} are over all automorphic representations $V$ (cuspidal or Eisenstein) occurring in $L^2(\Gamma\backslash G)$, keeping in mind the $\tau_{\ell}$-spherical transform of \S\ref{section:sphericaltransform}, it will be useful to introduce the shorthand notation $\int_{[\ell]}\,\dd V$ for the sum-integral over those $V$ of type $(\nu,p)$ such that $\pi_{\nu,p}\in\widehat{G}(\tau_{\ell})$ (that is, with $|p|\leq\ell$ as well as $p\in 2\ZZ$ for $V$ Eisenstein). Thus, for example, \eqref{eq:spectral-decomposition-EGM} may be rewritten in the more compact form
\begin{equation}
\label{compactform}
f=\frac{\langle f,1\rangle}{\vol(\Gamma \backslash G)}+\sum_{\ell\geq 0}\int_{[\ell]}\sum_{|q|\leq\ell}\langle f,\phi_{\ell,q}^{V}\rangle\phi_{\ell,q}^{V}\,\dd V.
\end{equation}

\subsection{Rankin--Selberg convolutions}\label{RSSection}
In this subsection, we review briefly the properties of Rankin--Selberg $L$-functions. We shall restrict to automorphic representations for $\Gamma\backslash G$ on which the Hecke operator $T_i$ acts trivially, so that they lift to automorphic representations for $\PGL_2(\ZZ[i])\backslash\PGL_2(\CC)$. This allows us to refer to the theory of $\GL_2$.

The Rankin--Selberg $L$-function of two automorphic representations $V_j$ of type
$(\nu_j,p_j)\in i\RR\times \ZZ$ for $\Gamma\backslash G$ is defined by the absolutely convergent series (cf.~\eqref{RS-bound})
\begin{equation}\label{RSdef}
L(s, V_1\times V_2) =  \frac{1}{4}\zeta_{\QQ(i)}(2s)\sum_{n \in \ZZ[i] \setminus \{0\}}
\frac{\lambda_n(V_1)\lambda_n(V_2)}{(|n|^2)^s}, \qquad \Re s > 1.
\end{equation}
This can be verified by matching the Euler factors on the two sides, using \cite[Th.~15.1]{Jacquet1972}, \cite[Prop.~3.5]{JacquetLanglands}, \cite[(3.1.3)]{Tate1977}, and \cite[Lemma~1.6.1]{Bump1997}.
In particular,
\[L(s,V\times E(\nu,p))=L(s-\tfrac{1}{2}\nu,V\otimes\chi_p)L(s+\tfrac{1}{2}\nu,V\otimes\chi_{-p})\]
for $V$ cuspidal and $(\nu,p)\in i\RR\times 4\ZZ$ according to \eqref{eq:hecke-i-eigenvalue-eisenstein}, as well as
\[ L\big(s,E(\nu_1,p_1)\times E(\nu_2,p_2)\big)=\prod_{\epsilon_1,\epsilon_2\in\{\pm 1\}}L\big(s+\tfrac{1}{2}(\epsilon_1\nu_1+\epsilon_2\nu_2),\chi_{-\epsilon_1p_1-\epsilon_2p_2}\big),\]
with $(\nu_j,p_j)\in i\RR\times 4\ZZ$ and $\chi_p(z):=(z/|z|)^{-p}$. All $L$-functions are meant over $\QQ(i)$.

The Rankin--Selberg $L$-function $L(s,V_1\times V_2)$ possesses a meromorphic continuation to the entire complex plane with the exception of finitely many possible poles along the line $\Re s=1$. It is in fact entire except as follows (cf.\ \cite[Th.~2.2]{GelbartJacquet}):
\begin{itemize}
\item If $V_1 = V_2\,(=V)$ is cuspidal of type $(\nu,p)$ (that is, $(\nu_1,p_1)=\pm(\nu_2,p_2)$), there is a simple pole at $s=1$ with (strictly) positive residue
\begin{equation}\label{RS-res}
\mathop{\mathrm{res}}_{s=1} L(s, V\times V) = \frac{\pi}{4}\cdot L(1,\mathrm{ad^2}V) \gg_{\eps} \big((1 + |p|)(1+ |\nu|)\big)^{-\eps},
\end{equation}
where the lower bound follows from \cite[Prop.~3.2]{Maga2013}.
\item If $V_1$ and $V_2$ are both Eisenstein series with $p_1 = \epsilon p_2$ for some $\epsilon\in\{\pm 1\}$, there are simple poles at $s = 1+\eta(\nu_1 - \epsilon\nu_2)/2$  for $\eta\in\{\pm 1\}$ with residue
\begin{equation}
\label{RS-res-Eis}
\mcL_{\eta} (V_1,V_2):=\frac{\pi}{4}\cdot\zeta_{\QQ(i)}(1+\eta(\nu_1-\epsilon\nu_2))L(1+\eta \nu_1,\chi_{-2\eta p_1})L(1-\eta\epsilon\nu_2,\chi_{2\eta\epsilon p_2}),
\end{equation}
unless $\nu_1=\pm\nu_2$ or $\nu_1=0$ or $\nu_2=0$, in which case, however, the definition still makes sense as a meromorphic function of $\nu_1$ and $\nu_2$.
\end{itemize}

Finally, the associated completed $L$-function satisfies the familiar functional equation
\begin{equation}\label{func-eq}
\Lambda(s, V_1 \times V_2) := 16^sL(s, V_1\times V_2)L_{\infty} (s, V_1\times V_2) = \Lambda(1-s, V_1 \times V_2),
\end{equation}
where the exponential factor $16^s$ coming from the discriminant of $\QQ(i)$ is included for convenience, and the factor at infinity is given by
\begin{align}
\notag L_{\infty}(s, V_1\times V_2) = \Gamma(s,\vec{\nu},\vec{p})
:&=\prod_{\epsilon_1,\epsilon_2\in\{\pm 1\}}
L_\infty(s,\chi_{\epsilon_1\nu_1,\epsilon_1p_1}\cdot\chi_{\epsilon_2\nu_2,\epsilon_2p_2})\\
\label{inffactor} &=\prod_{\epsilon_1,\epsilon_2\in\{\pm 1\}}
\Gamma_{\CC}\left(s+\textstyle\frac{1}{2}(\epsilon_1\nu_1+\epsilon_2\nu_2)+\textstyle\frac{1}{2}|\epsilon_1p_1+\epsilon_2p_2|\right).
\end{align}
Here we used the abbreviations
\[\Gamma_{\CC}(s):=2(2\pi)^{-s}\Gamma(s),\qquad\vec{\nu}:=(\nu_1,\nu_2),\qquad\vec{p}:=(p_1,p_2).\]
Indeed, \eqref{func-eq}--\eqref{inffactor} follow from \cite[Prop.~18.2]{Jacquet1972}, \cite[\S 3]{Tate1977}, \cite[Prop.~6 in \S VII-2]{Weil1974} and its proof, upon noting that $V_j$ is isomorphic to the principal series representation induced from the pair of characters $(\chi_{-\nu_j,-p_j},\chi_{\nu_j,p_j})$.

\begin{lemma}\label{eis-pos}
Let $f : i\RR \rightarrow \CC$ be a function decaying as $f(\nu)\ll (1+|\nu|)^{-3}$, and let $p \in \ZZ$. Then
\[\int_{(0)}\int_{(0)}\sum_{\eta \in \{\pm 1\}}
f(\nu_1)\ov{f(\nu_2)}\,\mcL_{\eta}((\nu_1,p),(\nu_2,p))\,\frac{\dd\nu_1}{\pi i}\,\frac{\dd\nu_2}{\pi i}\geq 0.\]
\end{lemma}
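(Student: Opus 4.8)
The claim is a positivity statement, so the natural strategy is to recognize the double integral as (a constant multiple of) a squared norm. Recall that $\mcL_{\eta}((\nu_1,p),(\nu_2,p))$ is, up to the factor $\pi/4$, the residue at $s=1+\eta(\nu_1-\nu_2)/2$ of the Rankin--Selberg $L$-function $L(s,E(\nu_1,p)\times E(\nu_2,p))$, which in turn appears in the spectral expansion of $\langle \Theta_1, \Theta_2\rangle$ for suitable incomplete Eisenstein series (or, more transparently, in the Rankin--Selberg unfolding of $|E(\nu_1,p)|^2$ integrated against $E(\nu_2,p)$). First I would set up the truncated Eisenstein series or, cleaner, the regularized inner product $\langle E_{\ell,q}(\nu_1,p)\cdot\overline{E_{\ell,q}(\nu_2,p)}, 1\rangle^{\mathrm{reg}}$ à la Zagier, whose evaluation produces precisely the residues $\mcL_{\pm 1}$; the point is that after averaging against $f(\nu_1)\overline{f(\nu_2)}$ one obtains an honest $L^2$-norm on $\Gamma\backslash G$ of the Eisenstein-type wave packet
\[
\Psi(g):=\frac{1}{\pi i}\int_{(0)} f(\nu)\,E_{\ell,q}(\nu,p)(g)\,\dd\nu,
\]
which is a genuine element of $L^2(\Gamma\backslash G)_{\Eis}$ by the convergence hypothesis $f(\nu)\ll(1+|\nu|)^{-3}$ and the polynomial growth of Eisenstein series on vertical lines.

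**Key steps, in order.** (1) Fix $\ell,q$ with $|p|,|q|\le\ell$, and form the wave packet $\Psi$ above; by \eqref{eq:spectral-decomposition-plancherel} (or directly by the Maass--Selberg relations) it lies in $L^2(\Gamma\backslash G)$, with $\|\Psi\|_2^2\ge 0$ trivially. (2) Compute $\|\Psi\|_2^2$ by opening one copy of $\Psi$ via \eqref{Eisdef}, unfolding the $\Gamma_\infty\backslash\Gamma$ sum against the other copy, and carrying out the resulting integral over $\Gamma_\infty\backslash G$; the $r$-integral in the Iwasawa coordinates is a Mellin-type integral that converges for $\Re(\nu_1-\nu_2)$ slightly off the critical lines and whose analytic continuation to $\nu_1,\nu_2\in i\RR$ has exactly the two simple poles contributing the residues $\mcL_{+1}$ and $\mcL_{-1}$ as in \eqref{RS-res-Eis}. (3) Tracking the constant (the volume of $\Gamma_\infty\backslash N$, the normalization of $\Phi^\ell_{p,q}$, the factor $1/4$ in \eqref{Eisenstein-Hecke-eigen}), conclude that
\[
\|\Psi\|_2^2 = c \int_{(0)}\int_{(0)}\sum_{\eta\in\{\pm1\}} f(\nu_1)\overline{f(\nu_2)}\,\mcL_{\eta}((\nu_1,p),(\nu_2,p))\,\frac{\dd\nu_1}{\pi i}\,\frac{\dd\nu_2}{\pi i}
\]
for a positive constant $c$ depending only on $\ell,p,q$; since $\|\Psi\|_2^2\ge 0$ and $c>0$, the stated inequality follows.

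**The main obstacle.** The delicate point is the regularization of the inner product: $E_{\ell,q}(\nu_j,p)$ for individual $\nu_j\in i\RR$ is not square-integrable (its constant term decays too slowly), so $\|\Psi\|_2^2$ must be computed either through a truncation/Maass--Selberg argument or through Zagier's meromorphic-continuation regularization, and one must verify that after integrating against $f(\nu_1)\overline{f(\nu_2)}$ all the would-be divergent terms (the ``residual'' boundary contributions at the cusp) either cancel or get absorbed, leaving precisely the clean residue expression. Concretely, the unfolding produces a term $\int_0^\infty r^{2(\nu_1-\overline{\nu_2})}\,\frac{\dd r}{r}$-type divergence that must be interpreted by contour shift; the residues picked up in moving the $s$-contour across $\Re s=1$ are exactly the $\mcL_\eta$. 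One should also confirm that the exceptional loci $\nu_1=\pm\nu_2$, $\nu_1=0$, $\nu_2=0$ — where the formula \eqref{RS-res-Eis} is only meromorphically defined — form a measure-zero set on $(i\RR)^2$ and hence do not affect the integral, while the integrand remains locally integrable there thanks to the decay of $f$ and the at-worst-logarithmic nature of the singularities of $\zeta_{\QQ(i)}$ and the Hecke $L$-functions on the relevant lines. Once the regularization bookkeeping is under control, the positivity is immediate.
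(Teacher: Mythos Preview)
Your instinct that positivity should come from a squared norm is correct, but the specific identification you propose does not hold. By the Plancherel theorem for the continuous spectrum (encoded in \eqref{eq:spectral-decomposition-plancherel}), the $L^2$-norm of the wave packet $\Psi=\frac{1}{\pi i}\int_{(0)}f(\nu)E_{\ell,q}(\nu,p)\,\dd\nu$ is the \emph{diagonal} quantity
\[
\|\Psi\|_2^2=\frac{1}{\pi i}\int_{(0)}|f(\nu)|^2\,\|\Phi^{\ell}_{p,q}\|_K^2\,\dd\nu,
\]
not the genuinely bilinear double integral in the lemma. Put differently, the Maass--Selberg relations produce the scattering-matrix entries (ratios of completed Hecke $L$-values), and after integrating against $f(\nu_1)\overline{f(\nu_2)}$ and letting the truncation parameter tend to infinity, the oscillatory cross terms localize to $\nu_1=\pm\nu_2$ and collapse to the single integral above; they do not yield $\sum_\eta\mcL_\eta$. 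The Rankin--Selberg residues $\mcL_{\pm 1}$ live at $s=1\pm(\nu_1-\nu_2)/2$ in the \emph{triple} integral $\int E(\nu_1)\overline{E(\nu_2)}E(s)$, not at the pole $s=1$ that Zagier's regularization of $\langle E(\nu_1)\overline{E(\nu_2)},1\rangle$ would pick up; the latter residue would give a value of $L(1,E(\nu_1,p)\times E(\nu_2,p))$, again not $\mcL_\eta$. So steps~(2)--(3) of your plan break down at the identification.

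The paper's argument is far more elementary and avoids Eisenstein series altogether. One observes that the $\eta$-sum cancels the poles at $\nu_1=\nu_2$, introduces a regularizing shift $\eps>0$ in the argument of $\zeta_{\QQ(i)}$, and opens this Dedekind zeta as a Dirichlet series (now absolutely convergent). The remaining factor $L(1+\eta\nu_1,\chi_{-2\eta p})L(1-\eta\nu_2,\chi_{2\eta p})$ satisfies, for $\nu_j\in i\RR$, the conjugation relation $L(1-\eta\nu_2,\chi_{2\eta p})=\overline{L(1+\eta\nu_2,\chi_{-2\eta p})}$, so that the double $\nu$-integral factorizes as
\[
\biggl|\int_{(0)}|n|^{-2\eta\nu}L(1+\eta\nu,\chi_{-2\eta p})f(\nu)\,\frac{\dd\nu}{\pi i}\biggr|^2,
\]
manifestly nonnegative term by term. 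Continuity in $\eps$ at $\eps=0$ finishes the proof. This is the squared-norm structure you were looking for, but the underlying Hilbert space is a weighted $\ell^2$ over $\ZZ[i]\setminus\{0\}$, not $L^2(\Gamma\backslash G)$.
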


\begin{proof} First we note that the $\eta$-sum cancels the individual poles of $\mcL_{\eta}((\nu_1,  p), (\nu_2,  p))$ at $\nu_1 = \nu_2$. For $\eps > 0$ and $V_j = (\nu_j,p)$ with $j\in\{1,2\}$ define
\[\mcL_\eta(V_1, V_2, \eps):=\frac{\pi}{4}\cdot\zeta_{\QQ(i)}(1+\eps+\eta(\nu_1-\nu_2))
L(1+\eta\nu_1,\chi_{-2\eta p})L(1-\eta\nu_2,\chi_{2\eta p})\]
and
\[\mcI(\eps):=\int_{(0)}\int_{(0)}\sum_{\eta\in\{\pm 1\}}
f(\nu_1)\ov{f(\nu_2)}\,\mcL_{\eta}(V_1,V_2,\eps)\,\frac{\dd\nu_1}{\pi i}\,\frac{\dd\nu_2}{\pi i}.\]
This function is continuous at $\eps = 0$, so it suffices to show $\mcI(\eps) \geq 0$ for $\eps > 0$.
Inserting the definition and opening the Dedekind zeta function, we see that
\[\mcI(\eps) = \frac{\pi}{16} \sum_{\eta \in \{\pm 1\}}   \sum_{n \in \ZZ[i]\setminus\{0\}}
\frac{1}{|n|^{2+2\eps}}
\biggl|\int_{(0)} \frac{1}{|n|^{2\eta\nu}}  L(1 + \eta \nu, \chi_{-2\eta p}) f(\nu) \frac{\dd\nu}{\pi i} \biggr|^2 \geq 0\]
as desired.
\end{proof}

\subsection{Diagonal detection of Voronoi type}\label{sec28}
In this subsection, we prove a Voronoi-type formula that allows us to detect equality of two automorphic representations occurring in $L^2(\Gamma\backslash G)$ in terms of a certain weighted orthogonality relation between their Hecke eigenvalues.
We shall use that only tempered representations occur in $L^2(\Gamma\backslash G)$, e.g.\ by \cite[Ch.~7, Prop.~6.2]{ElstrodtGrunewaldMennicke1998}.

\begin{lemma}\label{lemma-vor}
Let $P \geq 1$ be a parameter. There exists a function
\[W_P:\RR_{>0}\times\CC^2\times\ZZ^2\to\CC,\]
given explicitly by \eqref{defW}, with the following properties.
\begin{enumerate}[(a)]
\item\label{vor-a} $W_{P}(x,\vec{\nu},\vec{p})$ is an entire function of $\vec{\nu}=(\nu_1,\nu_2)\in\CC^2$, and it is invariant under
\[ (\nu_j,p_j)\mapsto (-\nu_j,-p_j)\quad\text{as well as}\quad (\nu_j,p_j)\mapsto (p_j,\nu_j)\quad (\nu_j\in \ZZ). \]
\item\label{vor-b} Let us abbreviate $\tilde P:=\bigl(1+|p_1+p_2|\bigr)\bigl(1+|p_1-p_2|\bigr)$. Then for every
$A>|\Re\nu_1|+|\Re\nu_2|$ we have
\begin{equation}\label{W1}
W_{P}(x,\vec{\nu},\vec{p})\ll_{A,\Re\nu_1,\Re\nu_2}\bigl(1+(\tilde P/P)^{2A-2}\bigr)\bigl(1+|\nu_1|+|\nu_2|\bigr)^{4A}x^{-A}.
\end{equation}
\item\label{vor-c} For every two automorphic representations $V_j$ of type $(\nu_j,p_j)\in i\RR\times\ZZ$ for $\Gamma\backslash G$ we have
\begin{equation}\label{vor-formula}
\begin{split}
&\sum_{n\in\ZZ[i]\setminus\{0\}}W_P\left(\frac{|n|}{P},\vec{\nu},\vec{p}\right)\lambda_n(V_1)\lambda_n(V_2)\\
&=\begin{cases}
\frac{\pi}{4}L(1,\mathrm{ad^2}V_1)P^2,&V_1=V_2\text{ cuspidal};\\
\sum_{\eta\in\{\pm 1\}}\mcL_{\eta}(V_1,V_2)P^{2+\eta(\nu_1-\epsilon\nu_2)},&V_1,V_2\text{ Eisenstein, }p_1=\epsilon p_2,\,\,\epsilon\in\{\pm 1\};\\
0,&\text{otherwise,}
\end{cases}
\end{split}
\end{equation}
where $L(1,\mathrm{ad^2}V_1)$ and $\mcL_{\eta}(V_1,V_2)$ are as in \eqref{RS-res} and \eqref{RS-res-Eis}.
\end{enumerate}
\end{lemma}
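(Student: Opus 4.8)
The plan is to construct $W_P$ directly as an inverse Mellin transform built from the completed Rankin--Selberg $L$-function, so that the three cases in \eqref{vor-formula} fall out from a contour shift that picks up exactly the residues catalogued in \eqref{RS-res} and \eqref{RS-res-Eis}. Concretely, fix an auxiliary even holomorphic function $G(s)$ of rapid decay in vertical strips with $G(0)=1$ (e.g.\ $G(s)=e^{s^2}$ or a product of such adapted to the gamma factors), and define
\[
W_P(x,\vec\nu,\vec p):=\frac{1}{2\pi i}\int_{(2)}
\frac{\zeta_{\QQ(i)}(2s+1)}{\zeta_{\QQ(i)}(2)}\cdot
\frac{\Gamma(s+1,\vec\nu,\vec p)}{\Gamma(1,\vec\nu,\vec p)}\cdot
G(s)\,x^{-2s}\,\frac{\dd s}{s},
\]
so that $\sum_n W_P(|n|/P,\vec\nu,\vec p)\lambda_n(V_1)\lambda_n(V_2)$, upon inserting \eqref{RSdef} and interchanging the (absolutely convergent, by \eqref{RS-bound}) sum with the integral, becomes $\frac{1}{2\pi i}\int_{(2)}\Lambda(s+1,V_1\times V_2)\cdot(\text{normalizing factors})\cdot P^{2s}G(s)\,\dd s/s$ up to the harmless $16^s$ and $\zeta_{\QQ(i)}(2)$ constants. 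The symmetry properties in \ref{vor-a} are then immediate: invariance under $(\nu_j,p_j)\mapsto(-\nu_j,-p_j)$ is built into $\Gamma(s,\vec\nu,\vec p)$ as defined in \eqref{inffactor}, and invariance under $(\nu_j,p_j)\mapsto(p_j,\nu_j)$ for $\nu_j\in\ZZ$ follows because that swap preserves the multiset $\{\tfrac12(\epsilon_1\nu_1+\epsilon_2\nu_2)+\tfrac12|\epsilon_1p_1+\epsilon_2p_2|\}$ entering each $\Gamma_{\CC}$ factor (this is the same mechanism behind \eqref{eq:spherical-function-symmetry-2}).

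For part \ref{vor-c}, I would move the contour from $\Re s=2$ to $\Re s=-A$ for suitable $A>|\Re\nu_1|+|\Re\nu_2|$ and collect residues. The pole of $1/s$ at $s=0$ contributes $\Lambda(1,V_1\times V_2)\cdot G(0)/\Gamma(1,\vec\nu,\vec p)\cdot(\text{const})$, which vanishes in the tempered non-self-dual case since $L(s,V_1\times V_2)$ is entire there and there is no pole to cancel—wait, this term is present but equals the central value, not a residue; so I must instead choose $G(s)$ to vanish at $s=0$, or more cleanly, replace $\dd s/s$ by a kernel with no pole at $s=0$ and engineer the main terms purely from the poles of $L(s+1,V_1\times V_2)$ at $s=0$ (cuspidal diagonal, from \eqref{RS-res}) and at $s=\eta(\nu_1-\epsilon\nu_2)/2$ (Eisenstein, from \eqref{RS-res-Eis}). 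The $P$-powers $P^2$ and $P^{2+\eta(\nu_1-\epsilon\nu_2)}$ emerge as $P^{2s}$ evaluated at these residue locations, and the residue constants are exactly $\frac\pi4 L(1,\mathrm{ad}^2V_1)$ and $\mcL_\eta(V_1,V_2)$ once the $16^s$, $\zeta_{\QQ(i)}(2)$, $G$, and $\Gamma$-normalizations are arranged to contribute $1$ at those points (this forces the precise definition \eqref{defW} of $W_P$, including the choice of $G$). The shifted integral over $\Re s=-A$ is then bounded trivially, and together with the $x^{-2s}=x^{2A}$ decay—recall $x=|n|/P$ ranges over all positive reals but the sum is controlled—yields the tail estimate; here the dependence $(1+(\tilde P/P)^{2A-2})(1+|\nu_1|+|\nu_2|)^{4A}x^{-A}$ of \ref{vor-b} comes from Stirling applied to the four $\Gamma_{\CC}$ factors (each contributing polynomial growth in $|\nu_j|$ of degree $\sim A$, whence $|\nu|^{4A}$, and the $|p_1\pm p_2|$-shifts producing the $\tilde P$ factor once normalized against $\Gamma(1,\vec\nu,\vec p)$).

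The main obstacle is the bookkeeping in part \ref{vor-b}: one must track, with complete uniformity in $\vec\nu\in\CC^2$ (not just $i\RR^2$) and in $\vec p\in\ZZ^2$, the ratio $\Gamma(s+1,\vec\nu,\vec p)/\Gamma(1,\vec\nu,\vec p)$ across the strip $-A\leq\Re s\leq 2$, where the four gamma factors have arguments shifted by $\tfrac12|\epsilon_1p_1+\epsilon_2p_2|$, i.e.\ by quantities of size $\asymp|p_1+p_2|$ and $\asymp|p_1-p_2|$. The clean way is to use the standard bound $\Gamma_{\CC}(s+a)/\Gamma_{\CC}(a)\ll (1+|a|)^{\Re s}(1+|s|)^{O(1)}e^{O(|s|)}$ valid for $\Re(s+a),\Re a$ bounded below, applied with $a=1+\tfrac12(\epsilon_1\nu_1+\epsilon_2\nu_2)+\tfrac12|\epsilon_1p_1+\epsilon_2p_2|$; summing the exponents $\Re s=-A$ over the four factors gives the $\tilde P^{-2A}$-type gain, but two of the four factors have $|a|\asymp 1+|p_1+p_2|$ while the other two have $|a|\asymp 1+|p_1-p_2|$, so one really gets $\bigl((1+|p_1+p_2|)(1+|p_1-p_2|)\bigr)^{-2A}=\tilde P^{-2A}$, and after restoring the $P^{2s}$ and the normalization one lands on the stated $1+(\tilde P/P)^{2A-2}$ (the $-2$ rather than $-2A$ in the exponent, and the additive $1$, come from the competing $P^{2s}=P^{-2A}$ factor and from not gaining in the two "small" gamma factors when $\tilde P\lesssim P$). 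Beyond this, the only other point requiring care is justifying the contour shift when some $\nu_j$ is real (so that extra poles of $L(1+\eta\nu_j,\chi_{\mp 2\eta p_j})$ or of $\zeta_{\QQ(i)}(1+\eta(\nu_1-\epsilon\nu_2))$ collide with $s=0$): as noted after \eqref{RS-res-Eis} the combination $\mcL_\eta$ is still meromorphic in $\vec\nu$, and one checks that the $\eta$-sum in \eqref{vor-formula} produces the correct analytic continuation, exactly as in the proof of Lemma~\ref{eis-pos}.
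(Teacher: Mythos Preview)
Your proposal has two genuine gaps, both stemming from the fact that the kernel you write down is not the one in \eqref{defW}, and the differences are not cosmetic.

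\textbf{Part \ref{vor-c} must be an exact identity.} Your plan is to shift the contour of $\int L(s,V_1\times V_2)(\cdots)P^{2s}\,\dd s$ to $\Re s=-A$, collect residues, and then ``bound the shifted integral trivially''. But \eqref{vor-formula} is an equality, not an asymptotic; there is no room for a leftover integral. The paper's construction is engineered precisely so that the leftover vanishes identically: one starts from $\tfrac{1}{16}\sum_{m,n} w(|m|^4|n|^2/P^2)\lambda_n(V_1)\lambda_n(V_2)=\tfrac{1}{2\pi i}\int_{(2)}L(s,V_1\times V_2)\widehat w(s)P^{2s}\,\dd s$ with a compactly supported bump $w$, shifts to $\Re s=-1$ picking up the residues on the right of \eqref{vor-formula}, then applies the functional equation \eqref{func-eq} and the substitution $s\mapsto 1-s$ to rewrite the remaining integral back on $\Re s=2$ as $\tfrac{1}{2\pi i}\int_{(2)}L(s,V_1\times V_2)P^{2-4s}\tfrac{16^{2s-1}\Gamma(s,\vec\nu,\vec p)}{\Gamma(1-s,\vec\nu,\vec p)}\widehat w(1-s)P^{2s}\,\dd s$. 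Moving this to the left side and reopening the Dirichlet series gives a sum with weight exactly $W_P$ as in \eqref{defW}, equal on the nose to the residue terms. Your kernel has no such subtraction, so your contour shift cannot produce an exact formula. (You in fact noticed the problem yourself when the spurious $s=0$ pole appeared; killing it with $G(0)=0$ does not help, because the issue is the entire shifted line, not one point.)

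\textbf{The swap symmetry in \ref{vor-a} does not hold for your kernel.} You claim that $(\nu_j,p_j)\mapsto(p_j,\nu_j)$ preserves the multiset $\{\tfrac12(\epsilon_1\nu_1+\epsilon_2\nu_2)+\tfrac12|\epsilon_1 p_1+\epsilon_2 p_2|\}$. This is false: take $(\nu_1,p_1,\nu_2,p_2)=(1,2,0,0)$, where the multiset is $\{3/2,3/2,1/2,1/2\}$, while after the swap $(2,1,0,0)$ it is $\{3/2,3/2,-1/2,-1/2\}$. Consequently your ratio $\Gamma(s+1,\vec\nu,\vec p)/\Gamma(1,\vec\nu,\vec p)$ is not swap-invariant. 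What \emph{is} invariant is the ratio $\Gamma(s,\vec\nu,\vec p)/\Gamma(1-s,\vec\nu,\vec p)$ appearing in \eqref{defW}: one uses $\Gamma(z+r)/\Gamma(1-z+r)=(-1)^{2r}\Gamma(z-r)/\Gamma(1-z-r)$ for $r\in\tfrac12\ZZ$ to drop the absolute values, after which the expression is visibly symmetric in $\nu_j\leftrightarrow p_j$. This is another reason the specific ``functional-equation dual'' structure of \eqref{defW} is essential.

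Once you adopt \eqref{defW}, part \ref{vor-b} follows by shifting the defining contour to $\Re s=A/2$ and bounding the ratio $\Gamma(s,\vec\nu,\vec p)/\Gamma(1-s,\vec\nu,\vec p)$ via Stirling (pairing each $\Gamma_{\CC}$ with its complex conjugate argument in the denominator), which is where the factor $\tilde P^{4\Re s-2}(|s|+|\nu_1|+|\nu_2|)^{8\Re s}$ comes from; your rough Stirling outline for this part is in the right spirit but applied to the wrong ratio.
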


\begin{proof}
Let $w:\RR_{>0}\to\CC$ be a smooth function supported inside $[1,2]$, and normalized so that its Mellin transform $\widehat{w}(s)=\int_0^{\infty}w(x)x^s\,\dd x/x$ satisfies $\widehat{w}(1)=1$. We define
\begin{equation}\label{defW}
W_P(x,\vec{\nu},\vec{p}):=\frac1{8\pi i}\int_{(2)}\zeta_{\QQ(i)}(2s)\left(\widehat{w}(s)-P^{2-4s}\frac{16^{2s-1}\Gamma(s,\vec{\nu},\vec{p})}{\Gamma(1-s,\vec{\nu},\vec{p})}\widehat{w}(1-s)\right)x^{-2s}\,\dd s,
\end{equation}
where $\Gamma(s,\vec{\nu},\vec{p})$ is as in \eqref{inffactor}.

Shifting the contour to the far right, we see that $W_P(x,\vec{\nu},\vec{p})$ is entire in $\vec{\nu}$. The symmetry with respect to $(\nu_j, p_j) \mapsto (-\nu_j, -p_j)$ is obvious from \eqref{inffactor}. For $r\in\frac{1}{2}\ZZ$ we have the equality
\[\frac{\Gamma(z+r)}{\Gamma(1-z+r)}
= \frac{\Gamma(z-r)}{\Gamma(1-z-r)}\cdot\frac{\sin(\pi(z-r))}{\sin(\pi(z+r))}
= (-1)^{2r}\frac{\Gamma(z-r)}{\Gamma(1-z-r)}\]
of meromorphic functions in $z\in\CC$. This shows that (cf.\ \eqref{inffactor})
\begin{align*}
\frac{\Gamma(s,\vec{\nu},\vec{p})}{\Gamma(1-s,\vec{\nu},\vec{p})}
&=\prod_{\epsilon_1,\epsilon_2\in\{\pm 1\}}
\frac{\Gamma_{\CC}\left(s+\frac{1}{2}(\epsilon_1\nu_1+\epsilon_2\nu_2) + \frac{1}{2}|\epsilon_1p_1+\epsilon_2p_2|\right)}
{\Gamma_{\CC}\left(1-s-\frac{1}{2}(\epsilon_1\nu_1+\epsilon_2\nu_2)+\frac{1}{2}|\epsilon_1p_1+\epsilon_2p_2|\right)}\\
&=\prod_{\epsilon_1,\epsilon_2\in\{\pm 1\}}
\frac{\Gamma_{\CC}\left(s+\frac{1}{2}(\epsilon_1\nu_1+\epsilon_2\nu_2) + \frac{1}{2}(\epsilon_1p_1+\epsilon_2p_2)\right)}
{\Gamma_{\CC}\left(1-s-\frac{1}{2}(\epsilon_1\nu_1+\epsilon_2\nu_2)+\frac{1}{2}(\epsilon_1p_1+\epsilon_2p_2)\right)}
\end{align*}
is symmetric with respect to $(\nu_j, p_j) \mapsto (p_j, \nu_j)$, completing the proof of \ref{vor-a}.

Combining the first line of the previous display with \cite[Lemma~3.2]{Harcos2002}, we infer for $\Re(s)>\frac{1}{2}|\Re\nu_1|+\frac{1}{2}|\Re\nu_2|$ that
\begin{align*}
&\left|\frac{\Gamma(s,\vec{\nu},\vec{p})}{\Gamma(1-s,\vec{\nu},\vec{p})}\right|
=\prod_{\epsilon_1,\epsilon_2\in\{\pm 1\}}
\left|\frac{\Gamma_{\CC}\left(s+\frac{1}{2}(\epsilon_1\nu_1+\epsilon_2\nu_2) + \frac{1}{2}|\epsilon_1p_1+\epsilon_2p_2|\right)}
{\Gamma_{\CC}\left(1-\ov{s}-\frac{1}{2}(\epsilon_1\ov{\nu_1}+\epsilon_2\ov{\nu_2})+\frac{1}{2}|\epsilon_1p_1+\epsilon_2p_2|\right)}\right|\\
&\quad\ll_{\Re s,\Re\nu_1,\Re\nu_2}\prod_{\epsilon_1,\epsilon_2\in\{\pm 1\}}
\left|s+\tfrac12(\epsilon_1\nu_1+\epsilon_2\nu_2) + 
\tfrac12|\epsilon_1p_1+\epsilon_2p_2|\right|^{\Re(2s+\epsilon_1\nu_1+\epsilon_2\nu_2)-1}\\
&\quad\ll_{\Re s,\Re\nu_1,\Re\nu_2}\prod_{\epsilon_1,\epsilon_2\in\{\pm 1\}}
\left(1+|\epsilon_1p_1+\epsilon_2p_2|\right)^{\Re(2s+\epsilon_1\nu_1+\epsilon_2\nu_2)-1}
\left(|s|+|\nu_1|+|\nu_2|\right)^{\Re(2s+\epsilon_1\nu_1+\epsilon_2\nu_2)}\\
&\quad=\left(1+|p_1+p_2|\right)^{4\Re s-2}\left(1+|p_1-p_2|\right)^{4\Re s-2}\left(|s|+|\nu_1|+|\nu_2|\right)^{8\Re s}.
\end{align*}
Turning back to \eqref{defW}, the singularity of the integrand at $s = 1/2$ is removable, so we can shift the contour to $\Re s = A/2$. The bound \eqref{W1} follows upon noting that
that
\begin{itemize}
\setlength\itemsep{3pt}
\item $\widehat{w}(s) \ll_{C, \Re s} (1 + |s|)^{-C}$ for all $C > 0$ and $s\in\CC$;
\item $\zeta_{\QQ(i)}(2s) \ll (1 + |s|)^2$ for $\Re s > 0$ and $|2s-1|>1$.
\end{itemize}

Finally, to show \ref{vor-c}, we start from the following identity, a consequence of \eqref{RSdef}:
\[\frac{1}{16}\sum_{m,n\in\ZZ[i]\setminus\{0\}}w\left(\frac{|m|^4|n|^2}{P^2}\right)\lambda_n(V_1)\lambda_n(V_2)
= \frac{1}{2\pi i}\int_{(2)} L(s, V_1 \times V_2) \widehat{w}(s) P^{2s} \,\dd s.\]
We shift the contour to $\Re s = -1$; the contribution of the possible poles (on the line $\Re s = 1$) is recorded on the right-hand side of \eqref{vor-formula}. In the remaining integral we apply the functional equation \eqref{func-eq} and change variables $s \mapsto 1-s$ getting
$$\frac{1}{2\pi i}\int_{(2)} L(s, V_1 \times V_2) P^{2-4s} \frac{16^s\Gamma(s,\vec{\nu},\vec{p})}{16^{1-s}\Gamma(1-s,\vec{\nu},\vec{p})}\widehat{w}(1-s) P^{2s}\,\dd s.$$
Moving this term   to the other side, we obtain the desired formula \eqref{vor-formula}, first for $(\nu_1, p_1) \neq \pm (\nu_2, p_2)$, but then by analytic continuation everywhere.
This completes the proof of \ref{vor-c}.
\end{proof}

\section{Pre-trace formula and amplification}
In this section, we first implement a pre-trace setup, using integral kernels that are (by necessity) not bi-$K$-invariant, first in \S\ref{sec31} as the full pre-trace formula based on the theory of Eisenstein series and then as a streamlined pre-trace inequality in \S\ref{sec31b}. In \S\S\ref{tfa-subsec}--\ref{reduction}, we couple the pre-trace setup with either amplification by Hecke operators or self-amplification via diagonal detection of Voronoi type in \S\ref{sec28} to derive estimates on pointwise values of automorphic forms in terms of estimates on generalized spherical trace functions and Diophantine counts.

\subsection{Amplified pre-trace formula}\label{sec31}
In this subsection, we prove an amplified pre-trace formula based on the theory of Eisenstein series and the spectral decomposition of $L^2(\Gamma\backslash G)$ (see \S\ref{Eisenstein-subsec}). This is a familiar identity between spectral and geometric data, and its full force will be needed in the proof of Theorem~\ref{thm3}\ref{thm3-b}; in fact, as an even more general version, we shall use a double pre-trace formula (see \S\ref{sec:double-pre-trace-formula}) in two variables.

Let $A$ be a bounded operator on $L^2(\Gamma\backslash G)$ preserving the subspace $C_0^{\infty}(\Gamma\backslash G)$ of smooth functions with all rapidly decreasing derivatives. Assume that for the basis forms $\phi_{\ell,q}^{V}$, indexed as in \eqref{compactform} by $V$ occurring in $L^2(\Gamma\backslash G)$ (cuspidal or Eisenstein) and $\ell,q\in\ZZ$ satisfying $\ell\geq\max(|p_V|,|q|)$, there are constants $c_{\ell,q}^{V}(A)\in\CC$ such that
\begin{equation}\label{newconstants}
\langle A\psi,\phi_{\ell,q}^{V}\rangle=c_{\ell,q}^{V}(A)\langle \psi,\phi_{\ell,q}^{V}\rangle,\qquad
\psi\in C^{\infty}_0(\Gamma\backslash G).
\end{equation}
Then \eqref{eq:spectral-decomposition-plancherel} yields, for every $\psi\in C^{\infty}_0(\Gamma\backslash G)$,
\begin{equation}\label{Apsi-psi}
\langle A\psi,\psi\rangle=\frac{\langle A\psi,1\rangle\langle 1,\psi\rangle}{\vol(\Gamma\backslash G)}+\sum_{\ell\geq 0}\int_{[\ell]}\sum_{|q|\leq\ell}c_{\ell,q}^{V}(A)|\langle\psi,\phi_{\ell,q}^{V}\rangle|^2\,\dd V.
\end{equation}

For $f\in C_0(G)$ a rapidly decaying continuous function on $G$, and $\psi\in L^2(\Gamma\backslash G)$, we may consider the function $R(f)\psi\in L^2(\Gamma\backslash G)$ defined by
\begin{align*}
(R(f)\psi)(g)&:=\int_Gf(h)\psi(gh)\,\dd h=\int_Gf(g^{-1}h)\psi(h)\,\dd h\\
&=\int_{\Gamma\backslash G}k_f(g,h)\psi(h)\,\dd h,\qquad k_f(g,h):=\sum_{\gamma\in\Gamma}f(g^{-1}\gamma h).
\end{align*}
Thus $R(f)$ is a bounded integral operator on $L^2(\Gamma\backslash G)$ with kernel $k_f$. It is clear that $R(f)$ preserves $C_0^{\infty}(\Gamma\backslash G)$, and its adjoint equals $R(f)^\ast=R(f^\ast)$ with
\[f^\ast(g):=\ov{f(g^{-1})},\qquad g\in G.\]
Further, for a finitely supported sequence of complex coefficients $x=(x_n)_{n\in\ZZ[i]\setminus\{0\}}$, let $R_{\fin}(x)$ be the operator on $L^2(\Gamma\backslash G)$ given by
\begin{equation}\label{eq:def-amplifier}
R_{\fin}(x):=\sum_{n\in\ZZ[i]\setminus\{0\}}x_nT_n.
\end{equation}
The adjoint of this operator equals $R_{\fin}(x)^\ast=R_{\fin}(\ov{x})$.

Let us now fix an integer $\ell\geq 1$. Let $f\in\mcH(\tau_{\ell})_{\infty}$ be such that $f=f^\ast$, and let $x=(x_n)$ be as above such that $x=\ov{x}$, the self-adjointness conditions serving only to lighten the notation below. Further, let $V$ be a non-identity (cuspidal or Eisenstein) automorphic representation of arbitrary type $(\nu_V,p_V)$ occurring in $L^2(\Gamma\backslash G)$, and let $\ell',q\in\ZZ$ be such that $\ell'\geq\max(|p_V|,|q|)$. For $V$ cuspidal, \eqref{eq:opfromtr} and \eqref{eq:trpif} show that
\begin{equation}\label{joint-eigenfunctions}
\begin{aligned}
R(f)\phi_{\ell',q}^{V}&=\delta_{\ell'=\ell}\frac{\widehat{f}(V)}{2\ell+1}\phi_{\ell',q}^{V}, & & & & & \widehat{f}(V)&:=\widehat{f}(\nu_V,p_V);\\
R_{\fin}(x)\phi_{\ell',q}^{V}&=\widehat{x}(V)\phi_{\ell',q}^{V}, & & & & & \widehat{x}(V)&:=\sum_{n\in\ZZ[i]\setminus\{0\}}x_n\lambda_n(V).
\end{aligned}
\end{equation}
For $V$ Eisenstein, these equations are still valid with the obvious extension of $R(f)$ and $R_{\fin}(x)$ to functions in $C^\infty(\Gamma\backslash G)$ of moderate growth, as follows from \eqref{Eisenstein-Hecke-eigen} and the discussion between \eqref{eq:spectral-decomposition} and \eqref{eq:spectral-decomposition-EGM}. Therefore, following the usual argument that $R(f)$ and $R_{\fin}(x)$ are self-adjoint, we obtain that $A:=R(f)R_{\fin}(x)$ satisfies \eqref{newconstants} with
\[ c_{\ell',q}^{V}(A)=\delta_{\ell'=\ell}\frac{\widehat{f}(V)\widehat{x}(V)}{2\ell+1}.\]
Hence \eqref{Apsi-psi} holds with these coefficients and $\ell$-summation replaced by $\ell'$-summation. We note that the coefficients decay rapidly in $\nu$ by Theorem~\ref{thm:pws}. Moreover, $A(1)=R(f)(1)$ vanishes by $f=f\star\ov{\chi_{\ell}}$ and the orthogonality of characters (recalling that $\ell\geq 1$).

Applying \eqref{Apsi-psi} and recalling our observation below \eqref{Hecke-def} about $n\gamma^{-1}$ as $\gamma\in\Gamma\setminus\Gamma_n$, we obtain for every $\psi\in C_0^{\infty}(\Gamma\backslash G)$ that
\begin{align*}
\int_{[\ell]}\sum_{|q|\leq\ell}c_{\ell,q}^{V}(A)|\langle\psi,\phi_{\ell,q}^{V}\rangle|^2\,\dd V
&=\iint_{(\Gamma\backslash G)^2}k_f(g,h) \sum_{n\in\ZZ[i]\setminus\{0\}}x_n T_n\psi(h)\ov{\psi(g)}\,\dd g\,\dd h\\
&=\iint_{(\Gamma\backslash G)^2}\sum_{n\in\ZZ[i]\setminus\{0\}}\frac{x_n}{|n|}\sum_{\gamma\in\Gamma_n}f(g^{-1}\tilde{\gamma}h)\ov{\psi(g)}\psi(h)\,\dd g\,\dd h,
\end{align*}
where $\tilde\gamma$ abbreviates $\gamma/\sqrt{\det\gamma}$. Letting $\psi$ range through smooth, nonnegative, $L^{1}$-nor\-mal\-ized functions supported in increasingly small open neighborhoods of a fixed point $\Gamma g\in\Gamma\backslash G$, and taking limits using the rapid decay of $c_{\ell,q}^{V}(A)$, we obtain the desired \emph{amplified pre-trace formula}
\begin{equation}\label{APTF}
\int_{[\ell]}\frac{\widehat{f}(V)\widehat{x}(V)}{2\ell+1}\sum_{|q|\leq\ell}|\phi_{\ell,q}^{V}(g)|^2\,\dd V=\sum_{n\in\ZZ[i]\setminus\{0\}}\frac{x_n}{|n|}\sum_{\gamma\in\Gamma_n}f(g^{-1}\tilde{\gamma}g).
\end{equation}

The pre-trace formula \eqref{APTF} isolates forms $\phi_{\ell,q}^{V}$ with a specific value of $\ell$ (thus, forms in the 
chosen constituent $V^{\ell}$ in the decomposition \eqref{decomp} for various $V$'s), a starting point for a proof of Theorem~\ref{thm1}. To further isolate eigenforms in the specific constituent $V^{\ell,q}$ (for a fixed $|q|\leqslant\ell$), starting from our earlier $f\in\mcH(\tau_{\ell})_{\infty}$ satisfying $f=f^\ast$, we define a smooth function $f_q\in C_0(G)$ by
\begin{equation}\label{fqg-average-2}
f_q(g):=\frac1{2\pi}\int_0^{2\pi}f\big(g\diag(e^{i\varrho},e^{-i\varrho})\big)\,e^{2qi\varrho}\,\dd \varrho =\frac1{2\pi}\int_0^{2\pi}f\big(\diag(e^{i\varrho},e^{-i\varrho})g\big)\,e^{2qi\varrho}\,\dd \varrho.
\end{equation}
We note that $f_q=f_q^\ast$, but $f_q$ need not lie in $\mcH(\tau_{\ell})_{\infty}$. By the orthogonality of characters on $\RR/\ZZ$, we have
\begin{equation}\label{projectq}
R(f_q)=R(f)\Pi_q=\Pi_q R(f),
\end{equation}
where $\Pi_q$ is the projection onto the closed subspace consisting of $\psi\in L^2(\Gamma \backslash G)$ such that $\psi(g\diag(e^{i\varrho},e^{-i\varrho}))=e^{2qi\varrho}\psi(g)$. In particular, $R(f_q)$ is a bounded, self-adjoint operator, which preserves $C_0^{\infty}(\Gamma\backslash G)$. Moreover, by \eqref{joint-eigenfunctions} and the surrounding discussion,
\[R(f_q)\phi_{\ell',q'}^{V}= \delta_{(\ell',q')=(\ell,q)}\frac{\widehat{f}(V)}{2\ell+1}\phi_{\ell',q'}^{V}\]
holds for $V$ cuspidal, and also for $V$ Eisenstein with the obvious extension of $R(f_q)$ to functions in $C^\infty(\Gamma\backslash G)$ of moderate growth. Thus, applying as above \eqref{Apsi-psi} with $A=R(f_q)R_{\fin}(x)$, we obtain the following amplified pre-trace formula for individual forms:
\begin{equation}\label{APTF-single-form}
\int_{[\ell]}\frac{\widehat{f}(V)\widehat{x}(V)}{2\ell+1}|\phi_{\ell,q}^{V}(g)|^2\,\dd V=\sum_{n\in\ZZ[i]\setminus\{0\}}\frac{x_n}{|n|}\sum_{\gamma\in\Gamma_n}f_q(g^{-1}\tilde{\gamma}g).
\end{equation}

We proved \eqref{APTF} and \eqref{APTF-single-form} for every $f\in\mcH(\tau_{\ell})_{\infty}$ and finitely supported $x=(x_n)$ under the assumption that $f=f^\ast$ and $x=\ov{x}$. In fact \eqref{APTF} and \eqref{APTF-single-form} hold without this assumption, because both sides are $\CC$-linear in $f$ and $x$. Alternatively, one can modify the above proof to work without the self-adjointness assumption, starting with the analogue of \eqref{joint-eigenfunctions} for $R(f^\ast)\phi_{\ell',q}^{V}$ and $R_\fin(\ov{x})\phi_{\ell',q}^{V}$.

\subsection{Positivity and amplified pre-trace inequality}\label{sec31b}
In many situations, the coefficients on the left-hand (spectral) side of \eqref{APTF} and \eqref{APTF-single-form} are nonnegative, and the pre-trace formula is simply used as an inequality, by dropping all but the terms of interest. This is the case for the proofs of Theorems~\ref{thm1},~\ref{thm2}~and~\ref{thm3}\ref{thm3-a}. In this subsection, we derive such amplified pre-trace inequalities in a streamlined way with substantially less heavy machinery, drawing inspiration from \cite[\S 3]{BlomerHarcosMagaMilicevic2020}. For example, here we do not even need to mention Eisenstein series.

Let $A$ be a positive operator operator on $L^2(\Gamma\backslash G)$, and let $\mfB$ be a finite orthonormal system of eigenfunctions $\phi$ of $A$ with (not necessarily distinct) eigenvalues $(c_{\phi}(A))_{\phi\in\mfB}$. Then, $A$ preserves the orthodecomposition
$L^2(\Gamma\backslash G)=\mathrm{Span}(\mfB)\oplus\mathrm{Span}(\mfB)^{\perp}$,
and for any $\psi\in L^2(\Gamma\backslash G)$ the corresponding decomposition $\psi=\psi_1+\psi_2$ with
\[\psi_1:=\sum_{\phi\in\mfB}\langle\psi,\phi\rangle\phi\qquad \text{and} \qquad \psi_2:=\psi-\psi_1\]
gives
\begin{equation}\label{positivity}
\langle A\psi,\psi\rangle=\langle A\psi_1,\psi_1\rangle+\langle A\psi_2,\psi_2\rangle
\geq\langle A\psi_1,\psi_1\rangle=\sum_{\phi\in\mfB}c_{\phi}(A)|\langle\psi,\phi\rangle|^2.
\end{equation}

We will apply this positivity argument to the operators $A=R(f) R_{\fin}(x)$ and $A=R(f_q) R_{\fin}(x)$, where $f\in\mcH(\tau_{\ell})_{\infty}$ and $x=(x_n)$ are as in the previous subsection. Positivity is achieved by making the operators $R(f)$ and $R_{\fin}(x)$ individually positive, because Hecke operators commute with integral operators, and $\Pi_q$ in \eqref{projectq} is a positive operator commuting with $R(f)$. For the positivity of $R(f)$, it suffices that
\begin{equation}\label{positivity-assumption}
f=u\star u\qquad\text{for some $u\in\mcH(\tau_{\ell})_{\infty}$ satisfying $u=u^\ast$}.
\end{equation}
For the positivity of $R_{\fin}(x)$, it suffices that
\begin{equation}
\label{gen-amplifier}
\begin{gathered}
R_{\fin}(x)=\Bigl(\sum_{l\in P}y_lT_l\Bigr) \star \Bigl(\sum_{m\in P}\ov{y_m}T_m\Bigr) +
\Bigl(\sum_{l\in P}z_lT_{l^2}\Bigr) \star \Bigl(\sum_{m\in P}\ov{z_m}T_{m^2}\Bigr),\\[4pt]
x_n:=\sum_{\substack{l,m\in P\\ (d)|(l,m)\\ lm/d^2=n}}y_l\ov{y_m}+
\sum_{\substack{l,m\in P\\ (d)|(l^2,m^2)\\ l^2m^2/d^2=n}}z_l\ov{z_m},
\end{gathered}
\end{equation}
where $(y_l)_{l\in P}$ and $(z_l)_{l\in P}$ are arbitrary complex coefficients supported on a finite set
$P\subset\ZZ[i]\setminus\{0\}$. Here we used that each Hecke operator $T_n$ is self-adjoint.

Now, let $V$ be a cuspidal automorphic representation that occurs in $L^2(\Gamma\backslash G)$ and contains $\tau_{\ell}$-type vectors. Let $\mfB=\{ \phi_q : |q| \leq \ell\}$ be an orthonormal basis of $V^\ell$, with $\phi_q\in V^{\ell,q}$. As in the previous subsection, we evaluate the left-hand side of \eqref{positivity} geometrically, and then apply a limit  in $\psi$ to both sides. This way we obtain the following \emph{amplified pre-trace inequalities} in place of \eqref{APTF} and \eqref{APTF-single-form}:
\begin{align}\label{APTI}
\frac{\widehat{f}(V)\widehat{x}(V)}{2\ell+1}\sum_{\phi\in\mfB}|\phi(g)|^2&\leq\sum_{n\in\ZZ[i]\setminus\{0\}}\frac{x_n}{|n|}\sum_{\gamma\in\Gamma_n}f(g^{-1}\tilde{\gamma}g),\\
\label{APTI-single-form}
\frac{\widehat{f}(V)\widehat{x}(V)}{2\ell+1}|\phi_q(g)|^2&\leq\sum_{n\in\ZZ[i]\setminus\{0\}}\frac{x_n}{|n|}\sum_{\gamma\in\Gamma_n}f_q(g^{-1}\tilde{\gamma}g).
\end{align}

\subsection{Test functions and amplifier}\label{tfa-subsec}
The main idea of the amplified pre-trace inequality \eqref{APTI} is that it can provide a good upper bound for $\sum_{\phi\in\mfB}|\phi(g)|^2$ as long as the test function $f\in\mcH(\tau_{\ell})_{\infty}$ and the amplifier $x=(x_n)$ in \S\ref{sec31b} are chosen so that $\widehat{f}(V)$ and $\widehat{x}(V)$ are sizeable while the right-hand side is not too large. In this subsection, we make these choices.

As in Theorems~\ref{thm1},~\ref{thm2}~and~\ref{thm3}, let $\ell\geq 1$ be an integer, $I\subset i\RR$ and $\Omega\subset G$ be compact sets. Let $V\subset L^2(\Gamma\backslash G)$ be a cuspidal automorphic representation with minimal $K$-type $\tau_{\ell}$ and spectral parameter $\nu_V\in I$. Let us introduce the spectral weights
\begin{equation}\label{eq:def-gaussian-spectral-weight}
h(\nu,p):=\begin{cases} e^{(p^2-\ell^2+\nu^2)/2},\qquad &\text{$\nu\in\CC$, \quad $p\in \frac1{2}\ZZ$, \quad $|p|\leq\ell$,}\\ 0,&\text{$\nu\in\CC$, \quad $p\in \frac1{2}\ZZ$, \quad $|p|>\ell$.}\end{cases}
\end{equation}
According to Theorems~\ref{thm:tr-plancherel}~and~\ref{thm:pws}, the inverse $\tau_\ell$-spherical transform $f:=\widecheck{h}$ given by \eqref{eq:inverse-tauell-transform} belongs to $\mcH(\tau_{\ell})_{\infty}$, and it satisfies $\widehat{f}=h$. Moreover, if we set $u:=\widecheck{v}$ with
\[v(\nu,p):=\begin{cases} (2\ell+1)^{1/2}e^{(p^2-\ell^2+\nu^2)/4},\qquad &\text{$\nu\in\CC$, \quad $p\in \frac1{2}\ZZ$, \quad $|p|\leq\ell$,}\\ 0,&\text{$\nu\in\CC$, \quad $p\in \frac1{2}\ZZ$, \quad $|p|>\ell$,}\end{cases}\]
then $u\in\mcH(\tau_{\ell})_{\infty}$, $u=u^\ast$ by \eqref{eq:spherical-function-symmetry} and \eqref{eq:inverse-tauell-transform}, and $\widehat{f}=\widehat{u}^2/(2\ell+1)=\widehat{u\star u}$. This shows that \eqref{positivity-assumption} is satisfied. Hence $R(f)$ is the kind of positive operator considered in \S\ref{sec31b}, and
by \eqref{joint-eigenfunctions} we have
\begin{equation}\label{lower-bd-arch}
\widehat{f}(V)=h(\nu_V,\ell)\gg_I 1.
\end{equation}
With the notation \eqref{eq:def-tilde-f-s-p}, we have
\[
\widetilde{f}(s,p)=\sqrt{2\pi}(p^2+1-s^2)e^{(p^2-\ell^2-s^2)/2},
\]
whence by \eqref{eq:inverse-spherical-transform-estimate}, \eqref{eq:inverse-tauell-transform}, and the trivial bound
$\bigl|\varphi_{\nu,p}^{\ell}(g^{-1})\bigr|\leq 2\ell+1$, we have
\begin{equation}\label{better-be-bounded}
f(g) \ll \ell^2e^{-\log^2\|g\|}.
\end{equation}
We shall also use the following supplement, a consequence of \eqref{eq:spherical-function-symmetry}
and \eqref{eq:inverse-tauell-transform}:
\begin{equation}\label{in-the-bulk}
f(g)\ll \ell\sup_{\nu\in i\RR}\,\bigl|\varphi_{\nu,\ell}^{\ell}(g)\bigr|+\ell^{-50}.
\end{equation}

We now choose our amplifier, which we do as in \cite[\S5]{BlomerHarcosMilicevic2016}. Let $L\geq 7$ be a parameter, to be chosen at the very end of the proof of Theorems~\ref{thm1},~\ref{thm2}~and~\ref{thm3}, and set
\begin{gather*}
P(L):=\left\{\text{$l\in\ZZ[i]$ prime : $0<\arg(l)<\tfrac{\pi}4$ and $L\leq |l|^2\leq 2L$}\right\};\\
y_l:=\sgn(\lambda_{l}(V)),\qquad z_l:=\sgn(\lambda_{l^2}(V)),\qquad l\in P(L).
\end{gather*}
It follows from the result of Breusch~\cite[Teil II]{Breusch1932} (or from the prime number theorem for arithmetic progressions, for sufficiently large $L$) that $P(L)\neq\emptyset$, while
in \eqref{eq:def-amplifier} and \eqref{gen-amplifier} we have
\begin{equation}
\label{ampl-expand}
x_n=\begin{cases}
\sum\nolimits_{l\in P(L)}(y_l^2+z_l{}^2)\ll L/\log L,&n=1;\\
(1+\delta_{l_1\neq l_2})y_{l_1}y_{l_2}+\delta_{l_1=l_2}z_{l_1}z_{l_2}\ll 1,&\text{$n=l_1l_2$ for some $l_1,l_2\in P(L)$};\\
(1+\delta_{l_1\neq l_2})z_{l_1}z_{l_2}\ll 1,&
\text{$n=l_1^2l_2^2$ for some $l_1,l_2\in P(L)$};\\
0,&\text{otherwise}.
\end{cases}
\end{equation}
This formula is the analogue of \cite[(9.16)]{BlomerHarcosMagaMilicevic2020}, except that we forgot to insert the factors $1+\delta_{l_1\neq l_2}$ there.
In particular, by the inequality $|\lambda_{l}(V)|+|\lambda_{l^2}(V)|>1/2$ that follows from \eqref{hecke-mult}, we have
\begin{equation}
\label{lower-bd-non-arch}
\widehat{x}(V)=\Bigl(\sum_{l\in P(L)}|\lambda_{l}(V)|\Bigr)^2+
\Bigl(\sum_{l\in P(L)}|\lambda_{l^2}(V)|\Bigr)^2\gg \frac{L^2}{\log^2L}.
\end{equation}

Let $\mfB$ be an orthonormal basis of $V^\ell$. Entering the lower bounds \eqref{lower-bd-arch} and \eqref{lower-bd-non-arch} into the amplified pre-trace inequality \eqref{APTI}, we obtain
\begin{equation}\label{pre-APTI-done}
\frac{L^{2-\eps}}{\ell}\sum_{\phi\in\mfB}|\phi(g)|^2\ll_{\eps,I}\sum_{n\in\ZZ[i]\setminus\{0\}}\frac{|x_n|}{|n|}\sum_{\gamma\in\Gamma_n}|f(g^{-1}\tilde\gamma g)|.
\end{equation}
Let us assume that $g\in\Omega$. A straightforward counting combined with the divisor bound shows that
\begin{equation}\label{straightforward}
\#\left\{\gamma\in\Gamma_n:\|g^{-1}\tilde\gamma g\|\leq R\right\}\ll_{\eps,\Omega} R^{4+\eps}|n|^{2+\eps},
\end{equation}
so that, splitting into dyadic ranges for $\|g^{-1}\tilde\gamma g\|$ and using \eqref{better-be-bounded}, we obtain
\[ \sum_{\substack{\gamma\in\Gamma_n\\\log\|g^{-1}\tilde\gamma g\|>8\sqrt{\log\ell}}}
|f(g^{-1}\tilde\gamma g)|
\ll_{\eps,\Omega} \ell^{-50}|n|^{2+\eps}. \]
Thus from \eqref{in-the-bulk} and \eqref{pre-APTI-done} we conclude that
\begin{equation}\label{pre-APTI-done-2}
\begin{aligned}
&\sum_{\phi\in\mfB}|\phi(g)|^2\ll_{\eps,I,\Omega}L^{-2+\eps}\ell^2\sum_{\substack{n\in\ZZ[i]\setminus\{0\} \\ \gamma\in\Gamma_n\\\log\|g^{-1}\tilde\gamma g\|\leq 8\sqrt{\log\ell}}}\frac{|x_n|}{|n|}
\sup_{\nu\in i\RR}|\varphi_{\nu,\ell}^{\ell}(g^{-1}\tilde\gamma g)|+L^{2+\eps}\ell^{-48}.
\end{aligned}
\end{equation}
The bound \eqref{pre-APTI-done-2} explicitly reduces the non-spherical sup-norm problem of estimating $\sum_{\phi\in\mfB}|\phi(g)|^2$ via the amplification method to two ingredients:
\begin{itemize}
\item estimates on $\varphi_{\nu,\ell}^{\ell}(g^{-1}\tilde\gamma g)$ for $g^{-1}\tilde\gamma g\in G$ of moderate size;
\item counting $\gamma\in\Gamma_n$ according to the size of $\varphi_{\nu,\ell}^{\ell}(g^{-1}\tilde\gamma g)$.
\end{itemize}

We now also derive a version of \eqref{pre-APTI-done-2} adapted to estimating a single form $|\phi_q(g)|^2$ for some $|q|\leq\ell$. With the specific $f\in\mcH(\tau_{\ell})_{\infty}$ provided by \eqref{eq:inverse-tauell-transform} and \eqref{eq:def-gaussian-spectral-weight}, we obtain by averaging as in \eqref{fqg-average-2} the test function
\[ f_q(g):=\frac1{(2\ell+1)\pi^2}\sum_{|p|\leq\ell}\int_{0}^{\infty}e^{(p^2-\ell^2-t^2)/2}\,\varphi_{it,p}^{\ell,q}(g^{-1})\,(t^2+p^2)\,\dd t, \]
where
\[\varphi_{\nu,p}^{\ell,q}(g):=\frac1{2\pi}\int_0^{2\pi}
\varphi_{\nu,p}^{\ell}\bigl(g\diag(e^{i\varrho},e^{-i\varrho})\bigr)\,e^{-2qi\varrho}\,\dd \varrho.\]
In particular, this definition generalizes \eqref{spherical-averaged}, and
by \eqref{eq:spherical-function-symmetry} we have the symmetry
\begin{equation}\label{eq:averaged-spherical-function-symmetry}
\varphi_{\nu,p}^{\ell,-q}(g)=\ov{\varphi_{-\ov{\nu},p}^{\ell,q}(g)}=\varphi_{\nu,p}^{\ell,q}(g^{-1}).
\end{equation}
The analogues of \eqref{better-be-bounded}--\eqref{in-the-bulk} clearly hold for the $\RR/\ZZ$-average $f_q$, hence by \eqref{APTI-single-form} the following analogue of \eqref{pre-APTI-done-2} holds as well:
\begin{equation}
\label{pre-APTI-done-2-single-form}
\begin{aligned}
&|\phi_q(g)|^2\ll_{\eps,I,\Omega} L^{-2+\eps}\ell^2\sum_{\substack{n\in\ZZ[i]\setminus\{0\} \\ \gamma\in\Gamma_n\\\log\|g^{-1}\tilde\gamma g\|\leq 8\sqrt{\log\ell}}}\frac{|x_n|}{|n|}
\sup_{\nu\in i\RR}|\varphi_{\nu,\ell}^{\ell,q}(g^{-1}\tilde\gamma g)|+L^{2+\eps}\ell^{-48}.
\end{aligned}
\end{equation}

\subsection{A double pre-trace formula and a fourth moment}\label{sec:double-pre-trace-formula}
In this subsection, we use a different argument, outlined in \S\ref{KS-intro}, to estimate values $|\phi_q(g)|$ in terms of Diophantine counts of \emph{pairs} of Hecke correspondences and estimates on generalized spherical functions; see \eqref{almostdone} and \eqref{conclude-KS} below. 
The argument, reminiscent of self-amplification, relies on using diagonal detection of Voronoi type of \S\ref{sec28} in a double pre-trace formula (see \eqref{two-var} below) to get a handle on the fourth spectral moment of $|\phi_q(g)|$.

Let us fix two integers $\ell,q\in\ZZ$ with $\ell\geq\max(1,|q|)$. Let $n\in\ZZ[i]\setminus\{0\}$ and $g\in G$. By \eqref{APTF-single-form} and the remarks below it, for any $f\in\mcH(\tau_{\ell})_{\infty}$ we have
\[\int_{[\ell]}\widehat{f}(V)\lambda_n(V)|\phi_{\ell,q}^{V}(g)|^2\,\dd V=\frac{2\ell+1}{|n|}\sum_{\gamma\in\Gamma_n}f_q(g^{-1}\tilde{\gamma}g).\]
It is straightforward to adapt, first the two-variable versions of \eqref{eq:opfromtr}, \eqref{eq:trpif}, and \eqref{eq:spectral-decomposition-plancherel}, and then the proof of the above pre-trace formula to yield the following two-variable version.
Let $n_1,n_2\in\ZZ[i]\setminus\{0\}$ and $g_1,g_2\in G$. Then for any $f\in\mcH(\tau_{\ell},\tau_{\ell})_{\infty}$ (recalling the notation introduced after Theorem~\ref{thm:pws2}) we have
\begin{equation}\label{two-var}
\begin{aligned}
&\int_{[\ell]}\int_{[\ell]} \widehat{f}(V_1, V_2)\lambda_{n_1}(V_1)\lambda_{n_2}(V_2) |\phi^{V_1}_{\ell,q}(g_1)|^2|\phi^{V_2}_{\ell,q}(g_2)|^2 \,\dd V_1 \,\dd V_2\\
&\qquad=\frac{(2\ell+1)^2}{|n_1n_2|}\sum_{\gamma_1 \in \Gamma_{n_1}}\sum_{\gamma_2 \in \Gamma_{n_2}}
f_q(g_1^{-1}\tilde{\gamma}_1 g_1, g_2^{-1}\tilde{\gamma}_2 g_2),
\end{aligned}
\end{equation}
where $\widehat{f}(V_1, V_2)$ is given by \eqref{doubletransform} when $V_j$ is of type $(\nu_j,p_j)\in i\RR\times\ZZ$, and
\[f_{q}(g_1, g_2):=\frac1{(2\pi)^2}\int_0^{2\pi}\int_0^{2\pi}  f\big(g_1\diag(e^{i\varrho_1},e^{-i\varrho_1}), g_2\diag(e^{i\varrho_2},e^{-i\varrho_2})\big) \,e^{2q i(\varrho_1 + \varrho_2)}\,\dd \varrho_1 \,\dd \varrho_2.\]
In \eqref{two-var}, we can restrict to pairs $(V_1,V_2)$ satisfying $\lambda_i(V_j)=1$ by introducing an averaging over $\{n_1,in_1\}\times\{n_2,in_2\}$:
\begin{equation}\label{two-var-PGL}
\begin{aligned}
&\int_{[\ell]'}\int_{[\ell]'} \widehat{f}(V_1, V_2)\lambda_{n_1}(V_1)\lambda_{n_2}(V_2) |\phi^{V_1}_{\ell,q}(g_1)|^2|\phi^{V_2}_{\ell,q}(g_2)|^2 \,\dd V_1 \,\dd V_2\\
&\qquad=\frac{(2\ell+1)^2}{4|n_1n_2|}\sum_{\gamma_1\in\Gamma_{n_1}\cup\Gamma_{in_1}}\sum_{\gamma_2\in\Gamma_{n_2}\cup\Gamma_{in_2}}
f_q(g_1^{-1}\tilde{\gamma}_1 g_1, g_2^{-1}\tilde{\gamma}_2 g_2).
\end{aligned}
\end{equation}
The prime symbol in $[\ell]'$ indicates that we sum-integrate over automorphic representations with a lift to $\PGL_2(\ZZ[i])\backslash\PGL_2(\CC)$, so that the results of \S\ref{RSSection} and \S\ref{sec28} are applicable.

Now we consider, for any $n\in\ZZ[i]\setminus\{0\}$, the spectral weights
\[H(V_1,V_2;n):=h(\nu_1,p_1)h(\nu_2,p_2)W_\ell\left(\frac{|n|}{\ell},\vec{\nu},\vec{p}\right),\]
where $h$ is as in \eqref{eq:def-gaussian-spectral-weight} and $W_\ell$ is as in Lemma~\ref{lemma-vor}. Combining 
the Hilbert space isomorphism
\[\mcH(\tau_{\ell}) \hat{\otimes} \mcH(\tau_{\ell})
\longleftrightarrow L^2(\Gtemp(\tau_{\ell})\times \Gtemp(\tau_{\ell}))\]
induced by Theorem~\ref{thm:tr-plancherel} with Theorem~\ref{thm:pws2} and parts \ref{vor-a}--\ref{vor-b} of Lemma~\ref{lemma-vor}, we see that the function $(g_1,g_2)\mapsto\widecheck{H}(g_1,g_2;n)$ given by \eqref{inversedoubletransform} belongs to $\mcH(\tau_{\ell},\tau_{\ell})_{\infty}$, and its double $\tau_\ell$-spherical transform equals $H(V_1,V_2;n)$. Therefore, applying \eqref{two-var-PGL} with $f=\widecheck{H}(\cdot,\cdot;n)$, $n_1=n_2=n$, and $g_1=g_2=g$, and then summing up over $n$, we arrive at
\begin{equation}\label{eval}
\begin{aligned}
&\sum_{n\in\ZZ[i]\setminus\{0\}}\int_{[\ell]'}\int_{[\ell]'}
H(V_1,V_2;n)\lambda_n(V_1)\lambda_n(V_2)|\phi^{V_1}_{\ell,q}(g)|^2|\phi^{V_2}_{\ell,q}(g)|^2\,\dd V_1\,\dd V_2\\
&\qquad=\sum_{n\in\ZZ[i]\setminus\{0\}}\frac{(2\ell+1)^2}{4|n|^2}\sum_{\gamma_1,\gamma_2\in\Gamma_n\cup\Gamma_{in}}
\widecheck{H}_q(g^{-1}\tilde{\gamma}_1 g, g^{-1}\tilde{\gamma}_2 g;n).
\end{aligned}
\end{equation}
By Lemma~\ref{lemma-vor}\ref{vor-c}, the left-hand side of \eqref{eval} equals
\begin{equation}\label{eval-cusp}
\frac{\pi}{4}\ell^2\sum_{\substack{\text{$V$ cuspidal} \\T_i(V)=1,\ |p_V|\leq \ell}}
h(\nu_V, p_V)^2 \,L(1,\mathrm{ad^2} V)\,|\phi^{V}_{\ell,q}(g)|^4\ +\ \text{Eis},
\end{equation}
where the term $\text{Eis}$ is the contribution of Eisenstein representations:
\[\begin{aligned}
\text{Eis}=\ell^2\smash[b]{\sum_{\epsilon,\eta\in\{\pm 1\}}\sum_{\substack{p\in 4\ZZ\\|p|\leq\ell}}}\int_{(0)}\int_{(0)}
&\ell^{\eta(\nu_1-\epsilon \nu_2)}\,h(\nu_1,\epsilon p) h(\nu_2,p)\,\mcL_{\eta}((\nu_1,\epsilon p),(\nu_2,p))\\
&\times|\phi^{E(\nu_1,\epsilon p)}_{\ell,q}(g)|^2|\phi^{E(\nu_2,p)}_{\ell,q}(g)|^2
\,\frac{\dd\nu_1}{\pi i}\,\frac{\dd\nu_2}{\pi i}.
\end{aligned}\]
We make a change of variable $(\nu_1,\nu_2,p)\mapsto(\eta\nu_1,\eta\epsilon\nu_2,\eta\epsilon p)$. By invariance, we can replace the resulting pairs $(\eta\nu_1,\eta p)$ and $(\eta\epsilon\nu_2,\eta\epsilon p)$ by $(\nu_1,p)$ and $(\nu_2,p)$, respectively. In this way we see that
\begin{align*}
\text{Eis}=4\ell^2\smash[b]{\sum_{\substack{p\in 4\ZZ\\|p|\leq\ell}}}\int_{(0)}\int_{(0)}
&\ell^{\nu_1-\nu_2}\,h(\nu_1,p) h(\nu_2,p)\,\mcL_{\eta}((\nu_1,p),(\nu_2,p))\\
&\times|\phi^{E(\nu_1,p)}_{\ell,q}(g)|^2|\phi^{E(\nu_2,p)}_{\ell,q}(g)|^2
\,\frac{\dd\nu_1}{\pi i}\,\frac{\dd\nu_2}{\pi i}.
\end{align*}
By Lemma~\ref{eis-pos}, we conclude that $\text{Eis}\geq 0$. In particular, the right-hand side of \eqref{eval} is real, and it provides an upper bound for the contribution of each cuspidal $V$ in \eqref{eval-cusp}:
\[h(\nu_V, p_V)^2 \,L(1,\mathrm{ad^2} V)\,|\phi^{V}_{\ell,q}(g)|^4
\ll\sum_{n\in\ZZ[i]\setminus\{0\}}\frac{1}{|n|^2}\sum_{\gamma_1,\gamma_2\in\Gamma_n\cup\Gamma_{in}}
\widecheck{H}_q(g^{-1}\tilde{\gamma}_1 g, g^{-1}\tilde{\gamma}_2 g;n).\]
Here we can restrict the $n$-sum to $|n|\leq\ell^{1+\eps}$ at the cost of an error of $\OO_\eps(\ell^{-50})$. Indeed, the contribution of $|n|>\ell^{1+\eps}$ on the two sides of \eqref{eval} are equal, and this contribution is $\OO_\eps(\ell^{-50})$ thanks to the bound $H(V_1, V_2;n)\ll_A(|n|/\ell)^{-A}$ for any $A > 0$ that follows from Lemma~\ref{lemma-vor}\ref{vor-b} and the exponential decay in \eqref{eq:def-gaussian-spectral-weight}.

We now further explicate this bound within the context of Theorems~\ref{thm1}--\ref{thm3} (in particular, in preparation for use in Theorem~\ref{thm3}\ref{thm3-b}). Let $I\subset i\RR$ and $\Omega\subset G$ be compact subsets. We fix a cuspidal automorphic representation
$V\subset L^2(\Gamma\backslash G)$ with $\nu_V\in I$, $p_V=\ell$, $\lambda_i(V)=1$, and we pick a cusp form $\phi_q\in V^{\ell,q}$ with ${\|\phi_q\|}_2=1$. We shall also assume that $g\in\Omega$. By \eqref{RS-res} and our findings above,
\begin{equation}\label{almostdone}
|\phi_q(g)|^4\ll_{\eps,I}\ell^\eps\sum_{\substack{n\in\ZZ[i]\setminus\{0\}\\|n|\leq\ell^{1+\eps}}}\frac{1}{|n|^2}\sum_{\gamma_1,\gamma_2\in\Gamma_n\cup\Gamma_{in}}\widecheck{H}_q(g^{-1}\tilde{\gamma}_1 g, g^{-1}\tilde{\gamma}_2 g;n)+\ell^{-50}.
\end{equation}
We will analyze the right-hand side of \eqref{almostdone} to localize $\gamma_1$, $\gamma_2$ which contribute non-negligibly, and to bound these contributions in terms of generalized spherical functions $\varphi_{\nu,\ell}^{\ell,q}$.

We estimate $\widecheck{H}$ (hence also $\widecheck{H}_q$) in terms of Cartan coordinates using the two-dimensional analogue of \eqref{eq:inverse-spherical-transform-estimate}:
\[\begin{split}
\big|\widecheck{H}(k_1a_{h_1}k_2,k_3a_{h_2}k_4;n)\big|\leq
& \sum_{|p_1|,|p_2|\leq\ell}\ \ \iint\limits_{\substack{s_1>h_1\\s_2>h_2}}\,
\Bigg|\ \iint\limits_{\substack{t_1\in\RR\\t_2\in\RR}}
W_\ell\left(\frac{|n|}{\ell},(it_1, it_2),(p_1,p_2)\right)\\
\times e^{-\ell^2+(p_1^2+p_2^2)/2}\,& e^{-(t_1^2 + t_2^2)/2}\,e^{-it_1s_1-it_2s_2}\,(t_1^2+p_1^2)(t_2^2+p_2^2)
\,\dd t_1\,\dd t_2\,\Bigg|\,\dd s_1\,\dd s_2.
\end{split}\]
This estimate holds for $k_j\in K$ and $h_j>1$. Assuming without loss of generality that $s_1\geq s_2>0$ and shifting the $t_1$-contour, we conclude from Lemma~\ref{lemma-vor}\ref{vor-b} that for any $\eps, B > 0$ the inner double integral is
\begin{align*}
&\ll_{\eps,B}\left(1 + \frac{(1 +|p_1+p_2|)(1 + |p_1- p_2|)}{\ell}\right)^{2B-2+2\eps} e^{-\ell^2+(p_1^2+p_2^2)/2}\ell^4 e^{-B s_1} \left(\frac{|n|}{\ell}\right)^{-B-\eps}\\
&\ll_{\eps,B}\frac{\ell^{4+\eps}e^{-B\max(s_1,s_2)}}{\bigl(1+\ell-|p_1|\bigr)^2\bigl(1+\ell-|p_2|\bigr)^2}\left(\frac{|n|}{\ell}\right)^{-B}. 
\end{align*}
It follows that
\[\widecheck{H}(k_1a_{h_1}k_2,k_3a_{h_2}k_4;n) 
\ll_{\eps,B} \ell^{4+\eps} e^{-B\max(h_1,h_2)} \bigl(\ell/|n|\bigr)^B \]
for any $\eps, B > 0$ and $h_1, h_2 > 1$. This estimate remains true for general $h_1, h_2 \geq 0$, as can be seen by using \eqref{eq:inverse-tauell-transform} and the trivial bound $|\varphi_{it,p}^{\ell}|\leqslant 2\ell+1$
instead of \eqref{eq:inverse-spherical-transform-estimate} for the respective variable if one or both of $h_1, h_2$ are
at most $1$. The same bound applies for $\widecheck{H}_q$, that is,
\[\widecheck{H}_q(g_1,g_2;n)
\ll_{\eps,B}\ell^{4+\eps}\bigg(\frac{\ell/|n|}{\|g_1\|^2+\|g_2\|^2}\bigg)^B\]
for any $\eps, B > 0$ and $g_1,g_2\in G$. So we can refine \eqref{almostdone} to
\[|\phi_q(g)|^4\ll_{\eps,I,\Omega}\ell^\eps\sum_{\substack{n\in\ZZ[i]\setminus\{0\}\\|n|\leq\ell^{1+\eps}}}
\frac1{|n|^2}\sum_{\substack{\gamma_1,\gamma_2\in\Gamma_n\cup\Gamma_{in}\\\| g^{-1}\tilde{\gamma}_j g\|\leq\ell^{\eps}\sqrt{\ell/|n|}}}\widecheck{H}_q(g^{-1}\tilde{\gamma}_1 g, g^{-1}\tilde{\gamma}_2 g;n)+\ell^{-50}.\]

In the last sum, we estimate the terms more directly by \eqref{inversedoubletransform}, \eqref{eq:averaged-spherical-function-symmetry}, and Lemma~\ref{lemma-vor}\ref{vor-b}:
\[\widecheck{H}_q(g^{-1}\tilde{\gamma}_1 g,g^{-1}\tilde{\gamma}_2 g;n)
\ll_\eps\ell^{2+\eps} F(\gamma_1)F(\gamma_2)+\ell^{-80},\]
where we temporarily abbreviate (suppressing $g$ and $q$ from the notation)
\[F(\gamma):=\sup_{\nu\in i\RR}|\varphi_{\nu,\ell}^{\ell,q}(g^{-1}\tilde{\gamma}g)|,\qquad\gamma\in\GL_2(\CC).\]
Recalling also \eqref{straightforward}, we obtain an inequality of bilinear type:
\[|\phi_q(g)|^4\ll_{\eps,I,\Omega}\ell^{2+\eps}\sum_{\substack{n\in\ZZ[i]\setminus\{0\}\\|n|\leq\ell^{1+\eps}}}
\frac1{|n|^2}\sum_{\substack{\gamma_1,\gamma_2\in\Gamma_n\cup\Gamma_{in}\\\|g^{-1}\tilde{\gamma}_j g\|\leq\ell^{\eps}\sqrt{\ell/|n|}}}F(\gamma_1)F(\gamma_2)+\ell^{-50}.\]
With the shorthand notation
\[S(n):=\sum_{\substack{\gamma\in\Gamma_n\\\| g^{-1}\tilde{\gamma} g\|\leq\ell^{\eps}\sqrt{\ell/|n|}}}F(\gamma), \]
we observe that the innermost sum in the previous display equals $(S(n)+S(in))^2$, hence it does not exceed $2(S(n)^2+S(in)^2)$. In the end, we conclude
\begin{equation}\label{conclude-KS}
|\phi_q(g)|^4\ll_{\eps,I,\Omega}\ell^{2+\eps}\sum_{\substack{n\in\ZZ[i]\setminus\{0\}\\|n|\leq\ell^{1+\eps}}}
\frac1{|n|^2}\sum_{\substack{\gamma_1,\gamma_2\in\Gamma_n\\\|g^{-1}\tilde{\gamma}_j g\|\leq\ell^{\eps}\sqrt{\ell/|n|}}}
\prod_{j=1}^2\sup_{\nu\in i\RR}|\varphi_{\nu,\ell}^{\ell,q}(g^{-1}\tilde{\gamma}_j g)|+\ell^{-50},
\end{equation}
which serves as an analogue of \eqref{pre-APTI-done-2-single-form}.

\subsection{Reduction to Diophantine counting}\label{reduction}
In this subsection, we input into the preliminary estimates \eqref{pre-APTI-done-2}, \eqref{pre-APTI-done-2-single-form} and \eqref{conclude-KS} the results of Theorems~\ref{thm4},~\ref{thm6}~and~\ref{thm5}, which provide the desired estimates on spherical trace functions. We shall assume (as we can) that $\ell$ is sufficiently large in terms of $\eps$.

We begin by explicating the estimate \eqref{pre-APTI-done-2} using \eqref{ampl-expand} and Theorem~\ref{thm4}. For $\mcL\geq 1$ and $\vec{\delta}=(\delta_1,\delta_2)\in\RR^{2}_{>0}$, let
\[D(L,\mcL):=\left\{n\in\ZZ[i]: \mcL\leq |n|^2\leq 16\mcL,\,\,
\begin{matrix}\text{$n=1$ or $n=l_1l_2$ or $n=l_1^2l_2^2$}\\\text{for some $l_1,l_2\in P(L)$}\end{matrix}\right\},\]
\[M(g,L,\mcL,\vec{\delta}):=\sum_{n\in D(L,\mcL)}\#\bigg\{\gamma\in\Gamma_n:
g^{-1}\tilde{\gamma}g=k\begin{pmatrix} z&u\\&z^{-1}\end{pmatrix}k^{-1}\,\,
\begin{matrix}\text{for some }k\in K,\,\,|z|\geq 1,\\
\min|z\pm 1|\leq\delta_1,\,\,|u|\leq\delta_2\end{matrix}\bigg\}.\]
Note that every element of $G$ is of the form $k\big(\begin{smallmatrix}z&u\\&z^{-1}\end{smallmatrix}\big)k^{-1}$ for some $k\in K$, $|z|\geq 1$, and $u\in\CC$. Indeed, such a decomposition is immediate with $k\in G$, $z\in\CC^{\times}$, and $u=0$ unless $z=\pm 1$, after which the claim follows by replacing $k$ by $k\big(\begin{smallmatrix} &-1\\1&\end{smallmatrix}\big)$ if needed and using the Iwasawa decomposition of $k$.

Thus to each $\gamma$ occurring in \eqref{pre-APTI-done-2} we may associate a dyadic vector $\vec{\delta}=(\delta_1,\delta_2)$ (that is, $\log_{2}\delta_{j}\in\ZZ$) such that $1/\sqrt{\ell}\leq\delta_j\leq\ell^\eps$ and $\delta_{j}$ are minimal such that $\gamma$ is counted in the corresponding $M(g,L,\mcL,\vec{\delta})$. Therefore, applying \eqref{ampl-expand} and the estimates of Theorem~\ref{thm4} in \eqref{pre-APTI-done-2} leads to the following result.

\begin{lemma}\label{APTI-done-lemma}
Let $\ell\geq 1$ be an integer, $I\subset i\RR$ and $\Omega\subset G$ be compact sets. Let $V\subset L^2(\Gamma\backslash G)$ be a cuspidal automorphic representation with minimal $K$-type $\tau_{\ell}$ and spectral parameter $\nu_V\in I$. Let $\mfB$ be an orthonormal basis of $V^\ell$, and let $g\in\Omega$. Then for any $L\geq 7$ and $\eps>0$ we have
\begin{align*}
\sum_{\phi\in\mfB}|\phi(g)|^2
&\ll_{\eps,I,\Omega}\ell^{3+\eps}L^{\eps}\sum_{\substack{\vec{\delta}\textnormal{ dyadic}\\1/\sqrt{\ell}\leq\delta_j\leq \ell^{\eps}}}\min\left(\frac1{\ell\delta_1^2},\frac1{\sqrt{\ell}\delta_2}\right)\\
&\qquad\times\bigg(\frac{M(g,L,1,\vec{\delta})}{L}+\frac{M(g,L,L^2,\vec{\delta})}{L^3}+\frac{M(g,L,L^4,\vec{\delta})}{L^4}\bigg)+L^{2+\eps}\ell^{-48}.
\end{align*}
\end{lemma}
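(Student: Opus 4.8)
The plan is to combine the amplified pre-trace inequality \eqref{pre-APTI-done-2} with the pointwise estimate of Theorem~\ref{thm4}, organizing the geometric side according to how close each $\gamma$ comes to the identity in the two relevant ``directions''. Concretely, each $\gamma\in\Gamma_n$ contributing on the right of \eqref{pre-APTI-done-2} gives, after conjugation by a suitable $k\in K$, an upper triangular matrix $g^{-1}\tilde\gamma g=k\left(\begin{smallmatrix}z&u\\&z^{-1}\end{smallmatrix}\right)k^{-1}$ with $|z|\ge 1$ and $u\in\CC$ (using the decomposition spelled out before the statement of the lemma, together with $\varphi_{\nu,\ell}^{\ell}$ being conjugation-invariant so that applying it to $g^{-1}\tilde\gamma g$ sees only $z$ and $u$). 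First I would observe that since $\log\|g^{-1}\tilde\gamma g\|\le 8\sqrt{\log\ell}$, we have $\|g^{-1}\tilde\gamma g\|\ll\ell^{\eps}$; moreover the trivial bound $|z-1|\le\delta_1$ and $|u|\le\delta_2$ may be taken with $\delta_j\le\ell^{\eps}$, and by Theorem~\ref{thm4} the contribution of any $\gamma$ with $\min|z\pm 1|\le 1/\sqrt\ell$ \emph{or} $|u|\le 1/\sqrt\ell$ is already governed by the $\min(\cdot)$ term at $\delta_j=1/\sqrt\ell$ (the full strength $\ll_\eps\ell$ from Theorem~\ref{thm4} is never needed beyond what the bound $\min(\ell^\eps\|g\|^6/|z^2-1|^2,\ell^{1/2+\eps}\|g\|^3/|u|)$ provides once we truncate at the smallest relevant scale). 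So it is harmless to restrict to dyadic $\delta_j$ with $1/\sqrt\ell\le\delta_j\le\ell^\eps$.

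Next I would perform a dyadic decomposition in three parameters: the size $\mcL\asymp|n|^2$ of the Hecke index (there are $\OO(\log L)=\OO(L^\eps)$ dyadic values, with $\mcL$ ranging over $1$, $\asymp L^2$, and $\asymp L^4$ according to the three cases of \eqref{ampl-expand}), and the two distances $\delta_1\asymp\min|z\pm1|$ and $\delta_2\asymp|u|$. For fixed $(\mcL,\vec\delta)$, the number of pairs $(n,\gamma)$ with $n\in D(L,\mcL)$ and $\gamma$ counted at scale $\vec\delta$ is exactly $M(g,L,\mcL,\vec\delta)$ by definition. On this piece, Theorem~\ref{thm4} gives $\sup_{\nu\in i\RR}|\varphi_{\nu,\ell}^\ell(g^{-1}\tilde\gamma g)|\ll_\eps\ell^\eps\min(\ell^{-1}\delta_1^{-2}\|g\|^6,\ell^{-1/2}\delta_2^{-1}\|g\|^3)\ll_\eps\ell^{\eps}\min(\ell^{-1}\delta_1^{-2},\ell^{-1/2}\delta_2^{-1})$ since $\|g\|\ll\ell^\eps$ on the truncated range; absorbing these powers of $\ell^\eps$ into a single $\ell^\eps$ (as permitted by the $\eps$-convention) I would bound the inner sum $\sum_{n}\frac{|x_n|}{|n|}\sum_{\gamma}\sup_\nu|\varphi_{\nu,\ell}^\ell(\cdots)|$ on the $(\mcL,\vec\delta)$-piece by
\[
\frac{\ell^{\eps}}{\sqrt{\mcL}}\min\!\left(\frac1{\ell\delta_1^2},\frac1{\sqrt\ell\,\delta_2}\right)M(g,L,\mcL,\vec\delta),
\]
using the uniform bounds $|x_n|\ll 1$ for $\mcL\asymp L^2, L^4$ and $|x_1|\ll L/\log L$ for $\mcL=1$ from \eqref{ampl-expand}. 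Summing the three cases $\sqrt{\mcL}\asymp 1$ (with $|x_1|\ll L/\log L$, hence $|x_1|/\sqrt{\mcL}\asymp L^{1-\eps}$ contributes a $1/L$ after the prefactor $L^{-2}$... let me recompute: prefactor is $L^{-2+\eps}\ell^2$, and $|x_1|/|n|\ll L$, so this case gives $L^{-1+\eps}\ell^2\cdot\min(\cdots)\cdot M(g,L,1,\vec\delta)$), $\sqrt{\mcL}\asymp L$ (giving $L^{-2+\eps}\ell^2\cdot L^{-1}\cdot\min(\cdots)M$), and $\sqrt{\mcL}\asymp L^2$ (giving $L^{-2+\eps}\ell^2\cdot L^{-2}\cdot\min(\cdots)M$) yields precisely the three terms $M(g,L,1,\vec\delta)/L$, $M(g,L,L^2,\vec\delta)/L^3$, $M(g,L,L^4,\vec\delta)/L^4$ inside the bracket, each multiplied by $\ell^{3+\eps}L^\eps\min(\ell^{-1}\delta_1^{-2},\ell^{-1/2}\delta_2^{-1})$ after collecting the $L^{-2+\eps}\ell^2\cdot\ell^{\eps}$ factor (note $\ell^2\cdot\ell=\ell^3$; the extra $\ell$ comes from the $\ell^{-1}$ inside being already accounted — actually the $\ell^3$ arises as $\ell^2$ from the prefactor times $\ell^\eps$, with the remaining $\ell$-powers inside the $\min$, so the stated $\ell^{3+\eps}$ outside is consistent once one writes $\min(\frac1{\ell\delta_1^2},\frac1{\sqrt\ell\delta_2})$ without the extra factor $\ell$ — I would double-check this bookkeeping against \eqref{pre-APTI-done-2}).

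Finally I would add the error term $L^{2+\eps}\ell^{-48}$ carried over verbatim from \eqref{pre-APTI-done-2}, and note that the outer sum over dyadic $\vec\delta$ has only $\OO(\log^2\ell)=\OO(\ell^\eps)$ terms, which is absorbed. The main obstacle, such as it is, is purely bookkeeping: keeping the powers of $\ell$, $L$, $\mcL$, $\delta_j$, and $\|g\|$ consistent across the three ranges of $\mcL$ and verifying that the truncation $\|g^{-1}\tilde\gamma g\|\ll\ell^\eps$ legitimately lets one replace $\|g\|^6$ and $\|g\|^3$ in Theorem~\ref{thm4} by $\ell^\eps$ without losing the $\min$ structure; there is no new idea needed beyond assembling \eqref{pre-APTI-done-2}, \eqref{ampl-expand}, and Theorem~\ref{thm4}.
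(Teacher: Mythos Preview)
Your proposal is correct and follows essentially the same path as the paper's proof, which simply assembles \eqref{pre-APTI-done-2}, the amplifier bounds \eqref{ampl-expand}, and Theorem~\ref{thm4} via a dyadic decomposition in $(\delta_1,\delta_2)$. The bookkeeping you flagged does work out: the prefactor $L^{-2+\eps}\ell^2$ from \eqref{pre-APTI-done-2} combines with $\sup_{\nu}|\varphi_{\nu,\ell}^{\ell}|\ll\ell^{1+\eps}\min\bigl(1/(\ell\delta_1^2),1/(\sqrt{\ell}\,\delta_2)\bigr)$ from Theorem~\ref{thm4} (after absorbing $\|g^{-1}\tilde\gamma g\|^6\ll\ell^{\eps}$) to give the $\ell^{3+\eps}$ outside, while $|x_n|/|n|\ll L,\,1/L,\,1/L^2$ for $\mcL=1,L^2,L^4$ combine with $L^{-2+\eps}$ to produce the denominators $L,\,L^3,\,L^4$.
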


Lemma~\ref{APTI-done-lemma} is free of any choices of the test function, amplifier, and spherical trace function. It reduces the estimation of $\sum_{\phi\in\mfB}|\phi(g)|^2$ to the Diophantine counting problem of estimating $M(g,L,\mcL,\vec{\delta})$ uniformly in $L$, $\mcL$, and $\vec{\delta}$.

Now, we similarly explicate the estimate \eqref{pre-APTI-done-2-single-form} using \eqref{ampl-expand} and Theorems~\ref{thm6}--\ref{thm5}\ref{thm5-a}. Recall the sets $\mcD\subset G$ and $\mcS\subset K\subset\mcN\subset G$ introduced before Theorem~\ref{thm5}. With $D(L,\mcL)$ as above, we define for $|q|\leq\ell$, $\mcL\geq 1$, $\delta>0$, and $\vec{\delta}=(\delta_1,\delta_2)\in\RR_{>0}^{2}$, the matrix counts
\begin{align*}
M^{\ast}_0(g,L,\mcL,\delta)&:=\sum_{n\in D(L,\mcL)}\#\left\{\gamma\in\Gamma_n:\dist(g^{-1}\tilde\gamma g,\mcS)\leq\delta,\,\,\frac{D(g^{-1}\tilde\gamma g)}{\|g^{-1}\tilde\gamma g\|^2}\ll\frac{\log\ell}{\sqrt{\ell}}\right\},\\
M^\ast(g,L,\mcL,\vec{\delta})&:=\sum_{n\in D(L,\mcL)}\#\left\{\gamma\in\Gamma_n:\dist\left(g^{-1}\tilde\gamma g,K\right)\leq\delta_1,\,\,\dist(g^{-1}\tilde\gamma g,\mcD)\leq\delta_2\right\},
\end{align*}
with a sufficiently large implied constant in the definition of $M^{\ast}_0(g,L,\mcL,\delta)$.

For $q=0$, we estimate the size of $\varphi_{\nu,\ell}^{\ell,q}(g^{-1}\tilde\gamma g)$ in \eqref{pre-APTI-done-2-single-form} using Theorem~\ref{thm5}\ref{thm5-a}. Since there are at most $\OO_{\eps,\Omega}(\ell^\eps|n|^{2+\eps})$ elements $\gamma\in\Gamma_n$ contributing to the right-hand side of \eqref{pre-APTI-done-2-single-form}, the total contribution of those elements which fail to satisfy $D(g^{-1}\tilde\gamma g)\ll\|g^{-1}\tilde\gamma g\|^2(\log\ell)/\sqrt{\ell}$ with a sufficiently large implied constant may be absorbed into the existing $\OO_{\eps,I,\Omega}(L^{2+\eps}\ell^{-48})$ error term. We may thus restrict to $\gamma\in\Gamma_n$ satisfying these conditions. We associate to each remaining $\gamma$ in \eqref{pre-APTI-done-2-single-form} the smallest dyadic $1/\sqrt{\ell}\leq\delta\leq\ell^{\eps}$ such that $\gamma$ is counted in the corresponding $M_0^{\ast}(g,L,\mcL,\delta)$. For a general $|q|\leq\ell$, we associate to each $\gamma$ in \eqref{pre-APTI-done-2-single-form} the
lexicographically smallest dyadic vector $\vec{\delta}=(\delta_1,\delta_2)$ such that $\delta_j\leq\ell^\eps$ and $\delta_1^2\delta_2\geq 1/\sqrt{\ell}$ and $\gamma$ is counted in the corresponding $M_q(g,L,\mcL,\vec{\delta})$. Applying \eqref{ampl-expand} and the estimates of Theorems~\ref{thm6}--\ref{thm5}\ref{thm5-a} in \eqref{pre-APTI-done-2-single-form} leads to the following result.

\begin{lemma}\label{APTI-done-lemma-single-form}
Let $\ell\geq 1$ be an integer, $I\subset i\RR$ and $\Omega\subset G$ be compact sets. Let $V\subset L^2(\Gamma\backslash G)$ be a cuspidal automorphic representation with minimal $K$-type $\tau_{\ell}$ and spectral parameter $\nu_V\in I$. Let $\phi_q\in V^{\ell,q}$ such that ${\|\phi_q\|}_2=1$ and let $g\in\Omega$. Then for any $L\geq 7$ and $\eps>0$ we have
\begin{align*}
|\phi_0(g)|^2
&\ll_{\eps,I,\Omega}\ell^{2+\eps}L^{\eps}\sum_{\substack{\delta\textnormal{ dyadic}\\1/\sqrt{\ell}\leq\delta\leq\ell^{\eps}}}\frac1{\sqrt{\ell}\delta}\\
&\qquad\times\bigg(\frac{M_0^{\ast}(g,L,1,\delta)}{L}+\frac{M_0^{\ast}(g,L,L^2,\delta)}{L^3}+\frac{M_0^{\ast}(g,L,L^4,\delta)}{L^4}\bigg)+L^{2+\eps}\ell^{-48}.
\end{align*}
Moreover, for $|q|\leq\ell$ we have
\begin{align*}
|\phi_q(g)|^2
&\ll_{\eps,I,\Omega}\ell^{2+\eps}L^{\eps}\sum_{\substack{\vec{\delta}\textnormal{ dyadic},\,\,
\delta_j\leq\ell^{\eps}\\\delta_1^2\delta_2\geq 1/\sqrt{\ell}}}
 \frac{1}{\sqrt{\ell} \delta_1^2 \delta_2}\\
&\qquad\times\bigg(\frac{M^\ast(g,L,1,\vec{\delta})}{L}+\frac{M^\ast(g,L,L^2,\vec{\delta})}{L^3}+\frac{M^\ast(g,L,L^4,\vec{\delta})}{L^4}\bigg)+L^{2+\eps}\ell^{-48}.
\end{align*}
\end{lemma}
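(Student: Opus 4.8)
The plan is to follow verbatim the pattern of the discussion preceding Lemma~\ref{APTI-done-lemma}, now feeding the bounds of Theorems~\ref{thm6} and~\ref{thm5}\ref{thm5-a} into the pre-trace inequality \eqref{pre-APTI-done-2-single-form} in place of Theorem~\ref{thm4}. Throughout, I would use that every $\gamma$ occurring on the right-hand side of \eqref{pre-APTI-done-2-single-form} satisfies $\|g^{-1}\tilde\gamma g\|\leq e^{8\sqrt{\log\ell}}\ll_\eps\ell^\eps$; since each of $K$, $\mcS$, $\mcD$ contains $\id$, the relevant distances $\dist(g^{-1}\tilde\gamma g,\cdot)$ are likewise $\ll_\eps\ell^\eps$, so the dyadic parameters $\delta,\delta_1,\delta_2$ never exceed $\ell^\eps$. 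I would also record the elementary consequence of \eqref{straightforward} and \eqref{ampl-expand} that $\sum_{n}|x_n|\,|n|^{1+\eps}\ll_\eps L^{4+\eps}$ (the three strata $|n|^2\asymp 1,\,L^2,\,L^4$ contributing $\ll L,\,L^{3+\eps},\,L^{4+\eps}$ respectively), which is the uniform device for absorbing negligible terms.

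For the bound on $|\phi_0(g)|^2$ I would first invoke the second assertion of Theorem~\ref{thm5}\ref{thm5-a} with $\Lambda$ large: on those $\gamma$ for which $D(g^{-1}\tilde\gamma g)$ exceeds a suitable constant multiple of $\|g^{-1}\tilde\gamma g\|^2(\log\ell)/\sqrt\ell$ one has $\sup_{\nu\in i\RR}|\varphi_{\nu,\ell}^{\ell,0}(g^{-1}\tilde\gamma g)|\ll_\Lambda\ell^{-\Lambda}$, so by \eqref{straightforward}, \eqref{ampl-expand}, and the estimate above, the contribution of such $\gamma$ to \eqref{pre-APTI-done-2-single-form} is $\ll L^{2+\eps}\ell^{2+\eps-\Lambda}$, absorbed into the existing error $L^{2+\eps}\ell^{-48}$ for $\Lambda$ large enough. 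On the surviving $\gamma$ I would apply the main bound $\sup_{\nu\in i\RR}|\varphi_{\nu,\ell}^{\ell,0}(g^{-1}\tilde\gamma g)|\ll_\eps\ell^\eps\min\bigl(1,(\sqrt\ell\,\dist(g^{-1}\tilde\gamma g,\mcS))^{-1}\bigr)+\ell^{-\Lambda}$, again discarding the $\ell^{-\Lambda}$ tail. To each such $\gamma$ I attach the smallest dyadic $\delta$ with $1/\sqrt\ell\leq\delta\leq\ell^\eps$ and $\dist(g^{-1}\tilde\gamma g,\mcS)\leq\delta$; then $\min\bigl(1,(\sqrt\ell\,\dist(g^{-1}\tilde\gamma g,\mcS))^{-1}\bigr)\ll(\sqrt\ell\,\delta)^{-1}$ in every case (for $\delta=1/\sqrt\ell$ both sides are $\asymp 1$, otherwise $\dist\asymp\delta$), and the number of such $\gamma$ with $n\in D(L,\mcL)$ is at most the $n$-summand of $M_0^{\ast}(g,L,\mcL,\delta)$. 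Summing over the $\OO(\log\ell)$ admissible $\delta$ and splitting the $n$-sum according to $\mcL\in\{1,L^2,L^4\}$ with the weights $|x_1|\ll L/\log L$ and $|x_n|\ll 1$ from \eqref{ampl-expand} then turns the prefactor $L^{-2+\eps}\ell^2$ from \eqref{pre-APTI-done-2-single-form} together with $\ell^\eps/(\sqrt\ell\,\delta)$ into precisely the claimed shape (using $L^{-2+\eps}\cdot L/\log L\ll L^{-1+\eps}$, etc.).

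For a general $|\phi_q(g)|^2$ the argument is identical, now driven by Theorem~\ref{thm6}: the $\ell^{-\Lambda}$ summand in \eqref{thm6bound} is discarded as above, and on the remaining $\gamma$ one has $\sup_{\nu\in i\RR}|\varphi_{\nu,\ell}^{\ell,q}(g^{-1}\tilde\gamma g)|\ll_\eps\ell^\eps\bigl(\sqrt\ell\,\dist(g^{-1}\tilde\gamma g,K)^2\dist(g^{-1}\tilde\gamma g,\mcD)\bigr)^{-1}$, which is moreover $\ll\ell^\eps$. To such a $\gamma$ I attach the lexicographically smallest dyadic vector $\vec\delta=(\delta_1,\delta_2)$ with $\delta_j\leq\ell^\eps$, $\delta_1^2\delta_2\geq 1/\sqrt\ell$, $\dist(g^{-1}\tilde\gamma g,K)\leq\delta_1$, and $\dist(g^{-1}\tilde\gamma g,\mcD)\leq\delta_2$, so that $\gamma$ is counted in $M^{\ast}(g,L,\mcL,\vec\delta)$; a short case analysis — according to whether the naive dyadic scales already satisfy $\delta_1^2\delta_2\geq 1/\sqrt\ell$ or have to be rounded up, in which case the product stays $\asymp 1/\sqrt\ell$ — shows that for this choice one has $\ell^\eps\min\bigl(1,(\sqrt\ell\,\dist(\cdot,K)^2\dist(\cdot,\mcD))^{-1}\bigr)\ll\ell^\eps(\sqrt\ell\,\delta_1^2\delta_2)^{-1}$. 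Summing over the $\OO((\log\ell)^2)$ admissible $\vec\delta$ and over $\mcL\in\{1,L^2,L^4\}$ exactly as before yields the second displayed bound.

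The work here is entirely bookkeeping rather than analysis: the two points requiring care are (i) arranging the dyadic boxes — and in particular the truncations $\delta\geq 1/\sqrt\ell$, $\delta_1^2\delta_2\geq 1/\sqrt\ell$ together with the ``round-up'' convention in the lexicographic choice — so that the spherical-function bound on each box is genuinely controlled by $(\sqrt\ell\,\delta)^{-1}$, respectively $(\sqrt\ell\,\delta_1^2\delta_2)^{-1}$; and (ii) checking that the $\ell^{-\Lambda}$ remainders in Theorems~\ref{thm6} and~\ref{thm5} are dominated uniformly, which uses the freedom to take $\Lambda$ arbitrarily large together with \eqref{straightforward}. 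No cancellation is exploited anywhere, so the lemma is a soft repackaging of \eqref{pre-APTI-done-2-single-form}, and the only genuine obstacle is being sufficiently careful with the dyadic truncations in the general-$q$ case.
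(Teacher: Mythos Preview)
Your proposal is correct and follows essentially the same approach as the paper: for $q=0$ you discard the $\gamma$ failing the $D(g^{-1}\tilde\gamma g)$ localization via Theorem~\ref{thm5}\ref{thm5-a} and assign to each survivor the minimal dyadic $\delta\in[1/\sqrt\ell,\ell^\eps]$, while for general $q$ you discard the $\ell^{-\Lambda}$ tail from Theorem~\ref{thm6} and assign the lexicographically smallest admissible dyadic $\vec\delta$ with $\delta_1^2\delta_2\geq 1/\sqrt\ell$. The paper's derivation (the paragraph immediately preceding the lemma in \S\ref{reduction}) is exactly this bookkeeping, with the same truncations and the same absorption of negligible terms via \eqref{straightforward} and \eqref{ampl-expand}.
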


Similarly, we explicate \eqref{conclude-KS} using Theorem~\ref{thm5}\ref{thm5-b}. Here we introduce the double matrix count
\begin{align*}
&Q(g,L,H_1, H_2):=\\
&\quad\sum_{L \leq |n|  \leq 2L} \#\Bigg\{(\gamma_1, \gamma_2) \in\Gamma_n^2 :\ 
\|g^{-1}\tilde\gamma_jg\|\leq \sqrt{\frac{H_j}{L}},
\,\dist(g^{-1}\tilde\gamma_jg,\mcD)\ll\sqrt{\frac{H_j\log\ell}{L\ell}}\Bigg\},
\end{align*}
with a sufficiently large implied constant in the distance condition.

\begin{lemma}\label{q=ell-case}
Let $\ell\geq 1$ be an integer,  $I\subset i\RR$ and $\Omega\subset G$ be compact sets. Let $V\subset L^2(\Gamma\backslash G)$ be a cuspidal automorphic representation with minimal $K$-type $\tau_{\ell}$ and spectral parameter $\nu_V\in I$. Suppose that $V$ lifts to an automorphic representation for $\PGL_2(\ZZ[i])\backslash\PGL_2(\CC)$.
Let $\phi_{\pm \ell}\in V^{\ell,\pm\ell}$ such that ${\|\phi_{\pm \ell}\|}_2=1$ and let $g\in\Omega$. Then for any $\eps > 0$ we have
\[|\phi_{\pm \ell}(g)|^4\ll_{ \eps,I,\Omega}\ell^{2+ \eps} \max_{1 \leq L, H_1, H_2 \leq \ell^{1+\eps}}  \frac{Q(g,L,H_1, H_2) }{H_1H_2}+\ell^{-50}.\]
\end{lemma}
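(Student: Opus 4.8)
The plan is to start from the bilinear inequality \eqref{conclude-KS}, which is exactly the analogue of \eqref{pre-APTI-done-2-single-form} for the double pre-trace argument, and then feed in the $g$-dependence and localization from Theorem~\ref{thm5}\ref{thm5-b}. First I would recall that for $q=\pm\ell$ the estimate \eqref{thm5boundq=ell} gives $\varphi_{\nu,\ell}^{\ell,\pm\ell}(h)\ll_\eps\|h\|^{-2+\eps}\ell^\eps$, and moreover $\varphi_{\nu,\ell}^{\ell,\pm\ell}(h)\ll_\Lambda\ell^{-\Lambda}$ unless $\dist(h,\mcD)\ll_\Lambda\|h\|\sqrt{\log\ell}/\sqrt{\ell}$. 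So in the sum over $\gamma_1,\gamma_2\in\Gamma_n$ appearing in \eqref{conclude-KS}, any pair for which one of the $g^{-1}\tilde\gamma_jg$ fails the distance condition $\dist(g^{-1}\tilde\gamma_jg,\mcD)\ll\|g^{-1}\tilde\gamma_jg\|\sqrt{\log\ell}/\sqrt\ell$ contributes at most $\ell^{-\Lambda}$, and since (using the divisor bound, cf.~\eqref{straightforward}) there are only $\OO_{\eps,\Omega}(\ell^\eps|n|^{2+\eps})$ such $\gamma_j$, the whole contribution of such pairs, once weighted by $\ell^{2+\eps}|n|^{-2}$ and summed over $|n|\le\ell^{1+\eps}$, is absorbed into the $\ell^{-50}$ error term. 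Thus I may restrict to pairs where both $\gamma_j$ satisfy $\dist(g^{-1}\tilde\gamma_jg,\mcD)\ll\|g^{-1}\tilde\gamma_jg\|\sqrt{\log\ell}/\sqrt\ell$, and for such $\gamma_j$ I bound $\sup_\nu|\varphi_{\nu,\ell}^{\ell,\pm\ell}(g^{-1}\tilde\gamma_jg)|\ll_\eps\ell^\eps\|g^{-1}\tilde\gamma_jg\|^{-2}$.

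Next I would dyadically decompose according to $\|g^{-1}\tilde\gamma_jg\|$. Writing $H_j:=L\|g^{-1}\tilde\gamma_jg\|^2$ (so $H_j$ records the size of the norm on the scale dictated by $|n|\asymp\sqrt L$ — note in \eqref{conclude-KS} the constraint is $\|g^{-1}\tilde\gamma_jg\|\le\ell^\eps\sqrt{\ell/|n|}$, and $|n|\le\ell^{1+\eps}$, so $H_j\ll\ell^{1+\eps}$ after the dyadic split of $|n|$), the number of pairs $(\gamma_1,\gamma_2)\in\Gamma_n^2$ in a given dyadic box with both $g^{-1}\tilde\gamma_jg$ close to $\mcD$ in the stated sense is precisely what $Q(g,L,H_1,H_2)$ counts (after further dyadically splitting $|n|$ into a range $L\le|n|\le 2L$, and noting $\|h\|\asymp\sqrt{2}$ trivially bounds below $\|h\|$, so $H_j\gtrsim L$). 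On this box, $\prod_j\sup_\nu|\varphi_{\nu,\ell}^{\ell,\pm\ell}(g^{-1}\tilde\gamma_jg)|\ll_\eps\ell^\eps\prod_j(L/H_j)=\ell^\eps L^2/(H_1H_2)$, and $1/|n|^2\asymp 1/L^2$, so after the dyadic sum over $|n|$, over $H_1$, and over $H_2$ (at most $\OO(\log^3\ell)=\OO(\ell^\eps)$ dyadic ranges), the right-hand side of \eqref{conclude-KS} is
\[\ll_{\eps,I,\Omega}\ell^{2+\eps}\max_{1\le L,H_1,H_2\le\ell^{1+\eps}}\frac{1}{L^2}\cdot\frac{L^2}{H_1H_2}\,Q(g,L,H_1,H_2)+\ell^{-50}
=\ell^{2+\eps}\max_{1\le L,H_1,H_2\le\ell^{1+\eps}}\frac{Q(g,L,H_1,H_2)}{H_1H_2}+\ell^{-50},\]
which is the asserted bound.

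The only genuinely delicate points are bookkeeping ones: I must check that the error terms coming from the non-localized $\gamma_j$ (those far from $\mcD$) and from the tail $|n|>\ell^{1+\eps}$ are genuinely $\OO_\eps(\ell^{-50})$ — for the former this uses the polynomial count $\OO_{\eps,\Omega}(\ell^\eps|n|^{2+\eps})$ of $\Gamma_n$-elements in a ball together with $|n|\le\ell^{1+\eps}$ and the super-polynomial decay $\ell^{-\Lambda}$ with $\Lambda$ chosen large, and for the latter it is already handled in the derivation of \eqref{conclude-KS}. I should also note that the constraint $\delta_j\le\ell^\eps$ hidden in \eqref{conclude-KS}'s summation range translates into $H_j\le\ell^{1+\eps}$ under $|n|\asymp L$, and that the distance condition defining $Q(g,L,H_1,H_2)$ is normalized exactly so that $\dist(g^{-1}\tilde\gamma_jg,\mcD)\ll\sqrt{H_j\log\ell/(L\ell)}$ matches $\|g^{-1}\tilde\gamma_jg\|\sqrt{\log\ell/\ell}$ with $\|g^{-1}\tilde\gamma_jg\|^2\asymp H_j/L$; the implied constant in $Q$ can be taken large enough to accommodate the implied constant from Theorem~\ref{thm5}\ref{thm5-b}. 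I expect no substantive obstacle here — all the analytic work has been done in Theorem~\ref{thm5}\ref{thm5-b} and in deriving \eqref{conclude-KS} — so the proof is essentially a clean dyadic repackaging; the main care is simply to track that the normalizations of $L$, $H_1$, $H_2$ in the definition of $Q$ line up with the size and distance conditions that survive from \eqref{conclude-KS}.
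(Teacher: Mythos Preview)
Your proof is correct and follows exactly the route the paper indicates: the paper derives Lemma~\ref{q=ell-case} simply by ``explicating \eqref{conclude-KS} using Theorem~\ref{thm5}\ref{thm5-b}'' and introducing the count $Q(g,L,H_1,H_2)$, leaving the dyadic repackaging implicit. You have reconstructed precisely this repackaging, with the correct matching of normalizations (the $H_j=L\|g^{-1}\tilde\gamma_jg\|^2$ scaling, the distance condition, and the $\|\cdot\|^{-2}$ bound from \eqref{thm5boundq=ell} producing the $L^2/(H_1H_2)$ factor that cancels against $1/|n|^2$).
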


\section{Proof of Theorem~\ref{thm4}}
In this section, we prove Theorem~\ref{thm4}. It is clear from the definition \eqref{eq:def-spherical-function}
that we can restrict to $k=1$ without loss of generality, and the first bound holds in the stronger form $|\varphi_{\nu,\ell}^{\ell}(g)|\leq 2\ell+1$. In particular, Theorem~\ref{thm4} is trivial for $\ell=1$, hence we shall assume (for notational simplicity) that $\ell\geq 2$. In addition, the exponential factor in \eqref{spherical-def} has absolute value less than $\|g\|^2$ thanks to
\eqref{eq:kappaH} and the identity
\[|ad-bc|^2+|a\bar b+c\bar d|^2=(|a|^2+|c|^2)(|b|^2+|d|^2),\]
hence it suffices to prove that
\begin{equation}\label{suffices}
\int_K  |\psi_{\ell}(\kappa(k^{-1} g k))|\, \dd k \ll_\eps \ell^\eps
\min\left(\frac{\|g\|^4}{|z^2 - 1|^2\ell}, \frac{\|g\|}{|u|\sqrt{\ell}}\right).
\end{equation}
Finally, we shall use the obvious fact that
\begin{equation}\label{eq:bounded_z_z(-1)_u}
|u|,|z|,|z^{-1}|\leq\|g\|.
\end{equation}

Writing $k=k[\phi,\theta,\psi]$ in Euler angles as in \eqref{decomp-K}, and setting
\[
x:= (z^2 - 1)\cos\theta + i e ^{-2i\phi} uz \sin\theta,
\]
one computes
\[
k[\phi,   \theta, \psi]^{-1} g k[\phi,  \theta, \psi] = \left(\begin{matrix}  (1 + x \cos\theta  )/z & \ast\\ \  -ie^{2i\psi}x \sin \theta  /z& \ast\end{matrix}\right).
\]
Our goal is to estimate then
\begin{equation}\label{eq:thm4_integral_to_bound}
\int_0^{\pi} \int_0^{\pi/2} \int_{-\pi}^{\pi} \left|\psi_{\ell}\left(\kappa\left(\begin{pmatrix}  (1 + x \cos\theta  )/z & \ast\\ \  -ie^{2i\psi}x \sin \theta  /z& \ast\end{pmatrix}\right)\right)\right| \sin 2\theta\,\dd \psi\,\dd \theta\,\dd \phi.
\end{equation}
We introduce the notation $\lambda:=\sqrt{\log\ell}$.

\subsection{Small values of the integrand}
First we identify a region where $|\psi_\ell|$ in the integral \eqref{eq:thm4_integral_to_bound} is small. Assume that
\begin{equation}\label{tiny}
\min\bigl(\tan\theta,|x|\sin\theta\bigr)>\frac{4\lambda}{\sqrt{\ell}}.
\end{equation}
Then in
\[
\kappa\left(\begin{pmatrix}  (1 + x \cos\theta  )/z & \ast\\ \  -ie^{2i\psi}x \sin \theta  /z& \ast\end{pmatrix}\right)=\begin{pmatrix} \frac{1 + x \cos\theta}{\sqrt{|1 + x \cos\theta|^2+|x\sin\theta|^2}}& \ast \\ \ast & \ast \end{pmatrix}\in K
\]
the upper left entry has absolute square less than $1-\lambda^2/\ell$, hence
\[\left|\psi_{\ell}\left(\kappa\left(\begin{pmatrix}  (1 + x \cos\theta  )/z & \ast\\ \  -ie^{2i\psi}x \sin \theta  /z& \ast\end{pmatrix}\right)\right)\right| < \left(1-\frac{\log \ell}{\ell}\right)^{\ell}<\frac{1}{\ell}.\]
In view of \eqref{eq:bounded_z_z(-1)_u}, this is admissible for \eqref{suffices}. In the next subsection, we consider the case when \eqref{tiny} fails.

\subsection{Large values of the integrand}
Assume first that $\tan\theta\leq 4\lambda/\sqrt{\ell}$. Then $\theta\leq 4\lambda/\sqrt{\ell}$, hence the corresponding contribution to \eqref{eq:thm4_integral_to_bound} is $\ll\lambda^2/\ell$. This is admissible for \eqref{suffices} in the light of \eqref{eq:bounded_z_z(-1)_u}.

Now assume that $|x|\sin\theta\leq 4\lambda/\sqrt{\ell}$, and decompose the relevant integration domain for $\theta$ as follows. For any $m,n\in\ZZ_{\geq 0}$ and $\phi\in[0,\pi]$, let
\[I(m,n,\phi):=\left\{\theta\in\left(0,\frac{\pi}{2}\right)\,:\,
|x|\sin\theta\leq\frac{4\lambda}{\sqrt{\ell}},\
\frac{1}{2}<2^m\sin\theta\leq 1,\ \frac{1}{2}<2^n\cos\theta\leq 1\right\}.\]
If $\theta\notin I(m,n,\phi)$ holds for every $0\leq m,n\leq 2\log\ell$, then $\sin 2\theta=2\sin\theta\cos\theta\leq 1/\ell$, which is admissible for \eqref{suffices}. Therefore, by \eqref{eq:bounded_z_z(-1)_u} and \eqref{eq:thm4_integral_to_bound}, it suffices to prove the bound
\begin{equation}\label{eq:integral_large_psi}
\int_0^{\pi} \int_{I(m,n,\phi)} \sin 2\theta\,\dd\theta\,\dd\phi \ll \min\left(\frac{\lambda^2}{\ell|z^2-1|^{2}},\frac{\lambda}{\ell^{1/2}|uz|}\right)
\end{equation}
for every $0\leq m,n\leq 2\log\ell$. We shall assume that $\min(m,n)=0$, for otherwise $I(m,n,\phi)=\emptyset$.
We record also that the Lebesgue measure of $I(m,n,\phi)$ is $\OO(2^{-m-n})$, because if $n=0$, then $\sin\theta\asymp \theta$, while if $m=0$, then $\cos\theta\asymp \pi/2-\theta$. Hence, for any $\phi\in[0,\pi]$, we have
\[\int_{I(m,n,\phi)} \sin 2\theta\,\dd \theta = \int_{I(m,n,\phi)} 2\sin\theta\cos\theta\,\dd \theta \ll 2^{-2m-2n}.\]

First consider the case when in $x=(z^2 - 1)\cos\theta + i e ^{-2i\phi} uz \sin\theta$, whose absolute value does not exceed $2^{m+3}\lambda/\sqrt{\ell}$, neither of the two summands is large:
\[
|z^2 - 1|2^{-n} \leq 2^{m+6} \frac{\lambda}{\sqrt{\ell}},\qquad |uz|2^{-m}\leq 2^{m+6} \frac{\lambda}{\sqrt{\ell}}.
\]
Recalling $\min(m,n)=0$, the previous two displays imply  for any $\phi\in[0,\pi]$ that
\[\int_{I(m,n,\phi)}\sin 2\theta\,\dd\theta \ll \min\left(\frac{\lambda^2}{\ell|z^2-1|^{2}},\frac{\lambda}{\ell^{1/2}|uz|}\right).\]
So in this case \eqref{eq:integral_large_psi} is clear.

Now consider the case when in $x=(z^2 - 1)\cos\theta + i e ^{-2i\phi} uz \sin\theta$,
whose absolute value does not exceed $2^{m+3}\lambda/\sqrt{\ell}$, the two summands are individually large:
\begin{equation}\label{eq:triangle_large_sides}
|z^2 - 1|2^{-n}> 2^{m+4} \frac{\lambda}{\sqrt{\ell}},\qquad |uz|2^{-m}> 2^{m+4} \frac{\lambda}{\sqrt{\ell}}, \qquad |z^2-1|2^{-n}\asymp |uz|2^{-m}.
\end{equation}
We   claim that this localizes $\phi$. Indeed, setting
\[
2\phi_0=\arg(iuz)-\arg(z^2-1),
\]
we see that
\[
|z^2-1|\cos \theta + e^{2i(\phi_0-\phi)}|uz|\sin \theta \ll 2^m\frac{\lambda}{\sqrt{\ell}},
\]
and comparing the imaginary parts, we have that
\[
\sin(2\phi-2\phi_0)\ll\frac{2^{2m}\lambda}{|uz|\sqrt{\ell}},\quad\text{and so}\quad
\phi\equiv\phi_0+\OO\left(\frac{2^{2m}\lambda}{|uz|\sqrt{\ell}}\right)\!\!\!\pmod{\pi/2}.
\]
Also, $\theta$ is localized, since
\[
|z^2-1|\cos \theta - |uz|\sin \theta \ll 2^m\frac{\lambda}{\sqrt{\ell}},
\]
and the first term here is monotone decreasing, the second one is monotone increasing in $\theta$. We see that $\theta$ is localized to an interval of length $
\OO(2^m\lambda/|uz|\sqrt{\ell})$ for $\sin\theta\leq \cos\theta$ (in which case $n=0$), and to an interval of length $\OO(\lambda/|z^2-1|\sqrt{\ell})$ for $\cos\theta\leq\sin\theta$ (in which case $m=0$).

We estimate the left-hand side of \eqref{eq:integral_large_psi} by exploiting the above localizations and all three parts of \eqref{eq:triangle_large_sides}. If $\sin\theta\leq\cos\theta$, then $n=0$ and $\sin 2\theta\leq 2^{1-m}$, so altogether we obtain a contribution to \eqref{eq:integral_large_psi} of size
\[\ll 2^{-m} \cdot \frac{2^m \lambda}{|uz|\sqrt{\ell}}\cdot \frac{2^{2m}\lambda}{|uz|\sqrt{\ell}}
\ll \min\left(\frac{\lambda^2}{|z^2-1|^2\ell},\frac{\lambda}{|uz|\sqrt{\ell}}\right).\]
Similarly, if $\cos\theta\leq\sin\theta$, then $m=0$ and $\sin 2\theta\leq 2^{1-n}$, so altogether we obtain a contribution to \eqref{eq:integral_large_psi} of size
\[\ll2^{-n} \cdot\frac{\lambda}{|z^2-1|\sqrt{\ell}}\cdot\frac{\lambda}{|uz|\sqrt{\ell}}
\ll \min\left(\frac{\lambda^2}{|z^2-1|^2\ell},\frac{\lambda}{|uz|\sqrt{\ell}}\right).\]

The proof of Theorem~\ref{thm4} is complete.

\section{Proof of Theorems~\ref{thm6} and \ref{thm5}}
In this section, we prove Theorems~\ref{thm6}~and~\ref{thm5}. We recall that the key player is the function
\begin{equation}\label{varphi-def-proof}
\varphi_{\nu,\ell}^{\ell,q}(g)  := \frac{1}{2\pi} \int_{0}^{2\pi} \varphi_{\nu,\ell}^{\ell}\big(gk[0, 0, \varrho]\big) \,e^{-2qi\varrho} \,\dd\varrho,
\end{equation}
where
\[\varphi_{\nu,\ell}^{\ell}(g):=(2\ell+1)
\int_K  \psi_{\ell}(\kappa(k^{-1} g k)) \,e^{(\nu-1)\rho(H(gk))}\,\dd k.\]
The function $\psi_\ell:K\to\CC$ was defined in \eqref{chi-ell}, but for calculational purposes we extend it now to $\GL_2(\CC)$:
\begin{equation}\label{psidef}\psi_{\ell}\left( \begin{pmatrix} \alpha  &\beta \\ \gamma & \delta \end{pmatrix} \right) :=
\bar{\alpha}^{2\ell}, \qquad \left( \begin{matrix} \alpha  &\beta \\ \gamma & \delta \end{matrix} \right) \in\GL_2(\CC).
\end{equation}

\subsection{Preliminary computations}\label{sec:preliminary-computations}
We write $g$ in Cartan form
\begin{equation}\label{g}
g=k[u_1,v_1,w_1]\begin{pmatrix}r & \\ & r^{-1}\end{pmatrix} k[u_2,v_2,w_2],
\end{equation}
where $r\geq 1$, and we allow $u_j,v_j,w_j\in\RR$ to be arbitrary for convenience.
Spelling out the definitions, and using that the height in the Iwasawa decomposition is left $K$-invariant, we see that $\varphi_{\nu,\ell}^{\ell,q}(g)$ equals
\[\begin{split}
\frac{d_{\ell}}{4\pi^3}\int_{\substack{0\leq u\leq \pi \\ 0\leq v\leq \pi/2 \\ 0\leq w\leq 2\pi \\ 0\leq \varrho\leq 2\pi}} & \psi_{\ell}\left( k[-w,-v,-u]k[u_1,v_1,w_1] \kappa\left(\begin{pmatrix}r&\\&r^{-1}\end{pmatrix} k[u_2,v_2,w_2]k[0,0,\varrho] k[u,v,w] \right)\right)\\
& \cdot e^{-2iq\varrho} \, e^{(\nu-1)\rho\left(H\left(\left(\begin{smallmatrix}r&\\&r^{-1}\end{smallmatrix}\right) k[u_2,v_2,w_2]k[0,0,\varrho]k[u,v,w]\right)\right)} \sin 2v \,\dd u\,\dd v\,\dd w\,\dd \varrho.
\end{split}\]
With a change of variables $k[u_2,v_2,w_2]k[0,0,\rho] k[u,v,w] \mapsto k[u,v,w]$ and dropping the normalized $w$-integration (which is legitimate since the conjugation by $k[0,0,w]$ does not alter the $\psi_{\ell}$-value, and the height in the Iwasawa decomposition is also unaffected by right-multiplication by $k[0,0,w]$), we arrive at
\[\begin{split}
\frac{d_{\ell}}{2\pi^2}\int_{\substack{0\leq u\leq \pi \\ 0\leq v\leq \pi/2 \\ 0\leq \varrho\leq 2\pi}} & \psi_{\ell}\left( k[0,-v,-u] k[u_2,v_2,w_2]k[0,0,\varrho] k[u_1,v_1,w_1] \kappa\left(\begin{pmatrix}r&\\&r^{-1}\end{pmatrix}k[u,v,0]\right)\right)\\
& \cdot e^{-2iq\varrho} \, e^{(\nu-1)\rho\left(H\left(\left(\begin{smallmatrix}r&\\&r^{-1}\end{smallmatrix}\right) k[u,v,0]\right)\right)} \sin 2v   \,\dd u\,\dd v\,\dd\varrho.
\end{split}\]
The sum of absolute squares in the first column of $\diag(r, r^{-1})k[u,v,0]$ equals
\[h(r,v):=r^2\cos^2v+r^{-2}\sin^2v,\]
hence recalling the definitions \eqref{eq:kappaH} and \eqref{psidef}, we can rewrite the integral as
\[\begin{split}
\frac{d_{\ell}}{2\pi^2}\int_{\substack{0\leq u\leq \pi \\ 0\leq v\leq \pi/2 \\ 0\leq \varrho\leq 2\pi}} & \psi_{\ell}\left( k[0,-v,-u] k[u_2,v_2,w_2]k[0,0,\varrho] k[u_1,v_1,w_1]
\begin{pmatrix}r&\\&r^{-1}\end{pmatrix}k[u,v,0]\right) \\
& \cdot e^{-2iq\varrho} \, h(r,v)^{\nu-1-\ell} \sin 2v \,\dd u\,\dd v\,\dd\varrho.
\end{split}\]
Replacing $\varrho$ by $\varrho-u_1-w_2$, the integral further simplifies to
\[\begin{split}
\frac{d_{\ell}e^{2iq(u_1+w_2)}}{2\pi^2}\int_{\substack{0\leq u\leq \pi \\ 0\leq v\leq \pi/2 \\ 0\leq \varrho\leq 2\pi}} & \psi_{\ell}\left( \left(\begin{matrix} e^{-i\varrho} I + e^{i\varrho}J & \ast \\ \ast & \ast \end{matrix}\right)\right) e^{-2iq\varrho} \, h(r,v)^{\nu-1-\ell} \sin 2v  \,\dd u\,\dd v\,\dd \varrho,
\end{split}\]
where
\begin{align*}
I:=&\left(r^{-1}e^{-2iu-iw_1}\sin v\cos v_1+re^{iw_1}\cos v\sin v_1\right)
\left(e^{2iu-iu_2}\sin v\cos v_2-e^{iu_2}\cos v\sin v_2\right),\\
J:=&\left(-r^{-1}e^{-2iu-iw_1}\sin v\sin v_1+re^{iw_1}\cos v\cos v_1\right)
\left(e^{2iu-iu_2}\sin v\sin v_2+e^{iu_2}\cos v\cos v_2\right).\end{align*}
Evaluating the $\varrho$-integral, we obtain
\begin{equation}\label{post-expansion}
\varphi_{\nu,\ell}^{\ell,q}(g) = \frac{d_{\ell}e^{2iq(u_1+w_2)}}{\pi}\binom{2\ell}{\ell + q}
\int_{\substack{0\leq u\leq \pi \\ 0\leq v\leq \pi/2 } } \frac{\sin 2v}{h(r,v)^{\ell+1-\nu}}
\,\bar{I}^{\ell + q} \bar{J}^{\ell - q}\,\dd u\,\dd v.
\end{equation}
Taking the complex conjugate of the right-hand side, and introducing the new variables $t:=r^{-1}\tan v$ and $\phi:=2u$, we get
\begin{align*}
\bigl|\varphi_{\nu,\ell}^{\ell,q}(g)\bigr| = \frac{d_{\ell}}{\pi}&\binom{2\ell}{\ell+q}
\bigg|\int_0^{\infty}\int_0^{2\pi} \frac{t}{(1 + (t/r)^2)^{\ell+1+\nu}(1+(tr)^2)^{\ell+1-\nu}}\\
&\left(e^{-i\phi-2iw_1}(t/r)\cos v_1+\sin v_1\right)^{\ell+q}
\left(e^{i\phi-2iu_2}(tr)\cos v_2-\sin v_2\right)^{\ell+q}\\
&\left(e^{-i\phi-2iw_1}(t/r)\sin v_1-\cos v_1\right)^{\ell-q}
\left(e^{i\phi-2iu_2}(tr)\sin v_2+\cos v_2\right)^{\ell-q}\,\dd\phi\,\dd t\bigg|.
\end{align*}
Now comes the last key step: in the inner $\phi$-integral, we can remove the $r$'s. This is so because $e^{-i\phi}$ must be chosen equally many times as $e^{i\phi}$, and the $r$'s will cancel out in all terms surviving the integration. Another way to see the same thing is to shift the contour as in $\phi\mapsto\phi+i\log r$ where the boundary terms cancel out by $2\pi$-periodicity. Either way, using also the opportunity to replace $\phi\mapsto\phi+u_2-w_1$,
and writing $\Delta:=u_2+w_1$, we finally obtain
\begin{align*}
\bigl|\varphi_{\nu,\ell}^{\ell,q}(g)\bigr| \leq \frac{d_{\ell}}{\pi} \binom{2\ell}{\ell+q}
\int_{0}^{\infty}&\frac{t}{((1 + (t/r)^2)(1+(tr)^2))^{\ell+1}}\\
\times\int_0^{2\pi}
&\bigl|e^{i\phi+i\Delta}t\cos v_1+\sin v_1\bigr|^{\ell+q}
\bigl|e^{i\phi -i\Delta}t\cos v_2-\sin v_2\bigr|^{\ell+q}\\
&\bigl|e^{i\phi+i\Delta}t\sin v_1-\cos v_1\bigr|^{\ell-q}
\bigl|e^{i\phi -i\Delta}t\sin v_2+\cos v_2\bigr|^{\ell-q}\,\dd\phi\,\dd t.
\end{align*}

We estimate the inner integrand using the following lemma, which is purely about inequalities. We state it formally so as to clearly separate issues. (In the case $q=\pm\ell$, all expressions raised to exponent 0 should simply be omitted.) As in the  previous section, we introduce the notation $\lambda:=\sqrt{\log \ell}$.

\begin{lemma}\label{young}
Let $\ell,q\in\ZZ$ be such that $\ell\geq\max(1,|q|)$. Let $X>0$ and $\Lambda>0$.
\begin{enumerate}[label=(\alph*)]
\item\label{part-a}
If $A,B\geq 0$ satisfy $A^2+B^2=X^2$, then
\begin{equation}
\label{ineq-two-terms}
\left(\frac{2\ell}{\ell+q}\right)^{(\ell+q)/2}\left(\frac{2\ell}{\ell-q}\right)^{(\ell-q)/2}A^{\ell+q}B^{\ell-q}\leq X^{2\ell}.
\end{equation}
Moreover, the left-hand side is $\OO_\Lambda(X^{2\ell}\ell^{-\Lambda})$ unless
\begin{equation}\label{cases-of-eq}
\begin{split}
A^2&=\frac{\ell+q}{2\ell}X^2+\OO_{\Lambda}\left(X^2\frac{\lambda^2+\lambda\sqrt{\ell-|q|}}{\ell}\right),\\
B^2&=\frac{\ell-q}{2\ell}X^2+\OO_{\Lambda}\left(X^2\frac{\lambda^2+\lambda\sqrt{\ell-|q|}}{\ell}\right).
\end{split}
\end{equation}
\item\label{part-b}
If $A,B,C,D\geq 0$ satisfy $A^2+B^2=C^2+D^2=X^2$, then
\[ \binom{2\ell}{\ell+q}A^{\ell+q}B^{\ell-q}C^{\ell+q}D^{\ell-q} \ll\frac{X^{4\ell}}{ 1+\sqrt{\ell-|q|}}. \]
Moreover, the left-hand side is $\OO_\Lambda(X^{4\ell}\ell^{-\Lambda})$ unless \eqref{cases-of-eq} and the analogous estimates for $C$, $D$ are satisfied.
\end{enumerate}
\end{lemma}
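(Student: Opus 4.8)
The plan is to read part~\ref{part-a} as a quantitative form of the weighted arithmetic--geometric mean inequality, and then to deduce part~\ref{part-b} from it together with Stirling's formula for the central binomial coefficient.

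For part~\ref{part-a} I would first set $a:=A^{2}$, $b:=B^{2}$, and $\theta:=\tfrac{\ell+q}{2\ell}\in[0,1]$, so that $1-\theta=\tfrac{\ell-q}{2\ell}$ and $a+b=X^{2}$. When $0<\theta<1$, applying the weighted AM--GM inequality $u^{\theta}v^{1-\theta}\leq\theta u+(1-\theta)v$ to $u=a/\theta$, $v=b/(1-\theta)$ gives $(a/\theta)^{\theta}(b/(1-\theta))^{1-\theta}\leq a+b=X^{2}$, and raising this to the $\ell$-th power is precisely \eqref{ineq-two-terms}; the degenerate cases $q=\pm\ell$ reduce to $A^{2\ell}\leq X^{2\ell}$ (or its analogue for $B$), with the exponent-$0$ factors simply dropped. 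Equality forces $a=\theta X^{2}$, $b=(1-\theta)X^{2}$. For the stability (``moreover'') statement I would write the left side of \eqref{ineq-two-terms} as $X^{2\ell}e^{\ell f(a)}$ with
\[
f(a):=\theta\log\frac{a}{\theta X^{2}}+(1-\theta)\log\frac{X^{2}-a}{(1-\theta)X^{2}},
\]
a strictly concave function on $(0,X^{2})$ with $f\leq0$ and a unique maximum $f(a_{0})=0$ at $a_{0}=\theta X^{2}$. Since $e^{\ell f(a)}\ll_{\Lambda}\ell^{-\Lambda}$ as soon as $\ell|f(a)|\geq(\Lambda+1)\log\ell$, it remains to show that $\ell|f(a)|\ll_{\Lambda}\log\ell=\lambda^{2}$ confines $a$ to the window in \eqref{cases-of-eq}. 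For $a$ near $a_{0}$ this comes from the Taylor expansion $f(a)=-\tfrac{(a-a_{0})^{2}}{2\theta(1-\theta)X^{4}}+(\text{explicit cubic remainder})$, while for $a$ far from $a_{0}$ the concavity makes $|f(a)|$ exceed any fixed constant; substituting $\theta(1-\theta)=\tfrac{\ell^{2}-q^{2}}{4\ell^{2}}\asymp\tfrac{\ell-|q|}{\ell}$ turns $\ell|f(a)|\ll_{\Lambda}\lambda^{2}$ into $|a-a_{0}|\ll_{\Lambda}X^{2}\,\tfrac{\lambda\sqrt{\ell-|q|}+\lambda^{2}}{\ell}$, which is \eqref{cases-of-eq}. (The $\lambda^{2}/\ell$ term dominates when $\ell-|q|=\OO(\log\ell)$; in the extreme case $q=\pm\ell$ one uses directly $f(a)=\log(a/X^{2})\leq-(X^{2}-a)/X^{2}$.)

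For part~\ref{part-b} I would invoke Stirling's formula in the uniform entropy form
\[
\binom{2\ell}{\ell+q}\ll\frac{1}{\sqrt{1+\ell-|q|}}\Bigl(\frac{2\ell}{\ell+q}\Bigr)^{\ell+q}\Bigl(\frac{2\ell}{\ell-q}\Bigr)^{\ell-q},\qquad|q|\leq\ell
\]
(an equality at $q=\pm\ell$), split the power factor as the square of $\bigl(\tfrac{2\ell}{\ell+q}\bigr)^{(\ell+q)/2}\bigl(\tfrac{2\ell}{\ell-q}\bigr)^{(\ell-q)/2}$, and apply \eqref{ineq-two-terms} once to the pair $(A,B)$ and once to $(C,D)$; this gives
\[
\binom{2\ell}{\ell+q}A^{\ell+q}B^{\ell-q}C^{\ell+q}D^{\ell-q}\ll\frac{X^{4\ell}}{\sqrt{1+\ell-|q|}}\asymp\frac{X^{4\ell}}{1+\sqrt{\ell-|q|}}.
\]
For the ``moreover'' of part~\ref{part-b} I would note that, since $A^{2}+B^{2}=X^{2}$, condition \eqref{cases-of-eq} for $(A,B)$ is a single constraint; if it, or the analogous constraint for $(C,D)$, fails, then the part~\ref{part-a} stability makes the corresponding factor $\OO_{\Lambda}(X^{2\ell}\ell^{-\Lambda})$, the other factor is $\leq X^{2\ell}$, and the prefactor is $\leq1$, so the product is $\OO_{\Lambda}(X^{4\ell}\ell^{-\Lambda})$.

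The weighted AM--GM and Stirling inputs are entirely standard; the step I expect to be the main obstacle is the stability in part~\ref{part-a}, specifically obtaining the error term \eqref{cases-of-eq} \emph{uniformly} over $|q|\leq\ell$. This requires handling the crossover between the Gaussian regime of width $\asymp X^{2}\lambda\sqrt{\ell-|q|}/\ell$ (where the quadratic Taylor term dominates and $\theta(1-\theta)\asymp(\ell-|q|)/\ell$) and the degenerate regime near $q=\pm\ell$ of width $\asymp X^{2}\lambda^{2}/\ell$, so that the two combine into the single shape $\lambda^{2}+\lambda\sqrt{\ell-|q|}$; care is also needed with the cubic remainder and with the large-deviation tail of $f$.
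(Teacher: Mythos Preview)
Your proposal is correct and follows essentially the same route as the paper. The paper phrases the basic inequality via Young's inequality $xy\leq x^{a}/a+y^{b}/b$ with $a=\tfrac{2\ell}{\ell+q}$, $b=\tfrac{2\ell}{\ell-q}$, which is exactly your weighted AM--GM; for the stability it analyzes the auxiliary function $F(t)=x^{a}/a+t^{b}/b-xt$ by second-order Taylor with Lagrange remainder, which is equivalent to your Taylor analysis of the entropy function $f(a)$, and obtains $x^{a}-y^{b}\ll_{\Lambda}b\delta+\sqrt{b\delta}$ with $\delta=\lambda^{2}/\ell$, precisely encoding your crossover between the $\lambda\sqrt{\ell-|q|}/\ell$ and $\lambda^{2}/\ell$ regimes. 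Part~\ref{part-b} is handled identically in both, via Stirling and two applications of \eqref{ineq-two-terms}, with the edge case $|q|=\ell$ treated separately.
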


\begin{proof}
Let us first assume $|q| < \ell$. We use Young's inequality
\[xy \leq \frac{x^a}{a}  + \frac{y^b}{b} , \qquad \frac{1}{a} + \frac{1}{b}=1,\]
to conclude with
\begin{equation}\label{xy}
x := \left(\sqrt{\frac{2\ell}{\ell+q}}\frac{A}{X}\right)^{\frac{\ell+q}{\ell}}, \quad
y := \left(\sqrt{\frac{2\ell}{\ell-q}}\frac{B}{X}\right)^{\frac{\ell-q}{\ell}}, \quad
a := \frac{2\ell}{\ell + q}, \quad
b := \frac{2\ell}{\ell - q}
\end{equation}
that
\[
\left(\sqrt{\frac{2\ell}{\ell+q}}\frac{A}{X}\right)^{\frac{\ell+q}{\ell}}
\left(\sqrt{\frac{2\ell}{\ell-q}}\frac{B}{X}\right)^{\frac{\ell-q}{\ell}}
\leq \frac{A^2+B^2}{X^2}=1.
\]
This is equivalent to \eqref{ineq-two-terms}. We also conclude (still using the notation \eqref{xy}) that the left-hand side of \eqref{ineq-two-terms} is $\OO_\Lambda(X^{2\ell}\ell^{-\Lambda})$ unless
\begin{equation}\label{ineq}
xy>1/2,\qquad xy=1+\OO_\Lambda(\delta),\qquad\delta:=\lambda^2/\ell.
\end{equation}

Let us explore the consequences of \eqref{ineq}. First, by $x^a/a+y^b/b=1$ we have
\[1/3<x,y<3/2.\]
 Without loss of generality, $q\geq 0$ (i.e. $a\leq b$), and then $x^a<a\leq 2$. Moreover,
\[b\log x<(b/a)\log a<(b/a)(a-1)=1,\]
hence also $-b\log y<1+\OO_\Lambda(b\delta)$. In particular, $y^b\gg_\Lambda 1$ whenever $b\delta<1$. Now let us consider the function
\[F(t):=\frac{x^a}{a}+\frac{t^b}{b}-xt.\]
Note that $F(y)=1-xy$, and $F(y_0)=F'(y_0)=0$ for $y_0:=x^{a-1}$. Hence, using Lagrange's form for the remainder term in Taylor's theorem, we see that
\[\delta\gg_\Lambda F(y)\geq\frac{(b-1)}{2}\min(y_0^{b-2},y^{b-2})\,(y-y_0)^2.\]
Here $y_0^{b-2}=x^{2-a}\gg 1$. Now let us assume that $y^b>1$ or $b\delta<1$. Then $y^b\gg_\Lambda 1$, whence $y-y_0\ll_\Lambda\sqrt{\delta/b}$ by the previous display. From here and \eqref{ineq} we get the following two approximations for $bxy$:
\begin{align*}
bxy&=bxy_0+\OO_\Lambda(\sqrt{b\delta})=bx^a+\OO_\Lambda(\sqrt{b\delta}),\\
bxy&=b+\OO_\Lambda(b\delta)=(b-1)x^a+y^b+\OO_\Lambda(b\delta).
\end{align*}
Comparing the right-hand sides, we conclude that
\begin{equation}\label{eq:xayb}
x^a-y^b\ll_\Lambda b\delta+\sqrt{b\delta}.
\end{equation}
In the remaining case when $y^b\leq 1$ and $b\delta\geq 1$, the inequality \eqref{eq:xayb} holds automatically in the stronger form $|x^a-y^b|<2\leq 2b\delta$.

We proved that \eqref{ineq} implies \eqref{eq:xayb} in all ranges. For our specific set-up \eqref{xy}, the inequality \eqref{eq:xayb} says that
\[aA^2-bB^2\ll_\Lambda X^2(b\delta+\sqrt{b\delta}),\]
and this is equivalent to \eqref{cases-of-eq} in the light of $A^2+B^2=X^2$.
This shows \ref{part-a} under the assumption $0\leq q<\ell$, but it is easily seen to continue to hold also for $q = \ell$ in which case \eqref{ineq} simply reads $A^2 = X^2 + \OO_\Lambda(X^2\delta)$. The argument for $- \ell \leq q < 0$ is identical.

Turning to \ref{part-b}, we conclude from \ref{part-a} that
\[ \left(\frac{2\ell}{\ell+q}\right)^{\ell+q}\left(\frac{2\ell}{\ell-q}\right)^{\ell-q}A^{\ell+q}B^{\ell-q}C^{\ell+q}D^{\ell-q}\leq X^{4\ell}. \]
On the other hand, using Stirling's formula $n!\sim(n/e)^n\sqrt{2\pi n}$, we have for $|q|<\ell$ that
\[ \binom{2\ell}{\ell+q}\asymp\frac{(2\ell)^{2\ell}}{(\ell+q)^{\ell+q}(\ell-q)^{\ell-q}}\sqrt{\frac{2\ell}{(\ell+q)( \ell-q)}}, \]
and so combining the two most recent displays we have the announced bound
 \[ \binom{2\ell}{\ell+q}A^{\ell+q}B^{\ell-q}C^{\ell+q}D^{\ell-q} \ll\frac{X^{4\ell}}{ 1+\sqrt{\ell-|q|}}. \]
We added artificially the $1+$ term in the denominator, so that the inequality also holds for the previously excluded case $|q| = \ell$ in view of $AC, BD \leq X^2$ (which follows directly  from $A^2 + C^2 = B^2 +D^2 = X^2$). The claim that the left-hand side is negligible unless \eqref{cases-of-eq} holds for $(A,B)$ and $(C,D)$ is immediate from \ref{part-a}.
\end{proof}

We now return to the double integral in the upper bound for $\varphi_{\nu,\ell}^{\ell,q}(g)$.
We estimate
the inner integral by writing the integrand as  $A^{\ell + q}  B^{\ell - q} C^{\ell + q} D^{\ell - q}$ in the obvious way and applying Lemma~\ref{young}, where
\[A^2+B^2=C^2+D^2=X^2=1+t^2,\]
and
\[ A^2=\frac{1+t^2}{2}+\frac{t^2-1}{2}\cos 2v_1+t\sin 2v_1\cos(\phi+\Delta), \]
with analogous expressions for $B^2$, $C^2$, and $D^2$. Since
\[\frac{(1 + (t/r)^2)(1+(tr)^2)}{(1 + t^2)^2} = 1 + \left(\frac{r - r^{-1}}{t + t^{-1}}\right)^2,\]
we conclude that the contribution of the inner integral is $\OO_\Lambda(\ell^{-\Lambda})$ unless
\begin{equation}\label{r}
\min(t,t^{-1})\ll_\Lambda\frac{\lambda}{(r-1)\sqrt{\ell}}.
\end{equation}
For $r=1$ we treat the right-hand side as infinity. We may then summarize our findings as follows.

\begin{lemma}\label{lem1} Let $\Lambda\in\NN$. Let $\ell,q\in\ZZ$ be such that $\ell\geq\max(1,|q|)$, and let $\nu\in i\RR$. Assume that $g\in\SL_2(\CC)$ is given by \eqref{g}. Let us abbreviate $\Delta:= u_2 + w_1$ and $\lambda := \sqrt{\log \ell}$. Let $\mcM = \mcM(v_1,v_2,\Delta,r,\Lambda)$ be the set of $(\phi, t) \in [0, 2\pi] \times [0, \infty)$ satisfying \eqref{r} as well as
\begin{equation}\label{eq}
\begin{split}
2t\sin 2v_1\cos(\phi+\Delta)&=(1-t^2)\cos 2v_1+\frac{q}{\ell}(1+t^2)+\OO_{\Lambda}\left((1+t^2)\frac{\lambda^2+ \lambda\sqrt{\ell-|q|}}{\ell} \right),\\
2t\sin 2v_2\cos(\phi-\Delta)&=(t^2-1)\cos 2v_2-\frac{q}{\ell}(1+t^2)+\OO_{\Lambda}\left((1+t^2)\frac{\lambda^2 + \lambda\sqrt{\ell-|q|}}{\ell} \right),\\
\end{split}
\end{equation}
with a sufficiently large (but fixed) implied constant depending on $\Lambda$. Then
\begin{equation}\label{keyupperbound}
\varphi_{\nu,\ell}^{\ell,q}(g)  \ll_\Lambda  \frac{\ell}{1 + \sqrt{\ell - |q|}}
\int_{\mcM} \frac{t}{(1+t^2)^2} \, \dd \phi\, \dd t + \ell^{-\Lambda}.
\end{equation}
\end{lemma}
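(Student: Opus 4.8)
The plan is to assemble Lemma~\ref{lem1} by feeding Lemma~\ref{young}\ref{part-b} and one elementary algebraic identity into the double-integral upper bound for $\bigl|\varphi_{\nu,\ell}^{\ell,q}(g)\bigr|$ established at the end of \S\ref{sec:preliminary-computations}; beyond that, only careful bookkeeping of error terms and constants is needed.

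First I would write the inner $\phi$-integrand in that bound as $A^{\ell+q}B^{\ell-q}C^{\ell+q}D^{\ell-q}$ with
\[
A:=\bigl|e^{i\phi+i\Delta}t\cos v_1+\sin v_1\bigr|,\quad B:=\bigl|e^{i\phi+i\Delta}t\sin v_1-\cos v_1\bigr|,
\]
\[
C:=\bigl|e^{i\phi-i\Delta}t\cos v_2-\sin v_2\bigr|,\quad D:=\bigl|e^{i\phi-i\Delta}t\sin v_2+\cos v_2\bigr|,
\]
and expand the squares to obtain $A^2+B^2=C^2+D^2=X^2:=1+t^2$ together with
\[
A^2=\tfrac{1+t^2}{2}+\tfrac{t^2-1}{2}\cos 2v_1+t\sin 2v_1\cos(\phi+\Delta),\qquad C^2=\tfrac{1+t^2}{2}+\tfrac{t^2-1}{2}\cos 2v_2-t\sin 2v_2\cos(\phi-\Delta)
\]
(and the companion formulas for $B^2$, $D^2$). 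Applying Lemma~\ref{young}\ref{part-b} with $X^2=1+t^2$ then bounds the inner integrand, after multiplication by $\binom{2\ell}{\ell+q}$, by $X^{4\ell}/(1+\sqrt{\ell-|q|})$, and shows it is $\OO_\Lambda(X^{4\ell}\ell^{-\Lambda})$ unless $A^2=\tfrac{\ell+q}{2\ell}X^2+\OO_\Lambda\!\bigl(X^2(\lambda^2+\lambda\sqrt{\ell-|q|})/\ell\bigr)$ and the analogous estimate for $C^2$ hold. Substituting the two displayed expansions, clearing the factor $2$, and rearranging the trigonometric terms turn these two estimates into exactly the two constraints in \eqref{eq}.

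Next I would dispose of the $t$-weight using $\dfrac{(1+(t/r)^2)(1+(tr)^2)}{(1+t^2)^2}=1+\Bigl(\dfrac{r-r^{-1}}{t+t^{-1}}\Bigr)^2$: the quantity $\dfrac{t\,X^{4\ell}}{\bigl((1+(t/r)^2)(1+(tr)^2)\bigr)^{\ell+1}}$ equals $\dfrac{t}{(1+t^2)^2}\Bigl(1+\bigl(\tfrac{r-r^{-1}}{t+t^{-1}}\bigr)^2\Bigr)^{-\ell-1}$, which is at most $\dfrac{t}{(1+t^2)^2}$ and is $\OO_\Lambda(\ell^{-\Lambda})$ times that unless $\tfrac{r-r^{-1}}{t+t^{-1}}\ll_\Lambda\lambda/\sqrt\ell$; since $r\ge 1$ and $t+t^{-1}\asymp\max(t,t^{-1})$, this is precisely condition \eqref{r} (and is vacuous when $r=1$). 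Combining the two steps, recalling $d_\ell\asymp\ell$, one finds that on $\mcM$ the full $(\phi,t)$-integrand is $\ll\dfrac{\ell}{1+\sqrt{\ell-|q|}}\cdot\dfrac{t}{(1+t^2)^2}$, whereas off $\mcM$ it is $\OO_\Lambda\!\bigl(\ell\,\ell^{-\Lambda}\,\tfrac{t}{(1+t^2)^2}\bigr)$; since $\int_0^\infty\!\int_0^{2\pi}\tfrac{t}{(1+t^2)^2}\,\dd\phi\,\dd t<\infty$, the off-$\mcM$ part contributes $\OO_\Lambda(\ell^{1-\Lambda})$, so running the two decay statements with $\Lambda+1$ in place of $\Lambda$ gives \eqref{keyupperbound}.

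The argument is routine; the only point deserving attention is the error bookkeeping — verifying that the $\OO_\Lambda\!\bigl(X^2(\lambda^2+\lambda\sqrt{\ell-|q|})/\ell\bigr)$ emerging from Lemma~\ref{young}\ref{part-b} becomes, after the trigonometric rearrangement, exactly the error term displayed in \eqref{eq}, and that the two independent sources of $\ell^{-\Lambda}$ savings together with the prefactor $d_\ell/(1+\sqrt{\ell-|q|})$ collapse into a single $\ell^{-\Lambda}$ after the harmless adjustment of $\Lambda$.
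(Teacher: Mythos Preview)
Your proposal is correct and follows essentially the same route as the paper: apply Lemma~\ref{young}\ref{part-b} to the inner $\phi$-integrand with $X^2=1+t^2$ to obtain both the pointwise bound and the constraints~\eqref{eq}, then use the identity $\frac{(1+(t/r)^2)(1+(tr)^2)}{(1+t^2)^2}=1+\bigl(\tfrac{r-r^{-1}}{t+t^{-1}}\bigr)^2$ to extract condition~\eqref{r}. Your explicit treatment of the off-$\mcM$ contribution via the finiteness of $\int_0^\infty\frac{t}{(1+t^2)^2}\,\dd t$ and the $\Lambda\mapsto\Lambda+1$ adjustment makes transparent what the paper leaves implicit.
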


\subsection{Simplifying assumptions}\label{simplifying-section}
For the proof of Theorems~\ref{thm6}~and~\ref{thm5}, we can and we shall assume that $|\Delta|\leq\pi/4$. Indeed, using the last relation in \eqref{angleequiv} multiple times, we can choose the coordinates in \eqref{g} so that this bound is satisfied. Moreover, we can replace $g$ by
\[g^{-1}=k\left[\frac{\pi}{2}-w_2,v_2-\frac{\pi}{2},u_2+\frac{\pi}{2}\right]
\begin{pmatrix}r & \\ & r^{-1}\end{pmatrix}
k\left[w_1-\frac{\pi}{2},v_1-\frac{\pi}{2},\frac{\pi}{2}-u_1\right]\]
if needed, because the quantities $\Delta$, $\|g\|$, $D(g)$ do not change under this replacement, 
$\bigl|\varphi_{\nu,\ell}^{\ell,q}(g)\bigr|=\bigl|\varphi_{\nu,\ell}^{\ell,q}(g^{-1})\bigr|$ holds by \eqref{eq:averaged-spherical-function-symmetry}, and
\[\dist(g,\mcH)=\dist(g^{-1},\mcH),\qquad\mcH\in\{K,\mcD,\mcS\}\]
holds by \eqref{distinvariance}.

We shall derive (most of) the bounds in Theorems~\ref{thm6}~and~\ref{thm5} from \eqref{keyupperbound}. In Lemma~\ref{lem1}, the pair $(\Delta,r)$ does not change under the above discussed replacement $g\mapsto g^{-1}$, while
the corresponding integration domains $\mcM$ are related by
\[(\phi,t)\in\mcM\left(v_2-\frac{\pi}{2},v_1-\frac{\pi}{2},\Delta,r,\Lambda\right)\quad\Longleftrightarrow\quad
(\phi,t^{-1})\in\mcM(v_1,v_2,\Delta,r,\Lambda).\]
Moreover, the integrand in \eqref{keyupperbound} is invariant under $t\mapsto t^{-1}$, hence we can assume that the contribution of $t\leq 1$ is not smaller than the contribution of $t>1$. So from now on we restrict $\mcM$ in \eqref{keyupperbound} to the corresponding subset of $[0, 2\pi] \times [0, 1]$. On this subset we have, by \eqref{r},
\begin{equation}\label{r1}
t\in[0,1]\qquad\text{and}\qquad t \ll_\Lambda \frac{\lambda}{(r-1)\sqrt{\ell}}.
\end{equation}

\subsection{Proof of Theorem~\ref{thm6}}
The bound \eqref{thm6bound} is trivial for $\ell\ll_\Lambda 1$, hence we shall assume that $\ell$ is sufficiently large in terms of $\Lambda$. With the notation
\[\alpha := \dist(g, K) \asymp r-1\qquad\text{and}\qquad\beta := \dist(g, \mcD),\]
it follows from \eqref{keyupperbound} and the previous subsection that it suffices to show
\begin{equation}\label{beginThm6}
\frac{\ell}{1 + \sqrt{\ell - |q|}}\int_{\mcM}  t \, \dd \phi\, \dd t \ll_{\eps,\Lambda}
\ell^{\eps}\min\left(1,\frac{\| g \|}{\sqrt{\ell}\alpha^2\beta}\right),
\end{equation}
where $\mcM$ is now restricted by \eqref{r1}. In fact our arguments below will show that $\ell^\eps$ can be replaced by $(\log\ell)^3$.

We start with the first bound of \eqref{beginThm6}. With the notation
\[\sigma:=\lambda^2+\lambda\sqrt{\ell-|q|},\qquad
\mu:=\frac{q}{\ell} - \cos 2v_1,\qquad \rho:=\sin 2 v_1,\]
the first equation in \eqref{eq} becomes
\begin{equation}\label{tquadratic}
\mu t^2 - 2t\rho\cos(\phi+\Delta) + \frac{2q}{\ell}-\mu+\OO_\Lambda\left(\frac{\sigma}{\ell}\right)=0.
\end{equation}
Without loss of generality, $\mu\neq 0$, and then we can view \eqref{tquadratic} as a quadratic equation for $t$. Multiplying by $\mu$ and completing the square, we obtain the alternative form
\begin{equation}\label{compl}
\bigl(\mu t - \rho\cos(\phi + \Delta)\bigr)^2+\bigl(\rho\sin(\phi + \Delta)\bigr)^2
=1-\frac{q^2}{\ell^2}+\OO_\Lambda\left(\frac{|\mu|\sigma}{\ell}\right).
\end{equation}
In particular, the discriminant of \eqref{tquadratic} equals $4D(\phi)+\OO_\Lambda(|\mu|\sigma/\ell)$, where
\begin{equation}\label{disc-q}
D(\phi) := 1-\frac{q^2}{\ell^2}-\bigl(\rho\sin(\phi + \Delta)\bigr)^2
\end{equation}
We assume first that $|q|/\ell\leq 5/6$, and decompose $\mcM$ into two parts $\mcM^\pm$ according as
$|\rho\sin(\phi+\Delta)|$ exceeds $1/2$ or not. On $\mcM^+$, the equation \eqref{tquadratic} localizes $\phi$ within $\ll_\Lambda\sigma/(\ell t)$ for each given $t\in[0,1]$. On $\mcM^-$, we have $D(\phi)\geq 1/18$, hence the equation \eqref{compl} localizes $t$ within $\ll_\Lambda\sigma/\ell$ for each given $\phi\in[0,2\pi]$. This shows that
\[\int_\mcM t\,\dd\phi\,\dd t\ll_\Lambda\int_0^1 t\,\frac{\sigma}{\ell t}\,\dd t+\int_0^{2\pi}\frac{\sigma}{\ell}\,\dd\phi
\ll\frac{\sigma}{\ell},\]
hence the first bound of \eqref{beginThm6} follows in stronger form. From now on we assume that $|q|/\ell>5/6$. We decompose $\mcM$ into two parts $\mcM^\pm$ according as $D(\phi)$ is positive or not, and we make two initial observations. First, $\mcM^+$ is clearly empty when $|q|=\ell$. Second, $|\mu|>1/6$ holds for large $\ell$, because \eqref{tquadratic} coupled with $t\in[0,1]$ yields
\[\frac{2|q|}{\ell}-|\mu|-\OO_\Lambda\left(\frac{\sigma}{\ell}\right)\leq 2t|\rho|
\leq 2\sqrt{1 - \left( \frac{q}{\ell} - \mu\right)^2}.\]
In order to estimate the contribution of $\mcM^+$ in \eqref{beginThm6}, we decompose $\mcM^+$
into pieces
\[\mcM^{+}({\mtD},\eta):=\left\{(\phi,t)\in\mcM^+:
\text{$D(\phi) \asymp {\mtD}$ and $|\cos(\phi + \Delta)| \asymp \eta$}
\right\}.\]
If $\eta \leq \ell^{-10}$, we can estimate trivially, so there are only $\OO(\log \ell)$ relevant values for $\eta$. If $\rho \geq \ell^{-10}$, then by the same argument there are only $\OO(\log \ell)$ relevant values for ${\mtD}$. If $\rho < \ell^{-10}$, then ${\mtD} > \ell^{-1}$ by $|q|<\ell$, hence again
there are only $\OO(\log \ell)$ relevant values for ${\mtD}$. So in all cases it suffices to restrict to $\OO((\log \ell)^2)$ pairs $({\mtD},\eta)$. Our current assumptions localize $\sin(\phi+\Delta)$ within
$\ll\sqrt{{\mtD}}/|\rho|$, and hence $\phi$ within $\ll\min(1,\sqrt{\mtD}/|\rho\eta|)$, independently of $t$. On the other hand, given $\phi$, the equation \eqref{compl} localizes $t$ within $\ll_\Lambda\min((\sigma/\ell){\mtD}^{-1/2},\sqrt{\sigma/\ell})$.
Such $t$ are of size $\ll_\Lambda |\rho\eta| + \sqrt{\mtD} + \sqrt{\sigma/\ell}$, so that
\[\int_{\mcM^+({\mtD},\eta)} t\,\dd\phi\,\dd t \ll_\Lambda \left( |\rho\eta|+ \sqrt{\mtD} + \sqrt{\frac{\sigma}{\ell}}  \right)\min\left( \frac{\sigma}{\ell \sqrt{\mtD} }, \sqrt{\frac{\sigma}{\ell}}\right)  \min\left(1, \frac{\sqrt{\mtD} }{|\rho \eta|}\right)
\ll\frac{\sigma}{\ell}.\]
This contribution is admissible for the first bound of \eqref{beginThm6}.
It remains to estimate the contribution of $\mcM^-$ in \eqref{beginThm6}. On this set we have
\[0\leq -D(\phi)\ll_\Lambda\frac{\sigma}{\ell}\]
by \eqref{compl}. The argument is similar as for $\mcM^+$, in fact simpler as we only need $\OO(\log \ell)$ pieces $\mcM^{-}(\eta)$ defined by $|\cos(\phi+\Delta)|\asymp\eta$. Initially we localize $\phi$ within $\ll\min(1,|\rho\eta|^{-1}\sqrt{\sigma/\ell})$, independently of $t$. The equation \eqref{compl} localizes $t$ within $\ll_\Lambda\sqrt{\sigma/\ell}$, and such $t$ are of size $\ll_\Lambda |\rho\eta| + \sqrt{\sigma/\ell}$. We obtain altogether
\[\int_{\mcM^-(\eta)} t\,\dd\phi\,\dd t \ll_\Lambda \left(|\rho \eta| + \sqrt{\frac{\sigma}{\ell}}\right)
\sqrt{\frac{\sigma}{\ell}}\min\left(1, \frac{1}{|\rho\eta|}\sqrt{\frac{\sigma}{\ell}}\right) \ll \frac{\sigma}{\ell},\]
which is again admissible for the first bound of \eqref{beginThm6}.

We now turn to the second bound of \eqref{beginThm6}. We shall assume (as we can) that $\mcM\neq\emptyset$ and $\sqrt{\ell}\alpha^2\beta>\| g \|$. We pick an arbitrary point $(\phi,t)\in\mcM$. Combining \eqref{eq} and \eqref{r1}, we get
\[\cos 2v_j = - \text{sgn}(q) + \OO(t)\sin 2v_j + \OO_\Lambda\left(t^2 +\frac{\lambda^2+ \ell - |q|}{\ell}\right),\]
where for $q=0$ we can replace $\sgn(q)$ by $1$. After squaring and solving for $\sin 2v_j$, then feeding back the result into the previous display, we get
\[\sin 2v_j=\OO_\Lambda\left(t +\frac{\lambda+ \sqrt{\ell - |q|}}{\sqrt{\ell}}\right),\qquad
\cos 2v_j = - \text{sgn}(q) + \OO_\Lambda\left(t^2 +\frac{\lambda^2+ \ell - |q|}{\ell}\right).\]
Recalling also \eqref{g}, and using \eqref{r1} again, we infer that
\[\beta \ll_\Lambda\|g\|\left(\frac{\lambda}{\alpha\sqrt{\ell}} +\frac{\lambda+ \sqrt{\ell - |q|}}{\sqrt{\ell}}\right).\]
Hence we always have
\[1 \ll_\Lambda \frac{\|g\|\lambda}{\alpha\beta\sqrt{\ell}}
\qquad \text{or} \qquad 1 \ll_\Lambda \|g\|\lambda\frac{1+\sqrt{\ell - |q|}}{\beta\sqrt{\ell}}.\]
In either case, for any $c > 0$, the previous display combined with \eqref{r1} yields that
\begin{align*}
\frac{\ell}{1 + \sqrt{\ell - |q|}}\int_{\mcM}  t \, \dd \phi\, \dd t
&\ll_{\Lambda,c}\frac{\lambda^2}{(1+\sqrt{\ell - |q|})\alpha^2}
\left(\left(\frac{\|g\|\lambda}{\alpha\beta\sqrt{\ell}}\right)^c+ \|g \|\lambda\frac{1+\sqrt{\ell - |q|}}{\beta\sqrt{\ell}}\right)\\[4pt]
& \ll_{\eps,\Lambda,c} \ell^\eps\left(\frac{\| g \|^c}{\ell^{c/2} \alpha^{2 + c} \beta^c}+\frac{\| g \|}{\sqrt{\ell} \alpha^2 \beta}\right).
\end{align*}
Choosing $c = 2$, and recalling our initial assumption $\sqrt{\ell}\alpha^2\beta>\| g \|$,
we obtain the second bound of \eqref{beginThm6} in stronger form.

The proof of Theorem~\ref{thm6} is complete.

\subsection{Proof of Theorem~\ref{thm5}\ref{thm5-a}}\label{thm5a-proof-sec}
The averaged spherical trace function $\varphi_{\nu,\ell}^{\ell, q}(g)$ exhibits starkly different behavior depending on the value of $-\ell\leq q\leq\ell$. Some of these features are already visible along $K=\SU_2(\CC)$. From \eqref{varphi-def-proof} and \eqref{post-expansion} we can see that, in the notation of \eqref{decomp-K} and \eqref{matrix-coeff},
\[\varphi_{\nu,\ell}^{\ell,q}(k[u,v,w])=\Phi_{q,q}^{\ell}(k[u,v,w])=
e^{2\pi i q(u+w)}(\cos v)^{2q}P_{\ell-q}^{(0,2q)}(\cos 2v).\]
The absolute value of the right-hand side exhibits a primary peak at $v\in\pi\ZZ$ of size 1. For $q=\pm\ell$, this is followed by a sharp drop to $\OO_N(\ell^{-N})$ after a range of length about $\ell^{-1/2}$. For a generic $q$, the drop becomes soft through a highly oscillatory range of magnitude $\ell^{-1/2}$ (faster and more oscillatory for smaller $q$) and a secondary, Airy-type peak of size about $\ell^{-1/3}$ before the delayed sharp drop. For $q=0$, the secondary peak grows to a full peak of size 1 at $v\in\frac12\pi+\pi\ZZ$ (corresponding to skew-diagonal matrices in $K$) and the sharp drop disappears. These varying features, which are illustrated in Figure~\ref{Jacobi-figure}, become vastly more complicated off $K$, where the hard work in Theorems~\ref{thm6} and \ref{thm5} lies. Nevertheless, their traces are visible in the hard localization to $\mcD$ (but none to $K$!) for $q=\pm\ell$ and the hard localization to $\mcN$ with soft localization to $\mcS\subset K\subset\mcN$ for $q=0$.

\begin{figure}
\centering
\includegraphics[width=0.24\textwidth]{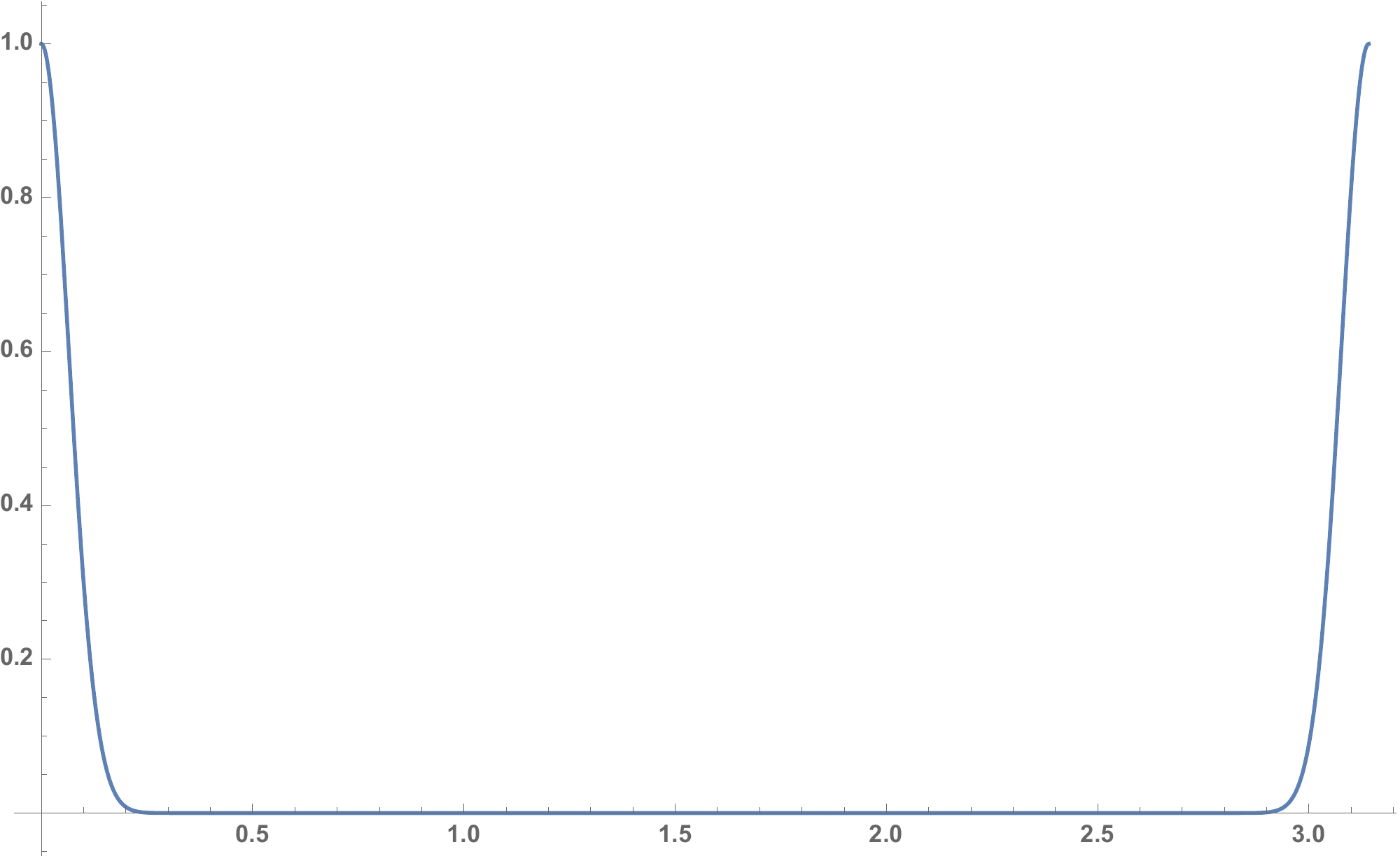}
\includegraphics[width=0.24\textwidth]{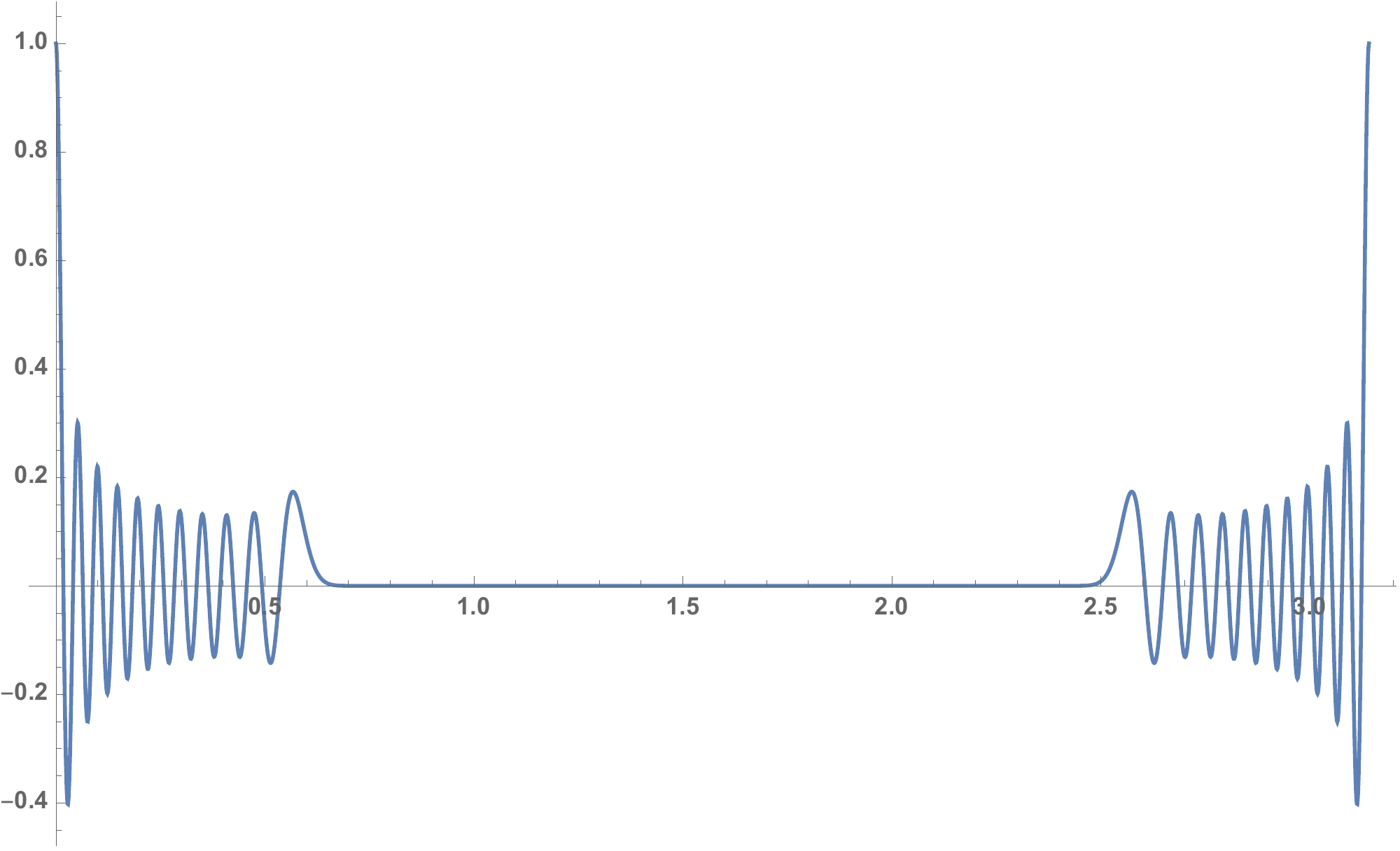}
\includegraphics[width=0.24\textwidth]{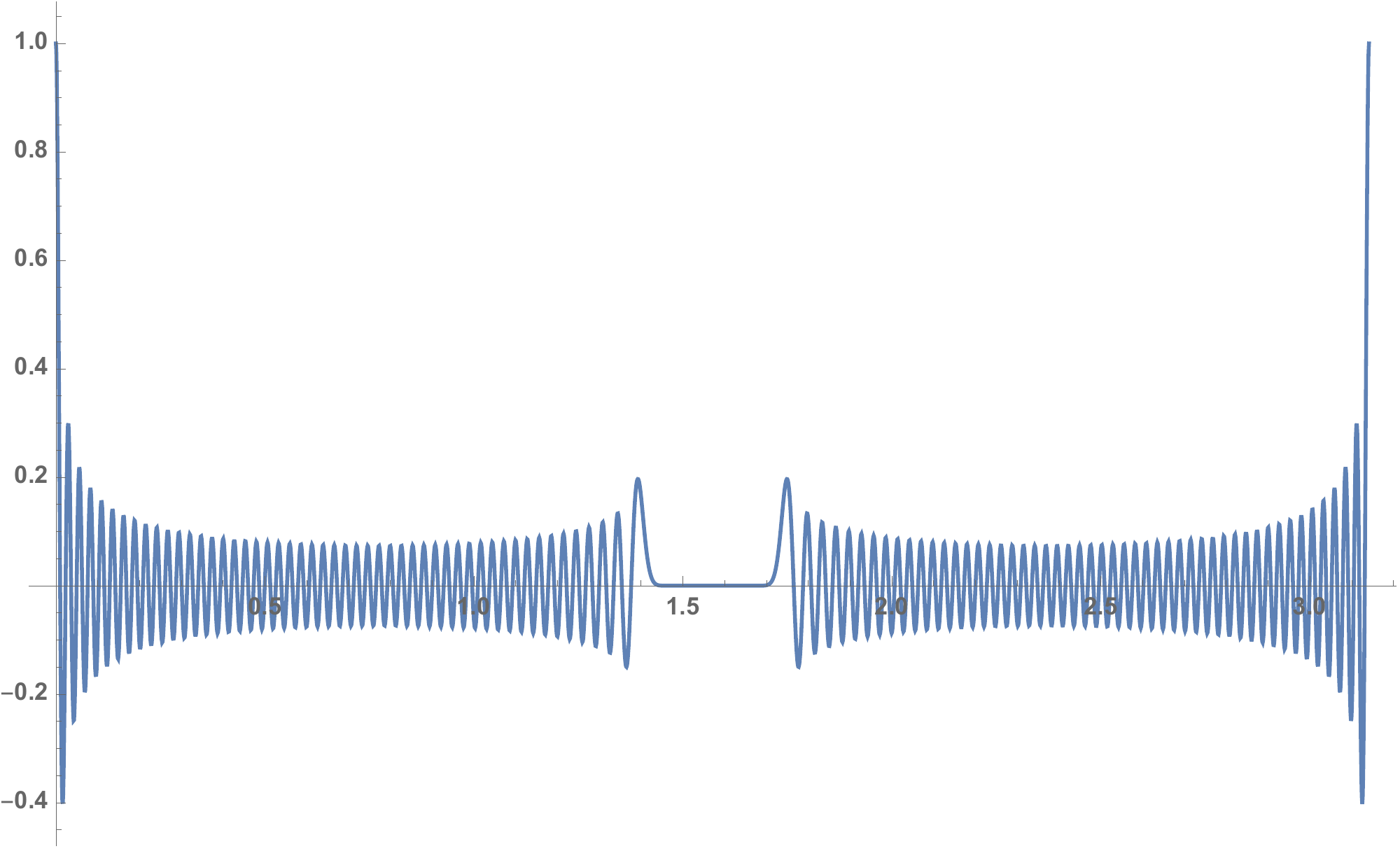}
\includegraphics[width=0.24\textwidth]{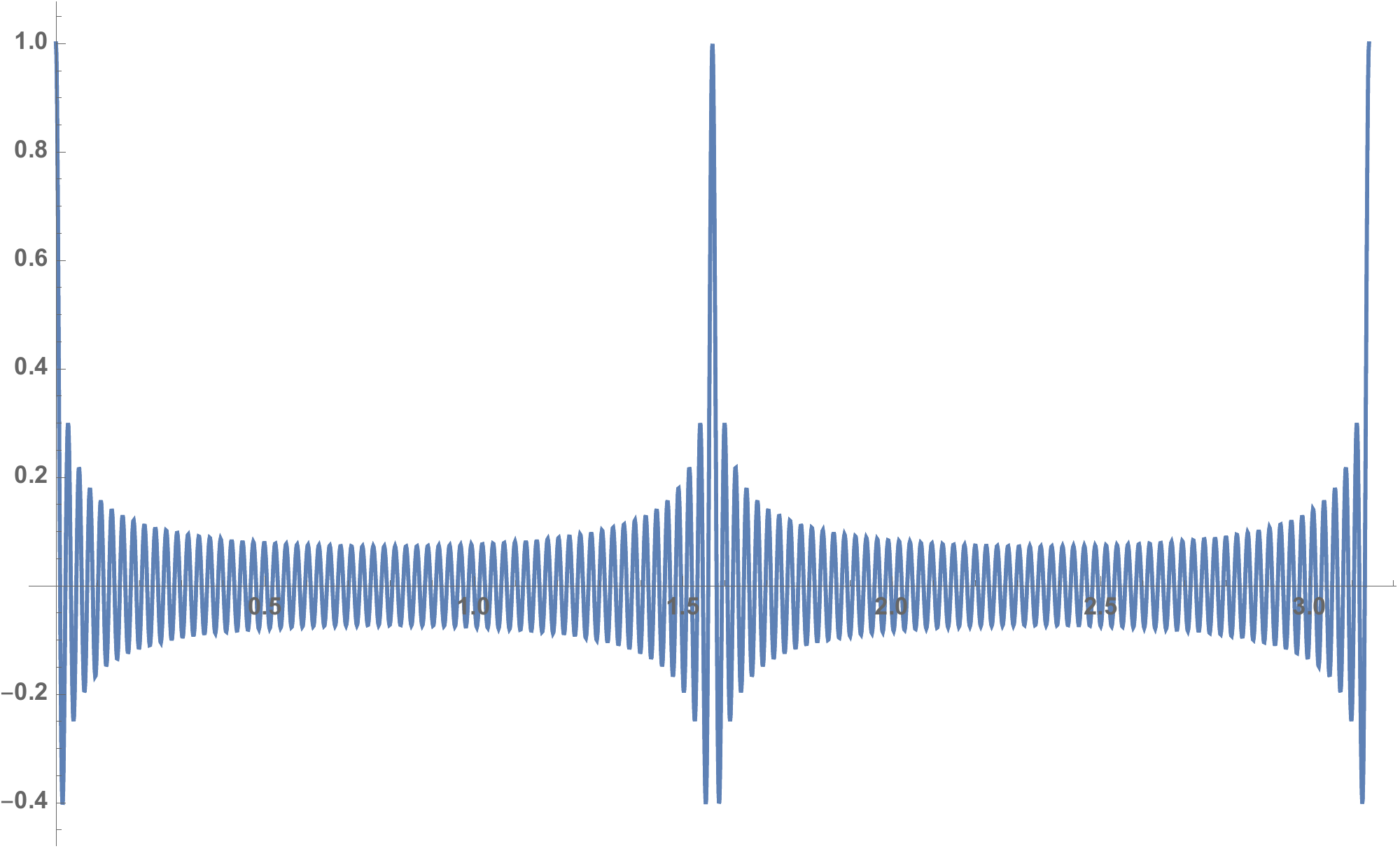}
\caption{Plots of $(\cos v)^{2q}P_{\ell-q}^{(0,2q)}(\cos 2v)$ for $0\leq v\leq\pi$, $\ell=120$, $q=120$, $q=100$, $q=20$, and $q=0$.}
\label{Jacobi-figure}
\end{figure}

In this subsection, we consider in more detail the case $q = 0$. Then \eqref{keyupperbound} simplifies to
\begin{equation}\label{keyupperbound:q=0}
\varphi_{\nu,\ell}^{\ell,0}(g) \ll_\Lambda\sqrt{\ell}
\int_{\mcM} \frac{t}{(1+t^2)^2} \, \dd \phi\, \dd t + \ell^{-\Lambda},
\end{equation}
where by \eqref{eq} and the last paragraph of \S \ref{sec:preliminary-computations}, the set $\mcM$ can be described by the constraints given in \eqref{r1} and
\begin{equation}\label{eq0}
\begin{split}
2t\sin 2v_1\cos(\phi+\Delta)&=(1-t^2)\cos 2v_1 +\OO_\Lambda(\lambda/\sqrt{\ell}),\\
2t\sin 2v_2\cos(\phi-\Delta)&=(t^2-1)\cos 2v_2 +\OO_\Lambda(\lambda/\sqrt{\ell}).\\
\end{split}
\end{equation}
We shall use the notations
\begin{align*}
P(\phi) &:= \max(|\sin 2v_1 \cos(\phi + \Delta)|, |\sin 2v_2 \cos(\phi-\Delta)|),\\
R & := \max(|\cos 2v_1|, |\cos2v_2|),\\
N & := \max(|\sin(2v_1+2v_2) \cos\Delta|, |\sin(2v_1-2v_2)\sin\Delta|).
\end{align*}
Recall also the earlier notations \eqref{adbc} and \eqref{Dgdef}. As
\[|a|^2 - |d|^2 = \frac{r^2-r^{-2}}{2}(\cos 2v_1 + \cos 2v_2), \qquad
|b|^2 - |c|^2 = \frac{r^2-r^{-2}}{2}(\cos 2v_1 - \cos 2v_2),\]
we can identify $\mcN$ as the set of matrices with $r=1$ or $\cos 2v_1 = \cos 2v_2 = 0$. More precisely,
by \eqref{eq0} and \eqref{r1} we have
\[D(g)\ll r(r-1)R\ll_\Lambda r(r-1)\left(t +\frac{\lambda}{\sqrt{\ell}}\right)\ll_\Lambda\frac{r^2\lambda}{\sqrt{\ell}},\]
so that unless $D(g)\ll_\Lambda\|g\|^2\lambda \ell^{-1/2}$, we have $\mcM=\emptyset$, yielding $\varphi^{\ell, 0}_{\nu, \ell}(g)\ll_\Lambda\ell^{-\Lambda}$. Hence we are left with proving \eqref{thm5boundq=0}.

In \eqref{keyupperbound:q=0}, the contribution of the $t$-integral over the interval $[0,\ell^{-\Lambda/2-1/4}]$ is negligible, and we split the rest of $\mcM$ in dyadic ranges $\mcM(\delta)$ according to $\ell^{-\Lambda/2-1/4}<t\asymp \delta\leq 1$. The number of such ranges is $\OO_{\Lambda}(\log \ell)$. Assume $(\phi,t)\in \mcM(\delta)$. The discriminants of the two quadratic equations \eqref{eq0} are $4D_j(\phi)+\OO_{\Lambda}(\lambda/\sqrt{\ell})$, where
\[D_1(\phi):=1 - \sin^2 (2v_1) \sin^2(\phi+ \Delta),\qquad D_2(\phi):=1 - \sin^2 (2v_2) \sin^2(\phi - \Delta).\]
A simple calculation gives that
\begin{equation}\label{disc-lower-bound}
D_1(\phi)+D_2(\phi)\geq P(\phi)^2+R^2.
\end{equation}
If $|\sin 2v_1\sin(\phi+\Delta)| > 1/2$, then for any fixed $t$, \eqref{eq0} localizes $\phi$ to a set of measure $\OO_{\Lambda}(\lambda/\sqrt{\ell})$. Otherwise, for any fixed $\phi$, \eqref{eq0} localizes $t$ to a set of measure $\OO_{\Lambda}(\lambda/\sqrt{\ell})$. We conclude that
\begin{equation}\label{I1}
\meas(\mcM(\delta))\ll_\Lambda\lambda/\sqrt{\ell}.
\end{equation}

Now we prove the alternative bound
\begin{equation}\label{I2b}
\meas(\mcM(\delta))\ll_\Lambda\frac{\lambda^4}{N\ell}.
\end{equation}
We shall assume that $N\ell>1$, for otherwise \eqref{I2b} follows from \eqref{I1}. Under this assumption, we have $\max(|\sin 2v_1|,|\sin 2v_2|)\gg\ell^{-1}$, which implies that
\[
\meas(\{(\phi,t)\in \mcM(\delta):P(\phi)\leq \ell^{-3}\}) \ll \ell^{-1}.
\]
Indeed, if $\phi$ changes by at least $\ell^{-1}$ and at most $\pi/4$, then $\cos(\phi\pm\Delta)$ both change by $\Omega(\ell^{-2})$, hence $P(\phi)$ changes by $\Omega(\ell^{-3})$. This implies that $P(\phi)\leq \ell^{-3}$ localizes $\phi$ to a set of measure $\OO(\ell^{-1})$.
Therefore, the contribution of $\{(\phi,t)\in \mcM(\delta):P(\phi)\leq \ell^{-3}\}$ to the left-hand side of \eqref{I2b} is
$\OO_{\Lambda}(\delta/\sqrt{\ell})$, which is admissible by $N\leq 1$. We decompose the rest of $\mcM(\delta)$ into dyadic ranges $\mcM(\delta,\mtP)$ according to $\ell^{-3}\leq P(\phi)\asymp \mtP\leq 1$. The number of such ranges is $\OO(\log \ell)$, hence in order to verify \eqref{I2b}, it suffices to prove
\[\meas(\mcM(\delta,\mtP))\ll_\Lambda\frac{\lambda^2}{N\ell}.\]
The proof of this estimate immediately reduces to the following two localizations:
\begin{equation}\label{phi-loc}
\meas(\{\phi\in[0,2\pi]:(\phi,t)\in\mcM(\delta,\mtP)\text{ for some $t\asymp \delta$}\}) \ll_{\Lambda} \frac{(\mtP+R)\lambda}{N\sqrt{\ell}},
\end{equation}
and for any $\phi\in[0,2\pi]$,
\begin{equation}\label{t-loc}
\meas(\{t\in[0,1]:(\phi,t)\in\mcM(\delta,\mtP)\}) \ll_{\Lambda} \frac{\lambda}{(\mtP+R)\sqrt{\ell}}.
\end{equation}
Now we prove these localizations.

Starting out from \eqref{eq0}, we execute two eliminations: one to eliminate the main terms of the right-hand sides, and the other one to eliminate the left-hand sides. Introducing
\[
F(\phi):= \cos \phi \cos \Delta \sin(2v_1+2v_2) - \sin \phi \sin \Delta \sin(2v_1-2v_2),
\]
these give
\[
tF(\phi) \ll_{\Lambda} R \lambda/\sqrt{\ell}\qquad\text{and}\qquad (1-t^2) F(\phi) \ll_{\Lambda} \mtP \lambda /\sqrt{\ell}.
\]
In particular, we obtain both for $t>1/2$ and $t\leq 1/2$ that
\begin{equation}\label{bound-on-Fphi}
F(\phi) \ll_{\Lambda} (\mtP+R) \lambda /\sqrt{\ell}.
\end{equation}
Letting
\[
N':= \sqrt{\sin^2(2v_1+2v_2)\cos^2(\Delta) + \sin^2(2v_1-2v_2)\sin^2(\Delta)} \asymp N,
\]
and choosing $\psi\in [0,2\pi)$ such that
\[
\cos\psi = \frac{\sin(2v_1+2v_2)\cos\Delta}{N'},\qquad
\sin\psi = - \frac{\sin(2v_1-2v_2)\sin \Delta}{N'},
\]
\eqref{bound-on-Fphi} gives rise to
\[
\cos(\phi-\psi) \ll_{\Lambda} \frac{(\mtP+R) \lambda }{N\sqrt{\ell}}.
\]
This localizes $\phi$ to a set of measure $\OO_{\Lambda}((\mtP+R)\lambda/N\sqrt{\ell})$. Indeed, if the right-hand side is very small in terms of the implied constant, then $\phi-\psi$ is bounded away from $\pi\ZZ$, hence the derivative $\cos'(\phi-\psi)$ is bounded away from zero, while otherwise the claimed localization is trivial. This gives \eqref{phi-loc}. Fixing $\phi\in[0,2\pi]$, and solving under \eqref{eq0} the quadratic equation in $t$ of the larger discriminant, we see by \eqref{disc-lower-bound} that $t$ is localized to a set of measure $\OO_{\Lambda}(\lambda/(\mtP+R)\sqrt{\ell})$. This gives \eqref{t-loc}. Altogether, the proof of \eqref{I2b} is complete.

Combining \eqref{I1} and \eqref{I2b}, we obtain
\[\meas(\mcM(\delta))\ll_{\eps,\Lambda}\ell^{\eps-1}\mu,\qquad\mu:=\min(\sqrt{\ell},N^{-1}).\]
We claim that 
\begin{equation}\label{toprovedelta-b}
\dist(g,\mcS)\ll_{\Lambda}\lambda\delta^{-1}\mu^{-1},\qquad \text{if $\mcM(\delta)\neq\emptyset$.}
\end{equation}
This implies the inequality
\[\sqrt{\ell}\int_{\mcM(\delta)} t\, \dd \phi\, \dd t\ll_{\eps,\Lambda}\ell^{\eps-1/2}\delta\mu
\ll_{\eps,\Lambda}\frac{\ell^\eps}{\sqrt{\ell}\dist(g,\mcS)},\]
which, summed over the $\OO(\log\ell)$ dyadic ranges for $\delta$, suffices for the proof of \eqref{thm5boundq=0}. Note that the bound $\varphi_{\nu,\ell}^{\ell,q}(g)\ll_\eps\ell^{\eps}$ is already covered by Theorem~\ref{thm6}.

To complete the proof of Theorem~\ref{thm5}\ref{thm5-a}, it remains to show \eqref{toprovedelta-b}. For this final argument, we can and we shall assume that $-\pi/8\leq v_1,v_2\leq 3\pi/8$, because replacing $(u_1,v_1)$ by $(-u_1,v_1+\pi/2)$, or $(v_2,w_2)$ by $(v_2+\pi/2,-w_2)$, has the effect of multiplying $g$ by $\left(\begin{smallmatrix}&i\\i&\end{smallmatrix}\right)$ from either side without altering $\Delta$ or the statement \eqref{toprovedelta-b}. We fix a pair $(\phi,t)\in\mcM(\delta)$.

Now, $N\leq \mu^{-1}$ implies that
\begin{equation}\label{eq:toprovedelta-c}
v_1+v_2\in\frac{\pi}2\ZZ+\OO\left(\frac1{\mu}\right)\qquad\text{and}\qquad
v_1-v_2\in\frac{\pi}2\ZZ+\OO\left(\frac1{\mu|\Delta|}\right).
\end{equation}
Let us introduce the short-hand notation
\[m[v]:=\begin{pmatrix}\cos v&i\sin v\\i\sin v&\cos v\end{pmatrix},\qquad v\in\RR.\]
Keeping \eqref{decomp-K} and \eqref{g} in mind, we observe initially that
\begin{equation}\label{initialdistance}
m[v_1]\diag\left(re^{i\Delta},r^{-1}e^{-i\Delta}\right)m[v_2]=m[v_1+v_2]+\OO\bigl(r-1+|\Delta|\bigr).
\end{equation}
On the right-hand side, we have $\dist(m[v_1+v_2],\mcS)\ll\mu^{-1}$ by \eqref{eq:toprovedelta-c}, and also
\begin{equation}\label{rbound}
r-1\ll_\Lambda\frac{\lambda}{t\sqrt{\ell}}\ll\frac{\lambda}{\delta\mu}
\end{equation}
by \eqref{r1} and $\mu\leq\sqrt{\ell}$. Hence \eqref{toprovedelta-b} follows from \eqref{initialdistance} as long as $\Delta\ll_\Lambda\lambda\delta^{-1}\mu^{-1}$. In other words, we can and we shall assume that $|\Delta|\gg_\Lambda\lambda\delta^{-1}\mu^{-1}$ holds with a sufficiently large implied constant depending on $\Lambda$. In particular, we shall assume that the error terms in \eqref{eq:toprovedelta-c}, and similar error terms for angles in the rest of this subsection, are less than $\pi/8$ in size. Under this assumption, \eqref{eq:toprovedelta-c} breaks into two cases.

\emph{Case 1:} $v_1,v_2\ll\mu^{-1}|\Delta|^{-1}$ and $v_1+v_2\ll\mu^{-1}$. In this case, we refine \eqref{initialdistance} to
\begin{align*}
&m[v_1]\diag\left(re^{i\Delta},r^{-1}e^{-\Delta}\right)m[v_2]\\
&=m[v_1+v_2]+m[v_1]\diag\left(re^{i\Delta}-1,r^{-1}e^{-i\Delta}-1\right)m[v_2]\\
&=m[v_1+v_2]+\diag\left(re^{i\Delta}-1,r^{-1}e^{-i\Delta}-1\right)+\OO\bigl(r-1+\mu^{-1}\bigr)\\
&=\diag\left(e^{i\Delta},e^{-i\Delta}\right)+\OO\bigl(r-1+\mu^{-1}\bigr).
\end{align*}
The main term $\diag\left(e^{i\Delta},e^{-i\Delta}\right)$ lies in $\mcS$, hence \eqref{toprovedelta-b} follows by \eqref{rbound}.

\emph{Case 2:} $v_1,v_2=\pi/4+\OO(\mu^{-1}|\Delta|^{-1})$ and $v_1+v_2=\pi/2+\OO(\mu^{-1})$. As we shall see, this case does not occur. The assumptions imply that $\sin 2v_1$ and $\sin 2v_2$ exceed $1/2$. We multiply the second equation in \eqref{eq0} by $\sin 2v_1$, and the first equation in \eqref{eq0} by $\sin 2v_2$. Adding and subtracting the resulting two equations, we obtain
\begin{align*}
4t\sin 2v_1\sin 2v_2\cos\phi\cos\Delta&=(t^2-1)\sin(2v_1-2v_2)+\OO_\Lambda(\lambda/\sqrt{\ell}),\\
4t\sin 2v_1\sin 2v_2\sin\phi\sin\Delta&=(t^2-1)\sin(2v_1+2v_2)+\OO_\Lambda(\lambda/\sqrt{\ell}).
\end{align*}
We infer that
\[\delta\ll|t\cos\phi|+|t\sin\phi|
\ll_\Lambda|\sin(2v_1-2v_2)|+\frac{|\sin(2v_1+2v_2)|}{|\Delta|}+\frac{\lambda}{\sqrt{\ell}|\Delta|}
\ll_\Lambda\frac{\lambda}{\mu|\Delta|}.\]
This contradicts our earlier assumption that $\Delta\gg_\Lambda\lambda\delta^{-1}\mu^{-1}$ holds with a sufficiently large implied constant depending on $\Lambda$.

The proof of Theorem~\ref{thm5}\ref{thm5-a} is complete.

\subsection{Proof of Theorem~\ref{thm5}\ref{thm5-b}}
We finally consider the case $q = \pm \ell$. By the symmetries \eqref{distinvariance} and \eqref{eq:averaged-spherical-function-symmetry}, we can restrict to $q = \ell$. We have already shown the bound $\varphi^{\ell,\ell}_{\nu,\ell}(g)\ll_\eps\ell^{\eps}$ in greater generality in Theorem~\ref{thm6}. As a first step, we complement this with a stronger bound for $r \geq 2$. To this end, we return to \eqref{post-expansion}. As $q=\ell$, the binomial coefficient and the $J$-factor disappear. When $\ov{I}^{2\ell}$ is expanded, we see a Laurent polynomial of $e^{2iu}$. When we integrate in $u$ from $0$ to $\pi$, all the terms but the constant one vanish.
We calculate the constant term using the binomial theorem and the original product definition of $I$. This way we see that
\begin{align*}
\varphi_{\nu,\ell}^{ \ell,\ell}(g) = \ &  d_{\ell}  e^{2i\ell(u_1-u_2-w_1+w_2)} r^{2\ell}
\sum_{m=0}^{2\ell}\binom{2\ell}{m}^2 (r^{-2}e^{2iu_2+2iw_1}\cos v_1\cos v_2)^m\\
&(-\sin v_1\sin v_2)^{2\ell-m}
\int_0^{\pi/2} (\sin^2 v)^m (\cos^2 v)^{2\ell-m} \frac{\sin 2v}{h(r,v)^{\ell+1-\nu}}\,\dd v.
\end{align*}
Using the variable $x:=\sin^2 v$, we rewrite this as
\begin{align*}
\varphi_{\nu,\ell}^{ \ell,\ell}(g) = \ &  d_{\ell} e^{2i\ell(u_1-u_2-w_1+w_2)} r^{2\nu-2}
\sum_{m=0}^{2\ell}\binom{2\ell}{m}^2
(r^{-2}e^{2iu_2+2iw_1}\cos v_1\cos v_2)^m\\
&(-\sin v_1\sin v_2)^{2\ell-m}\int_0^1 \frac{x^m (1-x)^{2\ell-m}}{(1-x+r^{-4}x)^{\ell+1-\nu}}\,\dd x.
\end{align*}
With the short-hand notation
\[U:=r^{-1}e^{iu_2+iw_1}\sqrt{x\cos v_1\cos v_2}\qquad\text{and}\qquad V:=i\sqrt{(1-x)\sin v_1\sin v_2},\]
we obtain finally
\begin{align}\label{eq:phi-ell-ell-ell-bound}
\begin{split}
\bigl|\varphi_{\nu,\ell}^{\ell,\ell}(g)\bigr| \leq &\ \frac{2\ell+1}{r^2}\left|
\int_0^1\sum_{m=0}^{2\ell}\binom{2\ell}{m}^2\frac{U^{2m}V^{4\ell-2m}}{(1-x+r^{-4}x)^{\ell+1-\nu}}\,\dd x \right|\\[4pt]
=&\ \frac{2\ell+1}{r^2} \left|\int_0^1\frac{1}{2\pi}\int_0^{2\pi}
\frac{(Ue^{i\phi}+V)^{2\ell}(Ue^{-i\phi}+V)^{2\ell}}{(1-x+r^{-4}x)^{\ell+1-\nu}}\,\dd\phi\,\dd x\right|.
\end{split}
\end{align}
Using that $(Ue^{i\phi}+V)(Ue^{-i\phi}+V)=U^2+V^2+2UV\cos\phi$ is on the line segment connecting $(U+V)^2$ and $(U-V)^2$, we observe that
\[\frac{|(Ue^{i\phi}+V)(Ue^{-i\phi}+V)|^2}{1-x+r^{-4}x}\leq \max_\pm \frac{|U\pm V|^4}{1-x+r^{-4}x},\]
which by the Cauchy--Schwarz inequality can be further upper bounded by
\[\leq\frac{(1-x+r^{-2}x)^2}{1-x+r^{-4}x}=1-\frac{x}{1+\frac{2}{r^2-1}+\frac{1}{\left(r^2-1\right)^2 (1-x)}}.\]
Hence the contribution to the rightmost expression in \eqref{eq:phi-ell-ell-ell-bound} of $x\in[0,1]$ satisfying
\[x>\delta\left(1+\frac{2}{r^2-1}+\frac{1}{\left(r^2-1\right)^2 (1-x)}\right),\qquad\delta:=\frac{\log\ell}{\ell},\]
is admissible for \eqref{thm5boundq=ell}. By $r\geq 2$, the remaining values $x\in[0,1]$ satisfy
\[x<3\delta\qquad\text{or}\qquad x(1-x)<\frac{3\delta}{(r^2-1)^2},\]
hence also $x<3\delta$ or $1-x<8\delta/r^4$. So the remaining contribution is
\[\leq\frac{2\ell+1}{r^2}\int_{[0,3\delta)\cup(1-8\delta/r^4,1]}\frac{\dd x}{1-x+r^{-4}x}\ll\frac{\log\ell}{r^2},\]
which is again admissible for \eqref{thm5boundq=ell}.

By \eqref{keyupperbound}, it remains to show that
\begin{equation}\label{toprovedelta-a}
\dist(g, \mcD) \ll_\Lambda\| g \|\lambda/\sqrt{\ell},\qquad \text{if $\mcM\neq\emptyset$.}
\end{equation}
In the present case $q=\ell$, the condition \eqref{eq} simplifies to
\begin{equation}\label{eqell}
\begin{split}
2t\sin 2v_1\cos(\phi+\Delta)&=(1-t^2)\cos 2v_1 + (1 + t^2)+\OO_\Lambda(\lambda^2/\ell),\\
2t\sin 2v_2\cos(\phi-\Delta)&=(t^2-1)\cos 2v_2 - (1 + t^2)+\OO_\Lambda(\lambda^2/\ell),
\end{split}
\end{equation}
hence for the proof of \eqref{toprovedelta-a} we can and we shall assume that $|v_1+v_2|\leq\pi/2$. Indeed, replacing $v_1$ by $v_1+\pi$ has the effect of replacing $g$ by $-g$ without altering $\Delta$ or the statement \eqref{toprovedelta-a}. We fix a pair $(\phi,t)\in\mcM$.

The two equations in \eqref{eqell} yield readily that
\[(\sin 2v_j)^2 \leq 2+2\cos 2v_j\ll_\Lambda t^2+t|\sin 2v_j|+\lambda^2/\ell.\]
Hence $\sin 2v_j \ll_\Lambda t+\lambda/\sqrt{\ell}$, that is,
\begin{equation}\label{pi-coset}
v_1, v_2 \in \frac{\pi}{2}\ZZ +\OO_\Lambda\left(t+\frac{\lambda}{\sqrt{\ell}}\right).
\end{equation}
Combining \eqref{eqell} with the Cauchy--Schwarz inequality, we also get
\[ (1+t^2)^2+\OO_\Lambda(\lambda^2/\ell)\leq (1-t^2)^2+4t^2\cos^2(\phi\pm\Delta).\]
Equivalently,
\[\sin(\phi\pm\Delta)\ll_\Lambda \frac{\lambda}{t\sqrt{\ell}}.\]
Using also our initial assumption $|\Delta|\leq\pi/4$, we conclude that
\begin{equation}\label{cos-ess-1}
\Delta\ll_\Lambda\frac{\lambda}{t\sqrt{\ell}}\qquad\text{and}
\qquad\phi\in\pi\ZZ+\OO_\Lambda\left(\frac{\lambda}{t\sqrt{\ell}}\right).
\end{equation}
In particular, $\cos(\phi\pm\Delta)=\epsilon+\OO_\Lambda(\lambda^2/t^2\ell)$ for some $\epsilon\in\{\pm 1\}$. Plugging this back to \eqref{eqell}, and using also \eqref{pi-coset} along with
\[t \left(t+\frac{\lambda}{\sqrt{\ell}}\right) \min\left(1, \frac{\lambda^2}{t^2\ell}\right) \ll \frac{\lambda^2}{\ell},\]
we obtain
\begin{equation}\label{eqell-rewrite}
\begin{split}
2t\epsilon\sin 2v_1&=(1-t^2)\cos 2v_1 + (1 + t^2)+\OO_\Lambda(\lambda^2/\ell),\\
2t\epsilon\sin 2v_2&=(t^2-1)\cos 2v_2 - (1 + t^2)+\OO_\Lambda(\lambda^2/\ell).
\end{split}
\end{equation}
Now consider the following three unit vectors in $\RR^2$:
\[\mbv_1:=(\cos 2v_1,\sin 2v_1),\qquad \mbv_2:=(\cos 2v_2,-\sin 2v_2),\qquad
\mbt:=\left(\frac{t^2-1}{t^2+1},\frac{2t\epsilon}{t^2+1}\right).\]
By \eqref{eqell-rewrite}, the scalar products $\mbv_j\mbt$ are $1+\OO_\Lambda(\lambda^2/\ell)$, hence the directed angles $\arg(\mbv_j)-\arg(\mbt)$ lie in $2\pi\ZZ+\OO_\Lambda(\lambda/\sqrt{\ell})$. It follows that
\[\arg(\mbv_1)-\arg(\mbv_2)\in2\pi\ZZ+\OO_\Lambda(\lambda/\sqrt{\ell}),\]
and then the assumption $|v_1+v_2|\leq\pi/2$ forces that
\begin{equation}\label{eps}
v_1+v_2\ll_\Lambda\lambda/\sqrt{\ell}.
\end{equation}

We are now ready to complete the proof of Theorem~\ref{thm5}\ref{thm5-b}. By \eqref{pi-coset} and \eqref{eps}, there exists a multiple $v$ of $\pi/2$ such that
\begin{align*}
m[v_1]&=m[v]+\OO_\Lambda\bigl(t+\lambda/\sqrt{\ell}\bigr),\\
m[v_2]&=m[-v]+\OO_\Lambda\bigl(t+\lambda/\sqrt{\ell}\bigr),\\
m[v_1+v_2]&=\id+\OO_\Lambda\bigl(\lambda/\sqrt{\ell}\bigr).
\end{align*}
Therefore, using also \eqref{r1} and \eqref{cos-ess-1}, we conclude that
\begin{align*}
&m[v_1]\diag\left(re^{i\Delta},r^{-1}e^{-\Delta}\right)m[v_2]\\
&=m[v_1+v_2]+m[v_1]\diag\left(re^{i\Delta}-1,r^{-1}e^{-i\Delta}-1\right)m[v_2]\\
&=m[v_1+v_2]+m[v]\diag\left(re^{i\Delta}-1,r^{-1}e^{-i\Delta}-1\right)m[-v]
+\OO_\Lambda\bigl(r\lambda/\sqrt{\ell}\bigr)\\
&=m[v]\diag\left(re^{i\Delta},r^{-1}e^{-i\Delta}\right)m[-v]
+\OO_\Lambda\bigl(r\lambda/\sqrt{\ell}\bigr).
\end{align*}
The main term $m[v]\diag\left(re^{i\Delta},r^{-1}e^{-i\Delta}\right)m[-v]$ lies in $\mcD$, hence \eqref{toprovedelta-a} follows.

The proof of Theorem~\ref{thm5}\ref{thm5-b} is complete.

\section{Proof of Theorem~\ref{thm1}}\label{thm1-proof-sec}
In this section, we prove Theorem~\ref{thm1}. Lemma~\ref{APTI-done-lemma}, which results from the amplified pre-trace inequality and estimates on the spherical trace function, proves an estimate on $|\Phi(g)|^2$ for $g\in\Omega$ in terms of the Diophantine counts $M(g,L,\mcL,\vec{\delta})$. We begin with the key remaining step of estimating these counts.

We allow all implied constants within this section to depend on $\Omega$, and we drop the subscript from notation. Moreover, we adopt the notation $A\preccurlyeq B$ to mean that $|A|\ll_{\eps}(\ell L)^{\eps} B$, where $\eps>0$ is fixed but may be taken as small as desired at each step, and the implied constant is allowed to depend on $\eps$. 

For each $\mcL\in\{1,L^2,L^4\}$ and $\vec{\delta}=(\delta_1,\delta_2)$ with $0<\delta_1,\delta_2\leq\ell^\eps$, we will estimate the count $M(g,L,\mcL,\vec{\delta})$ of matrices
\begin{align}\label{detgamma}
\begin{aligned}
&\gamma=\begin{pmatrix}a&b\\c&d\end{pmatrix}\in\MM_2(\ZZ[i]),
\qquad&&\begin{array}{l}\det\gamma=n\in D(L,\mcL),\qquad |n|\asymp\mcL^{1/2},\end{array}\\
&g^{-1}\tilde{\gamma}g=k\begin{pmatrix} z&u\\&z^{-1}\end{pmatrix}k^{-1}
&&\begin{array}{l}\text{for some $k\in K$ such that}\\\text{$|z|\geq 1$, $\min|z\pm 1|\leq\delta_1$, $|u|\leq\delta_2$,}\end{array}
\end{aligned}
\end{align}
where as before $\tilde{\gamma} = \gamma/\sqrt{n}$. By the symmetry $\gamma\leftrightarrow -\gamma$, we can and we shall assume that $|z-1|\leq|z+1|$. Then the conditions imply that both $|z-1|$ and $|z^{-1}-1|$ are at most $\delta_1$, hence
\[\left|\frac{a+d}{\sqrt{n}}-2\right|=|\tr\tilde\gamma-2|=|z+z^{-1}-2|=|z-1||z^{-1}-1|\leq\delta_1^2.\]
On the other hand, since $\|g\|\asymp_{\Omega}1$, we also have that
\[ \|\tilde{\gamma}-\id\|=\left\|gk\begin{pmatrix}z-1&u\\&z^{-1}-1\end{pmatrix}k^{-1}g^{-1}\right\|
\ll\delta_1+\delta_2. \]
Summarizing, we need to estimate the number of matrices $\gamma$ as in \eqref{detgamma} such that
\begin{equation}\label{abcd-conditions}
\left|a+d-2\sqrt{n}\right|\leq\delta_1^2\sqrt{|n|},\qquad |a-d|,|b|,|c|\ll(\delta_1+\delta_2)\sqrt{|n|}.
\end{equation}
In particular, we have $|a+d|\preccurlyeq\sqrt{|n|}$ and
\begin{equation}\label{trace-cor}
(a-d)^2+4bc=(a+d)^2-4n\preccurlyeq\delta_1^2|n|.
\end{equation}

As is often the case, parabolic matrices $\gamma$ (those with trace $\pm 2\sqrt{n}$) play a distinctive role in this counting problem, and we split the count accordingly into the parabolic and non-parabolic subcounts as
\[ M(g,L,\mcL,\vec{\delta})=M^{\pp}(g,L,\mcL,\vec{\delta})+M^{\np}(g,L,\mcL,\vec{\delta}). \]
We shall prove the following result using \eqref{detgamma}, \eqref{abcd-conditions}, and \eqref{trace-cor}.

\begin{lemma}\label{counting-for-thm1}
Let $\Omega\subset G$ be a compact subset, $L\geq 1$, and $\mcL\in\{1,L^2,L^4\}$. For $g\in\Omega$ and $\vec{\delta}=(\delta_1,\delta_2)$ with $0<\delta_1,\delta_2\preccurlyeq 1$, we have the following bounds.
\begin{align}
\label{Mbound1}M(g,L,1,\vec{\delta})&\preccurlyeq_{\Omega}1,\\
\label{Mbound2}M^{\pp}(g,L,\mcL,\vec{\delta})&\preccurlyeq_{\Omega} \mcL^{1/2}+\mcL\delta_2^2,\\
\label{Mbound3}M^{\np}(g,L,L^2,\vec{\delta})&\preccurlyeq_{\Omega} L^4\delta_1^4(\delta_1^2+\delta_2^2),\\
\label{Mbound4}M^{\np}(g,L,L^4,\vec{\delta})&\preccurlyeq_{\Omega} L^6\delta_1^4(\delta_1^2+\delta_2^2).
\end{align}
Moreover,
\begin{equation}\label{otherwise-vanishes}
M^{\np}(g,L,\mcL,\vec{\delta})=0\qquad\text{unless}\qquad\delta_1\succcurlyeq\mcL^{-1/4}.
\end{equation}
\end{lemma}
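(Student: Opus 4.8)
The plan is to dispatch the five estimates in order of increasing difficulty, using throughout that every admissible $n$ has $|n|\asymp\mcL^{1/2}$, that the divisor function on $\ZZ[i]$ is $\preccurlyeq 1$, and that a disc of radius $R$ contains $\ll R^2+1$ Gaussian integers (and at most one if $R<1$). Estimate \eqref{Mbound1} is immediate: for $\mcL=1$ the inequality $|n|^2\le 16<49\le L^2\le|l_1l_2|^2$ forces $n=1$, hence $\|\gamma-\id\|=\|g\,(g^{-1}\tilde\gamma g-\id)\,g^{-1}\|\ll_\Omega\delta_1+\delta_2\preccurlyeq_\Omega 1$ and there are $\preccurlyeq_\Omega 1$ such $\gamma$. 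For \eqref{otherwise-vanishes}, a non-parabolic $\gamma$ has $T:=a+d\in\ZZ[i]$ and $D:=T^2-4n=(a-d)^2+4bc\in\ZZ[i]$ with $D\ne 0$, so $|D|\ge 1$; but \eqref{abcd-conditions} gives $|D|=|T-2\sqrt n|\,|T+2\sqrt n|\le\delta_1^2\sqrt{|n|}\,\bigl(|T|+2\sqrt{|n|}\bigr)\ll\delta_1^2\mcL^{1/2}$, whence $\delta_1\succcurlyeq\mcL^{-1/4}$.

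For the parabolic bound \eqref{Mbound2}: the normalization $|z-1|\le|z+1|$ forces $\tr\gamma=2\sqrt n$, so $t:=\sqrt n\in\ZZ[i]$, $(\gamma-t\,\id)^2=0$, i.e.\ $d=2t-a$ and $bc=-(a-t)^2$; moreover $z=1$ and $|u|\le\delta_2$ give $\|\tilde\gamma-\id\|\ll\delta_2$, hence $|a-t|,|b|,|c|\ll\delta_2\sqrt{|n|}$. For fixed $n$ there are $\preccurlyeq\delta_2^2\mcL^{1/2}+1$ choices of $a$; if $a=t$ then $bc=0$ and the nonzero off-diagonal entry has $\preccurlyeq\delta_2^2\mcL^{1/2}+1$ values, while if $a\ne t$ the divisor bound gives $\preccurlyeq 1$ factorizations of $-(a-t)^2$. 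Summing over the $\preccurlyeq\mcL^{1/2}$ perfect-square values of $n$ in $D(L,\mcL)$ (at most $|P(L)|$ of the form $l^2$ if $\mcL=L^2$, at most $|P(L)|^2$ of the form $l_1^2l_2^2$ if $\mcL=L^4$) yields $\preccurlyeq\delta_2^2\mcL+\mcL^{1/2}$.

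The core is \eqref{Mbound3}--\eqref{Mbound4}; by \eqref{otherwise-vanishes} I may assume $\delta_1\succcurlyeq\mcL^{-1/4}$, so $\delta_1^2\mcL^{1/2}\succcurlyeq 1$ and all additive $1$'s below are harmless. Split $M^{\np}$ according to whether $bc\ne 0$ or $bc=0$. The first (and harder) subcount I bound by counting quadruples $(D,T,n,\gamma)$ in three steps: (i) $D\ne 0$ with $|D|\ll\delta_1^2\mcL^{1/2}$, giving $\preccurlyeq\delta_1^4\mcL$ choices; (ii) for fixed $D$ one has $n=(T^2-D)/4$, so $n$ is determined by $T$ and $|n|\asymp\mcL^{1/2}$ forces $|T|\ll\mcL^{1/4}$, yielding $\preccurlyeq\mcL^{1/2}=L$ choices of $T$ when $\mcL=L^2$, whereas when $\mcL=L^4$ one has $n=(l_1l_2)^2$ and $D=(T-2l_1l_2)(T+2l_1l_2)$, so each factorization of $D$ pins down $(T,n)$ up to $\preccurlyeq 1$; (iii) for fixed $(T,n)$, writing $d=T-a$ one has $bc=a(T-a)-n=-(a-r_1)(a-r_2)$ with $r_1,r_2$ the eigenvalues, which is a fixed nonzero Gaussian integer for each admissible $a$ (of which there are $\preccurlyeq(\delta_1^2+\delta_2^2)\mcL^{1/2}$ since $|a-\sqrt n|\ll(\delta_1+\delta_2)\sqrt{|n|}$), so the divisor bound gives $\preccurlyeq 1$ pairs $(b,c)$. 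Multiplying (i)--(iii) gives $\preccurlyeq\delta_1^4\mcL\cdot(\mcL^{1/2}\text{ or }1)\cdot(\delta_1^2+\delta_2^2)\mcL^{1/2}$, namely $L^4\delta_1^4(\delta_1^2+\delta_2^2)$ for $\mcL=L^2$ and $L^6\delta_1^4(\delta_1^2+\delta_2^2)$ for $\mcL=L^4$.

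For the subcount with $bc=0$, say $b=0$ (the case $c=0$ being symmetric): $ad=n$, $a\ne d$, $|a-\sqrt n|\ll\delta_1\sqrt{|n|}$ by \eqref{trace-cor} and \eqref{abcd-conditions}, and $c$ ranges over $\preccurlyeq(\delta_1^2+\delta_2^2)\mcL^{1/2}$ values. Examining the factorization of $n$, the constraint $a\ne d$ forces $a$, up to a unit, to be $l_1$ or $l_2$ (if $\mcL=L^2$), resp.\ $l_1^2$ or $l_2^2$ (if $\mcL=L^4$), so $(a,d)$ corresponds to $l_1\ne l_2$ in $P(L)$ with $|l_1-ul_2|\ll\delta_1 L^{1/2}$ for some unit $u$; since distinct elements of $P(L)$ are never unit multiples, this count vanishes unless $\delta_1 L^{1/2}\succcurlyeq 1$, and otherwise it is $\preccurlyeq|P(L)|(\delta_1^2L+1)\preccurlyeq\delta_1^2L^2$, so (using $\delta_1^2L\succcurlyeq 1$) the resulting bound $\preccurlyeq\delta_1^2L^2(\delta_1^2+\delta_2^2)\mcL^{1/2}$ is again absorbed into \eqref{Mbound3}--\eqref{Mbound4}. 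I expect the main obstacle to be the $bc\ne 0$ subcount of $M^{\np}$: a per-$n$ bound summed over all of $D(L,\mcL)$ is hopelessly lossy, so the count must be organized globally by the discriminant $D$, which is precisely where the extra $\delta_1$ in the trace condition of \eqref{abcd-conditions} (controlling $\#\{D\}$) and the multiplicative structure of $D(L,\mcL)$ (controlling $\#\{(T,n)\}$, and differently for $\mcL=L^2$ and $\mcL=L^4$) are both indispensable; everything else is either trivial or a routine divisor-function/geometry-of-numbers computation.
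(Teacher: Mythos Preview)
Your proof is correct and follows essentially the same strategy as the paper's: both exploit the discriminant bound $|(a-d)^2+4bc|\preccurlyeq\delta_1^2\mcL^{1/2}$ coming from \eqref{trace-cor}, the divisor bound on nonzero Gaussian integers, and (for $\mcL=L^4$) the factorization $(a+d)^2-4n=(a+d-2l_1l_2)(a+d+2l_1l_2)$ to pin down the trace. The orderings differ---you fix $D\to T\to a\to(b,c)$ whereas the paper fixes $(a-d)\to bc\to(a+d)$---but the arithmetic content is identical.

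One small wrinkle: in your $bc=0$ subcount you invoke the prime factorization of $n$ to claim that $a$ must be, up to a unit, $l_1$ or $l_2$ (resp.\ $l_1^2$ or $l_2^2$). This overlooks divisors such as $a=\pm i\,l_1l_2$ when $\mcL=L^4$ (with $d=\mp i\,l_1l_2$, so $a\ne d$), or $a=\pm i\,l$ when $\mcL=L^2$ and $l_1=l_2=l$. These only arise when $|a-d|\asymp\mcL^{1/4}$, hence $\delta_1\succcurlyeq 1$, so their contribution is comfortably absorbed; but the case analysis as written is incomplete. The paper sidesteps this entirely by treating $bc=0$ uniformly: since then $(a-d)^2=D$, one simply counts $a-d$ in a disc of radius $\preccurlyeq\delta_1\mcL^{1/4}$ and the nonzero off-diagonal entry separately, then determines $a+d$ exactly as in the $bc\ne 0$ case. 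That two-line argument replaces your entire last paragraph.
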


\begin{proof}
The bound \eqref{Mbound1} is immediate from \eqref{abcd-conditions}. We turn to the bound \eqref{Mbound2}, which counts parabolic matrices $\gamma$. In this case, we have $(a-d)^2+4bc=0$ and $z=1$, hence in particular \eqref{abcd-conditions} holds with $0$ in place of $\delta_1$. If $bc\neq 0$, then there are $\ll\mcL^{1/2}$ choices for $a+d=2\sqrt{n}$, and $\ll\mcL^{1/2}\delta_2^2$ choices for $a-d\neq 0$. The difference $a-d$ determines the product $bc$ uniquely, hence by the divisor bound, there are $\preccurlyeq 1$ choices for $(b,c)$. This is admissible for \eqref{Mbound2}. If $bc=0$, then there are $\ll\mcL^{1/2}$ choices for $a=d=\sqrt{n}$, and $\ll 1+\mcL^{1/2}\delta_2^2$ choices for $(b,c)$. This is again admissible for \eqref{Mbound2}.

From now on we count non-parabolic matrices $\gamma$, in which case $(a-d)^2+4bc\neq 0$.
The statement \eqref{otherwise-vanishes} is immediate from \eqref{trace-cor}, so we are left with proving \eqref{Mbound3} and \eqref{Mbound4}, where we may assume $\delta_1\succcurlyeq\mcL^{-1/4}$. If $bc\neq 0$, then there are $\preccurlyeq \mcL^{1/2}(\delta_1^2+\delta_2^2)$ choices for $a-d$, and, for given $a-d$, there are $\preccurlyeq \mcL\delta_1^4$ choices for $(b,c)$ by \eqref{trace-cor} and the divisor bound. If $bc=0$, then there are $\preccurlyeq\mcL^{1/2}\delta_1^2$ choices for $a-d$ by \eqref{trace-cor}, and  $\preccurlyeq \mcL^{1/2}(\delta_1^2+\delta_2^2)$ choices for $(b,c)$. Altogether, there are $\preccurlyeq\mcL^{3/2}\delta_1^4(\delta_1^2+\delta_2^2)$ choices for the triple $(a-d,b,c)$. In the middle range $\mcL=L^2$, we additionally use that there are $\preccurlyeq\mcL^{1/2}$ choices for $a+d$, whence \eqref{Mbound3} follows. In the high range $\mcL=L^4$, $n=l_1^2l_2^2$ is a square, and $(a-d)^2+4bc\neq 0$ factors as $(a+d+2l_1l_2)(a+d-2l_1l_2)$. Hence the triple $(a-d,b,c)$ in fact determines $a+d$ up to $\preccurlyeq 1$ possibilities by the divisor bound, and \eqref{Mbound4} follows.
\end{proof}

Combining Lemmata~\ref{APTI-done-lemma} and \ref{counting-for-thm1}, we obtain that
\[\sum_{\phi\in\mfB}|\phi(g)|^2\preccurlyeq_{I,\Omega}\ell^3
\left(\frac1L+S^{\pp}(L)+S^{\np}(L,L^2)+S^{\np}(L,L^4)\right)+L^{2}\ell^{-48},\]
where
\begin{alignat*}{3}
S^{\pp}(L)&:=\sum_{\substack{\vec{\delta}\text{ dyadic}\\1/\sqrt{\ell}\leq\delta_j\preccurlyeq 1}}\frac1{\sqrt{\ell}\delta_2}\left(\frac{L+L^2\delta_2^2}{L^3}+\frac{L^2+L^4\delta_2^2}{L^4}\right)&&\preccurlyeq\frac1{L^2}+\frac1{\sqrt{\ell}},\\
S^{\np}(L,L^2)&:=\sum_{\substack{\vec{\delta}\text{ dyadic}\\1/\sqrt{\ell}\leq\delta_j\preccurlyeq 1}}
\frac1{\ell\delta_1^2}\cdot\frac{L^4\delta_1^4(\delta_1^2+\delta_2^2)}{L^3}&&\preccurlyeq\frac{L}{\ell},\\
S^{\np}(L,L^4)&:=\sum_{\substack{\vec{\delta}\text{ dyadic}\\1/\sqrt{\ell}\leq\delta_j\preccurlyeq 1}}\frac1{\ell\delta_1^2}\cdot\frac{L^6\delta_1^4(\delta_1^2+\delta_2^2)}{L^4}&&\preccurlyeq\frac{L^2}{\ell}.
\end{alignat*}
Putting everything together, we conclude that
\[\sum_{\phi\in\mfB}|\phi(g)|^2\preccurlyeq_{I,\Omega}\ell^3
\left(\frac1L+\frac1{\sqrt{\ell}}+\frac{L^2}{\ell}\right)+L^2\ell^{-48}\ll\ell^{8/3}, \]
by making the essentially optimal choice $L:=7\ell^{1/3}$ (which satisfies our earlier
condition $L\geq 7$).

The proof of Theorem~\ref{thm1} is complete.

\section{Proof of Theorem~\ref{thm3}}\label{thm2proof-sec}
In this section, we prove Theorem~\ref{thm3}. For $q= 0$, Lemma~\ref{APTI-done-lemma-single-form} provides an estimate on $|\phi_q(g)|^2$ for $g\in\Omega$ in terms of the Diophantine count $M_0^{\ast}(g,L,\mcL,\delta)$, while for $q = \pm \ell$ we need to analyze $Q(g, L, H_1,H_2)$ as follows from Lemma~\ref{q=ell-case}. We begin by estimating these counts. We keep the notational conventions from \S\ref{thm1-proof-sec}.

\subsection{A comparison lemma}
The Diophantine counts in Lemmata~\ref{APTI-done-lemma-single-form} and \ref{q=ell-case} involve the positioning relative to certain special sets of the matrix $g^{-1}\tilde{\gamma}g$, which we now explicate in preparation for a counting argument. Using $g\in\Omega$, we may write explicitly
\[ g=\begin{pmatrix} g_1&g_2\\g_3&g_4\end{pmatrix},\qquad g_j\ll 1. \]
An explicit calculation shows that
\[ g^{-1}\begin{pmatrix} a&b\\c&d\end{pmatrix}g=\begin{pmatrix}\frac{a+d}{2}+L_1&L_2\\L_3&\frac{a+d}{2}-L_1\end{pmatrix}, \]
where
\begin{equation}\label{coordinates}
\begin{alignedat}{7}
L_1&=\hphantom{-}(a-d)\big(\tfrac12+g_2g_3\big)&&+bg_3g_4&&-cg_1g_2, \\
L_2&=\hphantom{-}(a-d)g_2g_4&&+bg_4^2&&-cg_2^2,\\
L_3&=-(a-d)g_1g_3&&-bg_3^2&&+cg_1^2.
\end{alignedat}
\end{equation}
We record the following simple but effective result, which will be used in both parts of Theorem~\ref{thm3}.

\begin{lemma}\label{l2l3-small}
Let $\Omega\subset G$ be a compact subset, and $g\in\Omega$. Let $a,b,c,d\in\CC$ and $\Delta>0$ be such that
$L_2,L_3\ll\Delta$.
\begin{enumerate}[(a)]
\item\label{1213-a}
For at least one $s\in\{a-d,b,c\}$, we have
\[\begin{bmatrix} a-d&b&c\end{bmatrix}^{\top}
=\begin{bmatrix}\lambda_1&\lambda_2&\lambda_3\end{bmatrix}^{\top}s+\OO(\Delta)\]
with $\lambda_1,\lambda_2,\lambda_3\ll 1$ depending only on $g$.
\item\label{1213-b}
For the same choice of $s\in\{a-d,b,c\}$, we have
\[ (a-d)^2+4bc=\mu s^2+\OO(\Delta |s|+\Delta^2), \]
with $\mu=\lambda_1^2+4\lambda_2\lambda_3\gg 1$. If additionally $(a-d)^2+4bc=0$, then $a-d,b,c\ll\Delta$.
\end{enumerate}
\end{lemma}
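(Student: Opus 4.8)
The plan is to work directly with the linear expressions \eqref{coordinates} for $L_1, L_2, L_3$ in terms of the three quantities $a-d$, $b$, $c$, treating these as the unknowns and $g$ (hence all the $g_j$) as fixed. The key point is that the coefficient matrix $M = M(g)$ sending the column vector $(a-d, b, c)^\top$ to $(L_1, L_2, L_3)^\top$ is invertible with $\det M$ bounded below in terms of $\Omega$. Indeed, a direct computation gives $\det M = g_1g_4 - g_2g_3 = \det g = 1$ up to the elementary row/column operations implicit in \eqref{coordinates} (more precisely, one checks $\det M$ equals a fixed nonzero constant times a power of $\det g = 1$, so $\det M \asymp 1$), while all entries of $M$ are $\ll_\Omega 1$. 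Consequently $M^{-1}$ exists with entries $\ll_\Omega 1$.

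For part \ref{1213-a}, I would argue as follows. From $L_2, L_3 \ll \Delta$ and the (unknown) value of $L_1$, inverting $M$ expresses each of $a-d$, $b$, $c$ as a fixed $\OO_\Omega(1)$-multiple of $L_1$ plus an $\OO(\Delta)$ error. In other words, there is a fixed vector $(\lambda_1, \lambda_2, \lambda_3)$ with $\lambda_j \ll_\Omega 1$ (the first column of $M^{-1}$, which is independent of $\Delta$ and of $a,b,c,d$) such that $(a-d, b, c)^\top = (\lambda_1, \lambda_2, \lambda_3)^\top L_1 + \OO(\Delta)$. Now take $s$ to be whichever of $a-d, b, c$ has the largest absolute value; then $|L_1| \ll |s| + \Delta$, and after rescaling the $\lambda_j$ (dividing through, which keeps them $\ll_\Omega 1$ since the chosen component of $(\lambda_1,\lambda_2,\lambda_3)$ is $\asymp 1$ — this is where one uses that $(\lambda_1,\lambda_2,\lambda_3)$ is not the zero vector, guaranteed by invertibility of $M$) we can rewrite this as $(a-d, b, c)^\top = (\lambda_1, \lambda_2, \lambda_3)^\top s + \OO(\Delta)$ with the new $\lambda_j \ll_\Omega 1$. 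One should be mildly careful about the edge case where $s$ itself is $\OO(\Delta)$: then all of $a-d, b, c$ are $\OO(\Delta)$ and the statement holds trivially with, say, $\lambda_1 = 1$, $\lambda_2 = \lambda_3 = 0$.

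For part \ref{1213-b}, I would simply substitute the expressions from \ref{1213-a} into $(a-d)^2 + 4bc$. This gives $(a-d)^2 + 4bc = \lambda_1^2 s^2 + 4\lambda_2\lambda_3 s^2 + \OO(\Delta|s| + \Delta^2) = \mu s^2 + \OO(\Delta|s| + \Delta^2)$ with $\mu = \lambda_1^2 + 4\lambda_2\lambda_3$, the cross terms between the main part and the $\OO(\Delta)$ errors contributing $\OO(\Delta|s|)$ and the error-times-error terms $\OO(\Delta^2)$. The lower bound $\mu \gg_\Omega 1$ requires an argument: $\mu$ is (up to the fixed nonzero rescaling factor) the value of the quadratic form $(x-w)^2 + 4yz$ evaluated on the first column of $M^{-1}$, and one checks by explicit computation — or by a conjugation-invariance observation, since $(a-d)^2 + 4bc = (\operatorname{tr})^2 - 4\det$ applied to the traceless part is exactly $-4$ times the discriminant, invariant under $g$-conjugation — that this never vanishes for $g \in G$; compactness of $\Omega$ then yields $\mu \gg_\Omega 1$. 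Finally, if $(a-d)^2 + 4bc = 0$, then $\mu s^2 = \OO(\Delta|s| + \Delta^2)$, so $|s| \ll \Delta$ (solving the quadratic inequality $\mu|s|^2 \ll \Delta|s| + \Delta^2$), and then \ref{1213-a} forces $a-d, b, c \ll \Delta$ as well. The main obstacle is the clean verification that $\det M \asymp 1$ and that $\mu \gg_\Omega 1$; both are finite explicit computations, and the conjugation-invariance viewpoint is the cleanest way to see the second without grinding through \eqref{coordinates} by hand.
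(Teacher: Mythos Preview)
Your approach is correct and takes a somewhat different route from the paper. The paper works only with the two forms $L_2,L_3$ (a $2\times 3$ system), computes the three $2\times 2$ minors explicitly as $g_1g_4+g_2g_3$, $g_1g_2$, $g_3g_4$, observes that at least one exceeds $1/3$ since $(g_1g_4+g_2g_3)^2-4g_1g_2\cdot g_3g_4=(g_1g_4-g_2g_3)^2=1$, and then solves two of the unknowns in terms of the remaining one; the choice of $s$ is thus determined by $g$ alone, and the lower bound $\mu\gg 1$ is a one-line explicit computation in each case (e.g.\ $\mu=(g_1g_4+g_2g_3)^{-2}$ when $s=a-d$). You instead invert the full $3\times 3$ system (indeed $\det M=\tfrac12$, as the paper records later), expressing everything as a bounded multiple of $L_1$, and your conjugation-invariance remark is the clean way to finish: $(a-d)^2+4bc=(\mathrm{tr}\,\gamma)^2-4\det\gamma$ is invariant, so it equals $4L_1^2+4L_2L_3=4L_1^2+\OO(\Delta^2)$, giving $\mu=4/\lambda_{j_0}^2\gg 1$ immediately. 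One small slip to fix: choosing $s$ as the entry of largest modulus makes the rescaled $\lambda_j$ depend on the input $(a,b,c,d)$, not only on $g$, contrary to the stated conclusion. The painless repair is to pick $s$ as the entry corresponding to the largest component of the first column of $M^{-1}$; that index depends only on $g$, the corresponding component is $\gg_\Omega 1$ by compactness since the column is nonzero, and your inversion and substitution go through verbatim. (Also, $(a-d)^2+4bc$ \emph{is} the discriminant $(\mathrm{tr})^2-4\det$, not $-4$ times it.)
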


\begin{proof}
We may write the defining equations for $L_2$ and $L_3$ as $\begin{bmatrix} L_2&L_3\end{bmatrix}^{\top}=M\begin{bmatrix} a-d&b&c\end{bmatrix}^{\top}$ for a $2\times 3$ matrix $M$ whose $2\times 2$ minors we compute to be
\[ \begin{vmatrix}g_4^2&-g_2^2\\-g_3^2&g_1^2\end{vmatrix}=g_1g_4+g_2g_3,\quad \begin{vmatrix} g_2g_4&-g_2^2\\-g_1g_3&g_1^2\end{vmatrix}=g_1g_2,\quad \begin{vmatrix} g_2g_4&g_4^2\\-g_1g_3&-g_3^2\end{vmatrix}=g_3g_4. \]
At least one of these minors exceeds $1/3$ in absolute value, since
\[(g_1g_4+g_2g_3)^2-4g_1g_2g_3g_4=1.\]
Consider the case when $|g_1g_4+g_2g_3|>1/3$. Then we may solve the latter two equations in  \eqref{coordinates} for $b$, $c$, which yields
\[ \begin{bmatrix} b\\c\end{bmatrix}=\begin{bmatrix}-g_1g_2\\g_3g_4\end{bmatrix}\frac{a-d}{g_1g_4+g_2g_3}+\OO(\Delta).\]
This settles the first claim in the lemma with $s=a-d$. The second claim follows from
\[ (a-d)^2+4bc=\frac{(a-d)^2}{(g_1g_4+g_2g_3)^2}+\OO\big(\Delta|a-d|+\Delta^2\big). \]
The other cases (of which it suffices to consider one) are similar. For example, under $|g_1g_2|>1/3$ we have
\[ \begin{bmatrix}a-d\\c\end{bmatrix}=\begin{bmatrix}-g_1g_4-g_2g_3\\-g_3g_4\end{bmatrix}\frac{b}{g_1g_2}+\OO(\Delta),\quad (a-d)^2+4bc=\frac{b^2}{(g_1g_2)^2}+\OO\big(\Delta|b|+\Delta^2\big), \]
from which the lemma follows.
\end{proof}

\subsection{Second moment count for \texorpdfstring{$q=\pm\ell$}{q=l,-l}}
We will now establish an upper bound for the quantity $Q(g,L, H_1, H_2)$ counting pairs of matrices $(\gamma_1,\gamma_2)$ such that
\begin{equation}\label{thm2a-KS-conditions}
\begin{gathered}
\gamma_j=\begin{pmatrix} a_j&b_j\\c_j&d_j\end{pmatrix}\in\MM_2(\ZZ[i]),
\qquad \det\gamma_1=\det\gamma_2=n,\qquad L \leq |n|\leq 2 L,\\
\|g^{-1}\tilde{\gamma}_jg\|\leq\sqrt{\frac{H_j}{L}},\qquad
\dist(g^{-1}\tilde{\gamma}_jg,\mcD)\ll\sqrt{\frac{H_j\log\ell}{L\ell}}.
\end{gathered}
\end{equation}
We denote the quantities in \eqref{coordinates} corresponding to $\gamma_j$ as
$L_{1j}$, $L_{2j}$, $L_{3j}$. From \eqref{coordinates} and \eqref{thm2a-KS-conditions} we deduce that
\begin{equation}\label{2a-KS-immediate}
\|\gamma_j\|\ll\sqrt{H_j},\qquad L_{2j},L_{3j}\preccurlyeq\sqrt{H_j/\ell},
\end{equation}
and
\begin{equation}\label{det-cond}
(a_1+d_1)^2-(a_2+d_2)^2=(a_1-d_1)^2+4b_1c_1-(a_2-d_2)^2-4b_2c_2.
\end{equation}
We shall prove the following result using \eqref{thm2a-KS-conditions}, \eqref{2a-KS-immediate}, and \eqref{det-cond}.

\begin{lemma}\label{lemma-ell-count}
Let $\Omega\subset G$ be a compact subset and $L\geq 1$. For $g\in\Omega$ and
$1\leq H_1, H_2\preccurlyeq\ell$, we have
\begin{equation}\label{Qbound}
Q(g,L,H_1, H_2)\preccurlyeq_\Omega H_1H_2.
\end{equation}
\end{lemma}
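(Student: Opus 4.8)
The plan is to count the pairs $(\gamma_1,\gamma_2)$ directly, exploiting that $\det\gamma_1=\det\gamma_2$ couples the two matrices and splitting the count according to the coincidence of a natural $\ZZ[i]$-invariant. Write $\gamma_j=\left(\begin{smallmatrix}a_j&b_j\\c_j&d_j\end{smallmatrix}\right)$, put $T_j:=a_j+d_j$ and $E_j:=(a_j-d_j)^2+4b_jc_j=T_j^2-4n\in\ZZ[i]$, and let $L_{2j},L_{3j}$ be the quantities \eqref{coordinates} attached to $\gamma_j$. Since a pair contributes to $Q$ only for the single value $n=\det\gamma_1$, it suffices to count pairs with $\det\gamma_1=\det\gamma_2$, $L\le|\det\gamma_1|\le 2L$, and \eqref{thm2a-KS-conditions}. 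First I would record the easy consequences of \eqref{2a-KS-immediate}: $\|\gamma_j\|\ll\sqrt{H_j}$, so $a_j-d_j,b_j,c_j,T_j\ll\sqrt{H_j}$, and $L_{2j},L_{3j}\preccurlyeq\Delta_j:=\sqrt{H_j/\ell}$ with $\Delta_j\preccurlyeq 1$ because $H_j\preccurlyeq\ell$; moreover, conjugation-invariance of the determinant gives $1=|\det(g^{-1}\tilde{\gamma}_jg)|\le\tfrac12\|g^{-1}\tilde{\gamma}_jg\|^2\le H_j/(2L)$, hence $2L\le\min(H_1,H_2)$. Then I would apply Lemma~\ref{l2l3-small} to each $\gamma_j$ with $\Delta=\Delta_j$; since the distinguished coordinate there depends only on $g$, the same position $s\in\{a-d,b,c\}$ is selected for $j=1,2$, and by the obvious symmetries we may assume $s_j=a_j-d_j$. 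Thus $|s_j|\ll\sqrt{H_j}$, the triple $(a_j-d_j,b_j,c_j)$ and hence $E_j$ are determined by $s_j$ up to $\preccurlyeq 1$ choices, and $E_j=\mu s_j^2+\OO(\Delta_j|s_j|+\Delta_j^2)$ with a single $\mu\gg 1$.

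In the case $E_1\ne E_2$, I would select $s_1$ ($\ll H_1$ lattice points) and $s_2$ ($\ll H_2$), fixing $E_1,E_2$ up to $\preccurlyeq 1$ choices; by \eqref{det-cond}, $T_1^2-T_2^2=E_1-E_2$ is a non-zero Gaussian integer of size $\ll H_1+H_2\preccurlyeq\ell$, so the $\ZZ[i]$-divisor bound leaves only $\preccurlyeq 1$ ordered factorizations $E_1-E_2=(T_1-T_2)(T_1+T_2)$, each of which fixes $(T_1,T_2)$ — and thereby $\gamma_1,\gamma_2$, since $a_j=(T_j+s_j)/2$, $d_j=(T_j-s_j)/2$ — up to $\preccurlyeq 1$ choices. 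This case contributes $\preccurlyeq H_1H_2$. In the case $E_1=E_2=:E$ we have $T_2=\pm T_1$, and the decisive point is that the two invariants are now forced to agree: with $p:=s_1-s_2$, $m:=s_1+s_2$, the necessary condition $|\mu(s_1^2-s_2^2)|\preccurlyeq\Delta_1|s_1|+\Delta_1^2+\Delta_2|s_2|+\Delta_2^2\preccurlyeq(H_1+H_2)/\sqrt\ell=:\Xi$ reads $|pm|\preccurlyeq\Xi$ with $|p|,|m|\ll\sqrt{H_1+H_2}$, and summing over $p$ (the term $p=0$ contributing $\ll\min(H_1,H_2)$) bounds the number of admissible pairs $(s_1,s_2)$ by $\ll(H_1+H_2)+\Xi^2\sum_{p\ne 0}|p|^{-2}\preccurlyeq H_1+H_2$, using $\Xi^2\preccurlyeq(H_1+H_2)^2/\ell\preccurlyeq H_1+H_2$. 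For each such pair $E$ is fixed up to $\preccurlyeq 1$, and $|n|\asymp L$ forces $T_1^2$ into the annulus $4L\le|w-E|\le 8L$; an elementary computation of $\iint_{|w-E|\le 8L}|w|^{-1}\,|\dd w|$ shows the preimage of this set under $w=T_1^2$ has area $\ll L$ and thus contains $\ll 1+L\ll L$ Gaussian integers, with $T_2=\pm T_1$ leaving $\preccurlyeq 1$ completions. Hence this case contributes $\preccurlyeq(H_1+H_2)L$, which by $2L\le\min(H_1,H_2)$ is $\le\tfrac12\bigl(H_1\min(H_1,H_2)+H_2\min(H_1,H_2)\bigr)\le H_1H_2$. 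Adding the two cases yields $Q(g,L,H_1,H_2)\preccurlyeq_\Omega H_1H_2$.

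I expect the only genuinely delicate step to be the double count in the case $E_1=E_2$: one must see that the diagonal coincidence of the $\ZZ[i]$-invariants cuts the number of pairs $(s_1,s_2)$ from $H_1H_2$ down to $\preccurlyeq H_1+H_2$, and that this saving — together with the constraint $L\le\min(H_1,H_2)$ obtained for free from $\det(g^{-1}\tilde{\gamma}_jg)=1$ — is exactly what absorbs the $\ll L$ admissible values of $T_1$, equivalently of $n$. Everything else reduces to Lemma~\ref{l2l3-small}, the Gaussian-integer divisor bound, and routine lattice-point estimates over disks and annuli, so it is only this last balancing of the two counts that requires care.
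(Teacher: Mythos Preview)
Your proof is correct and follows the same structural approach as the paper: parametrize each $\gamma_j$ by the single coordinate $s_j$ via Lemma~\ref{l2l3-small}, split according to whether $T_1^2=T_2^2$ (equivalently $E_1=E_2$), and in the off-diagonal case use the divisor bound on $T_1^2-T_2^2$.

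The one substantive difference is in the diagonal case $E_1=E_2$. The paper works with the cruder input $\Delta_j\preccurlyeq 1$, which gives $s_1^2-s_2^2\preccurlyeq\sqrt{H_1}+\sqrt{H_2}$; since this is a nonzero Gaussian integer (or $s_1=\pm s_2$), the divisor bound immediately yields $\preccurlyeq\max(H_1,H_2)$ pairs $(s_1,s_2)$, and then the trivial bound $|T_1|=|T_2|\ll\sqrt{\min(H_1,H_2)}$ gives $\ll\min(H_1,H_2)$ choices for the trace, so the product is $\preccurlyeq H_1H_2$. Your route instead keeps the sharper $\Delta_j\preccurlyeq\sqrt{H_j/\ell}$, does a direct lattice count on $(p,m)=(s_1-s_2,s_1+s_2)$, and then exploits the new ingredient $2L\le\min(H_1,H_2)$ together with an annulus count for $T_1$. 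Both work, but the paper's version avoids the extra geometric argument and the appeal to $L\le\min H_j$.

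One cosmetic point: your sum $\sum_{p\ne 0}|p|^{-2}$ over $\ZZ[i]$ diverges; what you actually use is the truncated sum over $|p|\ll\sqrt{H_1+H_2}$, which is $\ll\log(H_1+H_2)\preccurlyeq 1$ and is harmlessly absorbed by the $\preccurlyeq$ notation.
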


\begin{proof} We shall use that the entries $a_j,b_j,c_j,d_j\in\ZZ[i]$ of each participating $\gamma_j$ satisfy the conditions of Lemma~\ref{l2l3-small} with $\Delta_j\preccurlyeq 1$ in the role of $\Delta$. Indeed, this follows from \eqref{2a-KS-immediate} and $H_1, H_2\preccurlyeq\ell$.

Let $s_j\in\{a_j-d_j,b_j,c_j\}$ be as in Lemma~\ref{l2l3-small}\ref{1213-a}. By Lemma~\ref{l2l3-small}\ref{1213-a} and \eqref{2a-KS-immediate}, for a given pair $(s_1,s_2)$, there are $\preccurlyeq 1$ choices for the two triples $(a_j-d_j,b_j,c_j)$, which then determine both sides of \eqref{det-cond}. Using this preliminary observation, we do the counting in two steps.

First we count $(\gamma_1,\gamma_2)$ satisfying \eqref{thm2a-KS-conditions} and $(a_1+d_1)^2\neq(a_2+d_2)^2$. By \eqref{2a-KS-immediate}, there are $\ll H_1H_2$ choices for the pair $(s_1, s_2)$, hence $\preccurlyeq H_1H_2$ choices for the two triples $(a_j-d_j,b_j,c_j)$. Given the triples, by \eqref{2a-KS-immediate}--\eqref{det-cond} and the divisor bound, there are $\preccurlyeq 1$ choices for $(a_1+d_1,a_2+d_2)$. This is admissible for \eqref{Qbound}.

Now we count $(\gamma_1,\gamma_2)$ satisfying \eqref{thm2a-KS-conditions} and $(a_1+d_1)^2=(a_2+d_2)^2$.
In this case, Lemma~\ref{l2l3-small}\ref{1213-b} coupled with \eqref{2a-KS-immediate}--\eqref{det-cond} shows that
$s_1^2-s_2^2\preccurlyeq\sqrt{H_1}+\sqrt{H_2}$. Hence, by the divisor bound (separating the case when $s_1^2=s_2^2$), there are $\preccurlyeq\max(H_1,H_2)$ choices for the pair $(s_1, s_2)$ and same for the two triples $(a_j-d_j,b_j,c_j)$. Independently of the triples, by \eqref{2a-KS-immediate}, there are $\ll\min(H_1,H_2)$ choices for $(a_1+d_1,a_2+d_2)$. This is again admissible for \eqref{Qbound}.
\end{proof}

\subsection{Interlude: a first moment count}\label{thm2a-first-moment-sec}
For the proof of Theorem~\ref{thm2} in \S \ref{sec-proof2} below, we need a variation of the previous Diophantine argument that is most conveniently stated and proved at this point. 
For $\mcL\in\{1,L^2,L^4\}$ and every $0<\delta\preccurlyeq 1$, we will establish an upper bound on the quantity
\begin{equation}\label{thm2a-conditions}
M_\mcD(g,L,\mcL,\eps,\delta):=\sum_{n\in D(L,\mcL)}
\#\left\{\gamma\in\Gamma_n:\|g^{-1}\tilde\gamma g\|\ll\ell^{\eps},\,\,\dist(g^{-1}\tilde\gamma g,\mcD)\leq\delta\right\},
\end{equation}
where the implied constant is absolute. As before, we conclude from the conditions in \eqref{thm2a-conditions} and the explicit description in \eqref{coordinates} that
\begin{equation}\label{coeff-bound-2a_l2l3-delta0}
\|\gamma\|\preccurlyeq\mcL^{1/4}\qquad\text{and}\qquad L_2,L_3\ll\mcL^{1/4}\delta.
\end{equation}
We shall prove the following result using \eqref{coeff-bound-2a_l2l3-delta0} and the identity
\begin{equation}\label{parabolicidentity}
(a-d)^2+4bc=(a+d)^2-4n.
\end{equation}

\begin{lemma}\label{first-moment-count-lemma}
Let $\Omega\subset G$ be a compact subset, $L\geq 1$, and $\eps>0$. For $g\in\Omega$ and $0<\delta\preccurlyeq 1$, we have the following bounds.
\begin{align}
\label{Nbound1}M_{\mcD}(g,L,1,\eps,\delta)&\preccurlyeq_{\Omega} 1,\\
\label{Nbound2}M_{\mcD}(g,L,L^2,\eps,\delta)&\preccurlyeq_{\Omega} L^2+L^4\delta^4,\\
\label{Nbound3}M_{\mcD}(g,L,L^4,\eps,\delta)&\preccurlyeq_{\Omega} L^2+L^6\delta^4.
\end{align}
\end{lemma}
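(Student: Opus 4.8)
The plan is to follow the template of the proofs of Lemmata~\ref{counting-for-thm1} and \ref{lemma-ell-count}: reduce to the choice of a single entry-like quantity via Lemma~\ref{l2l3-small}, then pin down the remaining data using the trace identity \eqref{parabolicidentity} and the divisor bound in $\ZZ[i]$. The case $\mcL=1$ is immediate from \eqref{coeff-bound-2a_l2l3-delta0}: there $\|\gamma\|\preccurlyeq 1$, so there are only $\preccurlyeq 1$ matrices $\gamma$ in total, which gives \eqref{Nbound1}. So from now on I would take $\mcL\in\{L^2,L^4\}$ and apply Lemma~\ref{l2l3-small} with $\Delta:=\mcL^{1/4}\delta$, which is legitimate by \eqref{coeff-bound-2a_l2l3-delta0}. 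This produces $s\in\{a-d,b,c\}$ with $|s|\ll\|\gamma\|\preccurlyeq\mcL^{1/4}$, hence $\preccurlyeq\mcL^{1/2}$ Gaussian-integer choices for $s$; and given $s$, the other two of $\{a-d,b,c\}$ lie in a fixed ball of radius $O(\Delta)$, so there are $\preccurlyeq(1+\Delta^2)^2\preccurlyeq 1+\mcL\delta^4$ choices for the whole triple $(a-d,b,c)$ (using $\delta\preccurlyeq 1$ to absorb the cross term). Once the triple is fixed, so is $T:=(a-d)^2+4bc=(a+d)^2-4n$.

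Next I would split according to whether $\gamma$ is parabolic ($T=0$) or not ($T\neq 0$). In the non-parabolic case, for $\mcL=L^2$ I would bound the number of $a+d$ crudely by its modulus $\preccurlyeq\mcL^{1/4}$, i.e.\ $\preccurlyeq L$ lattice points, after which $n=((a+d)^2-T)/4$ is determined; multiplying, $\preccurlyeq\mcL^{1/2}\cdot(1+\mcL\delta^4)\cdot L=L^2+L^4\delta^4$. For $\mcL=L^4$, the point is that $n=l_1^2l_2^2$ is a perfect square, so $T=(a+d-2l_1l_2)(a+d+2l_1l_2)\neq 0$ factors and the divisor bound leaves $\preccurlyeq 1$ possibilities for $(a+d,n)$; multiplying, $\preccurlyeq\mcL^{1/2}(1+\mcL\delta^4)=L^2+L^6\delta^4$. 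In the parabolic case $T=0$, the last assertion of Lemma~\ref{l2l3-small}\ref{1213-b} forces $a-d,b,c\ll\Delta$, so there are $\preccurlyeq 1+\mcL^{1/2}\delta^2$ choices for $a-d$; when $bc\neq 0$ the value $bc=-(a-d)^2/4$ is then determined and the divisor bound gives $\preccurlyeq 1$ pairs $(b,c)$, while $bc=0$ forces $a=d$ with one of $b,c$ zero. In either sub-case $|a+d|\asymp\mcL^{1/4}$ contributes $\preccurlyeq\mcL^{1/2}$ further choices (which determine $n$), for a total $\preccurlyeq(1+\mcL^{1/2}\delta^2)\mcL^{1/2}=\mcL^{1/2}+\mcL\delta^2$.

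Finally I would collect the estimates: all the contributions above are absorbed into the stated bounds \eqref{Nbound2}--\eqref{Nbound3} by using $\delta\preccurlyeq 1$ together with the elementary inequalities $\mcL\delta^2\leq\mcL$, $L^3\delta^2\leq L^2+L^4\delta^4$, and $L^4\delta^2\leq L^2+L^6\delta^4$ (in each case one checks the two regimes $L\delta^2\lessgtr 1$, resp.\ $L^2\delta^2\lessgtr 1$). The argument uses essentially no structure beyond crude lattice-point counting, the one exception being the perfect-square shape of $n$ in the top range $\mcL=L^4$, which is exactly what furnishes the divisor-bound saving on $a+d$ there; the only mild bookkeeping subtlety is the degenerate parabolic corner $bc=0$, and neither of these is a genuine obstacle.
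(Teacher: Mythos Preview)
Your proposal is correct and follows essentially the same route as the paper: the case $\mcL=1$ is immediate from \eqref{coeff-bound-2a_l2l3-delta0}; for $\mcL\in\{L^2,L^4\}$ one applies Lemma~\ref{l2l3-small} with $\Delta=\mcL^{1/4}\delta$, splits into parabolic and non-parabolic $\gamma$, and in the non-parabolic high range $\mcL=L^4$ exploits that $n$ is a perfect square to recover $a+d$ from $(a-d)^2+4bc$ via the divisor bound. The only cosmetic difference is that you spell out the AM--GM absorption of cross terms such as $L^3\delta^2$ and $L^4\delta^2$, which the paper leaves implicit.
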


\begin{proof}
The bound \eqref{Nbound1} corresponds to $\mcL=1$, and it is immediate from \eqref{coeff-bound-2a_l2l3-delta0}. Hence we focus on the bounds \eqref{Nbound2}--\eqref{Nbound3} that correspond to $\mcL\in\{L^2,L^4\}$. We shall use that the entries $a,b,c,d\in\ZZ[i]$ of each participating $\gamma$ satisfy the conditions of Lemma~\ref{l2l3-small} with $\Delta=\mcL^{1/4}\delta$, as follows from \eqref{coeff-bound-2a_l2l3-delta0}.

First we count parabolic matrices $\gamma$. In this case, we have $(a-d)^2+4bc=0$, hence also $a-d,b,c\ll\mcL^{1/4}\delta$ by Lemma~\ref{l2l3-small}\ref{1213-b}. If $bc\neq 0$, then there are $\ll\mcL^{1/2}$ choices for $a+d=\pm 2\sqrt{n}$, and $\ll\mcL^{1/2}\delta^2$ choices for $a-d\neq 0$. The difference $a-d$ determines the product $bc$ uniquely, hence by the divisor bound, there are $\preccurlyeq 1$ choices for $(b,c)$. This is admissible for \eqref{Nbound2}--\eqref{Nbound3}. If $bc=0$, then there are $\ll\mcL^{1/2}$ choices for $a=d=\pm\sqrt{n}$, and $\ll 1+\mcL^{1/2}\delta^2$ choices for $(b,c)$. This is again admissible for \eqref{Nbound2}--\eqref{Nbound3}.

Now we count non-parabolic matrices $\gamma$, in which case $(a-d)^2+4bc\neq 0$. Let $s\in\{a-d,b,c\}$ be as in Lemma~\ref{l2l3-small}\ref{1213-a}. There are $\preccurlyeq\mcL^{1/2}$ choices for $s$, and for a given $s$, there are $\ll 1+\mcL\delta^4$ choices for the triple $(a-d,b,c)$ by Lemma~\ref{l2l3-small}\ref{1213-a}. Altogether, there are $\preccurlyeq \mcL^{1/2}+\mcL^{3/2}\delta^4$ choices for the triple $(a-d,b,c)$. In the middle range $\mcL=L^2$, we additionally use that there are $\preccurlyeq\mcL^{1/2}$ choices for $a+d$, whence \eqref{Nbound2} follows. In the high range $\mcL=L^4$, $n=l_1^2l_2^2$ is a square, and $(a-d)^2+4bc\neq 0$ factors as $(a+d+2l_1l_2)(a+d-2l_1l_2)$. Hence the triple $(a-d,b,c)$ in fact determines $a+d$ up to $\preccurlyeq 1$ possibilities by the divisor bound, and \eqref{Nbound3} follows.
\end{proof}

\subsection{Counting setup for \texorpdfstring{$q=0$}{q=0}}
For each $\mcL\in\{1,L^2,L^4\}$ and $0<\delta\preccurlyeq 1$, we will establish an upper bound on the quantity $M_0^{\ast}(g,L,\mcL,\delta)$ consisting of matrices
\begin{equation}\label{thm2b-conditions}
\begin{gathered}
\gamma=\begin{pmatrix}a&b\\c&d\end{pmatrix}\in\MM_2(\ZZ[i]),
\qquad\det\gamma=n\in D(L,\mcL),\qquad |n|\asymp\mcL^{1/2}\\
\dist(g^{-1}\tilde{\gamma}g,\mcS)\leq\delta,\qquad\frac{D(g^{-1}\tilde\gamma g)}{\|g^{-1}\tilde\gamma g\|^2}\ll\frac{\log\ell}{\sqrt{\ell}}.
\end{gathered}
\end{equation}
From the first distance condition in \eqref{thm2b-conditions} we conclude that
\begin{equation}\label{coeff-bound}
a,b,c,d\preccurlyeq\mcL^{1/4}.
\end{equation}
Using the description in \eqref{coordinates}, the distance conditions in \eqref{thm2b-conditions} imply that
\begin{gather}
\label{cond1} \left\{\begin{aligned} &L_2,L_3\ll\delta\sqrt{|n|}\\ &\left|\tfrac{a+d}{2}\pm L_1\right|=(1+\OO(\delta))\sqrt{|n|}\end{aligned}\right. \qquad\text{or}\qquad
\left\{\begin{aligned} &a+d,L_1\ll\delta\sqrt{|n|}\\ &|L_2|,|L_3|=(1+\OO(\delta))\sqrt{|n|};\end{aligned}\right.\\
\label{cond0} \left|\tfrac{a+d}{2}+L_1\right|^2-\left|\tfrac{a+d}{2}-L_1\right|^2
\preccurlyeq\sqrt{\mcL/\ell}\qquad\text{and}\qquad|L_2|^2-|L_3|^2\preccurlyeq\sqrt{\mcL/\ell}.
\end{gather}
As in \S\ref{thm1-proof-sec}, we split the count into the parabolic and non-parabolic subcounts as
\[ M_0^{\ast}(g,L,\mcL,\delta)=M_0^{\ast\pp}(g,L,\mcL,\delta)+M_0^{\ast\np}(g,L,\mcL,\delta). \]
We shall prove the following result using \eqref{thm2b-conditions}--\eqref{cond0} and \eqref{parabolicidentity}.

\begin{lemma}\label{counting-for-thm2}
Let $\Omega\subset G$ be a compact subset, $L\geq 1$, and $\mcL\in\{L^2,L^4\}$. For $g\in\Omega$ and $0<\delta\preccurlyeq 1$, we have the following bounds.
\begin{align}
\label{Rbound1} M_0^{\ast}(g,L,1,\delta)&\preccurlyeq_{\Omega} 1,\\
\label{Rbound2} M_0^{\ast\pp}(g,L,\mcL,\delta)&\preccurlyeq_{\Omega}\mcL^{1/2}+\mcL\delta^2,\\
\label{Rbound3} M_0^{\ast}(g,L,L^2,\delta)&\preccurlyeq_{\Omega} L^{3/2}+L^3\delta^3+\frac{L^2+L^{7/2}\delta^2}{\sqrt{\ell}}+\frac{L^4\delta^2}{\ell},\\[4pt]
\label{Rbound4} M_0^{\ast\np}(g,L,L^4,\delta)&\preccurlyeq_{\Omega} L^3+L^5\delta^2+\frac{L^4+L^6\delta^2}{\sqrt{\ell}}.
\end{align}
\end{lemma}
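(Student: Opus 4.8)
The plan is to prove Lemma~\ref{counting-for-thm2} by the same style of elementary lattice-point counting used in Lemmata~\ref{counting-for-thm1} and \ref{first-moment-count-lemma}, organizing everything around the comparison Lemma~\ref{l2l3-small} applied to the coordinates $L_1,L_2,L_3$ from \eqref{coordinates}. The bound \eqref{Rbound1} is the trivial range $\mcL=1$: by \eqref{coeff-bound} all entries are $\preccurlyeq 1$, so there are $\preccurlyeq 1$ matrices and we are done. For the parabolic bound \eqref{Rbound2}, I would note that parabolic $\gamma$ forces $(a-d)^2+4bc=0$, hence by Lemma~\ref{l2l3-small}\ref{1213-b} (applied with $\Delta\preccurlyeq\mcL^{1/4}\delta$, which is legitimate since \eqref{coeff-bound} and \eqref{cond1} give $L_2,L_3\ll\delta\sqrt{|n|}\preccurlyeq\mcL^{1/4}\delta$ in the first alternative — and in the second alternative one argues symmetrically) that $a-d,b,c\ll\mcL^{1/4}\delta$; one then counts $a+d=\pm2\sqrt n$ in $\preccurlyeq\mcL^{1/2}$ ways, the difference $a-d$ in $\ll 1+\mcL^{1/2}\delta^2$ ways, and for $bc\ne0$ the product $bc$ is determined, so the divisor bound gives $\preccurlyeq 1$ choices for $(b,c)$; the $bc=0$ case is easier. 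This reproduces the proof of \eqref{Mbound2} / \eqref{Nbound2} almost verbatim.

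The substance is in \eqref{Rbound3} and \eqref{Rbound4}, where one counts non-parabolic $\gamma$ (the parabolic part being already covered by \eqref{Rbound2} with $\mcL=L^2$, $L^4$). Here the new feature compared to \S\ref{thm1-proof-sec} is that the relevant distance is to $\mcS$ rather than to $K$, and crucially we have the \emph{two} near-equalities in \eqref{cond0} expressing that $g^{-1}\tilde\gamma g$ lies within $\preccurlyeq\|g^{-1}\tilde\gamma g\|^2(\log\ell)/\sqrt\ell$ of the manifold $\mcN$ (cf.\ the definition \eqref{Dgdef} of $D(g)$). The idea is: apply Lemma~\ref{l2l3-small}\ref{1213-a} to get $a-d,b,c$ all proportional (up to $\OO(\Delta)$, $\Delta\preccurlyeq\mcL^{1/4}\delta$) to a single scalar $s\in\{a-d,b,c\}$, giving $\preccurlyeq\mcL^{1/2}$ choices for $s$ and then $\preccurlyeq 1+\mcL^{1/2}\delta$ choices for the full triple $(a-d,b,c)$ — actually one should be slightly more careful and track the cube-of-$\delta$ savings from a two-dimensional box, as in the proof of \eqref{Nbound3}, producing the $L^3\delta^3$ (resp.\ $L^5\delta^2$) terms. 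Then one uses the first relation in \eqref{cond0}, which after substituting \eqref{coordinates} reads $(a+d)\Re(\cdots)+\cdots\preccurlyeq\sqrt{\mcL/\ell}$ and hence (together with \eqref{parabolicidentity}) constrains $a+d$ to an interval/disc of radius $\preccurlyeq\sqrt{\mcL/\ell}/|s|+(\text{something})$, giving the terms with $\sqrt\ell$ in the denominator. In the high range $\mcL=L^4$, $n=l_1^2l_2^2$ is a perfect square, so $(a-d)^2+4bc=(a+d)^2-4n=(a+d-2l_1l_2)(a+d+2l_1l_2)$ factors, and since by Lemma~\ref{l2l3-small}\ref{1213-b} the triple $(a-d,b,c)$ determines $(a-d)^2+4bc$ up to $\OO(\Delta|s|+\Delta^2)$, the divisor bound pins $a+d$ down to $\preccurlyeq 1$ values; this is exactly the mechanism that removed the extra $L$ in passing from \eqref{Mbound3} to \eqref{Mbound4} and from \eqref{Nbound2} to \eqref{Nbound3}.

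The main obstacle I anticipate is bookkeeping rather than a conceptual difficulty: one must treat the two alternatives in \eqref{cond1} (near-$\mcD$ versus near-skew-diagonal, i.e.\ the two "branches" of $\mcS$) separately but symmetrically, keep track of the $bc\ne0$ versus $bc=0$ dichotomy in each, and carefully combine the savings coming from (i) the proportionality of $(a-d,b,c)$ via Lemma~\ref{l2l3-small}\ref{1213-a}, (ii) the trace-discriminant relation \eqref{parabolicidentity} with the divisor bound, and (iii) the second relation in \eqref{cond0}, i.e.\ $|L_2|^2-|L_3|^2\preccurlyeq\sqrt{\mcL/\ell}$, which gives the additional $\delta^2/\ell$-type gains visible as the last term of \eqref{Rbound3}. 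Each of the four stated upper bounds is then just the sum of the contributions of these cases, and one checks termwise that every contribution is dominated by one of the listed monomials. I would present the argument as a single proof with the case split $\mcL=1$ / parabolic / non-parabolic-$L^2$ / non-parabolic-$L^4$, mirroring the structure of the proof of Lemma~\ref{counting-for-thm1}, and then feed \eqref{Rbound1}--\eqref{Rbound4} into the first estimate of Lemma~\ref{APTI-done-lemma-single-form} together with an optimization over $L$ and the dyadic sum over $\delta\in[\ell^{-1/2},\ell^{\eps}]$ to obtain Theorem~\ref{thm3}\ref{thm3-a}.
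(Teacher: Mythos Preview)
Your treatment of \eqref{Rbound1} and \eqref{Rbound2} matches the paper's almost exactly (for the parabolic second alternative the paper notes that $a+d=\pm2\sqrt n$ together with $a+d\ll\delta\sqrt{|n|}$ forces $\delta\gg 1$, making the claim trivial — not quite ``symmetric'' but harmless).

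For \eqref{Rbound3} and \eqref{Rbound4} the paper takes a genuinely different route: rather than the algebraic/divisor-bound method via Lemma~\ref{l2l3-small} that you sketch, it sets up a pure geometry-of-numbers volume argument. One observes that $(a+d,L_1,L_2,L_3)$ is the image of $(a+d,a-d,b,c)\in\ZZ[i]^4$ under a fixed linear map with Jacobian $\asymp 1$ (cf.\ \eqref{Ag3}--\eqref{Ag4}), so the count is $\ll\vol V^{\bullet}$ for the region $V\subset\CC^4$ (or $\CC^3$ in the high range, after the divisor-bound reduction on $a+d$) cut out by \eqref{coeff-bound}--\eqref{cond0}. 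The two alternatives in \eqref{cond1} give two explicit product regions $V_1,V_2$ (resp.\ $V_3,V_4$), and the key computation is the elementary estimate $\vol W_j(r,\Delta)^{\bullet}\ll 1+r^2\Delta+r^3$ for the sets $W_1=\{|z_1|,|z_2|\leq r,\ \Re(z_1\ov{z_2})\leq\Delta\}$ and $W_2=\{|z_1|,|z_2|\leq r,\ \bigl||z_1|^2-|z_2|^2\bigr|\leq\Delta\}$. This packages the two real (not complex) constraints in \eqref{cond0} cleanly and treats both branches of $\mcS$ on equal footing.

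Your proposed route has a real gap precisely at the second alternative of \eqref{cond1}: there $|L_2|,|L_3|\asymp\mcL^{1/4}$ are \emph{not} small, so Lemma~\ref{l2l3-small} (which needs $L_2,L_3\ll\Delta$) does not apply, and ``arguing symmetrically'' would require a new lemma with $(a+d,L_1)$ small instead — not stated, and not obviously parallel since the relevant $2\times 2$ minors differ. Moreover, each relation in \eqref{cond0} is a single real inequality, so using it to ``constrain $a+d$ to a disc'' over-counts the saving; you would in effect be forced back into a volume estimate like $\vol W_1$ anyway. The paper's approach sidesteps both issues by never invoking Lemma~\ref{l2l3-small} for \eqref{Rbound3}--\eqref{Rbound4} at all.
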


\begin{proof}
The bound \eqref{Rbound1} is immediate from \eqref{coeff-bound}. For the proof of \eqref{Rbound2}, we observe that, in the parabolic case, \eqref{cond1} implies $L_2,L_3\ll\mcL^{1/4}\delta$. Indeed, this is clear when the first half of \eqref{cond1} holds. Otherwise, the conditions $a+d=\pm 2\sqrt{n}$ and $a+d\ll\delta\sqrt{|n|}$ force $\delta\gg 1$, so the claimed bound is clear again. Applying Lemma~\ref{l2l3-small}\ref{1213-b}, we infer that $a-d,b,c\ll\mcL^{1/4}\delta$ holds in the parabolic case. From here \eqref{Rbound2} follows readily, as in the second paragraph of the proof of Lemma~\ref{first-moment-count-lemma}. Finally, we shall prove \eqref{Rbound3} and \eqref{Rbound4} in the next two subsections.
\end{proof}

\subsection{Volume argument}\label{subsec:volume-argument}
Here, we present a volume argument that we will use repeatedly to estimate the number of lattice points satisfying \eqref{thm2b-conditions}--\eqref{cond0}. The symbol $\vol$ will refer to the Lebesgue measure in $\CC^m\simeq\RR^{2m}$, with $m$ being clear from the context.

The explicit expressions for the linear forms in \eqref{coordinates} may be rewritten as 
\begin{equation}\label{Ag3}
\begin{bmatrix} L_1&L_2&L_3\end{bmatrix}^{\top}=A_0(g)\begin{bmatrix}a-d&b&c\end{bmatrix}^{\top},
\end{equation}
where $A_0:\Omega\to\GL_3(\CC)$ is a continuous function.
It is straightforward to verify that $\det A_0(g)=1/2$ holds identically. 
We shall also use the $4$-dimensional variant
\begin{equation}\label{Ag4}
\begin{bmatrix} a+d&L_1&L_2&L_3\end{bmatrix}^{\top}=\diag(1,A_0(g))\begin{bmatrix} a+d&a-d&b&c\end{bmatrix}^{\top}.
\end{equation}

Now, let $m\geq 1$ be a fixed integer ($m\in\{2,3,4\}$ in our applications), and let $A:\Omega\to\GL_m(\CC)$ be a fixed continuous function. As $\Omega$ is compact, there exists a fixed compact subset $K=K(A,\Omega)\subset\CC^m$ such that each $2m$-dimensional lattice $A(g)\ZZ[i]^m\subset\CC^m$ ($g\in\Omega)$ has a fundamental parallelepiped lying in $K$ and of volume $\asymp 1$. It follows by a standard volume argument that for any compact subset $V\subset\CC^m$ and $g\in\Omega$ we have
\begin{equation}\label{volume-bound}
\#\bigl(V\cap A(g)\ZZ[i]^m\bigr)\ll\vol V^\bullet\qquad\text{where}\qquad V^\bullet:=V+K.
\end{equation}

We also record for repeated reference a simple volume computation. For $r,\Delta>0$, we define the sets
\begin{align*}
W_1(r,\Delta)&:=\bigl\{(z_1,z_2)\in\CC^2:|z_1|,|z_2|\leq r,\,\,\Re(z_1\ov{z_2})\leq\Delta\bigr\},\\
W_2(r,\Delta)&:=\bigl\{(z_1,z_2)\in\CC^2:|z_1|,|z_2|\leq r,\,\,\bigl||z_1|^2-|z_2|^2\bigr|\leq\Delta\bigr\}.
\end{align*}
Cutting these into two parts according to whether $|z_2|\leq |z_1|$ or $|z_2|>|z_1|$, we obtain readily by Fubini's theorem that
\[\vol W_j(r,\Delta)\ll\min(r^4,r^2\Delta).\]
On the other hand, we have
\[W_j(r,\Delta)^\bullet\subset W_j\bigl(r+\OO(1),\Delta+\OO(r+1)\bigr)\]
with implied constants depending only on $A$ and $\Omega$, hence
\begin{equation}\label{volW}
\vol W_j(r,\Delta)^{\bullet}\ll\min\bigl((r+1)^4,(r+1)^2(\Delta+r+1)\bigr)\ll 1+r^2\Delta+r^3.
\end{equation}

\subsection{Middle and high range for \texorpdfstring{$q=0$}{q=0}}
We now estimate the count $M_0^{\ast}(g,L,\mcL,\delta)$ in the ``middle range'' $\mcL=L^2$ and the ``high range'' $\mcL=L^4$. In the high range, we shall focus on the non-parabolic contribution $M_0^{\ast\np}(g,L,L^4,\delta)$, since 
we have already proved \eqref{Rbound2}, and here we shall profit substantially from the fact that $\det\gamma$ is a square.

\subsubsection{Middle range}
In the middle range $\mcL=L^2$, we estimate the number of choices in $M_0^{\ast}(g,L,L^2,\delta)$ as follows.

For the case when the first half of \eqref{cond1} holds, we introduce the set
\begin{align*}
V_1(\delta):=\big\{(z_0,z_1,z_2,z_3)\in\CC^4:\ &z_0,z_1\preccurlyeq\mcL^{1/4},\,\,
\Re(z_0\ov{z_1})\preccurlyeq\sqrt{\mcL/\ell},\\
&z_2,z_3\ll\mcL^{1/4}\delta,\,\,|z_2|^2-|z_3|^2\preccurlyeq \sqrt{\mcL/\ell}\big\},
\end{align*}
suppressing from notation the dependence implicit in $\preccurlyeq$. Then we have by \eqref{volW}
\begin{align}
\nonumber\vol V_1(\delta)^{\bullet}&\preccurlyeq
\vol W_1(\mcL^{1/4},\sqrt{\mcL/\ell})^{\bullet}\cdot 
\vol W_2(\mcL^{1/4}\delta,\sqrt{\mcL/\ell})^{\bullet}\\
\label{V1bound}&\ll (\mcL^{3/4}+\mcL/\sqrt{\ell})(1+\mcL^{3/4}\delta^3+\mcL\delta^2/\sqrt{\ell}).
\end{align}
For the case when the second half of \eqref{cond1} holds, we introduce the set
\begin{align*}
V_2(\delta)=\big\{(z_0,z_1,z_2,z_3)\in\CC^4:\ &z_0,z_1\ll\mcL^{1/4}\delta,\,\,
\Re(z_0\ov{z_1})\preccurlyeq\sqrt{\mcL/\ell},\\
&z_2,z_3\preccurlyeq\mcL^{1/4},\,\,|z_2|^2-|z_3|^2\preccurlyeq\sqrt{\mcL/\ell}\big\},
\end{align*}
suppressing from notation the dependence implicit in $\preccurlyeq$. Then we have by \eqref{volW}
\begin{align}
\nonumber\vol V_2(\delta)^{\bullet}&\preccurlyeq
\vol W_1(\mcL^{1/4}\delta,\sqrt{\mcL/\ell})^{\bullet}\cdot 
\vol W_2(\mcL^{1/4},\sqrt{\mcL/\ell})^{\bullet}\\
\label{V2bound}&\ll (\mcL^{3/4}+\mcL/\sqrt{\ell})(1+\mcL^{3/4}\delta^3+\mcL\delta^2/\sqrt{\ell}).
\end{align}

Using \eqref{thm2b-conditions}--\eqref{cond0}, \eqref{Ag4}--\eqref{volume-bound}, and \eqref{V1bound}--\eqref{V2bound}, we conclude \eqref{Rbound3} in the form
\[ M_0^{\ast}(g,L,L^2,\delta)\preccurlyeq (L^{3/2}+L^2/\sqrt{\ell})(1+L^{3/2}\delta^3+L^2\delta^2/\sqrt{\ell}). \]

\subsubsection{High range}
As in the proof of Lemmata~\ref{counting-for-thm1} and \ref{first-moment-count-lemma}, in the high range $\mcL=L^4$, once the triple $(a-d,b,c)$ is determined for a non-parabolic matrix $\gamma$ (so that \eqref{parabolicidentity} holds), $a+d$ and along with it $\gamma$ is determined up to $\preccurlyeq 1$ choices by the divisor bound, using that $n=l_1^2l_2^2$ is a square. We now estimate the number of choices in $M_0^{\ast\np}(g,L,L^4,\delta)$ as follows.

For the case when the first half of \eqref{cond1} holds, we introduce the set
\[ V_3(\delta):=\big\{(z_1,z_2,z_3)\in\CC^3:
z_1\preccurlyeq\mcL^{1/4},\,\,z_2,z_3\ll\mcL^{1/4}\delta,\,\,|z_2|^2-|z_3|^2\preccurlyeq\sqrt{\mcL/\ell}\big\}, \]
suppressing from notation the dependence implicit in $\preccurlyeq$. Then we have by \eqref{volW}
\begin{equation}\label{V3bound}
\vol V_3(\delta)^{\bullet}\preccurlyeq
\sqrt{\mcL}\cdot\vol W_2(\mcL^{1/4}\delta,\sqrt{\mcL/\ell})^{\bullet}
\ll \sqrt{\mcL}(1+\mcL\delta^2/\sqrt{\ell}+\mcL^{3/4}\delta^3).
\end{equation}
For the case when the second half of \eqref{cond1} holds, we introduce the set
\[ V_4(\delta):=\big\{(z_1,z_2,z_3)\in\CC^3:z_1\ll\mcL^{1/4}\delta,\,\,z_2,z_3\preccurlyeq\mcL^{1/4},\,\,|z_2|^2-|z_3|^2\preccurlyeq\sqrt{\mcL/\ell}\big\}, \]
suppressing from notation the dependence implicit in $\preccurlyeq$. Then we have by \eqref{volW}
\begin{equation}\label{V4bound}
\vol V_4(\delta)^{\bullet}\preccurlyeq
(1+\mcL^{1/4}\delta)^2\cdot\vol W_2(\mcL^{1/4},\sqrt{\mcL/\ell})^{\bullet}
\ll(1+\sqrt{\mcL}\delta^2)(\mcL^{3/4}+\mcL/\sqrt{\ell}).
\end{equation}

Using \eqref{parabolicidentity}, \eqref{thm2b-conditions}--\eqref{cond0}, \eqref{Ag3}, \eqref{volume-bound}, and \eqref{V3bound}--\eqref{V4bound}, we conclude \eqref{Rbound4} in the form
\[ M_0^{\ast\np}(g,L,L^4,\delta)\preccurlyeq
L^2(1+L^4\delta^2/\sqrt{\ell}+L^3\delta^3)+(1+L^2\delta^2)(L^3+L^4/\sqrt{\ell}). \]

The proof of Lemma~\ref{counting-for-thm2} is complete.

\subsection{Proof of Theorem~\ref{thm3}}

In the case $q=0$, we combine Lemmata~\ref{APTI-done-lemma-single-form} and \ref{counting-for-thm2} to see that
\[ |\phi_0(g)|^2\preccurlyeq_{I,\Omega}\ell^2
\left(\frac1L+S^{\ast}_0(L,L^2)+S^{\ast}_0(L,L^4)\right)+L^{2}\ell^{-48}, \]
where
\begin{alignat*}{3}
S_0^{\ast}(L,L^2)&:=\sum_{\substack{\delta\text{ dyadic}\\1/\sqrt{\ell}\leq\delta\preccurlyeq 1}}
\frac1{\sqrt{\ell}\delta L^3}
\left(L^{3/2}+L^3\delta^3+\frac{L^2+L^{7/2}\delta^2}{\sqrt{\ell}}+\frac{L^4\delta^2}{\ell}\right)
&&\preccurlyeq\frac1{L^{3/2}}+\frac1{\sqrt{\ell}}+\frac{L}{\ell^{3/2}},\\
S_0^{\ast}(L,L^4)&:=\sum_{\substack{\delta\text{ dyadic}\\1/\sqrt{\ell}\leq\delta\preccurlyeq 1}}
\frac1{\sqrt{\ell}\delta L^4}
\left(L^3+L^5\delta^2+\frac{L^4+L^6\delta^2}{\sqrt{\ell}}\right)
&&\preccurlyeq\frac1L+\frac{L}{\sqrt{\ell}}+\frac{L^2}{\ell}.
\end{alignat*}
Putting everything together, we conclude that
\[ |\phi_0(g)|^2\preccurlyeq_{I,\Omega}\ell^2\left(\frac1L+\frac{L}{\sqrt{\ell}}+\frac{L^2}{\ell}\right)+L^2\ell^{-48}\ll \ell^{7/4}, \]
by making the essentially optimal choice $L:=7\ell^{1/4}$ (which satisfies our earlier
condition $L\geq 7$).

The case $q = \pm \ell$ is immediate from Lemmata~\ref{q=ell-case} and \ref{lemma-ell-count}, hence the proof of Theorem~\ref{thm3} is complete.

\section{Proof of Theorem~\ref{thm2}}\label{sec-proof2}
In this section, we prove Theorem~\ref{thm2}. Here we take the aim of the softest possible proof based on the localization properties of the averaged spherical trace function (proved in Theorem~\ref{thm6} and then encoded in the form of the amplified pre-trace inequality in Lemma~\ref{APTI-done-lemma-single-form}) and the already available ingredients for the counting problem.

For each $\mcL\in\{1,L^2,L^4\}$ and $\vec{\delta}=(\delta_1,\delta_2)$ with $0<\delta_1,\delta_2\leq\ell^\eps$, the count $M^\ast(g,L,\mcL,\vec{\delta})$ in Lemma~\ref{APTI-done-lemma-single-form} may be estimated in a split fashion as
\[ M^\ast(g,L,\mcL,\vec{\delta})\leq \min\big(M_K(g,L,\mcL,\delta_1),M_\mcD(g,L,\mcL,\eps,\delta_2)\big), \]
where
\[M_K(g,L,\mcL,\delta):=\sum_{n\in D(L,\mcL)}\#\left\{\gamma\in\Gamma_n:\dist\left(g^{-1}\tilde\gamma g,K\right)\leq\delta\right\},\]
and $M_{\mcD}(g,L,\mcL,\eps,\delta)$ is as in \eqref{thm2a-conditions}. The quantity $M_K(g,L,\mcL,\delta)$ is the classical Diophantine count in the spherical sup-norm problem in the eigenvalue aspect, which in the present context
was treated in detail in \cite{BlomerHarcosMilicevic2016}. In the notation of that paper, we have:
\begin{itemize}
\item $u(\tilde\gamma gK,gK)\asymp\dist(g^{-1}\tilde\gamma g,K)^2$ in \cite[(5.3)]{BlomerHarcosMilicevic2016};
\item $N=1$, and $r\asymp_{\Omega}1$ for $g\in\Omega$, in \cite[(6.2)]{BlomerHarcosMilicevic2016}.
\end{itemize}
Thus the count $M_K(g,L,\mcL,\delta_1)$ agrees with $M(gK,L,\mcL,\OO(\delta_1^2))$ in \cite[(5.17)--(5.18)]{BlomerHarcosMilicevic2016}. Importing estimates \cite[(7.1), (7.2), (7.5), (11.1), (11.6)]{BlomerHarcosMilicevic2016}, we conclude that
\[M_K(g,L,1,\delta_1)\preccurlyeq_{\Omega} 1,\quad
M_K(g,L,L^2,\delta_1)\preccurlyeq_{\Omega} L^2+L^4\delta_1,\quad
M_K(g,L,L^4,\delta_1)\preccurlyeq_{\Omega} L^3+L^6\delta_1.\]

The count $M_{\mcD}(g,L,\mcL,\eps,\delta)$ was estimated in Lemma~\ref{first-moment-count-lemma}. Combining everything, we obtain the following lemma.

\begin{lemma}\label{soft-counting-for-general-q}
For $g\in\Omega$, $L>0$, and arbitrary $\eps>0$ and $\vec{\delta}=(\delta_1,\delta_2)$ with $0<\delta_{j}\preccurlyeq 1$, the quantity $M^\ast(g,L,\mcL,\vec{\delta})$ in Lemma~\ref{APTI-done-lemma-single-form} satisfies
\begin{align*}
M^\ast(g,L,1,\vec{\delta})&\preccurlyeq_{\Omega} 1,\\
M^\ast(g,L,L^2,\vec{\delta})&\preccurlyeq_{\Omega}\min\big(L^2+L^4\delta_1,L^2+L^4\delta_2^4\big),\\
M^\ast(g,L,L^4,\vec{\delta})&\preccurlyeq_{\Omega}\min\big(L^3+L^6\delta_1,L^2+L^6\delta_2^4\big).
\end{align*}
\end{lemma}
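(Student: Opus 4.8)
The plan is to simply combine the two Diophantine counts that have already been prepared above. Recall that the count $M^\ast(g,L,\mcL,\vec\delta)$ appearing in Lemma~\ref{APTI-done-lemma-single-form} runs over matrices $\gamma\in\Gamma_n$ with $n\in D(L,\mcL)$ subject to the \emph{two} simultaneous constraints $\dist(g^{-1}\tilde\gamma g,K)\leq\delta_1$ and $\dist(g^{-1}\tilde\gamma g,\mcD)\leq\delta_2$; moreover, by the cutoff $\log\|g^{-1}\tilde\gamma g\|\leq 8\sqrt{\log\ell}$ inherited from \eqref{pre-APTI-done-2-single-form}, every such $\gamma$ automatically satisfies $\|g^{-1}\tilde\gamma g\|\ll\ell^\eps$. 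First I would discard one constraint at a time: dropping the $\mcD$-constraint bounds $M^\ast(g,L,\mcL,\vec\delta)$ by $M_K(g,L,\mcL,\delta_1)$, while dropping the $K$-constraint bounds it by $M_\mcD(g,L,\mcL,\eps,\delta_2)$, the size bound $\|g^{-1}\tilde\gamma g\|\ll\ell^\eps$ being exactly the hypothesis built into \eqref{thm2a-conditions}. Hence
\[ M^\ast(g,L,\mcL,\vec\delta)\leq\min\bigl(M_K(g,L,\mcL,\delta_1),\,M_\mcD(g,L,\mcL,\eps,\delta_2)\bigr). \]

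Next I would feed in the two sets of estimates. For $M_K$ I would invoke \cite{BlomerHarcosMilicevic2016}: translating notation via $u(\tilde\gamma gK,gK)\asymp\dist(g^{-1}\tilde\gamma g,K)^2$ together with $r\asymp_\Omega 1$ and $N=1$, the count $M_K(g,L,\mcL,\delta_1)$ coincides with $M(gK,L,\mcL,\OO(\delta_1^2))$ in \cite[(5.17)--(5.18)]{BlomerHarcosMilicevic2016}, whose estimates \cite[(7.1), (7.2), (7.5), (11.1), (11.6)]{BlomerHarcosMilicevic2016} read $M_K(g,L,1,\delta_1)\preccurlyeq 1$, $M_K(g,L,L^2,\delta_1)\preccurlyeq L^2+L^4\delta_1$, and $M_K(g,L,L^4,\delta_1)\preccurlyeq L^3+L^6\delta_1$. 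For $M_\mcD$ I would quote Lemma~\ref{first-moment-count-lemma} directly, namely $M_\mcD(g,L,1,\eps,\delta_2)\preccurlyeq 1$, $M_\mcD(g,L,L^2,\eps,\delta_2)\preccurlyeq L^2+L^4\delta_2^4$, and $M_\mcD(g,L,L^4,\eps,\delta_2)\preccurlyeq L^2+L^6\delta_2^4$.

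Finally I would take the minimum level by level in $\mcL$. At $\mcL=1$ both bounds are $\preccurlyeq 1$; at $\mcL=L^2$ and $\mcL=L^4$ the respective minima give exactly $\min(L^2+L^4\delta_1,\,L^2+L^4\delta_2^4)$ and $\min(L^3+L^6\delta_1,\,L^2+L^6\delta_2^4)$, which is the asserted statement. I do not expect any genuine obstacle here: all the substance is already contained in Lemma~\ref{first-moment-count-lemma} and in the imported counts of \cite{BlomerHarcosMilicevic2016}. The only point requiring care is the bookkeeping that matches the Frobenius-distance normalization used in the present paper with the parameter conventions of \cite{BlomerHarcosMilicevic2016}, and the verification that the implicit size restriction $\|g^{-1}\tilde\gamma g\|\ll\ell^\eps$ is in force, so that Lemma~\ref{first-moment-count-lemma} applies verbatim.
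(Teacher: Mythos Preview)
Your proposal is correct and follows essentially the same route as the paper: the paper also bounds $M^\ast(g,L,\mcL,\vec\delta)\leq\min\bigl(M_K(g,L,\mcL,\delta_1),M_\mcD(g,L,\mcL,\eps,\delta_2)\bigr)$, imports the $M_K$-bounds from \cite[(7.1), (7.2), (7.5), (11.1), (11.6)]{BlomerHarcosMilicevic2016} via the same dictionary $u\asymp\dist(\cdot,K)^2$, $N=1$, $r\asymp_\Omega 1$, and quotes Lemma~\ref{first-moment-count-lemma} for the $M_\mcD$-bounds. The only cosmetic difference is that the paper leaves the size restriction $\|g^{-1}\tilde\gamma g\|\ll\ell^\eps$ implicit (it already follows from $\dist(g^{-1}\tilde\gamma g,K)\leq\delta_1\preccurlyeq 1$ and the compactness of $K$), whereas you justify it via the cutoff in \eqref{pre-APTI-done-2-single-form}.
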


We are now ready for the proof of Theorem~\ref{thm2}. From Lemma~\ref{soft-counting-for-general-q}, we have  for every pair $\vec{\delta}=(\delta_1,\delta_2)$ with $0<\delta_1,\delta_2\leq\ell^{\eps}$ that
\[\frac{M^\ast(g,L,1,\vec{\delta})}{L}+\frac{M^\ast(g,L,L^2,\vec{\delta})}{L^3}+\frac{M^\ast(g,L,L^4,\vec{\delta})}{L^4}
\preccurlyeq_{\Omega} \left(\frac1L+L^2\min\left(\delta_1,\delta_2^4\right)\right).\]
Inserting this into Lemma~\ref{APTI-done-lemma-single-form}, we find that
\begin{align*}
|\phi_q(g)|^2&\preccurlyeq_{I,\Omega}\ell^{2}
\sum_{\substack{\vec{\delta}\textnormal{ dyadic},\,\,\delta_j\preccurlyeq 1\\\delta_1^2\delta_2\geq 1/\sqrt{\ell}}}
\frac{1}{\sqrt{\ell}\delta_1^2\delta_2}
\left(\frac1L+ L^2 \min\left(\delta_1,\delta_2^4\right)\right)+L^{2}\ell^{-48}\\
&\preccurlyeq \ell^{2} \Biggl(\frac1L+
\sum_{\substack{\vec{\delta}\textnormal{ dyadic},\,\,\delta_j\preccurlyeq 1\\\delta_1^2\delta_2\geq 1/\sqrt{\ell}}}
L^2 \min\left(\frac{1}{\sqrt{\ell}\delta_1\delta_2}, \delta_1, \delta_2^4\right)\Biggr)
\preccurlyeq \ell^{2} \left(\frac1L + \frac{L^2}{\ell^{2/9}}\right),
\end{align*}
where we used $\min(A, B, C) \leq A^{4/9}B^{4/9} C^{1/9}$ in the last step.
The choice $L:=7\ell^{2/27}$ is optimal up to a constant, and it satisfies our earlier
condition $L\geq 7$, hence we obtain Theorem~\ref{thm2} in the form
\[{\|\phi_q|_{\Omega}\|}_\infty\preccurlyeq_{I,\Omega}\ell^{26/27}.\qedhere \]

The proof of Theorem~\ref{thm2} is complete.

\bibliographystyle{amsalpha}
\bibliography{supnormK}

\providecommand{\bysame}{\leavevmode\hbox to3em{\hrulefill}\thinspace}
\providecommand{\MR}{\relax\ifhmode\unskip\space\fi MR }
\providecommand{\MRhref}[2]{%
  \href{http://www.ams.org/mathscinet-getitem?mr=#1}{#2}
}
\providecommand{\href}[2]{#2}
\begin{thebibliography}{BHMM20}

\bibitem[Art83]{Arthur1983}
J.~Arthur, \emph{A {P}aley-{W}iener theorem for real reductive groups}, Acta
  Math. \textbf{150} (1983), no.~1-2, 1--89. \MR{0697608}

\bibitem[BHM16]{BlomerHarcosMilicevic2016}
V.~Blomer, G.~Harcos, and D.~Mili\'cevi\'c, \emph{Bounds for eigenforms on
  arithmetic hyperbolic 3-manifolds}, Duke Math. J. \textbf{165} (2016), no.~4,
  625--659. \MR{3474814}

\bibitem[BHMM20]{BlomerHarcosMagaMilicevic2020}
V.~Blomer, G.~Harcos, P.~Maga, and D.~Mili\'{c}evi\'{c}, \emph{The sup-norm
  problem for {$\rm GL(2)$} over number fields}, J. Eur. Math. Soc. (JEMS)
  \textbf{22} (2020), no.~1, 1--53. \MR{4046009}

\bibitem[Blo20]{Blomer2020}
V.~Blomer, \emph{Epstein zeta-functions, subconvexity, and the purity
  conjecture}, J. Inst. Math. Jussieu \textbf{19} (2020), no.~2, 581--596.
  \MR{4079154}

\bibitem[BM16]{BlomerMaga2016}
V.~Blomer and P.~Maga, \emph{Subconvexity for sup-norms of cusp forms on
  {$\mathrm{PGL}(n)$}}, Selecta Math. (N.S.) \textbf{22} (2016), no.~3,
  1269--1287. \MR{3518551}

\bibitem[BM20]{BrumleyMarshall2020}
F.~Brumley and S.~Marshall, \emph{Lower bounds for {M}aass forms on semisimple
  groups}, Compos. Math. \textbf{156} (2020), no.~5, 959--1003. \MR{4089374}

\bibitem[BP16]{BlomerPohl2016}
V.~Blomer and A.~Pohl, \emph{The sup-norm problem on the {S}iegel modular space
  of rank two}, Amer. J. Math. \textbf{138} (2016), no.~4, 999--1027.
  \MR{3538149}

\bibitem[Bre32]{Breusch1932}
R.~Breusch, \emph{Zur {V}erallgemeinerung des {B}ertrandschen {P}ostulates,
  da{\ss} zwischen {$x$} und {$2x$} stets {P}rimzahlen liegen}, Math. Z.
  \textbf{34} (1932), no.~1, 505--526. \MR{1545270}

\bibitem[Bum97]{Bump1997}
D.~Bump, \emph{Automorphic forms and representations}, Cambridge Studies in
  Advanced Mathematics, vol.~55, Cambridge University Press, Cambridge, 1997.
  \MR{1431508}

\bibitem[Cam80]{Campoli1979}
O.~A. Campoli, \emph{Paley-{W}iener type theorems for rank-{$1$} semisimple
  {L}ie groups}, Rev. Un. Mat. Argentina \textbf{29} (1979/80), no.~3,
  197--221. \MR{602486}

\bibitem[DFJ91]{DelormeFlensted-Jensen1991}
P.~Delorme and M.~Flensted-Jensen, \emph{Towards a {P}aley-{W}iener theorem for
  semisimple symmetric spaces}, Acta Math. \textbf{167} (1991), no.~1-2,
  127--151. \MR{1111747}

\bibitem[DM78]{DixmierMalliavin1978}
J.~Dixmier and P.~Malliavin, \emph{Factorisations de fonctions et de vecteurs
  ind\'{e}finiment diff\'{e}rentiables}, Bull. Sci. Math. (2) \textbf{102}
  (1978), no.~4, 307--330. \MR{517765}

\bibitem[DM21]{dever2020ambient}
L.~Dever and D.~{Mili{\'c}evi{\'c}}, \emph{Ambient prime geodesic theorems on
  hyperbolic 3-manifolds}, Int. Math. Res. Not. IMRN (2021), rnab048.

\bibitem[EGM98]{ElstrodtGrunewaldMennicke1998}
J.~Elstrodt, F.~Grunewald, and J.~Mennicke, \emph{Groups acting on hyperbolic
  space}, Springer Monographs in Mathematics, Springer-Verlag, Berlin, 1998,
  Harmonic analysis and number theory. \MR{1483315}

\bibitem[FHMM20]{FHMM}
M.~Fr{\c a}czyk, G.~Harcos, P.~Maga, and D.~Mili{\'c}evi{\'c}, \emph{The
  density hypothesis for horizontal families of lattices}, arXiv e-prints
  (2020), arXiv:2007.13961.

\bibitem[Gan71]{Gangolli1971}
R.~Gangolli, \emph{On the {P}lancherel formula and the {P}aley-{W}iener theorem
  for spherical functions on semisimple {L}ie groups}, Ann. of Math. (2)
  \textbf{93} (1971), 150--165. \MR{0289724}

\bibitem[GGV77]{GGV}
I.~M. Gelfand, M.~I. Graev, and N.~Ya. Vilenkin, \emph{Generalized functions.
  {V}ol. 5}, Academic Press [Harcourt Brace Jovanovich, Publishers], New
  York-London, 1966 [1977], Integral geometry and representation theory,
  Translated from the Russian by Eugene Saletan. \MR{0435835}

\bibitem[GJ78]{GelbartJacquet}
S.~Gelbart and H.~Jacquet, \emph{A relation between automorphic representations
  of {$\mathrm{GL}(2)$} and {$\mathrm{GL}(3)$}}, Ann. Sci. \'Ecole Norm. Sup.
  (4) \textbf{11} (1978), no.~4, 471--542. \MR{0533066}

\bibitem[GN47]{GelfandNaimark}
I.~M. Gelfand and M.~A. Naimark, \emph{Unitary representations of the {L}orentz
  group}, Izvestiya Akad. Nauk SSSR. Ser. Mat. \textbf{11} (1947), 411--504.
  \MR{0024440}

\bibitem[GN50]{GelfandNaimark2}
\bysame, \emph{Unitary representations of the classical groups}, Trudy Mat.
  Inst. Steklova, vol. 36, Izdat. Nauk SSSR, Moscow-Leningrad, 1950.
  \MR{0046370}

\bibitem[GN57]{GelfandNaimark2Translated}
\bysame, \emph{Unit\"{a}re {D}arstellungen der klassischen {G}ruppen},
  Akademie-Verlag, Berlin, 1957. \MR{0085262}

\bibitem[GN88]{GelfandNaimarkTranslated}
\bysame, \emph{Unitary representations of the {L}orentz group}, Collected
  papers of I. M. Gelfand, {V}ol. {II}, Springer-Verlag, Berlin, 1988,
  pp.~41--123. \MR{0961842}

\bibitem[GRS13]{GhoshReznikovSarnak2013}
A.~Ghosh, A.~Reznikov, and P.~Sarnak, \emph{Nodal domains of {M}aass forms
  {I}}, Geom. Funct. Anal. \textbf{23} (2013), no.~5, 1515--1568. \MR{3102912}

\bibitem[Har02]{Harcos2002}
G.~Harcos, \emph{Uniform approximate functional equation for principal
  {$L$}-functions}, Int. Math. Res. Not. (2002), no.~18, 923--932. \MR{1902296}

\bibitem[HR70]{HewittRoss}
E.~Hewitt and K.~A. Ross, \emph{Abstract harmonic analysis. {V}ol. {II}:
  {S}tructure and analysis for compact groups. {A}nalysis on locally compact
  {A}belian groups}, Die Grundlehren der mathematischen Wissenschaften, Band
  152, Springer-Verlag, New York-Berlin, 1970. \MR{0262773}

\bibitem[HT13]{HarcosTemplier2013}
G.~Harcos and N.~Templier, \emph{On the sup-norm of {M}aass cusp forms of large
  level. {III}}, Math. Ann. \textbf{356} (2013), no.~1, 209--216. \MR{3038127}

\bibitem[IS95]{IwaniecSarnak1995}
H.~Iwaniec and P.~Sarnak, \emph{{$L^\infty$} norms of eigenfunctions of
  arithmetic surfaces}, Ann. of Math. (2) \textbf{141} (1995), no.~2, 301--320.
  \MR{1324136}

\bibitem[Jac72]{Jacquet1972}
H.~Jacquet, \emph{Automorphic forms on {${\rm GL}(2)$}. {P}art {II}}, Lecture
  Notes in Mathematics, Vol. 278, Springer-Verlag, Berlin-New York, 1972.
  \MR{0562503}

\bibitem[JK04]{JorgensonKramer2004}
J.~Jorgenson and J.~Kramer, \emph{Bounding the sup-norm of automorphic forms},
  Geom. Funct. Anal. \textbf{14} (2004), no.~6, 1267--1277. \MR{2135167}

\bibitem[JL70]{JacquetLanglands}
H.~Jacquet and R.~P. Langlands, \emph{Automorphic forms on {${\rm GL}(2)$}},
  Lecture Notes in Mathematics, Vol. 114, Springer-Verlag, Berlin-New York,
  1970. \MR{0401654}

\bibitem[Kna01]{Knapp}
A.~W. Knapp, \emph{Representation theory of semisimple groups}, Princeton
  Landmarks in Mathematics, Princeton University Press, Princeton, NJ, 2001,
  Reprint of the 1986 original. \MR{1880691}

\bibitem[KS20]{KhayutinSteiner2020}
I.~{Khayutin} and R.~S. {Steiner}, \emph{{Theta functions, fourth moments of
  eigenforms, and the sup-norm problem I}}, arXiv e-prints (2020),
  arXiv:2009.07194.

\bibitem[LG04]{Lokvenec-Guleska2004}
H.~Lokvenec-Guleska, \emph{Sum formula for $\mathrm{SL}_2$ over imaginary
  quadratic number fields}, Ph.D. thesis, Utrecht University, 2004.

\bibitem[Mag13]{Maga2013}
P.~Maga, \emph{Subconvexity and shifted convolution sums over number fields},
  Ph.D. thesis, Central European University, Budapest, 2013.

\bibitem[Mar14]{Marshall2014a}
S.~Marshall, \emph{Upper bounds for {M}aass forms on semisimple groups}, arXiv
  e-prints (2014), arXiv:1405.7033.

\bibitem[Mil11]{Milicevic2011}
D.~Mili{\'c}evi{\'c}, \emph{Large values of eigenfunctions on arithmetic
  hyperbolic 3-manifolds}, Geom. Funct. Anal. \textbf{21} (2011), no.~6,
  1375--1418. \MR{2860192}

\bibitem[Pop08]{Popa2008}
A.~Popa, \emph{Whittaker newforms for {A}rchimedean representations}, J. Number
  Theory \textbf{128} (2008), no.~6, 1637--1645. \MR{2419183}

\bibitem[Ram18]{Ramacher2018}
P.~Ramacher, \emph{The equivariant spectral function of an invariant elliptic
  operator. {$L^p$}-bounds, caustics, and concentration of eigenfunctions}, J.
  Math. Pures Appl. (9) \textbf{120} (2018), 33--90. \MR{3906155}

\bibitem[Rud05]{Rudnick2005}
Z.~Rudnick, \emph{On the asymptotic distribution of zeros of modular forms},
  Int. Math. Res. Not. (2005), no.~34, 2059--2074. \MR{2181743}

\bibitem[RW21]{RamacherWakatsuki2017a}
P.~Ramacher and S.~Wakatsuki, \emph{Subconvex bounds for {H}ecke-{M}aass forms
  on compact arithmetic quotients of semisimple {L}ie groups}, Math. Z.
  \textbf{298} (2021), no.~3-4, 1383--1424. \MR{4282133}

\bibitem[Sah17]{Saha2017a}
A.~Saha, \emph{Hybrid sup-norm bounds for {M}aass newforms of powerful level},
  Algebra Number Theory \textbf{11} (2017), no.~5, 1009--1045. \MR{3671430}

\bibitem[Sar04]{Sarnak2004Morawetz}
P.~Sarnak, \emph{Letter to {M}orawetz},
  \url{https://publications.ias.edu/sarnak/}, 2004.

\bibitem[Sim15]{Simon2015}
B.~Simon, \emph{Real analysis}, A Comprehensive Course in Analysis, Part 1,
  Amer. Math. Soc., Providence, RI, 2015. \MR{3408971}

\bibitem[Ste67]{Stein}
E.~M. Stein, \emph{Analysis in matrix spaces and some new representations of
  {${\rm SL}(N,\,C)$}}, Ann. of Math. (2) \textbf{86} (1967), 461--490.
  \MR{0219670}

\bibitem[Ste20]{St}
R.~S. Steiner, \emph{Sup-norm of {H}ecke-{L}aplace eigenforms on {$S^3$}},
  Math. Ann. \textbf{377} (2020), no.~1-2, 543--553. \MR{4099641}

\bibitem[Tad93]{Tadic}
M.~Tadi\'{c}, \emph{An external approach to unitary representations}, Bull.
  Amer. Math. Soc. (N.S.) \textbf{28} (1993), no.~2, 215--252. \MR{1181278}

\bibitem[Tat79]{Tate1977}
J.~Tate, \emph{Number theoretic background}, Automorphic forms, representations
  and {$L$}-functions ({P}roc. {S}ympos. {P}ure {M}ath., {O}regon {S}tate
  {U}niv., {C}orvallis, {O}re., 1977), {P}art 2, Proc. Sympos. Pure Math.,
  XXXIII, Amer. Math. Soc., Providence, RI, 1979, pp.~3--26. \MR{0546607}

\bibitem[Tem15]{Templier2015}
N.~Templier, \emph{Hybrid sup-norm bounds for {H}ecke-{M}aass cusp forms}, J.
  Eur. Math. Soc. (JEMS) \textbf{17} (2015), no.~8, 2069--2082. \MR{3372076}

\bibitem[Vog86]{Vogan}
D.~A. Vogan, Jr., \emph{The unitary dual of {${\rm GL}(n)$} over an
  {A}rchimedean field}, Invent. Math. \textbf{83} (1986), no.~3, 449--505.
  \MR{0827363}

\bibitem[Wan74]{Wang1974}
A.~B. Wang, \emph{An analogue of the {P}aley-{W}iener theorem for certain
  function spaces on {${\rm SL}(2,\,{\bf C})$}}, Pacific J. Math. \textbf{52}
  (1974), 617--629. \MR{0355484}

\bibitem[War72a]{Warner1972a}
G.~Warner, \emph{Harmonic analysis on semi-simple {L}ie groups. {I}},
  Springer-Verlag, New York-Heidelberg, 1972, Die Grundlehren der
  mathematischen Wissenschaften, Band 188. \MR{0498999}

\bibitem[War72b]{Warner1972}
\bysame, \emph{Harmonic analysis on semi-simple {L}ie groups. {II}},
  Springer-Verlag, New York-Heidelberg, 1972, Die Grundlehren der
  mathematischen Wissenschaften, Band 189. \MR{0499000}

\bibitem[Wei74]{Weil1974}
A.~Weil, \emph{Basic number theory}, third ed., Springer-Verlag, New
  York-Berlin, 1974, Die Grundlehren der Mathematischen Wissenschaften, Band
  144. \MR{0427267}

\end{thebibliography}

\end{document}